\def\C{\mathbb C}
\def\Q{\mathbb Q}
\def\Z{\mathbb Z}
\def\L{\mathfrak{L}}
\def\N{\mathbb N}
\def\P{\mathbb P}
\def\1{\mathbbm 1}
\def\g{\mathfrak{g}}
\def\h{\mathfrak{h}}
\newcommand{\interior}[1]{\raise0.2ex\hbox{$\displaystyle{\mathop{#1}^{\circ}}$}}
\renewcommand\phi{\varphi}
\newtheorem{theorem}{Theorem}[section]
\newtheorem{proposition}[theorem]{Proposition}
\newtheorem{remark}[theorem]{Remark}
\newtheorem{lemma}[theorem]{Lemma}
\numberwithin{equation}{section}
\newcommand{\quot}{\mathit{quot}}
\title{On a lower central series filtration of the Grothendieck-Teichm\"{u}ller Lie algebra $\mathfrak{grt}_1$}
\author{N. Arbesfeld}
\address{N.A.: Department of Mathematics, Columbia University, New York, NY 10027, USA \newline 
E-mail address: {\tt  nma@math.columbia.edu}}
\author{B. Enriquez}
\address{B.E..: IRMA (CNRS) et D\'epartement de math\'ematiques, Universit\'e de Strasbourg, 7 rue Ren\'e Descartes, 67000 
Strasbourg,  France \newline 
E-mail address: {\tt  b.enriquez@math.unistra.fr}}
\dedicatory{Dedicated to B.L. Feigin on his 60th birthday}
\begin{document}

\begin{abstract} The Grothendieck-Teichm\"{u}ller Lie algebra is a Lie subalgebra of a Lie algebra of derivations of the free Lie algebra in two 
generators. We show that the lower central series of the latter Lie algebra induces a decreasing filtration of the Grothendieck-Teichm\"{u}ller Lie 
algebra and we study the corresponding graded Lie algebra. Its degree zero part had been previously computed by the second author. 
We show that the degree one part is a module over a symmetric algebra, which are both equipped with compatible decreasing filtrations, 
and we exhibit an explicit lower bound for the associated graded module. We derive from there some information on explicit expression of the 
depth 3 part of the depth-graded of the Grothendieck-Teichm\"{u}ller Lie algebra.
\end{abstract}

\maketitle


\section{Introduction}

In this article, we study the graded Lie algebra of the Grothendieck-Teichm\"{u}ller group, denoted $\mathfrak{grt}_1$. This Lie algebra was introduced in \cite{Dr} in connection with the theory of associators, and in \cite{Ih1, Ih2} under the name of ``stable derivation algebra'' as a Lie algebra of compatible collections of derivations of Lie algebras of infinitesimal braids in genus 0. One source of interest in $\mathfrak{grt}_1$ is its link with the theory of motives and multizeta values (\cite{A}).

It was shown in \cite{Br1} that $\mathfrak{grt}_1$ contains a free Lie algebra with one generator in each odd, $\geq 3$ degree. On the other 
hand, $\mathfrak{grt}_1$ is equipped with a decreasing filtration, the {\it depth filtration,} compatible with its grading (which will henceforth be 
called the {\it weight grading}) and is the subject of conjectures 
(\cite{BK,Br2}). For each integer $d\geq 1$, these conjectures predict the Hilbert series of the depth $d$ part of the depth-graded of the 
Grothendieck-Teichm\"{u}ller Lie algebra. They were established in [G] for $d=2,3$. 

In this paper, we introduce another decreasing filtration of $\mathfrak{grt}_1$, also compatible with the weight grading, which we call the 
{\it l.c.s.-filtration}. This filtration is constructed as follows. The Lie algebra $\mathfrak{grt}_1$ is a Lie subalgebra of a Lie algebra 
$\overline{\mathfrak{L}}$, which is equipped with a decreasing filtration $\overline{\mathfrak L}=\overline{\mathfrak L}^0\supset 
\overline{\mathfrak L}^1\supset\cdots$ arising from a lower central series. The associated graded Lie algebra 
${\mathfrak L}={\mathfrak L}_0\oplus{\mathfrak L}_1\oplus\cdots$ is then equipped with a graded bracket, such 
that ${\mathfrak L}_0$ is abelian (here ${\mathfrak L}_i=\overline{\mathfrak L}^i/\overline{\mathfrak L}^{i+1}$). The decreasing filtration 
of $\overline{\mathfrak L}$ induces a decreasing filtration on $\mathfrak{grt}_1$. The associated graded Lie algebra is 
$\mathrm{gr}_{lcs}(\mathfrak{grt}_1):=\oplus_i \mathrm{gr}^i_{lcs}(\mathfrak{grt}_1)$, where  $\mathrm{gr}^i_{lcs}(\mathfrak{grt}_1) :=(\mathfrak{grt}_1\cap\overline{\mathfrak L}^i)/(\mathfrak{grt}_1\cap\overline{\mathfrak L}^{i+1})$. The depth filtration 
of $\mathfrak{grt}_1$ also arises from a  decreasing filtration of $\overline{\mathfrak L}$, and it induces a decreasing filtration 
on each $\mathrm{gr}^i_{lcs}(\mathfrak{grt}_1)$.  

The space $\mathrm{gr}^0_{lcs}(\mathfrak{grt}_1)\hookrightarrow {\mathfrak L}_0$ has been explicitly computed in \cite{En1}. In what 
follows, we set 
$$
\Sigma:=\mathrm{gr}^0_{lcs}(\mathfrak{grt}_1). 
$$
The Lie bracket defines a $S(\Sigma)$-module structure on each $\mathrm{gr}^i_{lcs}(\mathfrak{grt}_1)$. Let us define the 
{\it $\Sigma$-degree} on $S(\Sigma)$ by the condition that $\Sigma$ has degree 1; this degree is compatible with the 
weight degree.  
Let us define a {\it $\Sigma$-structure} as the following data: 
\begin{itemize}
\item a vector space ${\mathbb M}$, bigraded for ($\Sigma$-degree, weight degree) and equipped with a decreasing 
depth filtration compatible with the bigrading; the decomposition for the $\Sigma$-degree starts in degree 2 and is denoted 
${\mathbb M}={\mathbb M}_0\oplus {\mathbb M}_1\oplus\cdots$, where ${\mathbb M}_i$ has degree $i+2$; 
\item a $S(\Sigma)$-module structure on ${\mathbb M}$ and a linear map $\Lambda^2(\Sigma)\to {\mathbb M}_0$, 
$\sigma\wedge\sigma'\mapsto\{\sigma,\sigma'\}$,  
\end{itemize}
such that $\sigma\cdot\{\sigma',\sigma''\}+\mathrm{\ cycl.\ perm.}=0$, and such that the action map $S(\Sigma)\otimes {\mathbb M}
\to {\mathbb M}$ and the linear map $\Lambda^2(\Sigma)\to {\mathbb M}_0$ are compatible with all degrees and filtrations, except for the 
l.c.s. filtration for which the map has degree 1. 

We associate to the space $\mathrm{gr}^1_{lcs}(\mathfrak{grt}_1)$ a $\Sigma$-structure 
$[\mathrm{gr}^1_{lcs}(\mathfrak{grt}_1)]$, whose underlying $S(\Sigma)$-module is the graded module associated to the 
$S(\Sigma)$-module $\mathrm{gr}^1_{lcs}(\mathfrak{grt}_1)$, so ${\mathbb M}_i:=S^i(\Sigma)\cdot\mathrm{gr}^1_{lcs}
(\mathfrak{grt}_1)/S^{i+1}(\Sigma)\cdot\mathrm{gr}^1_{lcs}(\mathfrak{grt}_1)$ for any $i\geq 0$.  

Our main result is the computation of an explicit lower bound ${\mathbb M}^{min}(\Sigma)$ for the $\Sigma$-structure 
$[\mathrm{gr}^1_{lcs}(\mathfrak{grt}_1)]$ (see Subsection \ref{subsect:summary} and Theorem 
\ref{thm:Mmin}). One derives from there a lower bound for the Hilbert series of $\mathrm{gr}^1_{lcs}(\mathfrak{grt}_1)$,
as well as for the double Hilbert series of $\mathrm{gr}_{dpth}\mathrm{gr}^1_{lcs}(\mathfrak{grt}_1)$ (Theorem \ref{thm:lower:bound}). 

Here is a table of the filtrations/degrees discussed above. The entry (object $X$, filtration ${\mathcal F}$) contains the mention ``graded'' 
if the filtration ${\mathcal F}$ on $X$ actually comes from a grading, together with the list of degrees $i$ for which 
$\mathrm{gr}_{{\mathcal F}}^i(X)$ is nontrivial; the filtrations are always compatible with the gradings, and when 
an object is equipped with several gradings, they are always compatible. 


\begin{tabular}{|l||l|l|l|l|}
  \hline
 & depth filtration & lcs filtration & weight degree& $\Sigma$-degree\\
  \hline
$\overline{\mathfrak L}$ & graded/1,2,3,...  & 0,1,2,3,...& 2,3,4,5,...&undefined\\
 \hline
 ${\mathfrak L}$ & graded/1,2,3,... & graded/0,1,2,3,... & 2,3,4,5,...&undefined\\
 \hline
 ${\mathfrak{grt}}_1$ & 1,2,3,... & 0,1,2,3,... & 3,5,7,8,9,10,...&undefined\\
   \hline
 $\Sigma=\mathrm{gr}_{lcs}^0({\mathfrak{grt}}_1) $ & 1 & graded/0 & 3,5,7,9,11,...& 1\\
 \hline
 $\mathrm{gr}_{lcs}^1({\mathfrak{grt}}_1)$ & 2,3,4,5,6,... & graded/1 & 8,10,11,12,13,...& undefined\\
\hline
${\mathbb M}_0^{min}(\Sigma)$ & 2,3,4,5,6,... & graded/1 & 8,10,12,14,16,...& 2\\
\hline
${\mathbb M}_1^{min}(\Sigma)$ & 2,3,4,5,6,... & graded/1 & 11,13,15,17,...& 3\\
\hline
${\mathbb M}_i^{min}(\Sigma)$ & 2,3,4,5,6,... & graded/1 & $3i+8,3i+10,\ldots$ & $i+2$\\
\hline
\end{tabular}

The plan of the paper is as follows. In Section \ref{Section2}, we describe the Lie algebra $\overline{\mathfrak L}$, its l.c.s. filtration, the 
associated graded Lie algebra $\mathfrak{L}$, and their depth filtrations. In Section \ref{Section3}, we describe a quotient $\mathfrak{L}_{quot}$ of 
$\overline{\mathfrak{L}}$, equipped with the structure of an extension of abelian Lie algebras. In Section \ref{Section4}, we discuss the structures 
associated with Lie subalgebras of extensions of abelian Lie algebras. We apply this discussion in Section \ref{Section5}, where we construct 
the $\Sigma$-structure $[\mathrm{gr}_{lcs}^1({\mathfrak{grt}}_1)]$ and derive a lower bound for it which we express in terms of certain 
commutative rings. In Section \ref{Section:comm:alg}, we gather some results on these commutative rings. In Section 
\ref{Section:comp:Mminlowest}, we compute the lowest degree part ${\mathbb{M}}_0^{min}(\Sigma)$ of the lower bound and in 
Section \ref{Section:comp:Mminfull}, we compute the rest of it. As a corollary, we derive lower bounds for generating series in 
Section \ref{Section:comp:hilbert}. In Section \ref{Section:depth3}, we express the depth 3 part of the depth-graded of the Lie algebra
${\mathfrak{grt}}_1$ in terms of ${\mathbb{M}}^{min}(\Sigma)$. 

\section{The Lie algebra $\overline{\mathfrak L}$ and its l.c.s. filtration} \label{Section2}

In this section, we describe the Lie algebra $\overline{\mathfrak L}$, its l.c.s. filtration, the associated graded Lie algebra 
${\mathfrak L}$ and the depth filtration on these Lie algebras. 

\subsection{The Lie algebra $\overline{\mathfrak L}$}\label{sec:structure}

\subsubsection{The vector space $\overline{\mathfrak{L}}$}
If $X$ is a vector space over $\C$, we denote by $\mathbb{L}(X)$ the free Lie algebra generated by $X$, and if $k\geq 1$, we denote by $\mathbb{L}_k(X)$ the component of $\mathbb{L}(X)$ of degree $k$ (where $X$ has degree $1$). We therefore have $\mathbb{L}_1(X)=X, \mathbb{L}_2(X)=\Lambda^2(X)$, and so on.

We define $V_0$ as the vector space over $\C$ freely generated by a pair of letters $x,y$. We set $\tilde{\mathfrak{L}}:=\mathbb{L}(V_0)$; $\tilde{\mathfrak{L}}$ is then the free Lie algebra generated by $x,y$. Its Lie bracket is denoted $[,]$. It is $\N$-graded by the convention that $x,y$ have degree $1$. We set $\overline{\mathfrak{L}}:=[\tilde{\mathfrak{L}},\tilde{\mathfrak{L}}];$ we have $\tilde{\mathfrak{L}}=V_0\oplus \overline{\mathfrak{L}}$. The Lie algebra $\overline{\mathfrak{L}}$ can be identified with the part 
of $\tilde{\mathfrak{L}}$ of degree $\geq 2$. 
\newline

\subsubsection{The Lie bracket on $\overline{\mathfrak{L}}$}
For $f,g$ in $\tilde{\mathfrak{L}}$ we set $$\langle f,g\rangle:=[f,g]+D_f(g)-D_g(f),$$ where $D_f$ is the derivation of $(\tilde{\mathfrak{L}},[,])$ such that $D_f(x)=0, D_f(y)=[y,f]$. Then $(\tilde{\mathfrak{L}},\langle,\rangle)$ is a Lie algebra, and the map $\tilde{\mathfrak{L}}\to\mathrm{Der}(\tilde{\mathfrak{L}},[,])$ sending $f\mapsto D_f$ is a Lie algebra homomorphism (where the left side is equipped with the bracket $\langle,\rangle$). One checks that $\overline{\mathfrak{L}}$ is a Lie 
subalgebra of $\tilde{\mathfrak{L}}$ for the bracket $\langle,\rangle$. Moreover, $V_0$ is contained in the center of the Lie algebra $\tilde{\mathfrak{L}}$, so the isomorphism $\tilde{\mathfrak{L}}\simeq V_0\oplus \overline{\mathfrak{L}}$ provides an isomorphism between $\tilde{\mathfrak{L}}$ and the direct sum of $\overline{\mathfrak{L}}$ and an abelian Lie algebra of dimension $2$.

\subsection{The l.c.s. filtration on $\overline{\mathfrak{L}}$}\label{subsec:filtration}

\subsubsection{Definition of a descending filtration on the vector space $\overline{\mathfrak{L}}$}
By the Lazard elimination theorem, the Lie algebra $(\overline{\mathfrak{L}},[,])$ is freely generated by the elements $(\mathrm{ad} x)^i(\mathrm{ad} y)^j([x,y])$ where $i,j\geq 0$. We then set $\overline{\mathfrak{L}}^0:=\overline{\mathfrak{L}}$, and $\overline{\mathfrak{L}}^{i+1}:=[\overline{\mathfrak{L}},\overline{\mathfrak{L}}^i]$ for $i\geq 0.$ 

So we have 
\begin{align}\label{eqa}
[\overline{\mathfrak{L}}^i,\overline{\mathfrak{L}}^j]\subset\overline{}\mathfrak{L}^{i+j+1}; 
\end{align} 
equivalently, $\overline{\mathfrak{L}}=\overline{\mathfrak{L}}^0\supset \overline{\mathfrak{L}}^1\supset\cdots$ is, up to shiting the degree by $1$, a descending filtration on the Lie algebra $(\overline{\mathfrak{L}},[,])$. We define the Lie algebra $(\mathfrak{L},[,])$ by $\mathfrak{L}:=\oplus_{i\geq 0}\mathfrak{L}_i$ where $\mathfrak{L}_i:=\overline{\mathfrak{L}}^i/\overline{\mathfrak{L}}^{i+1}$; the bracket 
is the sum of the maps $\mathfrak{L}_i\otimes\mathfrak{L}_j\to\mathfrak{L}_{i+j+1}$ induced by the bracket $[,]$ on $\overline{\mathfrak{L}}$, so $\mathfrak{L}[1]$ identifies with the associated 
graded of $(\overline{\mathfrak{L}}[1],[,])$. We therefore have Lie algebra isomorphisms $\overline{\mathfrak{L}}[1]\simeq\mathfrak{L}[1]
\simeq\mathbb{L}(AB\C[A,B])$
(where $[1]$ means shifting the filtration or degree by 1),  where the 
first isomorphism is a non-canonical isomorphism of filtered Lie algebras, 
and the second is an isomorphism of graded Lie algebras, canonically defined by 
$$\mathfrak{L}_0\ni (\text{class of}\ (\mathrm{ad} x)^i
(\mathrm{ad} y)^j([x,y]))\mapsto A^{i+1}B^{j+1}\in {\mathbb L}_1(AB\C[A,B]).$$

\subsubsection{Lie algebraic properties of the filtration on $\overline{\mathfrak{L}}$}
\begin{proposition}\label{Ptes:L}
\bigskip
(1) If $i,j\geq 0$, we have $\langle\overline{\mathfrak{L}}^i,\overline{\mathfrak{L}}^j\rangle\subset\overline{\mathfrak{L}}^{i+j}.$
(2) $\langle\overline{\mathfrak{L}}^0,\overline{\mathfrak{L}}^0\rangle\subset\overline{\mathfrak{L}}^1$.
\end{proposition}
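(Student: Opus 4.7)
The plan is to reduce both statements to a single technical claim, namely that the derivation $D_f$ attached to $f\in\overline{\mathfrak{L}}^i$ shifts the filtration by $i$, and then for part~(2) to identify what $D_f$ does to the abelianization $\mathfrak{L}_0$.

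First I would establish the auxiliary fact that each $\overline{\mathfrak{L}}^i$ is a Lie ideal of the \emph{full} algebra $\tilde{\mathfrak{L}}$ (not merely of $\overline{\mathfrak{L}}$): equivalently $[V_0,\overline{\mathfrak{L}}^i]\subseteq \overline{\mathfrak{L}}^i$. The case $i=0$ is trivial, and the induction step is just Jacobi applied to $[v,[a,b]]$ with $v\in V_0$, $a\in\overline{\mathfrak{L}}$, $b\in\overline{\mathfrak{L}}^{i-1}$. With this in hand I would prove the main technical input: $D_f(\overline{\mathfrak{L}}^j)\subseteq\overline{\mathfrak{L}}^{i+j}$ for every $f\in\overline{\mathfrak{L}}^i$ and $j\geq 0$. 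The base case $j=0$ uses $D_f(x)=0$ and $D_f(y)=[y,f]\in\overline{\mathfrak{L}}^i$; applying $D_f$ to a Lie word in $x,y$ by Leibniz produces a sum of bracket expressions each containing $[y,f]$ inside at least one bracket with elements of $\tilde{\mathfrak{L}}$, and each such term stays in the ideal $\overline{\mathfrak{L}}^i$. The induction on $j$ is Leibniz on $g=[a,b]$ combined with \eqref{eqa}. Part~(1) is then immediate: of the three summands of $\langle f,g\rangle=[f,g]+D_f(g)-D_g(f)$, the first lies in $\overline{\mathfrak{L}}^{i+j+1}\subseteq\overline{\mathfrak{L}}^{i+j}$ and each of the other two in $\overline{\mathfrak{L}}^{i+j}$.

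For part~(2) we have $[f,g]\in\overline{\mathfrak{L}}^1$ automatically, so the content is to establish the symmetry $D_f(g)\equiv D_g(f)\pmod{\overline{\mathfrak{L}}^1}$ for $f,g\in\overline{\mathfrak{L}}^0$. By the previous paragraph $D_f$ preserves $\overline{\mathfrak{L}}^1$ and depends on $f$ only modulo $\overline{\mathfrak{L}}^1$; hence it descends to a $\mathbb{C}$-linear endomorphism $\overline{D_f}$ of $\mathfrak{L}_0=\overline{\mathfrak{L}}^0/\overline{\mathfrak{L}}^1$, depending only on the class $\bar f$. The key observation is that $[D_f,\ad(x)]=\ad(D_f(x))=0$ exactly, while $[D_f,\ad(y)]=\ad([y,f])$ vanishes on $\mathfrak{L}_0$ because $\ad$ of any element of $\overline{\mathfrak{L}}^0$ carries $\overline{\mathfrak{L}}^0$ into $\overline{\mathfrak{L}}^1$ by \eqref{eqa}. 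Thus $\overline{D_f}$ commutes with both induced actions of $x$ and $y$ on $\mathfrak{L}_0$, i.e.\ it is $\mathbb{C}[A,B]$-linear under the canonical identification $\mathfrak{L}_0\simeq AB\,\mathbb{C}[A,B]$ with $\bar e_{p,q}\leftrightarrow A^{p+1}B^{q+1}$.

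Since $\mathfrak{L}_0\simeq AB\,\mathbb{C}[A,B]$ is free of rank one over $\mathbb{C}[A,B]$ with generator $\bar e_{0,0}=AB$, the $\mathbb{C}[A,B]$-linear endomorphism $\overline{D_f}$ must be multiplication by a single element of $\mathbb{C}[A,B]$. Evaluating on the generator, $D_f(e_{0,0})=D_f([x,y])=[x,[y,f]]$ has class $AB\cdot\bar f$ in $\mathfrak{L}_0$, so $\overline{D_f}$ is multiplication by $\bar f$; hence $\overline{D_f(g)}=\bar f\bar g$ in the commutative ring $\mathbb{C}[A,B]$, which is symmetric in $(f,g)$. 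This yields $D_f(g)-D_g(f)\in\overline{\mathfrak{L}}^1$ and completes part~(2). The main obstacle is precisely the identification of $\overline{D_f}$ as a $\mathbb{C}[A,B]$-linear endomorphism; this depends on the ideal property above together with the clean description of $\mathfrak{L}_0$ via the Lazard identification, but once these are in place the symmetry of multiplication in $\mathbb{C}[A,B]$ delivers the conclusion with no further computation.
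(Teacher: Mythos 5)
Your argument is correct, and part (1) follows the paper essentially verbatim (with the helpful extra step of spelling out that each $\overline{\mathfrak{L}}^i$ is an ideal of $\tilde{\mathfrak{L}}$, which the paper uses tacitly when it asserts $D_f(\tilde{\mathfrak{L}})\subset\overline{\mathfrak{L}}^i$). For part (2), however, you take a genuinely different and more structural route. The paper proves the symmetry of the induced map $\Psi:(\overline{\mathfrak{L}}^0/\overline{\mathfrak{L}}^1)^{\otimes 2}\to\overline{\mathfrak{L}}^0/\overline{\mathfrak{L}}^1$ by a direct computation: it expands $D_{(\mathrm{ad}x)^i(\mathrm{ad}y)^j([x,y])}\bigl((\mathrm{ad}x)^{i'}(\mathrm{ad}y)^{j'}([x,y])\bigr)$ term by term, observes that all but one term lie in $\overline{\mathfrak{L}}^1$, and reads off that the surviving term maps to the polynomial product $A^{i+1}B^{j+1}\cdot A^{i'+1}B^{j'+1}$, which is manifestly symmetric. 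You instead exploit the module structure: using $[D_f,\mathrm{ad}(x)]=\mathrm{ad}(D_f(x))=0$ and $[D_f,\mathrm{ad}(y)]=\mathrm{ad}([y,f])$ (the latter vanishing on $\mathfrak{L}_0$ by $[\overline{\mathfrak{L}}^0,\overline{\mathfrak{L}}^0]\subset\overline{\mathfrak{L}}^1$), you conclude that $\overline{D_f}$ is $\mathbb{C}[A,B]$-linear on the rank-one free module $\mathfrak{L}_0\simeq AB\,\mathbb{C}[A,B]$, hence is multiplication by a fixed polynomial; evaluation at the generator $[x,y]$ identifies that polynomial as $\bar f$, and commutativity of $\mathbb{C}[A,B]$ finishes the proof. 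Both approaches arrive at the same conclusion that $\Psi$ is polynomial multiplication; the paper's direct computation is self-contained and makes the formula explicit, while your argument replaces the explicit expansion with the cleaner observation that a $\mathbb{C}[A,B]$-linear endomorphism of a free rank-one module is determined by its value on the generator, which eliminates almost all the bookkeeping. (One small point worth flagging: the fact that $\mathrm{ad}(y)$ descends to multiplication by $B$ on $\mathfrak{L}_0$ requires commuting $\mathrm{ad}(y)$ past $\mathrm{ad}(x)^p$, which works because $[\mathrm{ad}(y),\mathrm{ad}(x)]=\mathrm{ad}([y,x])$ carries $\overline{\mathfrak{L}}$ into $\overline{\mathfrak{L}}^1$; you rely on this identification but do not spell it out.)
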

\begin{proof}
First we prove $(1)$. We have already seen that $[\overline{\mathfrak{L}}^i,\overline{\mathfrak{L}}^j]\subset \overline{\mathfrak{L}}^{i+j+1}$. 
If $f$ is in $\overline{\mathfrak{L}}^i$, then $D_f$ sends $x$ to $0$ and $y$ to $[y,f]$ which is an element of $\overline{\mathfrak{L}}^i$. 
We therefore have $D_f(\tilde{{\mathfrak{L}}})\subset\overline{\mathfrak{L}}^i$, thus $D_f(\overline{\mathfrak{L}})\subset\overline{\mathfrak{L}}^i$. We therefore have 
$D_{\overline{\mathfrak{L}}^i}(\overline{\mathfrak{L}})\subset \overline{\mathfrak{L}}^i$.

Thus, 
$D_{\overline{\mathfrak{L}}^i}(\overline{\mathfrak{L}}^j)=D_{\overline{\mathfrak{L}}^i}(\underbrace{[\overline{\mathfrak{L}},[\ldots[\overline{\mathfrak{L}},\overline{\mathfrak{L}}]]]}_{j+1\ \text{terms}})
\subset \underbrace{[\overline{\mathfrak{L}},[\ldots[\overline{\mathfrak{L}},D_{\overline{\mathfrak{L}}^i}(\overline{\mathfrak{L}})]]]}_{j+1\ \text{terms}}\subset 
\underbrace{[\overline{\mathfrak{L}},[\ldots[\overline{\mathfrak{L}},\overline{\mathfrak{L}}^i]]]}_{j+1\ \text{terms}}\subset\overline{\mathfrak{L}}^{i+j}$, 
therefore \begin{align}\label{eqb} D_{\overline{\mathfrak{L}}^i}(\overline{\mathfrak{L}}^j)\subset\overline{\mathfrak{L}}^{i+j}.\end{align}

Equations (\ref{eqa}) and (\ref{eqb}) then imply $\langle\overline{\mathfrak{L}}^i,\overline{\mathfrak{L}}^j\rangle \subset\overline{\mathfrak{L}}^{i+j}$. 

We now prove $(2)$. By (\ref{eqb}), the map $\overline{\mathfrak{L}}^{\otimes 2}\to\overline{\mathfrak{L}}$ given by $f\otimes g\mapsto D_f(g)$ factors into a 
map $(\overline{\mathfrak{L}}^0/\overline{\mathfrak{L}}^1)^{\otimes 2}\to\overline{\mathfrak{L}}^0/\overline{\mathfrak{L}}^1$ which we will denote by 
$\alpha\otimes \beta\to \Psi(\alpha\otimes \beta)$.

We calculate the map $(AB\C[A,B])^{\otimes 2}\to AB\C[A,B]$ induced by $\Psi$ and the isomorphism 
$\overline{\mathfrak{L}}^0/\overline{\mathfrak{L}}^1\simeq AB\C[A,B].$ We have 
\begin{align*} D&_{(\mathrm{ad} x)^i(\mathrm{ad} y)^j([x,y])}((\mathrm{ad} x)^{i'}(\mathrm{ad} y)^{j'}([x,y]))\\&=\sum_{\alpha=0}^{j'-1}(\mathrm{ad}x)^{i'}(\mathrm{ad}y)^{\alpha}([(\mathrm{ad} y)(\mathrm{ad} x)^i(\mathrm{ad} y)^j([x,y]), (\mathrm{ad} y)^{j'-1-\alpha}([x,y])])\\&+(\mathrm{ad} x)^{i'}(\mathrm{ad} y)^{j'}(\mathrm{ad} x)(\mathrm{ad} y)(\mathrm{ad} x)(\mathrm{ad} y)(\mathrm{ad} x)^i(\mathrm{ad} y)^j([x,y]).\end{align*}
Each term of the first sum belongs to $[\overline{\mathfrak{L}},\overline{\mathfrak{L}}]=\overline{\mathfrak{L}}^1$. The image of the last term under the isomorphism 
$\overline{\mathfrak{L}}/\overline{\mathfrak{L}}^1\simeq AB\C[A,B]$ is $A^{i+i'+2}B^{j+j'+2}=A^{i+1}B^{j+1}\cdot A^{i'+1}B^{j'+1}.$ The map 
$(AB\C[A,B])^{\otimes 2}\to AB\C[A,B]$ obtained from $\Psi$ by the isomorphism $\overline{\mathfrak{L}}^0/\overline{\mathfrak{L}}^1\simeq AB\C[A,B]$ 
is therefore the product of polynomials. It follows that the map $\Psi$ satisfies $\Psi(\beta\otimes \alpha)=\Psi(\alpha\otimes\beta)$.

We thus deduce that for $f,g$ in $\overline{\mathfrak{L}}$, one has $D_f(g)-D_g(f)\in\overline{\mathfrak{L}}^1$, which implies that 
$\langle\overline{\mathfrak{L}},\overline{\mathfrak{L}}\rangle\subset \overline{\mathfrak{L}}^1$.
\end{proof}

\subsection{Calculation of the graded Lie algebra $(\mathrm{gr}_{lcs}(\overline{\mathfrak{L}}),\langle,\rangle)$}

By Proposition \ref{Ptes:L} we deduce that the bracket $\langle,\rangle$ of $\overline{\mathfrak{L}}$ induces a graded Lie bracket 
on ${\mathfrak{L}}=\oplus_{i\geq 0}{\mathfrak{L}}_i$, where ${\mathfrak{L}}=\mathrm{gr}_{lcs}(\overline{\mathfrak{L}})$, ${\mathfrak{L}}_i=\mathrm{gr}_{lcs}^i(\overline{\mathfrak{L}})$; its
components will also be denoted $\langle,\rangle:{\mathfrak{L}}_i\otimes{\mathfrak{L}}_j\to{\mathfrak{L}}_{i+j}$. 

\begin{proposition}\label{prop:gr:L}
\begin{enumerate}

\item  Let $\mathcal{V}=AB\C[A,B]$. For $P=p_1\otimes \cdots\otimes p_i\in \mathcal{V}^{\otimes i}$ and $q\in \mathcal{V}$, set 
$P\circledast q:=(q^{(1)}\cdot p_1)\otimes \cdots\otimes (q^{(i)}\cdot p_i)$, where for $q(A,B)\in \C[A,B]$, we have $q^{(1)}\otimes\cdots
\otimes q^{(i)}=q(A_1+\ldots+A_i,B_1+\ldots+B_i)\in \C[A_1,B_1,\ldots,A_i,B_i]\simeq \C[A,B]^{\otimes i}$ and the operation $\cdot$ 
corresponds to the structure of $\mathcal{V}$ as a $\C[A,B]$-module. Then, for $Q=q_1\otimes\cdots\otimes q_j\in 
\mathcal{V}^{\otimes j},$ we set 
$$
P\circledast Q=\sum_{\alpha=1}^j q_1\otimes\cdots\otimes q_{\alpha-1}\otimes (P\circledast q_{\alpha})\otimes q_{\alpha+1}\otimes\cdots\otimes q_j\in\mathcal{V}^{\otimes i+j-1}
$$ 
and we set $\langle P,Q\rangle=P\circledast Q-Q\circledast P$. This bracket extends linearly to a map $T(\mathcal{V})^{\otimes 2}\to T(\mathcal{V})$ which equips $T(\mathcal{V})$ with the structure of a Lie algebra. The subspace $\mathbb{L}(\mathcal{V})\subset 
T(\mathcal{V})$ is a Lie subalgebra of $T(\mathcal{V})$.

\item The Lie algebras $({\mathfrak{L}},\langle,\rangle)$ and $(\mathbb{L}(\mathcal{V}),\langle,\rangle)$ are isomorphic.
\end{enumerate}
\end{proposition}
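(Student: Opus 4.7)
I would prove (2) first, since (1) follows from it together with a direct verification of Jacobi on $T(\mathcal V)$.

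For (2), the starting point is that Proposition~\ref{Ptes:L}(1) together with the inclusion (\ref{eqa}) forces the induced bracket on $\mathfrak{L}_{i+j}$ to satisfy
$$
\langle \bar f, \bar g\rangle = \overline{D_f(g)} - \overline{D_g(f)},
$$
because the $[f,g]$-contribution to $\langle f,g\rangle$ already lies in $\overline{\mathfrak{L}}^{i+j+1}$. It therefore suffices to understand the operators $D_f$ modulo the filtration. By~(\ref{eqb}) each $D_f$ with $f\in\overline{\mathfrak{L}}^i$ descends to a derivation $\bar D_f$ of $(\mathfrak{L},[,])\simeq\mathbb{L}(\mathcal V)$ of $[,]$-degree $i$; since $\mathbb{L}(\mathcal V)$ is free, $\bar D_f$ is determined by its restriction $\bar D_f|_{\mathcal V}\colon\mathcal V\to\mathbb{L}_{i+1}(\mathcal V)$.

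The heart of the argument is then an explicit identification of this restriction. The case $i=0$ is already in the proof of Proposition~\ref{Ptes:L}(2): $\bar D_p$ acts on $\mathcal V$ by polynomial multiplication by $p$. For $i\geq 1$ I would proceed inductively, picking a representative of $\bar f\in\mathfrak{L}_i$ as an iterated bracket $[f_1,[f_2,\ldots,[f_i,f_{i+1}]\ldots]]$ with $f_k\in\overline{\mathfrak{L}}^0$, and applying Leibniz to $D_f(q)$ for $q\in\overline{\mathfrak{L}}^0$. The primitivity of $A$ and $B$ in the Hopf algebra $\mathbb{C}[A,B]$, with coproduct $\Delta(A)=A\otimes 1+1\otimes A$ and similarly for $B$, is exactly what causes the iterated Leibniz rule to distribute the ``multiplication by $q$'' across the $i+1$ slots via the weight $q(A_1+\cdots+A_{i+1},B_1+\cdots+B_{i+1})$ appearing in the definition of $\circledast$. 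The resulting formula $\bar D_f|_{\mathcal V}(q)=F\circledast q$, where $F\in\mathbb{L}_{i+1}(\mathcal V)$ is viewed inside $T(\mathcal V)$ via the commutator embedding, combined with the displayed formula above, yields precisely $\langle F,Q\rangle=F\circledast Q-Q\circledast F$ on $\mathbb{L}(\mathcal V)$.

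For part (1), the closure of $\mathbb{L}(\mathcal V)\subset T(\mathcal V)$ under $\langle,\rangle$ is then automatic, since the induced bracket on $\mathfrak{L}\simeq\mathbb{L}(\mathcal V)$ stays in $\mathbb{L}(\mathcal V)$. To upgrade to $T(\mathcal V)$, antisymmetry is built into the definition, and Jacobi is equivalent to the right pre-Lie identity for $\circledast$: that the associator $(P\circledast Q)\circledast R-P\circledast(Q\circledast R)$ is symmetric in $P,Q$. This I would verify by direct expansion; both sides enumerate ordered pairs of insertions of $P$ and $Q$ into the slots of $R$, and the associator extracts precisely those pairs of insertions landing in distinct slots of $R$, a manifestly $P\leftrightarrow Q$-symmetric set, while coassociativity of $\Delta\colon\mathbb{C}[A,B]\to\mathbb{C}[A,B]^{\otimes 2}$ reconciles the iterated Hopf weights.

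The main obstacle is the inductive computation matching $\bar D_f|_{\mathcal V}$ with the $\circledast$ insertion, and in particular the combinatorial appearance of the symmetrized coproduct $q(A_1+\cdots+A_{i+1},B_1+\cdots+B_{i+1})$: tracking how the Leibniz rule applied to iterated brackets, together with the lower-central-series filtration, reproduces exactly this weight rather than some less symmetric sum requires careful bookkeeping and is the technically heaviest step.
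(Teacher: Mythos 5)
Your argument for part (2) tracks the paper's: you first reduce the induced bracket to $\overline{D_f(g)}-\overline{D_g(f)}$ via Proposition~\ref{Ptes:L} and (\ref{eqa}), then exploit freeness of $\mathbb{L}(\mathcal V)$ (together with the fact that $Q\mapsto P\circledast Q$ is an associative-algebra derivation of $T(\mathcal V)$) to reduce to computing $\bar D_f$ on $\mathcal V$, which is exactly the passage from the ``grand'' to the ``petit'' diagram in the paper, followed by the explicit representative computation (\ref{expression:fstarg}). Your inductive phrasing of that explicit step, keyed to the Leibniz rule for $\mathrm{ad}\,x,\mathrm{ad}\,y$ distributing over the iterated bracket and producing the coproduct weight $q(A_1+\cdots+A_{i+1},\ldots)$, and the observation that $[\mathrm{ad}\,x,\mathrm{ad}\,y]=\mathrm{ad}[x,y]$ drops into $\overline{\mathfrak L}^{i+1}$, is the right mechanism; you correctly flag it as the technically heaviest point.

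Where you genuinely diverge is part (1). The paper establishes the Lie algebra structure on $T(\mathcal V)$ by citing \cite[Prop.~4.1]{En2}, and separately verifies by a direct bracket-insertion computation that $\mathbb L(\mathcal V)$ is closed under $\circledast$. You instead (i) deduce closure of $\mathbb L(\mathcal V)$ for free once the diagram in (2) is shown to commute, since the bracket on $\mathfrak L$ clearly preserves $\mathfrak L\simeq\mathbb L(\mathcal V)$, and (ii) give a self-contained proof that $\langle,\rangle$ on all of $T(\mathcal V)$ satisfies Jacobi by checking that $\circledast$ is pre-Lie, i.e.\ that the associator $(P\circledast Q)\circledast R-P\circledast(Q\circledast R)$ is symmetric in $P,Q$. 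This is correct: coassociativity of the coproduct $q\mapsto q(A_1+\cdots+A_n,B_1+\cdots+B_n)$ makes the ``$P$ inserted into a slot coming from $Q$'' contributions cancel, and the associator reduces (with a sign) to the sum over insertions of $P$ and $Q$ into two distinct slots of $R$, which is visibly $P\leftrightarrow Q$ symmetric. (Terminologically this is usually called the \emph{left} pre-Lie identity, but that is immaterial.) The reordering ``(2) before (1)'' is logically sound because showing the diagram commutes does not presuppose that $\mathbb L(\mathcal V)$ is closed under $\circledast$; it only uses the derivation property of $\circledast$ on the ambient $T(\mathcal V)$. Your route buys a self-contained treatment that avoids the citation and the separate closure computation; the paper's route is shorter on the page but outsources the pre-Lie verification. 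Both are correct.
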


\begin{proof}
(1) The fact that $T(\mathcal{V})$ is a Lie algebra (equipped with the structure described above) is shown in \cite[Prop. 4.1]{En2}.

We now show that $\mathbb{L}(\mathcal{V})$ is a Lie subalgebra of $T(\mathcal{V})$. Let $P\in \mathbb{L}_i(\mathcal{V})$ and 
$q\in \mathcal{V}$. Then $P$ is a linear combination of expressions of the form $[p_1,[\ldots,[p_{i-1},p_i]]]$ where the 
$p_{\alpha}$ belong to $\mathcal{V}$. Hence $P\circledast q$ is a linear combination of expressions of the form $[q^{(1)}\cdot p_1,
[\ldots,[q^{(i-1)}\cdot p_{i-1},q^{(i)}\cdot p_i]]]$ and so $P\circledast q\in\mathbb{L}_i(\mathcal{V})$. 

Now suppose $P\in\mathbb{L}_i(\mathcal{V})$ and $Q\in\mathbb{L}_j(\mathcal{V})$; so, $Q$ is a linear combination of expressions of the 
form $[q_1,[\ldots[q_{j-1},q_j]]]$, where the $q_{\beta}$ belong to $\mathcal{V}$. Then $P\circledast Q$ is a linear combination of 
expressions of the form $P\circledast [q_1,[\ldots[q_{j-1},q_j]]]$, each of which expresses as 
$\sum_{\alpha=1}^j[q_1,[\ldots[P\circledast q_{\alpha},[\ldots[q_{j-1},q_j]]]$, from which we deduce that 
$P\circledast Q\in \mathbb{L}_{i+j-1}(\mathcal{V})$. Therefore $\langle P,Q\rangle\in
\mathbb{L}_{i+j-1}(\mathcal{V}),$ which shows that $\mathbb{L}(\mathcal{V})$ is a Lie subalgebra of $T(\mathcal{V})$.

(2) It follows from the proof of Proposition \ref{Ptes:L} that 
$$
[\overline{\mathfrak{L}}^i,\overline{\mathfrak{L}}^j]\subset \overline{\mathfrak{L}}^{i+j+1},\ D_{\overline{\mathfrak{L}}^i}
(\overline{\mathfrak{L}}^j)\subset \overline{\mathfrak{L}}^{i+j}.
$$ 
From this, we deduce that the bracket $\langle,\rangle$ on ${\mathfrak{L}}$ is given by 
$$
\langle f,g\rangle=f\star g-g\star f,
$$ 
where $f\otimes g\to f\star g$ is the map ${\mathfrak{L}}_i\otimes{\mathfrak{L}}_j\to{\mathfrak{L}}_{i+j}$ induced by $\overline{\mathfrak{L}}^i\otimes\overline{\mathfrak{L}}^j\to
\overline{\mathfrak{L}}^{i+j}, f\otimes g\to D_f(g)$. It remains to show that the following diagram commutes: 
\begin{align}\label{grand:diag} \ \xymatrix{
{\mathfrak{L}}\otimes{\mathfrak{L}}
\ar[r] \ar[d]_{\star}& T(\mathcal{V})\otimes T(\mathcal{V}) \ar[d]_{\circledast} \\ {\mathfrak{L}}\ar[r] & 
T(\mathcal{V}) }\end{align} 
where the map ${\mathfrak{L}}\to T(\mathcal{V})$ is the direct sum over $i\geq 0$ of the injections ${\mathfrak{L}}_i\simeq \mathbb{L}_i(\mathcal{V})\subset \mathcal{V}^{\otimes i}$.

For all $f$ in ${\mathfrak{L}}_i$, the map $g\mapsto f\star g$ is a derivation with respect to the Lie algebra structure of $({\mathfrak{L}},[,])$. Moreover, for each $P$ 
in $T(\mathcal{V}),$ the map $Q\mapsto P\circledast Q$ is a derivation with respect to the associative algebra structure of $T(\mathcal{V})$. 
As the map ${\mathfrak{L}}\to T(\mathcal{V})$ is a Lie algebra homomorphism, it then suffices to show that the following diagram commutes  
\begin{align}\label{petit:diag} 
\xymatrix{{\mathfrak{L}}\otimes {\mathfrak{L}}^1\ar[r] 
\ar[d]_{\star}& T(\mathcal{V})\otimes \mathcal{V} \ar[d]_{\circledast} \\ {\mathfrak{L}}\ar[r] & T(\mathcal{V}) 
}\end{align}

If $f\in{\mathfrak{L}}_i=\overline{\mathfrak{L}}^i/\overline{\mathfrak{L}}^{i+1}$ is represented by $[f_1,\ldots,[f_i,f_{i+1}]]$, 
where each $f_{\alpha}$ is in $\overline{\mathfrak{L}}$, and if $g\in{\mathfrak{L}}_1\simeq \mathcal{V}$ is the class of 
$(\mathrm{ad} x)^k(\mathrm{ad} y)^l([x,y])$ (which corresponds to $\overline{g}=A^{k+1}B^{l+1}\in \mathcal{V}$) then we compute 
\begin{align*}
D_f(g)=&\sum_{\alpha=0}^{l-1}(\mathrm{ad} x)^k(\mathrm{ad} y)^{\alpha}(
\big[ [y,[f_1,\ldots,[f_i,f_{i+1}]]],(\mathrm{ad} y)^{l-1-\alpha}([x,y])\big])
\\&+(\mathrm{ad} x)^k(\mathrm{ad} y)^l(\mathrm{ad} x)(\mathrm{ad} y)([f_1,\ldots,[f_{i-1},f_i]]).
\end{align*}

Each term of the first sum belongs to $\overline{\mathfrak{L}}^{i+1},$ and thus is sent to $0$ in ${\mathfrak{L}}_i=\overline{\mathfrak{L}}^i/\overline{\mathfrak{L}}^{i+1}$. 
The remaining term belongs to the class $[\overline{g}^{(1)}\cdot f_1,\ldots,[\overline{g}^{(i-1)}\cdot f_{i-1},\overline{g}^{(i)}\cdot f_{i}]]$ in 
${\mathfrak{L}}_i=\overline{\mathfrak{L}}^i/\overline{\mathfrak{L}}^{i+1}$. Therefore, 
\begin{equation}
f\star g=[\overline{g}^{(1)}\cdot f_1,\ldots,[\overline{g}^{(i-1)}\cdot f_{i-1},\overline{g}^{(i)}\cdot f_i]]
\label{expression:fstarg}
\end{equation}
The image $P$ of $f$ in $T(\mathcal{V})$ is $f_1\otimes\cdots\otimes f_i-f_1\otimes\cdots\otimes f_i\otimes f_{i-1}+\ldots
+(-1)^{i-1}f_i\otimes\cdots\otimes f_{1}$. So, 
$$
P\circledast \overline{g}=(\overline{g}^{(1)}\cdot f_1)\otimes\cdots\otimes (\overline{g}^{(i)}\cdot f_i)-
(\overline{g}^{(1)}\cdot f_1)\otimes\cdots\otimes (\overline{g}^{(i-1)}\cdot f_i)\otimes (\overline{g}^{(i)}\cdot f_{i-1})
+\ldots+(-1)^{i-1}(\overline{g}^{(1)}\cdot f_i)\otimes\cdots\otimes(\overline{g}^{(i)}\cdot f_1),
$$ 
which is the image of the right hand side of (\ref{expression:fstarg}) under the map ${\mathfrak{L}}_i\to \mathcal{V}^{\otimes i}$. 
This shows that diagram (\ref{petit:diag}) commutes, and therefore so does diagram (\ref{grand:diag}).
\end{proof} 

\subsection{Depth filtration on $\overline{\mathfrak{L}}$ and ${\mathfrak{L}}$} \label{subsect:depth}
The Lie brackets $\langle,\rangle$ and $[,]$ on ${\mathfrak{L}}$ and $\overline{\mathfrak{L}}$ are bigraded
for the degrees in $x$ and $y$. For $i\geq 0$, set $F^i_{dpth}(\overline{\mathfrak{L}}):=\oplus_{j|j\geq i}\{$part of $\overline{\mathfrak{L}}$ of $y$-degree $j\}$. 
Then there is a decreasing filtration $\overline{\mathfrak{L}}=F^1_{dpth}(\overline{\mathfrak{L}})\supset F^2_{dpth}(\overline{\mathfrak{L}})\supset\cdots$, which is 
compatible with the brackets $\langle,\rangle$, $[,]$ and with the weight degree. 

For $i\geq 0$, set $F^i_{dpth}({\mathfrak{L}}):=\mathrm{im}(F^i_{dpth}(\overline{\mathfrak{L}})\to{\mathfrak{L}})$ and $F^i_{dpth}({\mathfrak{L}}^j):=
\mathrm{im}(F^i_{dpth}(\overline{\mathfrak{L}})\cap\overline{\mathfrak{L}}^j\to{\mathfrak{L}}^j)$. Then ${\mathfrak{L}}=F^1_{dpth}({\mathfrak{L}})\supset F^2_{dpth}({\mathfrak{L}})\supset\cdots$
is a decreasing filtration of ${\mathfrak{L}}$, compatible with the Lie bracket $\langle,\rangle$, with the weight degree and with the l.c.s.-degree. 

\section{The Lie algebra ${\mathfrak{L}}_{quot}$}\label{Section3}

The filtered Lie algebra $\overline{\mathfrak{L}}=\overline{\mathfrak{L}}^0\supset\overline{\mathfrak{L}}^1\supset\cdots$ gives rise to a Lie algebra 
${\mathfrak{L}}_{quot}:=\overline{\mathfrak{L}}/\overline{\mathfrak{L}}^2$, which naturally fits in an exact sequence of abelian Lie algebras. After reviewing 
the invariants attached to this situation (Subsection \ref{subsection:invariants}), we introduce ${\mathfrak{L}}_{quot}$ (Subsection 
\ref{subsec:Lquot}) and determine its invariants (Subsections \ref{subsection:module} and \ref{subsection:cocycle}). 

\subsection{Extensions of abelian Lie algebras} \label{subsection:invariants}
Let $\mathfrak{g}$ be a Lie algebra fitting in an exact sequence of 
abelian Lie algebras $0\to\mathfrak{g}_1\to{\mathfrak{g}}\to\mathfrak{g}_0\to 0$. 
The data of ${\mathfrak{g}}$ and of this exact sequence gives rise to: 
\begin{enumerate}
\item a module structure for $\mathfrak{g}_1$ over the abelian Lie algebra $\mathfrak{g}_0$, and therefore over the symmetric algebra $S(\mathfrak{g}_0)$. 

\item a cocycle $c\in H^2(\mathfrak{g}_0,\mathfrak{g}_1)$ (where $H^2$ denotes Lie algebra cohomology).
\end{enumerate}
Let us recall how this data is obtained: we fix a section $\sigma: \g_0\to \g$ of the linear map $\g\to\g_0$.  The action of $x_0\in\g_0$ on 
$x_1\in\g_1$ is given by $x_0\cdot x_1:=[\sigma(x_0),x_0]$. As $\g_0$ is abelian, this action equips $\g_1$ with the structure of an 
$S(\g_0)$-module; furthermore, this structure is independent of the choice of $\sigma$. On the other hand, the formula 
$\tilde c(x_0,x'_0):=[\sigma(x_0),\sigma(x'_0)]$ defines a $2$-cocycle for the Lie algebra with values in $\g_1$, whose cohomology class 
does not depend on the choice of $\sigma$.

\subsection{Definition and structure of ${\mathfrak{L}}_{\quot}$}\label{subsec:Lquot}

According to Proposition \ref{Ptes:L}, the subspace $\overline{\mathfrak{L}}^2$ is an ideal of the Lie algebra $(\overline{\mathfrak{L}},\langle,\rangle)$. This gives rise 
to a quotient Lie algebra ${\mathfrak{L}}_{\quot}:=\overline{\mathfrak{L}}/\overline{\mathfrak{L}}^2,$ whose bracket will again be denoted $\langle,\rangle$. 
Proposition \ref{Ptes:L} also implies that $\overline{\mathfrak{L}}^1$ is an ideal of $\overline{\mathfrak{L}}$. The quotient 
space ${\mathfrak{L}}_1=\overline{\mathfrak{L}}^1/\overline{\mathfrak{L}}^2$ is therefore an ideal of ${\mathfrak{L}}_{\quot},$ which is abelian as $\langle\overline{\mathfrak{L}}^1,\overline{\mathfrak{L}}^1
\rangle\subset\overline{\mathfrak{L}}^2$. The quotient Lie algebra is ${\mathfrak{L}}_0=\overline{\mathfrak{L}}/\overline{\mathfrak{L}}^1,$ which is also abelian as $\langle\overline{\mathfrak{L}},
\overline{\mathfrak{L}}\rangle\subset\overline{\mathfrak{L}}^1$. The Lie algebra ${\mathfrak{L}}_{quot}$ therefore fits in an extension $0\to{\mathfrak{L}}_1\to{\mathfrak{L}}_{\quot}\to{\mathfrak{L}}_0\to 0$ 
of abelian Lie algebras.

In what follows, we will set 
\begin{equation} \label{def:V:M}
V:={\mathfrak{L}}_0, \quad M:={\mathfrak{L}}_1
\end{equation}
so that ${\mathfrak{L}}_{quot}$ fits in an exact sequence of abelian Lie algebras 
\begin{align}\label{suite:ex:L:quot} 0\to M\to {\mathfrak{L}}_{\quot}\to V\to 0.\end{align}

\subsection{Module structure associated to ${\mathfrak{L}}_{\quot}$} \label{subsection:module}
Set 
\begin{align*}
& \C[A,B,A',B']^{as}:=\{\mathrm{polynomials\ in\ }\C[A,B,A',B'],\mathrm{\ antisymmetric\ under\ the\ exchange\ }(A,B)\leftrightarrow (A',B')\},  
\\ & 
\C[A,B,A',B']^{sym}:=\{\mathrm{polynomials\ in\ }\C[A,B,A',B'],\mathrm{\ symmetric\ under\ the\ exchange\ }(A,B)\leftrightarrow (A',B')\}. 
\end{align*}
There exist isomorphisms 
\begin{align}\label{isos}
V\simeq \mathbb{L}_1(AB\C[A,B])=AB\C[A,B],\quad M\simeq \mathbb{L}_2(AB\C[A,B])=ABA'B'\C[A,B,A',B']^{as}.
\end{align} 
There is a linear map $AB\C[A,B]\to \C[A,B,A',B']^{sym}$, given by $v\mapsto \delta_v(A,B,A',B')$, 
where
\begin{align}\label{application:delta} 
\delta_v(A,B,A',B'):=v(A+A',B+B')-v(A,B)-v(A',B'). 
\end{align}
The $V$-module structure of $M$ corresponds to the linear map $V\otimes M\to M$, defined by $v\otimes m\mapsto 
\langle \sigma(v),m\rangle$, where $\sigma:V\to{\mathfrak{L}}_{\quot}$ is any section of the projection ${\mathfrak{L}}_{\quot}\to V$. Proposition 
\ref{prop:gr:L} can then be rephrased as follows:
\begin{lemma}\label{lemme:actions}
Under the isomorphisms (\ref{isos}), this linear map becomes the linear map 
\begin{align*}
AB\C[A,B]\otimes ABA'B'\C[A,B,A',B']^{as}&\to ABA'B'\C[A,B,A',B']^{as}\\ v\otimes m&\mapsto -\delta_v\cdot m
\end{align*}
where the product $\cdot$ is multiplication of polynomials. 
\end{lemma}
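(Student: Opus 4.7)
The plan is to reduce the claim to a direct computation using Proposition~\ref{prop:gr:L}. The $V$-action on $M$ from (\ref{suite:ex:L:quot}) is given by $v\cdot m = \langle \sigma(v), m\rangle$ for any section $\sigma$, and since $M$ is an abelian ideal the answer depends only on $v$. Moreover, this action coincides with the restriction of the Lie bracket $\langle,\rangle$ on the graded Lie algebra $\mathfrak{L}$ to the map $\mathfrak{L}_0\otimes\mathfrak{L}_1\to\mathfrak{L}_1$, because the bracket on $\mathfrak{L}_{quot}$ between a lift of $v$ and $m$ is the associated-graded bracket. It therefore suffices to compute $\langle v, m\rangle$ in $\mathfrak{L}$ and transport the answer to polynomials via (\ref{isos}). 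Under the isomorphism $\mathfrak{L}\simeq\mathbb{L}(\mathcal{V})\subset T(\mathcal{V})$ with $\mathcal{V}=AB\C[A,B]$, this bracket reads $\langle v,m\rangle = v\circledast m - m\circledast v$.

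I would compute the two terms separately, using the identification $p\otimes q\leftrightarrow p(A,B)q(A',B')$ of $\mathcal{V}^{\otimes 2}$ with a space of polynomials, under which $m = p\otimes q - q\otimes p \in \mathbb{L}_2(\mathcal{V})$ corresponds to the antisymmetric polynomial $p(A,B)q(A',B')-p(A',B')q(A,B)$ of (\ref{isos}). For $v\circledast m$, since $v$ has length one the definition in Proposition~\ref{prop:gr:L}(1) reduces to $v\circledast r = v\cdot r$ (ordinary polynomial product) on $\mathcal{V}$, and the Leibniz-type rule for a length-one $P$ acting on a longer tensor $Q$ yields $v\circledast m = (vp)\otimes q + p\otimes(vq) - (vq)\otimes p - q\otimes(vp)$, which transports to $[v(A,B)+v(A',B')]\cdot m$. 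For $m\circledast v$, the definition applied with $P$ of length two and $q=v$ of length one gives $(p\otimes q)\circledast v \leftrightarrow v(A+A',B+B')\cdot p(A,B)q(A',B')$ (using the Sweedler-style decomposition of $v(A_1+A_2,B_1+B_2)$); antisymmetrizing yields $m\circledast v = v(A+A',B+B')\cdot m$.

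Subtracting produces $\langle v,m\rangle = [v(A,B)+v(A',B')-v(A+A',B+B')]\cdot m = -\delta_v\cdot m$, which is the claim. I do not expect any conceptual obstacle: the argument is a two-line manipulation of the formulas from Proposition~\ref{prop:gr:L}. The only care needed is bookkeeping --- keeping the two roles of $\circledast$ straight (on the left, a length-one $v$ acts in each slot separately, producing $v(A,B)+v(A',B')$; on the right, $v$ produces the diagonal $v(A+A',B+B')$), and getting the correct signs through the identification of $\Lambda^2\mathcal{V}$ with antisymmetric polynomials in two pairs of variables.
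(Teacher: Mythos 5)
Your argument is correct and is essentially the expansion of what the paper leaves implicit (the paper presents the lemma as a "rephrasing" of Proposition~\ref{prop:gr:L} without a separate proof). The two key reductions are right: the $V$-action on $M$ is independent of the section because $M$ is abelian, and it agrees with the degree-$(0,1)\to 1$ component of the graded bracket $\langle,\rangle$ on $\mathfrak{L}$, since both are obtained from the bracket on $\overline{\mathfrak{L}}$ by lifting and projecting to $\mathfrak{L}_1$. Your transport through $\circledast$ is also correct: for $v\in\mathcal{V}$ acting on $m\in\mathbb{L}_2(\mathcal{V})$ the length-one case of the definition gives the factor $v(A,B)+v(A',B')$, while $m\circledast v$ uses the coproduct $v\mapsto v(A+A',B+B')$; the difference is $v(A,B)+v(A',B')-v(A+A',B+B')=-\delta_v$, as required. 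The sign agrees with the later use in Lemma~\ref{lemma:5:2}, where $-\delta_{\sigma_k}=\tilde\sigma_k$.
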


\subsection{Cocycle associated to ${\mathfrak{L}}_{\quot}$} \label{subsection:cocycle}

Set $\overline M:=\C[A,B,A',B']^{as}$. Then $M$ is a vector subspace of $\overline M$. The formula defining the linear map from 
Lemma \ref{isos} extends to a linear map $AB\C[A,B]\otimes \C[A,B,A',B']^{as}\to\C[A,B,A',B']^{as}$, i.e., to a linear map 
$V\otimes\overline M\to\overline M$, which equips $\overline M$ with the structure of a {\it $V$-module, of which $M$ is a submodule.} 

We then define a linear map $V\to \overline M$ by $v\mapsto \lambda_v$, where 
$$
\lambda_v(A,B,A',B'):=\frac{1}{2}(-v(A+A',B)+v(A+A',B')+v(A,B+B')-v(A',B+B')-v(A,B')+v(A',B)).
$$
Multiplication of polynomials restricts to a map $\C[A,B,A',B']^{sym}\otimes \overline M\to \overline M.$ For $v,v'\in V$ we set 
\begin{equation} \label{def:c}
c(v,v'):=\delta_v\cdot \lambda_{v'}-\delta_{v'}\cdot \lambda_{v}\in\overline M.
\end{equation}

The space $M$ is the intersection of $\overline M$ with $ABA'B'\C[A,B,A',B']$, which can be identified with the intersection of the kernels of the maps from $\C[A,B,A',B']$ to the rings $\C[A,B,A'],$ $\C[A,B,B'],$ $\C[A,A',B'],$ $\C[B,A',B']$ given by the evaluations $P\mapsto P_{|B'=0},$ 
$P\mapsto P_{|A'=0},$ $P\mapsto P_{|B=0},$ $P\mapsto P_{|A=0},$ respectively.

Then, for $v\in AB\C[A,B]$, we have $(\lambda_v)_{|B'=0}=-{1\over 2}(\delta_v)_{|B'=0}$ so that 
$$
c(v,v')_{|B'=0}=(\delta_v)_{|B'=0}\cdot(-\frac{1}{2})(\delta_{v'})_{|B'=0}-(\delta_{v'})_{|B'=0}\cdot(-\frac{1}{2})(\delta_v)_{|B'=0}=0.
$$
Similarly, $(\lambda_v)_{|A'=0}=-{1\over 2}(\delta_v)_{|A'=0},(\lambda_v)_{|B=0}=-{1\over 2}(\delta_v)_{|B=0},$ and 
$(\lambda_v)_{|A=0}=-{1\over 2}(\delta_v)_{|A=0}$ so that $c(v,v')_{|A'=0}=c(v,v')_{|B'=0}=c(v,v')_{|A=0}=0$. So, the element 
$c(v,v')$ belongs to the subspace $ABA'B'\C[A,B,A',B']$ of $\C[A,B,A',B']$; as this element also belongs to $\overline M$, it belongs to $M$. 

By its definition, the map $c$ is a 2-coboundary of the Lie algebra $V$ with values in the module $\overline M$. It is therefore 
also a 2-cocycle in the same module. Since its takes its values in the submodule $M$, it is a 2-cocycle with values in $M$. 

We have proved : 

\begin{proposition}\label{prop:cocycle:1}
The map $(v,v')\mapsto c(v,v')$ defined by (\ref{def:c}) takes its values in $M\subset \overline M$. The map 
$c:V\times V\to M$ is a 2-cocycle of the abelian Lie algebra $V$ with values in the $V$-module $M$, so $c\in Z^2(V,M)$. 
\end{proposition}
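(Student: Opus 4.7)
The statement has two parts, which I would treat in order: (i) that $c(v,v')$ lands in the submodule $M$, and (ii) that $c$ is a $2$-cocycle.

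For part (i), recall that $M$ is the intersection of $\overline M=\C[A,B,A',B']^{as}$ with $ABA'B'\C[A,B,A',B']$, i.e.\ an element of $\overline M$ lies in $M$ precisely when it vanishes on each of the four hyperplanes $\{A=0\}$, $\{B=0\}$, $\{A'=0\}$, $\{B'=0\}$. By the $(A,B)\leftrightarrow(A',B')$ antisymmetry of $\overline M$, it is enough to check vanishing on $\{B'=0\}$ and $\{A'=0\}$. The key computation is that for $v\in V = AB\C[A,B]$ one has $(\lambda_v)_{|B'=0} = -\tfrac{1}{2}(\delta_v)_{|B'=0}$, and analogously on the other three hyperplanes; this follows directly from the explicit formula defining $\lambda_v$ together with the vanishing $v(\cdot,0)=v(0,\cdot)=0$. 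Substituting into $c(v,v')=\delta_v\cdot\lambda_{v'}-\delta_{v'}\cdot\lambda_v$, the two summands become equal on each hyperplane, so their difference vanishes, giving $c(v,v')\in M$.

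For part (ii), the cleanest route is to recognize $c$ as (minus) a coboundary in the ambient module $\overline M$. Define a $1$-cochain $\lambda\in C^1(V,\overline M)$ by $v\mapsto \lambda_v$, where $V$ acts on $\overline M$ by $v\cdot m := -\delta_v\cdot m$ (the natural extension of the action in Lemma \ref{lemme:actions}). Since $V$ is abelian, the Lie-algebra coboundary reduces to
\[
d\lambda(v,v') \;=\; v\cdot\lambda_{v'} - v'\cdot\lambda_v \;=\; -\delta_v\cdot\lambda_{v'} + \delta_{v'}\cdot\lambda_v \;=\; -c(v,v').
\]
Hence $c=-d\lambda \in B^2(V,\overline M) \subset Z^2(V,\overline M)$. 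Combined with (i), $c$ actually takes its values in the $V$-submodule $M\subset\overline M$, and so $c\in Z^2(V,M)$ as claimed.

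The only substantive step is the hyperplane-vanishing identity in part (i); the cocycle condition is essentially tautological once $c$ is recognized as a coboundary in the larger module. The most plausible pitfall is a sign error in matching $d\lambda$ with $c$, owing to the minus sign built into the $V$-action on $\overline M$.
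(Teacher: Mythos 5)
Your proposal is correct and follows essentially the same route as the paper: the hyperplane-vanishing identity $(\lambda_v)_{|B'=0}=-\tfrac12(\delta_v)_{|B'=0}$ (and its analogues) to show $c(v,v')\in M$, followed by recognizing $c$ as a coboundary in $\overline M$ to get the cocycle property for free. Your observation that the $(A,B)\leftrightarrow(A',B')$ antisymmetry reduces the check to two hyperplanes is a small but valid shortcut that the paper does not make explicit.
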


\begin{lemma}\label{lem:linearmap}
For $u\in AB\C[A,B]$ we set 
\begin{align*}
\lambda_u^0:=&\frac{1}{2} A\cdot \big( \frac{u(A+A',B+B')-u(A+A',B)-u(A+A',B')}{A+A'}-\frac{u(A,B+B')-u(A,B)-u(A,B')}{A}\big) \\&
-((A,B)\leftrightarrow (A',B')).
\end{align*} 
Then the map $u\mapsto \lambda_u^0$ is a linear map from $AB\C[A,B]$ to $M$.
\end{lemma}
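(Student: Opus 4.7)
The plan is to verify three properties of $\lambda_u^0$ in turn: that the formula defines a genuine polynomial, that it is antisymmetric under $(A,B)\leftrightarrow(A',B')$, and that it lies in $ABA'B'\mathbb{C}[A,B,A',B']^{as} = M$. Linearity of $u \mapsto \lambda_u^0$ is manifest from the formula.

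First, I would introduce the auxiliary polynomial
$$\Phi(X,B,B') := u(X,B+B') - u(X,B) - u(X,B'),$$
and observe that since $u \in AB\mathbb{C}[A,B]$ is divisible by $A$ in its first variable, one has $\Phi(0,B,B')=0$, so $\Phi(X,B,B') = X \cdot R(X,B,B')$ for a unique polynomial $R$. The two fractions appearing in the definition of $\lambda_u^0$ are then recognized as $R(A+A',B,B')$ and $R(A,B,B')$, so $\lambda_u^0$ is indeed a polynomial, given explicitly by
$$\lambda_u^0 = \tfrac{1}{2} A\bigl[R(A+A',B,B') - R(A,B,B')\bigr] - \tfrac{1}{2} A'\bigl[R(A+A',B',B) - R(A',B',B)\bigr].$$

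Second, antisymmetry under $(A,B)\leftrightarrow(A',B')$ is built into the formula by construction. To show membership in $M$, it then remains to establish divisibility of $\lambda_u^0$ by each of $A, B, A', B'$; by antisymmetry, divisibility by $A$ (resp.\ $B$) implies divisibility by $A'$ (resp.\ $B'$), so it is enough to handle the two unprimed variables. Setting $A=0$ annihilates the first summand through its explicit $A$-factor, while the second becomes $-\tfrac{1}{2}A'[R(A',B',B) - R(A',B',B)] = 0$. For divisibility by $B$, I would use that $u(X,0)=0$ (since $B\mid u$), which gives $\Phi(X,0,B') = \Phi(X,B,0) = 0$ and hence $R(X,0,B') = R(X,B,0) = 0$; setting $B=0$ then makes both summands vanish.

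No step here presents a real obstacle: the proof is short bookkeeping, with the only mildly substantive move being the introduction of $R$, which simultaneously establishes polynomiality and makes the two divisibility checks transparent by reducing them to evaluating $R$ at arguments at which it vanishes identically.
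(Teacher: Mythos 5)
Your proof is correct and takes essentially the same approach as the paper: the auxiliary polynomial $R(X,B,B')=\bigl(u(X,B+B')-u(X,B)-u(X,B')\bigr)/X$ is exactly the paper's $\psi$, and the substitution checks you perform amount to the paper's observations that $\psi\in BB'\C[A,B,B']$ and that $\psi(A+A',\cdot)-\psi(A,\cdot)$ acquires an $A'$ factor.
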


\begin{proof} 
Fix $u\in AB\C[A,B]$. For any $\phi\in B\C[B]$ we have $\phi(B+B')-\phi(B)-\phi(B')\in BB'\C[B,B']$. Consequently, 
$u(A,B+B')-u(A,B)-u(A,B')$ is in $ABB'\C[A,B,B']$. If we set $\psi(A,B,B'):=(u(A,B+B')-u(A,B)-u(A,B'))/A$ we then have 
$\psi(A,B,B')\in BB'\C[A,B,B']$. Also, for any $f\in \C[A]$ we have $f(A+A')-f(A)\in A'\C[A,A']$. So $\psi(A+A',B,B')-\psi(A,B,B')$ 
is in $A'BB'\C[A,A',B,B']$ and thus 
$$
\frac{1}{2}A\cdot(\psi(A+A',B,B')-\psi(A,B,B'))\in AA'BB'\C[A,B,B,B'].
$$ 
As the space $AA'BB'\C[A,A',B,B']$ is preserved under the exchange $((A,B)\leftrightarrow(A',B'))$ and as $\lambda_u^0=\big(\mathrm{id}-((A,B)\leftrightarrow(A',B'))\big)\big({1\over 2}A\cdot(\psi(A+A',B,B')-\psi(A,B,B'))\big)$, the element 
$\lambda_u^0$ belongs to $AA'BB'\C[A,A',B,B'];$ as this element is antisymmetric under this exchange, the element belongs to $M$.
\end{proof}

We introduce a section $s_0:V\to {\mathfrak{L}}_{\quot}$ of ${\mathfrak{L}}_{\quot}\to V,$ given by $s_0(A^{a+1}B^{b+1}):=(\mathrm{ad} x)^a(\mathrm{ad} y)^b([x,y])$.
We now define two linear maps $\Phi,\Psi:AB\C[A,B]\to M$ by $$\Phi(u):=\langle s_0(AB),s_0(u) \rangle, \ \Psi(u):=\delta_{AB}\cdot(\lambda_u+\lambda_u^0)-\delta_u\cdot\lambda_{AB}.$$

\begin{lemma}\label{lem:Phieq}
For $u\in AB\C[A,B]$ the equality 
$$
\Phi(A\cdot u)=(A+A')\cdot \Phi(u)+ABA'u(A',B')-A'B'Au(A,B)
$$ holds and for $u\in AB\C[B]$ the equality 
$$
\Phi(B\cdot u)=(B+B')\cdot \Phi(u)-ABB'u(A',B')+A'B'Bu(A,B)
$$ 
holds.
\end{lemma}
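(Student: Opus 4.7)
The plan is to compute $\Phi(A\cdot u)$ and $\Phi(B\cdot u)$ directly from the definition of $\langle,\rangle$, compare them with $[x,\Phi(u)]$ and $[y,\Phi(u)]$ respectively, and then translate the resulting identities through the isomorphism $\mathfrak{L}_1\simeq\mathbb{L}_2(\mathcal V)$ of Proposition \ref{prop:gr:L}. The starting point is that the formula $s_0(A^{a+1}B^{b+1})=(\ad x)^a(\ad y)^b([x,y])$ yields the exact equality $s_0(A\cdot u)=[x,s_0(u)]$ in $\overline{\mathfrak{L}}$ for every $u\in AB\C[A,B]$, whereas the analogous equality $s_0(B\cdot u)=[y,s_0(u)]$ holds in $\overline{\mathfrak{L}}$ only when $u\in AB\C[B]$; for more general $u$, commuting $\ad y$ past the leftmost $\ad x$'s introduces correction terms involving $\ad[x,y]$, which explains the restriction in the second assertion.

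Using $\langle f,g\rangle=[f,g]+D_f(g)-D_g(f)$ together with $D_{[x,y]}(x)=D_{[x,s_0(u)]}(x)=0$ and Leibniz, one expands
$$\Phi(A\cdot u)=[[x,y],[x,s_0(u)]]+[x,D_{[x,y]}(s_0(u))]-[x,[y,[x,s_0(u)]]].$$
A parallel expansion of $\Phi(u)=\langle[x,y],s_0(u)\rangle$, followed by left-bracketing with $x$ and two applications of the Jacobi identity (to $[x,[[x,y],s_0(u)]]$ and to $[x,[y,s_0(u)]]$ nested inside $[x,[x,[y,s_0(u)]]]$), gives $[x,\Phi(u)]=[x,D_{[x,y]}(s_0(u))]-[x,[y,[x,s_0(u)]]]$ after pairwise cancellation of the terms $[[x,[x,y]],s_0(u)]$ and $[[x,y],[x,s_0(u)]]$. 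Subtraction yields the key identity
$$\Phi(A\cdot u)=[x,\Phi(u)]+[[x,y],[x,s_0(u)]]\qquad\text{in }\overline{\mathfrak{L}}^1.$$
A short Jacobi induction shows $[x,\overline{\mathfrak{L}}^j]\subset\overline{\mathfrak{L}}^j$ for all $j\geq 1$, so $[x,\cdot]$ descends to a well-defined operator on $\mathfrak{L}_1=\overline{\mathfrak{L}}^1/\overline{\mathfrak{L}}^2$.

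To identify these two terms via $\mathfrak{L}_1\simeq\mathbb{L}_2(\mathcal V)\hookrightarrow ABA'B'\C[A,B,A',B']^{as}$, note that for $p,q\in\mathcal V$ the identity $[x,s_0(p)]=s_0(Ap)$ together with Jacobi gives $[x,[s_0(p),s_0(q)]]=[s_0(Ap),s_0(q)]+[s_0(p),s_0(Aq)]$, which translates under the identification to $[x,[p,q]]=[Ap,q]+[p,Aq]=(A+A')\cdot[p,q]$ as polynomials; hence $[x,\Phi(u)]$ corresponds to $(A+A')\cdot\Phi(u)$. Similarly $[[x,y],[x,s_0(u)]]=[s_0(AB),s_0(A\cdot u)]$ corresponds to $[AB,A\cdot u]=ABA'u(A',B')-A'B'A u(A,B)$, which proves the first formula. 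The second formula follows by the identical argument based on $s_0(B\cdot u)=[y,s_0(u)]$ for $u\in AB\C[B]$, except that the analogous Jacobi expansion produces two copies of $[[x,y],[y,s_0(u)]]$ in $[y,\Phi(u)]$ against only one in $\Phi(B\cdot u)$, so subtraction yields $\Phi(B\cdot u)=[y,\Phi(u)]-[[x,y],[y,s_0(u)]]$; translating into polynomials gives exactly $(B+B')\Phi(u)-ABB'u(A',B')+A'B'Bu(A,B)$. The main technical obstacle is executing the Jacobi bookkeeping so that the derivation terms $[x,D_{[x,y]}(s_0(u))]$ and $[y,D_{[x,y]}(s_0(u))]$ cancel exactly against their counterparts in $[x,\Phi(u)]$ and $[y,\Phi(u)]$; once that cancellation is in hand, the rest is a direct translation of commutators into polynomial multiplication via Proposition \ref{prop:gr:L}.
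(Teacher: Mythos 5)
Your proof is correct and follows essentially the same route as the paper: the paper establishes the abstract identities $\langle[x,y],[x,\xi]\rangle-[x,\langle[x,y],\xi\rangle]=[[x,y],[x,\xi]]$ and $\langle[x,y],[y,\xi]\rangle-[y,\langle[x,y],\xi\rangle]=-[[x,y],[y,\xi]]$ by the same Jacobi cancellations you perform, then specializes $\xi=s_0(u)$, whereas you keep $\Phi$ in view throughout. Your opening remark that $s_0(A\cdot u)=[x,s_0(u)]$ holds for all $u$ while $s_0(B\cdot u)=[y,s_0(u)]$ requires $u\in AB\C[B]$ is a nice way to make explicit the reason for the restriction that the paper leaves implicit by its choice of $\xi$.
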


\begin{proof}
Let $\xi$ be an arbitrary element of $\overline{\mathfrak{L}}$. We have 
$$
\langle [x,y],[x,\xi]\rangle=[[x,y],[x,\xi]]+D_{[x,y]}([x,\xi])-D_{[x,\xi]}([x,y])=[[x,y],[x,\xi]]+[x,D_{[x,y]}(\xi)]-[x,[y,[x,\xi]]],
$$ 
where the first equality follows from the definition of the bracket $\langle,\rangle$ and the second equality follows from the facts that $D_{[x,y]}$ is a derivation killing $x$ and that $D_{[x,y]}$ is a derivation for which the images of $x$ and $y$ are known. Moreover, 
$$
\langle [x,y],\xi\rangle=[[x,y],\xi]+D_{[x,y]}(\xi)-D_{\xi}([x,y])=[[x,y],\xi]+D_{[x,y]}(\xi)-[x,[y,\xi]].
$$ 
So, 
\begin{align}\label{onestar} \langle [x,y],[x,\xi]\rangle-[x,\langle[x,y],\xi\rangle]=[[x,y],[x,\xi]],\end{align}
and similarly
\begin{align}\label{twostar}\langle [x,y],[y,\xi]\rangle-[y,\langle[x,y],\xi\rangle]=-[[x,y],[y,\xi]]. \end{align}

When $a,b\geq 0$, setting $\xi=(\mathrm{ad} x)^a(\mathrm{ad} y)^b([x,y])$ in Equation (\ref{onestar}), we obtain 
\begin{align}\label{threestar} 
\langle [x,y],(\mathrm{ad} x)^{a+1}(\mathrm{ad} y)^b([x,y])\rangle=[x,\langle [x,y], (\mathrm{ad} x)^a(\mathrm{ad} y)^b([x,y])\rangle]
+[[x,y],(\mathrm{ad} x)^{a+1}(\mathrm{ad} y)^b([x,y])].
\end{align} 
The bijection $\overline{\mathfrak{L}}^1/\overline{\mathfrak{L}}^2\simeq{\mathfrak{L}}_1\to ABA'B'\C[A,B,A',B']^{as}$ (see (\ref{def:V:M}) and (\ref{isos})) sends
the element $\langle[x,y],(\mathrm{ad} x)^a(\mathrm{ad} y)^b([x,y])\rangle$ 
to $\Phi(A^{a+1}B^{b+1}),$ 
the endomorphism $\mathrm{ad}(x)$ to multiplication by $A+A'$ and 
the commutator 
$[[x,y],(\mathrm{ad} x)^{a+1}(\mathrm{ad} y)^b([x,y])]$ to the element $AB(A')^{a+2}(B')^{a+1}-((A,B)\leftrightarrow(A',B'))$. 

Equation (\ref{threestar}) then implies that 
$$
\Phi(A^{a+2}B^{b+1})=(A+A')\cdot \Phi(A^{a+1}B^{b+1})+AB(A')^{a+2}(B')^{a+1}-A^{a+2}B^{a+1}A'B',
$$ 
which is the first identity in the lemma in the case $u=A^{a+1}B^{b+1}$. 

When $b\geq 0$, setting $\xi=(\mathrm{ad} y)^b([x,y])$ in equation (\ref{twostar}), we obtain \begin{align}\label{fourstar} \langle [x,y],
(\mathrm{ad} y)^{b+1}([x,y])\rangle=[y,\langle [x,y], (\mathrm{ad} y)^b([x,y])\rangle]-[[x,y],(\mathrm{ad} y)^{b+1}([x,y])].\end{align} 
The bijection $\overline{\mathfrak{L}}^1/\overline{\mathfrak{L}}^2\to ABA'B'\C[A,B,A',B']^{as}$ sends $\langle[x,y],(\mathrm{ad} y)^b([x,y])\rangle$ to 
$\Phi(AB^{b+1}),$ the endomorphism $\mathrm{ad}(y)$ to multiplication by $B+B'$ and the commutator $[[x,y],(\mathrm{ad} y)^b([x,y])]$ to 
the element $ABA'(B')^{b+2}-AB^{b+2}A'B'$. Equation (\ref{fourstar}) then implies that 
$$
\Phi(AB^{b+2})=(B+B')\cdot \Phi(AB^{b+1})-ABA'(B')^{b+2}+AB^{b+2}A'B',
$$ 
which is the second identity in the lemma in the case $u=AB^{b+1}$. 
\end{proof}

\begin{lemma}\label{lem:Psieq}
For $u\in AB\C[A,B]$ the equality 
$$
\Psi(A\cdot u)=(A+A')\cdot\Psi(u)+ABA'u(A',B')-A'B'Au(A,B)
$$ 
holds, and for $u\in AB\C[B]$ the equality 
$$
\Psi(B\cdot u)=(B+B')\cdot\Psi(u)-ABB'u(A',B')+A'B'Bu(A,B)
$$ holds.
\end{lemma}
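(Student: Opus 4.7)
The strategy is to verify both identities by direct polynomial computation, tracking how each of the three building blocks $\delta_u$, $\lambda_u$, $\lambda_u^0$ of $\Psi$ transforms under $u \mapsto A\cdot u$ (resp.\ $u\mapsto B\cdot u$), and then reassembling. The bookkeeping is anchored in the explicit values $\delta_{AB}=AB'+A'B$ and $\lambda_{AB}=\tfrac{1}{2}(AB'-A'B)$, from which $\tfrac12\delta_{AB}+\lambda_{AB}=AB'$ and $\tfrac12\delta_{AB}-\lambda_{AB}=A'B$.

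First, a direct expansion of the definition of $\delta_v$ gives
\[
\delta_{Au} = (A+A')\delta_u + A'u(A,B) + Au(A',B').
\]
Next, substituting $v=Au$ into the defining formula for $\lambda_v$ and subtracting $(A+A')\lambda_u$, the six summands split according to the prefactors $A, A', A+A'$; the three summands involving $u(A+A',\cdot)$ cancel, and what remains is
\[
\lambda_{Au}-(A+A')\lambda_u = -\tfrac{A'}{2}\bigl(u(A,B+B')-u(A,B')\bigr)+\tfrac{A}{2}\bigl(u(A',B+B')-u(A',B)\bigr).
\]
For $\lambda^0$, setting $r_u(X,Y,Y'):=u(X,Y+Y')-u(X,Y)-u(X,Y')$, one has $r_{Au}(X,Y,Y')=X\cdot r_u(X,Y,Y')$, so the denominators $A$ and $A+A'$ inside $\lambda_u^0$ cancel cleanly when one passes to $\lambda_{Au}^0$, yielding
\[
\lambda_{Au}^0-(A+A')\lambda_u^0 = \tfrac{A'}{2}r_u(A,B,B')-\tfrac{A}{2}r_u(A',B,B').
\]
Adding the last two displays, the four terms $u(A,B+B'), u(A,B'), u(A',B+B'), u(A',B)$ cancel exactly, leaving
\[
(\lambda_{Au}+\lambda_{Au}^0)-(A+A')(\lambda_u+\lambda_u^0) = -\tfrac{A'}{2}u(A,B)+\tfrac{A}{2}u(A',B').
\]
Plugging back into $\Psi(Au)=\delta_{AB}(\lambda_{Au}+\lambda_{Au}^0)-\delta_{Au}\lambda_{AB}$ and using the two identities for $\tfrac12\delta_{AB}\pm\lambda_{AB}$ above, the residual coefficients of $u(A,B)$ and $u(A',B')$ collapse to $-AA'B'$ and $AA'B$ respectively, which matches the first claimed identity.

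For the second identity the restriction $u\in AB\C[B]$ means $u(A,B)=A\phi(B)$ for some $\phi\in B\C[B]$, so $r_u(A,B,B')/A=\phi(B+B')-\phi(B)-\phi(B')$ is independent of $A$; consequently $\lambda_u^0=\lambda_{Bu}^0=0$, and $\Psi$ reduces to $\delta_{AB}\lambda_u-\delta_u\lambda_{AB}$ on such $u$. The $B$-analogues of the first two computations, using $u(A+A',B)=(A+A')\phi(B)$ to simplify, produce
\[
\delta_{Bu}=(B+B')\delta_u+B'u(A,B)+Bu(A',B'),\qquad \lambda_{Bu}-(B+B')\lambda_u = \tfrac{B'}{2}u(A,B)-\tfrac{B}{2}u(A',B'),
\]
and the same assembly (with the roles of $(A,A')$ and $(B,B')$ exchanged) yields the second identity. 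The main delicacy lies in the $\lambda^0$ step: one must verify that the two rational-function corrections in $\lambda_u^0$ combine with $\lambda_u$ so that only the two ``boundary'' terms $u(A,B)$ and $u(A',B')$ survive. The cleanness of the final formula -- no trace of $r_u$ or $\lambda^0$ remains -- is not manifest from the definitions and rests on this precise cancellation.
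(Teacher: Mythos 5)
Your proof is correct. Where the paper first derives a single closed-form expression for $\Psi(u)$ (equation (\ref{star})) in which the rational corrections from $\lambda^0_u$ have already been absorbed, and then reads off both recurrences from that formula (in particular by specializing to $u=AB^l$ for the $B$-step), you instead keep the three building blocks $\delta_u,\lambda_u,\lambda^0_u$ separate and track the increment of each under $u\mapsto Au$ (resp.\ $u\mapsto Bu$). The identities $\delta_{Au}-(A+A')\delta_u=A'u(A,B)+Au(A',B')$, $\lambda_{Au}-(A+A')\lambda_u$, and $\lambda^0_{Au}-(A+A')\lambda^0_u$ are all correct, and the four-term cancellation you point out when adding the last two is exactly what makes the result polynomial; the final collapse using $\tfrac12\delta_{AB}\pm\lambda_{AB}=AB',\,A'B$ checks out on both the $A$ and $B$ sides (with the observation that $\lambda^0$ vanishes on $AB\C[B]$ and its $B$-multiples). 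The paper's route has the advantage of producing formula (\ref{star}), which it reuses for Lemma \ref{lem:seq}; your modular bookkeeping makes the cancellations explicit and avoids having to verify the closed form directly, at the cost of not producing a formula reusable downstream. Both are legitimate; yours is arguably more transparent for this particular lemma.
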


\begin{proof} We compute 
\begin{align}\label{star} 
\Psi(u)=&\frac{AA'(B-B')}{A+A'}u(A+A',B+B')-\frac{A(AB'+A'B)}{A+A'}u(A+A',B)\nonumber 
\\&+\frac{A'(AB'+A'B)}{A+A'}u(A+A',B')+AB'u(A,B)-A'Bu(A',B'),
\end{align}
which implies the first claim in the lemma. Moreover, equation (\ref{star}) implies that for $l\geq 1$ one has 
$$
\Psi(AB^l)=AA'(B-B')(B+B')^l-AA'B^{l+1}+AA'(B')^{l+1},
$$ which implies the second claim.
\end{proof}

\begin{lemma}\label{lem:seq}
For $u\in AB\C[A,B]$ the identity 
$$
\langle s_0(AB),s_0(u)\rangle=\delta_{AB}\cdot(\lambda_u+\lambda_u^0)-\delta_u\cdot\lambda_{AB}
$$ holds.
\end{lemma}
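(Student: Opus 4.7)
My plan is to show that the two linear maps $\Phi, \Psi : AB\C[A,B] \to M$ defined just before Lemma \ref{lem:Phieq}—namely $\Phi(u) := \langle s_0(AB), s_0(u)\rangle$ and $\Psi(u) := \delta_{AB}\cdot(\lambda_u + \lambda_u^0) - \delta_u\cdot \lambda_{AB}$—coincide, by exploiting the fact that Lemmas \ref{lem:Phieq} and \ref{lem:Psieq} already give them identical recursion relations, and then checking that they agree at the single base point $u = AB$.

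Concretely, subtracting the recursion of Lemma \ref{lem:Phieq} from that of Lemma \ref{lem:Psieq}, the difference $\Delta := \Phi - \Psi$ satisfies the homogeneous relations $\Delta(A\cdot u) = (A+A')\cdot \Delta(u)$ for $u \in AB\C[A,B]$ and $\Delta(B\cdot u) = (B+B')\cdot \Delta(u)$ for $u \in AB\C[B]$. Consequently, if $\Delta(AB) = 0$, iteration yields $\Delta(A^a B^b) = (A+A')^{a-1}(B+B')^{b-1}\cdot \Delta(AB) = 0$ for every $a, b \geq 1$: one first generates $AB^b$ from $AB$ by applying the $B$-recursion (legal since at each intermediate step the argument lies in $AB\C[B]$), and then generates $A^a B^b$ from $AB^b$ by applying the $A$-recursion. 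Since the monomials $A^a B^b$ with $a, b \geq 1$ span $AB\C[A,B]$, linearity then forces $\Delta \equiv 0$.

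For the base case, on the $\Phi$ side, $s_0(AB) = [x,y]$ so $\Phi(AB) = \langle [x,y], [x,y]\rangle = 0$ by antisymmetry of $\langle,\rangle$. On the $\Psi$ side, the two contributions involving $\lambda_{AB}$ partially cancel to leave $\Psi(AB) = \delta_{AB}\cdot \lambda_{AB}^0$; and substituting $u = AB$ into the formula for $\lambda_u^0$ from Lemma \ref{lem:linearmap}, the inner expression $u(A, B+B') - u(A,B) - u(A,B') = A(B+B') - AB - AB'$ vanishes identically, so $\lambda_{AB}^0 = 0$ and hence $\Psi(AB) = 0$. Thus $\Delta(AB) = 0$, completing the induction.

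The real work—and the main obstacle—has already been absorbed into Lemmas \ref{lem:Phieq} and \ref{lem:Psieq}: deriving the $\Phi$-recursion required manipulating bracket identities such as (\ref{onestar})--(\ref{twostar}) in $(\overline{\mathfrak{L}}, \langle,\rangle)$ and passing carefully to the quotient $\overline{\mathfrak{L}}^1/\overline{\mathfrak{L}}^2$, while the $\Psi$-recursion reduced to a polynomial identity. Once those parallel recursions are in hand, the present lemma is nothing more than a two-step induction anchored at a single easily verified boundary value.
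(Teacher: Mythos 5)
Your proof is correct and takes essentially the same route as the paper: both use the parallel recursions from Lemmas \ref{lem:Phieq} and \ref{lem:Psieq} (applying the $B$-recursion first along $AB^l$, then the $A$-recursion) together with the base case $\Phi(AB)=\Psi(AB)=0$. Packaging the argument as a homogeneous recursion for the difference $\Delta=\Phi-\Psi$ is a cosmetic reformulation, not a different method.
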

\begin{proof}
By the second parts of Lemmas \ref{lem:Phieq} and \ref{lem:Psieq}, the sequences $(\Phi(AB^l))_{l\geq 1}$ and $(\Psi(AB^l))_{l\geq 1}$ obey the same recurrence relation; moreover, $\Phi(AB)=\langle s_0(AB),s_0(AB)\rangle=0=\Psi(AB),$ where the last equality follows from the fact that $\lambda_{AB}^0=0$. So, the initial terms of the two sequences coincide; we deduce that for $l\geq 1$ we have $\Phi(AB^l)=\Psi(AB^l)$.

By the first parts of Lemmas \ref{lem:Phieq} and \ref{lem:Psieq}, for a fixed integer $l\geq 1$, the sequences $(\Phi(A^kB^l))_{k\geq 1}$ and $(\Psi(A^kB^k))_{k\geq 1}$ obey the same recurrence relation. As their initial terms coincide, we deduce that these two sequences are equal so that $\Phi(A^kB^l)=\Psi(A^kB^l)$ for all $k,l\geq 1$, which implies the lemma.
\end{proof}

\begin{lemma}\label{lem:s0eq}
For $u,v\in AB\C[A,B]$ the equality 
$$
\langle s_0(u),s_0(v)\rangle=\delta_u\cdot(\lambda_v+\lambda_v^0)-\delta_v\cdot(\lambda_u+\lambda_u^0)
$$ holds.
\end{lemma}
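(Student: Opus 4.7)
The plan is to reduce the general identity to its special case $v=AB$, which is Lemma \ref{lem:seq}, via the Jacobi identity for $\langle,\rangle$ and the explicit description of the $V$-action on $M$ provided by Lemma \ref{lemme:actions}.

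First I would introduce the defect
$$
\beta(v,w) := \langle s_0(v),s_0(w)\rangle - \delta_v\cdot(\lambda_w+\lambda_w^0) + \delta_w\cdot(\lambda_v+\lambda_v^0)
$$
for $v,w\in V=AB\C[A,B]$, so that the goal becomes $\beta\equiv 0$. Before exploiting it, I would check that $\beta$ is antisymmetric and takes values in $M$: the first summand lies in $\overline{\mathfrak{L}}^1/\overline{\mathfrak{L}}^2=M$ by Proposition \ref{Ptes:L}(2), the combination $\delta_v\lambda_w-\delta_w\lambda_v$ equals the cocycle $c(v,w)\in M$ from Proposition \ref{prop:cocycle:1}, and $\delta_v\lambda_w^0,\delta_w\lambda_v^0\in M$ since $\lambda_u^0\in M$ by Lemma \ref{lem:linearmap} and multiplication by a polynomial symmetric under $(A,B)\leftrightarrow(A',B')$ preserves $M$.

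The core step is to apply Jacobi for $\langle,\rangle$ on $\overline{\mathfrak{L}}$ to the triple $(s_0(AB),s_0(v),s_0(w))$ and reduce modulo $\overline{\mathfrak{L}}^2$. Each inner bracket lies in $\overline{\mathfrak{L}}^1$ by Proposition \ref{Ptes:L}(2), so modulo $\overline{\mathfrak{L}}^2$ each outer bracket $\langle s_0(x),-\rangle$ coincides with the $V$-module action of the class of $x$ on $M$, as set up in Subsection \ref{subsection:invariants}. By Lemma \ref{lemme:actions} this action is multiplication by $-\delta_x$, and after invoking antisymmetry of $\langle,\rangle$ the Jacobi identity descends to
$$
\delta_{AB}\cdot\langle s_0(v),s_0(w)\rangle \;=\; \delta_v\cdot\langle s_0(AB),s_0(w)\rangle - \delta_w\cdot\langle s_0(AB),s_0(v)\rangle \qquad\text{in }M.
$$
Substituting Lemma \ref{lem:seq} on the right, the cross-terms in $\lambda_{AB}$ cancel by commutativity of the polynomial product, and I am left with $\delta_{AB}\cdot\beta(v,w)=0$.

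To finish, $\delta_{AB}=AB'+A'B$ is a nonzero element of the integral domain $\C[A,B,A',B']$, so multiplication by $\delta_{AB}$ is injective on the ambient polynomial ring, and in particular on its subspace $M$; this forces $\beta(v,w)=0$, which is the claimed identity. The only non-formal step is the Jacobi reduction: one must carefully identify the outer brackets modulo $\overline{\mathfrak{L}}^2$ with the $V$-action on $M$ given by Lemma \ref{lemme:actions}. Once that translation is in place, the remainder of the argument is pure bookkeeping in the polynomial ring.
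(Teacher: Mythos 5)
Your proof is correct and takes essentially the same approach as the paper: you invoke the Jacobi identity for $\langle,\rangle$ on the triple $(s_0(AB),s_0(v),s_0(w))$, translate the outer brackets into multiplication by $-\delta_{AB},-\delta_v,-\delta_w$ via Lemma \ref{lemme:actions}, substitute Lemma \ref{lem:seq}, cancel the $\lambda_{AB}$ cross-terms, and divide by the nonzero polynomial $\delta_{AB}$ in the domain $\C[A,B,A',B']$. The paper packages this slightly differently — stating the cocycle identity \eqref{iden:delta} separately for $\Phi$ and $\Psi$ before specializing $w=AB$ — but your direct substitution of Lemma \ref{lem:seq} into the right-hand side is exactly the explicit verification of that identity for $\Psi$ at $w=AB$, so the two arguments are the same.
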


\begin{proof} For $u,v\in AB\C[A,B]$, set 
$$
\Phi(u,v):=\langle s_0(u),s_0(v)\rangle, \quad 
\Psi(u,v):=\delta_u\cdot(\lambda_v+\lambda_v^0)-\delta_v\cdot(\lambda_u+\lambda_u^0).
$$ 
So, $\Phi, \Psi$ are linear maps $\Lambda^2(V)\to M,$ both satisfying the identity 
\begin{align}\label{iden:delta} 
\delta_u\cdot f(v,w)+\delta_v\cdot f(w,u)+\delta_w\cdot f(u,v)=0 
\end{align} 
for all $u,v,w\in V$. In the case $f=\Phi$, this identity follows from the fact that $\Phi$ is a $2$-cocycle for $V$ with values in $M$, and 
from the explicit description of the action of $V$ on $M$; in the case $f=\Psi$, this identity follows from an explicit calculation (using the 
fact that $\Psi$ is a $2$-coboundary and therefore a $2$-cocycle). 
\newline

Moreover, $\Phi(AB,u)=\Phi(u)$, $\Psi(AB,u)=\Psi(u)$. Combining these identities with (\ref{iden:delta}) for $w=AB$, $f=\Phi,\Psi$, 
we deduce 
$$
\delta_{AB}\cdot\Phi(u,v)=\delta_u\cdot\Phi(v)-\delta_v\cdot\Phi(u),\quad 
\delta_{AB}\cdot\Psi(u,v)=\delta_u\cdot\Psi(v)-\delta_v\cdot\Psi(u).
$$ 
As $\Phi(u)=\Psi(u)$, $\Phi(v)=\Psi(v)$, it follows that 
$$
\delta_{AB}\cdot(\Phi(u,v)-\Psi(u,v))=0,
$$ 
where the product is in $\C[A,A',B,B']$. As $\delta_{AB}=AB'+A'B,$ we conclude that $\Phi(u,v)=\Psi(u,v)$, which implies the lemma.
\end{proof}

For $u\in AB\C[A,B]$, we set $s(u):=s_0(u)+\lambda_u^0$. The map $s:V\to{\mathfrak{L}}_{\quot}$ is a section of the projection ${\mathfrak{L}}_{\quot}\to V$. Then \begin{align*}\langle s(u),s(v)\rangle&=\langle s_0(u)+\lambda_u^0,s_0(v)+\lambda_v^0\rangle=\langle s_0(u),s_0(v)\rangle
+\langle s_0(u),\lambda_v^0\rangle+\langle\lambda_u^0,s_0(v)\rangle\\&=\delta_u\cdot (\lambda_v+\lambda_v^0)-\delta_v\cdot(\lambda_u+\lambda_u^0)-\delta_u\cdot\lambda_v^0+\delta_v\cdot\lambda_u^0
\\&=\delta_u\cdot\lambda_v-\delta_v\cdot\lambda_u,\end{align*}
where the second inequality follows from commutativity of the Lie subalgebra $M$ of ${\mathfrak{L}}_{\quot}$ and the third equality follows from Lemma \ref{lem:s0eq} and Lemma \ref{lemme:actions}. All this proves: 

\begin{proposition}\label{prop:cocycle:2}
The map $s:V\to {\mathfrak{L}}_{\quot}$ is a section of the projection ${\mathfrak{L}}_{\quot}\to V$, such that for all $v,v'\in V$ one has 
$\langle s(v),s(v')\rangle=c(v,v')$.
\end{proposition}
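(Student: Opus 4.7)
The plan is to take $s(u):=s_0(u)+\lambda_u^0$, where $s_0$ is the tautological linear lift introduced before Lemma \ref{lem:Phieq} and $\lambda_u^0\in M$ is the correction constructed in Lemma \ref{lem:linearmap}. Since $\lambda_u^0$ lies in $M=\ker({\mathfrak{L}}_{\quot}\to V)$ by (\ref{suite:ex:L:quot}), the composition with the projection ${\mathfrak{L}}_{\quot}\to V$ kills $\lambda_u^0$, so $s$ is again a linear section. This settles the first assertion and fixes the definition of $s$ that will be used throughout what follows.

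To verify the cocycle identity, I would expand $\langle s(u),s(v)\rangle$ bilinearly as
$$\langle s_0(u),s_0(v)\rangle+\langle s_0(u),\lambda_v^0\rangle+\langle\lambda_u^0,s_0(v)\rangle+\langle\lambda_u^0,\lambda_v^0\rangle.$$
The last piece is zero since both arguments lie in the abelian ideal $M$ of ${\mathfrak{L}}_{\quot}$. The first piece is provided by Lemma \ref{lem:s0eq}, which gives $\delta_u\cdot(\lambda_v+\lambda_v^0)-\delta_v\cdot(\lambda_u+\lambda_u^0)$. Each cross term has exactly one argument in $M$, so it is governed by the $V$-module structure on $M$ identified in Lemma \ref{lemme:actions} as $v\otimes m\mapsto -\delta_v\cdot m$; this yields the contributions $-\delta_u\cdot\lambda_v^0$ and $+\delta_v\cdot\lambda_u^0$ respectively (the sign on the second coming from the antisymmetry of $\langle,\rangle$).

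Adding the four contributions, the $\lambda^0$ pieces cancel in pairs and what remains is $\delta_u\cdot\lambda_v-\delta_v\cdot\lambda_u$, which is precisely $c(u,v)$ by definition (\ref{def:c}). The main obstacle in the whole argument is really concentrated in Lemma \ref{lem:s0eq} (which itself rests on the recursive identities of Lemmas \ref{lem:Phieq} and \ref{lem:Psieq}); granting that, the present proposition is a clean sign-bookkeeping exercise, and the definition $s(u)=s_0(u)+\lambda_u^0$ has been engineered precisely so that the $\lambda^0$ terms produced by Lemma \ref{lem:s0eq} get absorbed by the abelian-extension cross terms controlled by Lemma \ref{lemme:actions}.
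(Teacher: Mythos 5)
Your proposal is correct and follows precisely the same approach as the paper: set $s(u)=s_0(u)+\lambda_u^0$, expand $\langle s(u),s(v)\rangle$ bilinearly, drop the $\langle\lambda_u^0,\lambda_v^0\rangle$ term by commutativity of $M$, evaluate $\langle s_0(u),s_0(v)\rangle$ via Lemma~\ref{lem:s0eq}, handle the two cross terms by Lemma~\ref{lemme:actions}, and observe the cancellation down to $\delta_u\cdot\lambda_v-\delta_v\cdot\lambda_u=c(u,v)$. The sign bookkeeping in your cross-term treatment (including invoking antisymmetry of $\langle,\rangle$ for the second) agrees with the paper's computation.
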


\begin{remark} \rm
To the short exact sequence $0\to M\to \overline M\to \overline M/M\to 0$ of $V$-modules is associated to a long exact sequence 
in Lie algebra cohomology 
$$\ldots\to H^1(V,\overline M/M)\to H^2(V,M)\to H^2(V,\overline M)\to\ldots$$ Propositions \ref{prop:cocycle:1} and
\ref{prop:cocycle:2} then say that 

(a) the element $[{\mathfrak{L}}_{\quot}]\in H^2(V,M)$ corresponding to the exact sequence (\ref{suite:ex:L:quot}) lies in the kernel of the map $H^2(V,M)\to 
H^2(V,\overline M)$;

(b) if $\overline{\lambda}$ is the linear map $V\to \overline M/M$ defined by the composition of $\lambda:V\to\overline M$ with the projection 
$\overline M\to \overline M/M$, then $\overline{\lambda}$ is an element of $H^1(V,\overline M/M)$;

(c) $[{\mathfrak{L}}_{\quot}]$ is the image of $\overline{\lambda}$ under the connecting homomorphism $H^1(V,\overline M/M)\to H^2(V,M)$. 
\end{remark}

\section{Lower bound for the structure associated with the Lie subalgebra of an extension of abelian Lie algebras}\label{Section4}

In this section, we associate to each vector space $V$ (or more generally each object in a monoidal abelian category $\mathcal C_0$)
a category of {\it $V$-structures} (Subsection \ref{subsection:structures}). We then show that any extension $0\to\g_1\to\g\to\g_0\to0$ 
gives rise to a $\g_0$-structure $\mathbb{M}(\g)$. In the same situation, we associate an $X$-structure $\mathbb{M}^{min}(X)$ to any 
subobject $X\hookrightarrow\g_0$ (Subsection \ref{subsection:lower:bound}). The main result of the section is Proposition 
\ref{proposition:lower:bound}, which states a lower bound property of $\mathbb{M}^{min}(X)$. In Subsection \ref{subsection:GF}, we make 
explicit the particular case where $\mathcal C_0$ is the category $GF$ of graded-filtered vector spaces, and the consequences of 
Proposition \ref{proposition:lower:bound} in terms of Hilbert-Poincar\'e series. 

\subsection{Graded modules associated to modules over symmetric algebras}

Let $V$ be a vector space and let $M$ be a module over the symmetric algebra $S(V)$. Set 
$\mathrm{gr}_V(M):=\oplus_i\mathrm{gr}^i_V(M)$, where $\mathrm{gr}^i_V(M)=S^i(V)\cdot M/
S^{i+1}(V)\cdot M$. Then $\mathrm{gr}_V(M)$ is a graded module over $S(V)$, where $S(V)$ is graded
by assigning degree 1 to $V$. 

\subsection{Structures associated with an extension of abelian Lie algebras} \label{subsection:structures}

For $V$ a vector space, define a {\it $V$-structure} as the data of: 
\begin{enumerate}
\item a graded vector space ${\mathbb M}={\mathbb M}_0\oplus{\mathbb M}_1\oplus\cdots$, with ${\mathbb M}_i$ of degree $i+2$; 
\item a $S(V)$-module structure on ${\mathbb M}$, compatible with the grading of $S(V)$ for which $V$ has degree 1, and
\item a 
linear map $\Lambda^2(V)\to {\mathbb M}_0$,  $v\wedge v'\mapsto \{v,v'\}$, such that the identity 
$v\cdot\{v',v''\}+\mathrm{cycl.\ perm.}=0$ holds. 
\end{enumerate}
One defines morphisms of $V$-structures in the natural way. 
$V$-structures then form a category, where a morphism is onto (resp., epi) iff the  underlying morphism of graded $S(V)$-modules
is injective (resp., surjective). 

Let $0\to\g_1\to\g\to\g_0\to0$ be an extension of abelian Lie algebras. As we have seen (Subsection \ref{subsection:invariants}), 
it gives rise to a $S(\g_0)$-module structure on $\g_1$. Let ${\mathbb M}(\g)$ be the graded module $\mathrm{gr}_{\g_0}(\g_1)$, 
with degrees shifted by 2; so ${\mathbb M}_i(\g)=\mathrm{gr}_{\g_0}^i(\g_1)$. Let $\tilde c\in Z^2(\g_0,\g_1)$ be any 
cocycle representing the extension $0\to\g_1\to\g\to\g_0\to0$. Composing the map $\tilde c:\Lambda^2(\g_0)\to\g_1$
with the projection $\g_1\to{\mathbb M}_0(\g)$, one obtains a linear map $\Lambda^2(\g_0)\to{\mathbb M}_0(\g)$ which 
turns out to be independent of the choice of $\tilde c$.  One then checks that {\it the pair 
${\mathbb M}(\g):=(\mathrm{graded\ vector\ space\ }{\mathbb M}(\g),\mathrm{\ linear\ map\ }
\Lambda^2(\g_0)\to{\mathbb M}_0(\g))$ yields a $\g_0$-structure
canonically attached to the extension $0\to\g_1\to\g\to\g_0\to0$.}  One also checks that this 
$\g_0$-structure can be defined in terms of the invariants ($\g_0$-module structure on $\g_1$, 
cocycle class in $H^2(\g_0,\g_1)$) attached to the extension. 

\subsection{Lower bound structures in the case of Lie subalgebras} \label{subsection:lower:bound}

Let $0\to\g_1\to\g\to\g_0\to 0$ be an extension of abelian Lie algebras. Let $\h\subset\g$ be a Lie subalgebra of $\g$
and set $X:=\mathrm{im}(\h\to\g\to\g_0)$. Set $\mathit{Mod}:=\h\cap\g_1$. Then $\h$ is an extension of abelian Lie algebras
$0\to \mathit{Mod}\to\h\to X\to0$. We set: 
\begin{equation} \label{def:M:h}
{\mathbb M}(\h):=\mathrm{the\ }X\text{-structure\ attached\ to\ the\ extension\ }0\to \mathit{Mod}\to\h\to X\to0
\end{equation}
(see previous Subsection). 

The morphism $\mathit{Mod}\hookrightarrow\g_1$ of $S(X)$-modules induces a morphism $\mathrm{gr}_X(\mathit{Mod})
\to\mathrm{gr}_X(\g_1)$ of graded $S(X)$-modules. Let $Q$ be the image of this morphism; then $Q$ is a graded $S(X)$-module, 
which when equipped with the natural map $\Lambda^2(X)\to Q^0$ gives rise to an $X$-structure. The above morphism then factors 
as $\mathrm{gr}_X(\mathit{Mod})\twoheadrightarrow Q\hookrightarrow\mathrm{gr}_X(\g_1)$, 
whose degree 0 part is denoted $\mathrm{gr}^0_X(\mathit{Mod})\twoheadrightarrow Q^0\hookrightarrow\mathrm{gr}_X^0(\g_1)$. 
The cocycle $c$ gives rise to a linear map $\Lambda^2(X)\to\mathrm{gr}^0_X(\mathit{Mod})$. 

If $M$ is a graded $S(X)$-module and if $Sub$ is a subspace of the degree 0 part of $M$, we denote by 
$\langle Sub\rangle_M$ the $S(X)$-submodule of $M$ generated by $Sub$, so $\langle Sub\rangle_M=\mathrm{im}(S(V)\otimes Sub\to M)$.

Then there is an injection of graded $S(X)$-modules 
$$
\langle\mathrm{im}(\Lambda^2(X)\to\mathrm{gr}^0_X(\mathit{Mod}))\rangle_{\mathrm{gr}_X(\mathit{Mod})}
\hookrightarrow \mathrm{gr}_X(\mathit{Mod}). 
$$
The image of the left side of this inclusion in $Q$ under the morphism $\mathrm{gr}_X(\mathit{Mod})\to Q$ is the space $\langle\mathrm{im}
(\Lambda^2(X)\to Q^0)\rangle_{Q}$, and the image of this space in $\mathrm{gr}_X(\g_1)$ under the morphism $Q\to\mathrm{gr}_X(\g_1)$ 
is $\langle\mathrm{im}(\Lambda^2(X)\to \mathrm{gr}^0_X(\g_1))\rangle_{\mathrm{gr}_X(\g_1)}$. Since the morphism 
$Q\to\mathrm{gr}_X(\g_1)$ is injective, its restriction to a morphism $\langle\mathrm{im}(\Lambda^2(X)\to Q^0)\rangle_{Q}
\to\langle\mathrm{im}(\Lambda^2(X)\to \mathrm{gr}^0_X(\g_1))\rangle_{\mathrm{gr}_X(\g_1)}$ is an isomorphism. 

It follows that there is a diagram 
\begin{align*}  \xymatrix{
\langle\mathrm{im}(\Lambda^2(X)\to\mathrm{gr}^0_X(\mathit{Mod}))\rangle_{\mathrm{gr}_X(\mathit{Mod})} \ar@{->>}[r] \ar@{^{(}->}[d]& 
\langle\mathrm{im}(\Lambda^2(X)\to Q^0)\rangle_{Q} \ar@{^{(}->}[d] \ar^{\!\!\!\!\!\!\!\!\!\!\!\!\!\!\!\!\!\!\simeq}[r] & 
\langle\mathrm{im}(\Lambda^2(X)\to \mathrm{gr}^0_X(\g_1))\rangle_{\mathrm{gr}_X(\g_1)} \ar@{^{(}->}[d]
\\ \mathbb{M}(\h) \simeq \mathrm{gr}_X(\mathit{Mod}) \ar@{->>}[r] & 
Q \ar@{^{(}->}[r]& \mathrm{gr}_X(\g_1)}
\end{align*} 
of $S(X)$-modules. Set 
\begin{equation} \label{def:M:min}
\mathbb{M}^{min}(X):=\langle\mathrm{im}(\Lambda^2(X)\to \mathrm{gr}^0_X(\g_1))\rangle_{\mathrm{gr}_X(\g_1)};  
\end{equation}
When equipped with the natural map $\Lambda^2(X)\to\mathbb{M}^{min}(X)$, this graded $S(X)$-module gives rise to 
a $X$-structure. 

From the above diagram, one extracts the following diagram $\mathbb{M}(\h)\twoheadrightarrow Q\hookleftarrow\mathbb{M}^{min}(X)$, from 
where one derives the following result. 

\begin{proposition} \label{proposition:lower:bound}
Let $0\to\g_1\to\g\to\g_0\to0$ be an extension of abelian Lie algebras and let $X$ be any vector subspace of $\g_0$. 
If $\h$ is any Lie subalgebra of $\g$ such that $\mathrm{im}(\h\subset\g\to\g_0)=X$, then the $X$-structure 
$\mathbb{M}^{min}(X)$ defined in (\ref{def:M:min}) is a subquotient of the $X$-structure $\mathbb{M}(\h)$ defined in 
(\ref{def:M:h}). 
\end{proposition}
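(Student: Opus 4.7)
The proof is essentially contained in the diagram of $S(X)$-modules displayed just before the statement; my plan is to justify that diagram and then extract the subquotient claim by a preimage argument. The first step is to check that $\mathit{Mod}:=\h\cap\g_1$ is a sub-$S(X)$-module of $\g_1$: for $x\in X$, any two lifts of $x$ to $\g$ differ by an element of the abelian ideal $\g_1$, so they induce the same adjoint action on $\g_1$; in particular the lift may be chosen in $\h$, which shows that the $X$-action on $\mathit{Mod}$ inherited from $\h$ agrees with the restriction to $\mathit{Mod}$ of the $X$-action inherited from $\g$. Applying $\mathrm{gr}_X$ produces the bottom row $\mathbb{M}(\h)\simeq\mathrm{gr}_X(\mathit{Mod})\twoheadrightarrow Q\hookrightarrow\mathrm{gr}_X(\g_1)$ of the displayed diagram.

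The second step is to check compatibility of the cocycles in degree $0$. Picking any section $\sigma:X\to\h$ of $\h\to X$ and composing with $\h\subset\g$ gives a partial section $X\to\g$, and the cocycle $\tilde c(x,x'):=[\sigma(x),\sigma(x')]$ computed in $\h$ is literally the same element as the corresponding cocycle computed in $\g$, now viewed inside $\mathit{Mod}\subset\g_1$. Passing to associated graded yields commutativity of
$$\Lambda^2(X)\longrightarrow\mathrm{gr}^0_X(\mathit{Mod})\twoheadrightarrow Q^0\hookrightarrow\mathrm{gr}^0_X(\g_1),$$
and taking the $S(X)$-submodule generated by each image gives the top row of the displayed diagram. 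The rightmost arrow there is an isomorphism because the injection $Q\hookrightarrow\mathrm{gr}_X(\g_1)$ sends $\langle\mathrm{im}(\Lambda^2(X)\to Q^0)\rangle_Q$ bijectively onto $\mathbb{M}^{min}(X)$.

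From the extracted diagram $\mathbb{M}(\h)\twoheadrightarrow Q\hookleftarrow\mathbb{M}^{min}(X)$, let $N$ be the preimage of $\mathbb{M}^{min}(X)\subset Q$ under the surjection $\mathbb{M}(\h)\twoheadrightarrow Q$. Then $N$ is a graded $S(X)$-submodule of $\mathbb{M}(\h)$ containing the image of $\Lambda^2(X)\to\mathbb{M}_0(\h)$ (that image lands in $\mathbb{M}^{min}_0(X)$ by the compatibility above), so $N$ inherits an $X$-structure from $\mathbb{M}(\h)$. The induced surjection $N\twoheadrightarrow\mathbb{M}^{min}(X)$, whose kernel equals $\ker(\mathbb{M}(\h)\twoheadrightarrow Q)$, is a morphism of $X$-structures, exhibiting $\mathbb{M}^{min}(X)$ as the quotient of the subobject $N\subset\mathbb{M}(\h)$, hence as a subquotient of $\mathbb{M}(\h)$.

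The only genuine issue is the cocycle compatibility in the second step: verifying that under $\mathit{Mod}\hookrightarrow\g_1$, the class of the sub-extension $0\to\mathit{Mod}\to\h\to X\to 0$ maps to the class of the restriction over $X$ of $0\to\g_1\to\g\to\g_0\to 0$. The key point is the observation that a single section $\sigma:X\to\h$ simultaneously computes both cocycles. Once established, the remainder is routine diagram chasing in the category of $X$-structures.
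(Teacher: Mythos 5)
Your proof is correct and follows essentially the same route as the paper: the paper's argument for this proposition consists precisely of the displayed diagram of $S(X)$-modules and the extraction of $\mathbb{M}(\h)\twoheadrightarrow Q\hookleftarrow\mathbb{M}^{min}(X)$, and you justify the two points the paper leaves implicit (the cocycle compatibility via a common section $\sigma\colon X\to\h$, and the preimage construction making the subquotient structure explicit as a morphism of $X$-structures). The only cosmetic slip is the parenthetical "(that image lands in $\mathbb{M}^{min}_0(X)$)", which should read that its image \emph{in $Q^0$} lands in $\mathbb{M}^{min}_0(X)$, but the intended meaning is clear from context.
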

 
\subsection{Graded-filtered analogues} \label{subsection:GF}

In the preceding part of this section, the basic category is the monoidal category of vector spaces. One can replace it by the monoidal 
category $GF$ of {\it graded-filtered} vector spaces, whose objects are vector spaces $V$ equipped with a grading $V=\oplus_{n\geq 0}V[n]$ 
(the weight grading) and a decreasing filtration $V=F^0(V)\supset F^1(V)\supset\cdots$ (the depth filtration), such that: (i) each $V[n]$ is 
finite-dimensional, (ii) the filtration and the grading are compatible, i.e., for any $i\geq 0$, $F^i(V)=\oplus_{n\geq 0}F^i(V)\cap V[n]$.  

The construction of Subsection \ref{subsection:lower:bound} has the following analogue. To Lie algebras $\g,\g_0,\g_1$ in $GF$, 
fitting in an exact sequence $0\to\g_1\to\g\to\g_0\to0$ and such that $\g_0$, $\g_1$ are abelian, one associates
as before the $S(\g_0)$-module $\mathbb{M}(\g)$ and the map $\Lambda^2(\g_0)\to\mathbb{M}(\g)$, which 
make sense in the category $GF$. 

The analogue of Proposition \ref{proposition:lower:bound} is the following: {\it $0\to\g_1\to\g\to\g_0\to0$ is an extension of Lie 
algebras in $GF$ with $\g_0,\g_1$ abelian and $X$ is a subobject of $\g_0$ in $GF$. The $X$-structure $\mathbb{M}^{min}(X)$
then makes sense in $GF$. Then for any Lie subalgebra $\h$ of $\g$ in $GF$, such that $\mathrm{im}(\h\subset\g\to\g_0)=X$, 
$\mathbb{M}^{min}(X)$ is a subquotient of $\mathbb{M}(\h)$ as $X$-structures in $GF$.} 

To the object $V$ of $GF$, one associates its Hilbert-Poincar\'e series $P_V(t):=\sum_{n\geq 0}\mathrm{dim}V[n]t^n\in
\N[[t]]$ and the double series $P_V(t,u):=\sum_{n,i\geq 0}\mathrm{dim}\ \mathrm{gr}_F^i(V[n])t^nu^i\in
\N[u][[t]]$. We have $P_X(t)=P_X(t,1)$. As these assignments $GF\to\N[[t]]$, $GF\to\N[u][[t]]$ are non-decreasing,
one derives in the above situation: (1) $P_{\mathbb{M}^{min}(X)}(t,u)\leq P_{\mathbb{M}(\h)}(t,u)$, and 
(2) $P_{\mathbb{M}^{min}(X)}(t)\leq P_{\mathbb{M}(\h)}(t)$, $\leq$ meaning that the difference of two sides are series with 
non-negative coefficients. Since $P_{\mathbb{M}(\h)}(t)=P_{\h\cap\g_1}(t)$ and $P_{\mathbb{M}(\h)}(t,u)=P_{\h\cap\g_1}(t,u)$, we obtain: 
\begin{enumerate}
\item $P_{\mathbb{M}^{min}(X)}(t,u)\leq P_{\h\cap\g_1}(t,u)$, 
\item $P_{\mathbb{M}^{min}(X)}(t)\leq P_{\h\cap\g_1}(t)$.  
\end{enumerate}
  
\section{The $\Sigma$-structure $[\mathrm{gr}_{lcs}^1({\mathfrak{grt}}_1)]$ and the lower bound ${\mathbb M}^{min}(\Sigma)$}\label{Section5}

In this section, we give the consequences of the results of Section 4 for structures associated to the Lie algebra ${\mathfrak{grt}}_1$: 
we express a lower bound ${\mathbb M}^{min}(\Sigma)$ for the $\Sigma$-structure $[\mathrm{gr}_{lcs}^1({\mathfrak{grt}}_1)]$
(Subsections \ref{subsect:lower:bound}, \ref{constr:M:min}). We make explicit the ingredients of the construction of ${\mathbb M}^{min}(\Sigma)$
in Subsection \ref{subsection:ingredients} and relate this construction to some commutative algebra (Subsection \ref{subsection:comm:alg}). 

\subsection{The lower bound result} \label{subsect:lower:bound}

In Subsection \ref{subsec:Lquot}, we defined a Lie algebra ${\mathfrak{L}}_{quot}$ in the category $GF$, fitting in an exact sequence 
$0\to{\mathfrak{L}}_1\to{\mathfrak{L}}_{quot}\to{\mathfrak{L}}_0\to0$ of Lie algebras in $GF$, where both ${\mathfrak{L}}_0$ and ${\mathfrak{L}}_1$ are abelian. There is a sequence of 
morphisms of Lie algebras in $GF$, ${\mathfrak{grt}}_1\hookrightarrow\overline{\mathfrak{L}}\twoheadrightarrow{\mathfrak{L}}_{quot}$; composing them, 
we obtain a morphism ${\mathfrak{grt}}_1\to{\mathfrak{L}}_{quot}$. Set 
$$
{\mathfrak{H}}:=\mathrm{im}({\mathfrak{grt}}_1\to{\mathfrak{L}}_{quot}), 
$$
and $\Sigma:=\mathrm{im}({\mathfrak{H}}\subset{\mathfrak{L}}_{quot}\to{\mathfrak{L}}_0)$. Then $\Sigma=\mathrm{im}({\mathfrak{grt}}_1
\hookrightarrow\overline{\mathfrak{L}}\twoheadrightarrow{\mathfrak{L}}_0)$, therefore 
$$
\Sigma=\mathrm{gr}_{lcs}^0({\mathfrak{grt}}_1)
$$
The image of $\Sigma$ under the isomorphism ${\mathfrak{L}}_0\simeq AB\C[A,B]$ has been computed in \cite{En1}, namely 
$$
\Sigma\simeq \oplus_{k\mathrm{\ odd\ }\geq 3}\C\cdot\sigma_k, \mathrm{\ where\ }
\sigma_k(A,B):=A^k+B^k+(-A-B)^k\in AB\C[A,B]
$$
for any odd $k\geq 3$. The structure of $\Sigma$ as an object of $GF$ is described as follows: the weight degree $n$
part of $\Sigma$ is $\Sigma[n]=\C\cdot\sigma_n$ is $n$ is odd $\geq 3$, and is $0$ otherwise; the depth filtration is 
described by $F^0_{dpth}(\Sigma)=F^1_{dpth}(\Sigma)=\Sigma$, $F^2_{dpth}(\Sigma)=F^3_{dpth}(\Sigma)=\ldots=0$. 

One computes ${\mathfrak{H}}\cap{\mathfrak{L}}_1=\mathrm{ker}({\mathfrak{grt}}_1\hookrightarrow\overline{\mathfrak{L}}\twoheadrightarrow{\mathfrak{L}}_0)
=\mathrm{gr}_{lcs}^1({\mathfrak{grt}}_1)$, so there is a commutative diagram 
\begin{align} \label{morph:ex:seq} \xymatrix{
0\ar[r]&{\mathfrak{L}}_1\ar[r] & 
{\mathfrak{L}}_{quot}  \ar[r] & 
{\mathfrak{L}}_0 \ar[r]&0
\\ 0\ar[r]&\mathrm{gr}_{lcs}^1({\mathfrak{grt}}_1)  \ar[r]\ar@{^{(}->}[u] & 
{\mathfrak{H}} \ar[r]\ar@{^{(}->}[u]& \Sigma\ar@{^{(}->}[u]\ar[r]&0}
\end{align} 
The situation is therefore that of the $GF$ version of Proposition \ref{proposition:lower:bound} described in Subsection 
\ref{subsection:GF} with the following correspondence

\begin{tabular}{|l|l|l|l|}
  \hline
 $0\to\g_1\to\g\to\g_0\to0$ &  $0\to{\mathfrak{L}}_1\to{\mathfrak{L}}_{quot}\to{\mathfrak{L}}_0\to0$ \\
  \hline
$\mathit{Mod}$, ${\mathfrak{h}}$, $X$ & $\mathrm{gr}_{lcs}^1({\mathfrak{grt}}_1)$, ${\mathfrak{H}}$, $\Sigma$  \\
 \hline
\end{tabular}

Proposition \ref{proposition:lower:bound} and Subsection \ref{subsection:GF}  then yield the following statement. Let 
$[\mathrm{gr}_{lcs}^1({\mathfrak{grt}}_1)]:={\mathbb{M}}({\mathfrak{H}})$ be the $\Sigma$-structure associated with the bottom line of 
(\ref{morph:ex:seq}); 
it is given by (a) the $S(\Sigma)$-module ${\mathbb{M}}={\mathbb{M}}_0\oplus{\mathbb{M}}_1\oplus\ldots$, 
where ${\mathbb{M}}_i= S^i(\Sigma)\cdot\mathrm{gr}_{lcs}^1({\mathfrak{grt}})/ S^{i+1}(\Sigma)\cdot \mathrm{gr}_{lcs}^1({\mathfrak{grt}})$, 
(b) the linear map $\Lambda^2(\Sigma)\to{\mathbb{M}}_0$. 

\begin{proposition} \begin{enumerate}
\item The $\Sigma$-structure ${\mathbb{M}}^{min}(\Sigma)$ is a subquotient of the $\Sigma$-structure $[\mathrm{gr}_{lcs}^1({\mathfrak{grt}}_1)]$; 
\item $P_{{\mathbb{M}}^{min}(\Sigma)}(t)\leq P_{\mathrm{gr}_{lcs}^1({\mathfrak{grt}}_1)}(t)$ (generating series relative to the 
weight degree); 
\item $P_{{\mathbb{M}}^{min}(\Sigma)}(t,u)\leq P_{\mathrm{gr}_{lcs}^1({\mathfrak{grt}}_1)}(t,u)$ (generating series relative to the 
weight degree and the depth filtration). 
\end{enumerate}
\end{proposition}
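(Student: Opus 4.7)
The proof is essentially an application of the $GF$ version of Proposition \ref{proposition:lower:bound} (stated in Subsection \ref{subsection:GF}) to the concrete situation summarized in the table preceding the statement. My plan is therefore to verify that the hypotheses of that result are satisfied, and then quote the three consequences.

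First, I would recall that Subsection \ref{subsec:Lquot} exhibits $0\to{\mathfrak{L}}_1\to{\mathfrak{L}}_{quot}\to{\mathfrak{L}}_0\to 0$ as an extension of abelian Lie algebras in the category $GF$, the gradings and filtrations being inherited from the weight grading on $\overline{\mathfrak{L}}$ and the depth filtration discussed in Subsection \ref{subsect:depth}. Next, since ${\mathfrak{grt}}_1\hookrightarrow\overline{\mathfrak{L}}\twoheadrightarrow{\mathfrak{L}}_{quot}$ is a composition of morphisms of Lie algebras in $GF$, its image ${\mathfrak{H}}$ is a Lie subalgebra of ${\mathfrak{L}}_{quot}$ in $GF$, and it projects surjectively onto $\Sigma\subset{\mathfrak{L}}_0$ by construction. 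The identification $\mathfrak{H}\cap\mathfrak{L}_1=\mathrm{gr}^1_{lcs}(\mathfrak{grt}_1)$, already noted in the text leading up to diagram (\ref{morph:ex:seq}), follows from the fact that this intersection is exactly the kernel of ${\mathfrak{grt}}_1\hookrightarrow\overline{\mathfrak{L}}\twoheadrightarrow{\mathfrak{L}}_0$, i.e.\ ${\mathfrak{grt}}_1\cap\overline{\mathfrak{L}}^1$ modulo ${\mathfrak{grt}}_1\cap\overline{\mathfrak{L}}^2$.

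With these identifications in place, I would apply the $GF$-analogue of Proposition \ref{proposition:lower:bound} with the dictionary ($\g_0,\g_1,\g,\mathit{Mod},\h,X)\leftrightarrow({\mathfrak{L}}_0,{\mathfrak{L}}_1,{\mathfrak{L}}_{quot},\mathrm{gr}^1_{lcs}({\mathfrak{grt}}_1),{\mathfrak{H}},\Sigma)$. By definition (\ref{def:M:h}), the $\Sigma$-structure ${\mathbb{M}}({\mathfrak{H}})$ attached to the bottom row of (\ref{morph:ex:seq}) coincides with $[\mathrm{gr}^1_{lcs}({\mathfrak{grt}}_1)]$, since its underlying graded $S(\Sigma)$-module is $\mathrm{gr}_\Sigma(\mathrm{gr}^1_{lcs}({\mathfrak{grt}}_1))$ with components ${\mathbb{M}}_i=S^i(\Sigma)\cdot\mathrm{gr}^1_{lcs}({\mathfrak{grt}}_1)/S^{i+1}(\Sigma)\cdot\mathrm{gr}^1_{lcs}({\mathfrak{grt}}_1)$, and the map $\Lambda^2(\Sigma)\to{\mathbb{M}}_0$ is the one induced by the cocycle class. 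Proposition \ref{proposition:lower:bound}, in its $GF$ version, then yields that ${\mathbb{M}}^{min}(\Sigma)$ is a subquotient of $[\mathrm{gr}^1_{lcs}({\mathfrak{grt}}_1)]$ as $\Sigma$-structures in $GF$; this is item (1).

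For items (2) and (3), I would invoke the two series inequalities displayed at the end of Subsection \ref{subsection:GF}: since ${\mathbb{M}}({\mathfrak{H}})$ has the same Hilbert-Poincar\'e series (single and double) as ${\mathfrak{H}}\cap{\mathfrak{L}}_1=\mathrm{gr}^1_{lcs}({\mathfrak{grt}}_1)$, the inequalities $P_{{\mathbb{M}}^{min}(\Sigma)}(t)\leq P_{{\mathfrak{H}}\cap{\mathfrak{L}}_1}(t)$ and $P_{{\mathbb{M}}^{min}(\Sigma)}(t,u)\leq P_{{\mathfrak{H}}\cap{\mathfrak{L}}_1}(t,u)$ become exactly (2) and (3). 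The only point demanding any real care is the verification that the depth filtration behaves compatibly throughout the construction of Subsection \ref{subsection:lower:bound}, so that the passage to associated graded modules takes place in $GF$ and not merely in the category of vector spaces; this is the main (and only) obstacle, but it reduces to checking that each arrow in the diagram preceding (\ref{def:M:min}) respects weight degree and depth filtration, which follows directly from the fact that all the ingredients (the inclusion ${\mathfrak{grt}}_1\hookrightarrow\overline{\mathfrak{L}}$, the l.c.s.\ filtration, and the cocycle $c$) are morphisms in $GF$.
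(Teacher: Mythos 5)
Your proposal is correct and follows the paper's own argument essentially verbatim: identify $({\mathfrak{L}}_0,{\mathfrak{L}}_1,{\mathfrak{L}}_{quot},\mathrm{gr}^1_{lcs}({\mathfrak{grt}}_1),{\mathfrak{H}},\Sigma)$ with $(\g_0,\g_1,\g,\mathit{Mod},\h,X)$, verify via diagram (\ref{morph:ex:seq}) that the hypotheses of the $GF$-version of Proposition \ref{proposition:lower:bound} hold, and then read off the subquotient statement and the two Hilbert-Poincar\'e series inequalities from Subsection \ref{subsection:GF}. The only slight refinement you add is the explicit remark that $\mathfrak{H}\cap\mathfrak{L}_1=({\mathfrak{grt}}_1\cap\overline{\mathfrak{L}}^1)/({\mathfrak{grt}}_1\cap\overline{\mathfrak{L}}^2)$, which clarifies the paper's terse identification of this space with $\mathrm{gr}^1_{lcs}(\mathfrak{grt}_1)$.
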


\subsection{Construction of the $\Sigma$-structure ${\mathbb{M}}^{min}(\Sigma)$} \label{constr:M:min}

We now recall the construction of the $\Sigma$-structure ${\mathbb{M}}^{min}(\Sigma)$. 

1) The exact sequence $0\to{\mathfrak{L}}_1\to{\mathfrak{L}}_{quot}\to{\mathfrak{L}}_0\to0$ yields a $S({\mathfrak{L}}_0)$-module structure on ${\mathfrak{L}}_1$, and therefore 
a $S(\Sigma)$-module structure on the same space. One associates to it the graded module 
$\mathrm{gr}_\Sigma({\mathfrak{L}}_1)=\mathrm{gr}^0_\Sigma({\mathfrak{L}}_1)\oplus\mathrm{gr}^1_\Sigma({\mathfrak{L}}_1)\oplus\cdots$, 
where $\mathrm{gr}^i_\Sigma({\mathfrak{L}}_1)=S^i(\Sigma)\cdot{\mathfrak{L}}_1/S^{i+1}(\Sigma)\cdot{\mathfrak{L}}_1$ over the graded algebra 
$S(\Sigma)$ (the grading implied here is the $\Sigma$-grading, for which $\Sigma$ has degree 1 in $S(\Sigma)$). 

2) A linear map $\{,\}:\Lambda^2(\Sigma)\to\mathrm{gr}^0_\Sigma({\mathfrak{L}}_1)={\mathfrak{L}}_1/\Sigma\cdot{\mathfrak{L}}_1$ is defined by 
$\sigma\wedge\sigma'\mapsto($class of $c(\sigma,\sigma'))$, where $c\in Z^2({\mathfrak{L}}_0,{\mathfrak{L}}_1)$ is any cocycle 
representing the extension $0\to{\mathfrak{L}}_1\to{\mathfrak{L}}_{quot}\to{\mathfrak{L}}_0\to0$. 

3) One then sets 
$$
{\mathbb{M}}^{min}_0(\Sigma):=\mathrm{im}(\Lambda^2(\Sigma)\stackrel{\{,\}}{\to}\mathrm{gr}^0_\Sigma({\mathfrak{L}}_1))\subset 
\mathrm{gr}^0_\Sigma({\mathfrak{L}}_1), \quad {\mathbb{M}}^{min}_i(\Sigma):=S^i(\Sigma)\cdot{\mathbb{M}}^{min}_0(\Sigma)\subset 
\mathrm{gr}^i_\Sigma({\mathfrak{L}}_1)
$$
for $i\geq 1$ (the latter formula is also correct for $i=0$). 

4) Then ${\mathbb{M}}^{min}(\Sigma):={\mathbb{M}}^{min}_0(\Sigma)\oplus{\mathbb{M}}^{min}_1(\Sigma)\oplus\cdots$
is a graded $S(\Sigma)$-module, equipped with a map $\Lambda^2(\Sigma)\to{\mathbb{M}}^{min}_0(\Sigma)$ satisfying the axioms
of a $\Sigma$-structure. This $\Sigma$-structure will be also called ${\mathbb{M}}^{min}(\Sigma)$. 

\subsection{Ingredients of the construction of ${\mathbb{M}}^{min}(\Sigma)$}\label{subsection:ingredients}

The construction of ${\mathbb{M}}^{min}(\Sigma)$ in the previous Subsection uses the following ingredients: 1) the action of $S(\Sigma)$ 
on ${\mathfrak{L}}_1$, 2) the linear map $\{,\}:\Lambda^2(\Sigma)\to\mathrm{gr}^0_\Sigma({\mathfrak{L}}_1)$. In the present Subsection, we compute these objects 
explicitly. 

According to (\ref{def:V:M}) and (\ref{isos}), there are isomorphisms ${\mathfrak{L}}_1\simeq M=ABA'B'\C[A,B,A',B']^{as}$. For $k\geq 0$, set 
\begin{align}\label{def:sigmai}
\tilde\sigma_k(A,B,A',B'):=&A^k+B^k+(-A-B)^k+(A')^k+(B')^k+(-A'-B')^k+(-A-A')^k+(-B-B')^k\nonumber \\&+(A+B+A'+B')^k\in
\C[A,B,A',B']^{sym}. 
\end{align}
For any odd $k\geq 3$, we have
\begin{equation}\label{comp:delta}
\delta_{\sigma_k}=-\tilde\sigma_k, 
\end{equation} 
which together with Lemma \ref{lemme:actions} implies: 

\begin{lemma} \label{lemma:5:2} (Action of $\Sigma$ on ${\mathfrak{L}}_1$) The isomorphism ${\mathfrak{L}}_1\simeq M$ takes the restriction 
$\Sigma\otimes{\mathfrak{L}}_1\to{\mathfrak{L}}_1$ of the action map of $S(\Sigma)$ on ${\mathfrak{L}}_1$ to the map 
$$
\Sigma\otimes M\to M, \quad \sigma_k\otimes m\mapsto \tilde\sigma_k\cdot m, 
$$ 
where $\cdot$ is the product of symmetric and antisymmetric (under $(A,B)\leftrightarrow(A',B')$) polynomials.  
\end{lemma}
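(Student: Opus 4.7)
The strategy is to reduce the statement to combining two already-available facts: the general description of the $V$-action on $M$ given by Lemma \ref{lemme:actions}, and the polynomial identity (\ref{comp:delta}) relating $\delta_{\sigma_k}$ to $\tilde\sigma_k$. Indeed, by Lemma \ref{lemme:actions}, the isomorphism ${\mathfrak{L}}_1\simeq M$ intertwines the full action map $V\otimes{\mathfrak{L}}_1\to{\mathfrak{L}}_1$ with $v\otimes m\mapsto -\delta_v\cdot m$, where $V\simeq AB\C[A,B]$ via (\ref{isos}). Since $\Sigma\subset V$ under this identification and the $S(\Sigma)$-action on ${\mathfrak{L}}_1$ is the restriction of the $S(V)$-action, the action map $\Sigma\otimes{\mathfrak{L}}_1\to{\mathfrak{L}}_1$ corresponds to $\sigma_k\otimes m\mapsto -\delta_{\sigma_k}\cdot m$. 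Thus the entire lemma collapses to checking the identity (\ref{comp:delta}), namely $\delta_{\sigma_k}=-\tilde\sigma_k$ for every odd $k\geq 3$.

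This identity is a short symbolic manipulation. I would expand $\delta_{\sigma_k}(A,B,A',B')=\sigma_k(A+A',B+B')-\sigma_k(A,B)-\sigma_k(A',B')$ using the definition $\sigma_k(A,B)=A^k+B^k+(-A-B)^k$, obtaining nine monomial-type terms. Then I would apply the hypothesis that $k$ is odd to rewrite $(A+A')^k=-(-A-A')^k$, $(B+B')^k=-(-B-B')^k$, and $(-A-B-A'-B')^k=-(A+B+A'+B')^k$. Matching the resulting nine terms against the definition (\ref{def:sigmai}) of $\tilde\sigma_k$ confirms the sign, and the claim follows.

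No genuine obstacle is expected: the nontrivial work (the isomorphism ${\mathfrak{L}}_1\simeq M$, the general action formula) is packaged into Lemma \ref{lemme:actions}, and what remains is purely a bookkeeping exercise in elementary symmetric polynomials, whose only subtle point is the systematic use of oddness of $k$ to absorb signs. In particular, it is essential that $k$ be odd; for even $k$ the identity $\delta_{\sigma_k}=-\tilde\sigma_k$ would fail, but such degrees do not appear in $\Sigma$, so the restriction to odd $k\geq 3$ in the description of $\Sigma$ is precisely what makes the formula go through.
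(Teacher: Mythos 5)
Your proposal is correct and follows essentially the same route as the paper: the paper itself derives the lemma by combining Lemma \ref{lemme:actions} (which gives the action as $v\otimes m\mapsto -\delta_v\cdot m$) with the identity (\ref{comp:delta}), $\delta_{\sigma_k}=-\tilde\sigma_k$ for odd $k\geq 3$, which is a straightforward expansion using oddness of $k$ to absorb signs. Your expanded verification of (\ref{comp:delta}) and your remark on the essential role of $k$ being odd are both accurate and consistent with what the paper leaves implicit.
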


For any odd $k\geq 3$, set 
\begin{align} \label{lambda:i}
\lambda_k(A,B,A',B'):=&(A+B+A')^k-(A+A'+B')^k-(A+B+B')^k+(B+A'+B')^k\nonumber \\&+(A+B')^k-(A'+B)^k
\in\C[A,B,A',B']^{as}=\overline M. 
\end{align}
One checks that $\lambda_k=2\lambda_{\sigma_k}$, where $\sigma\mapsto\lambda_\sigma$ is the map defined in 
Subsection \ref{subsection:cocycle}. Together with explicit formula (\ref{def:c}) for the cocycle $c$, computation (\ref{comp:delta})
of the restriction of $v\mapsto\delta_v$ to $\Sigma$, and Proposition
\ref{prop:cocycle:1} on the functional properties of this cocycle, this implies that for $i,j$ odd $\geq 3$,
$c(\sigma_i,\sigma_j)=-2(\tilde\sigma_i\cdot\lambda_j-\tilde\sigma_j\cdot\lambda_i)\in M$. We will set 
\begin{equation} \label{tau:ij}
\tau_{ij}:=\tilde\sigma_i\cdot\lambda_j-\tilde\sigma_j\cdot\lambda_i\in M. 
\end{equation} 
Proposition \ref{prop:cocycle:2} relating the cocycle $c$ with the extension $0\to{\mathfrak{L}}_1\to{\mathfrak{L}}_{quot}\to{\mathfrak{L}}_0\to0$, 
relation $c(\sigma_i,\sigma_j)=-2\tau_{ij}$, and the relation between the bracket $\{,\}$ and cocycles (Subsection 
\ref{constr:M:min}, 2)) imply: 

\begin{lemma} (Map $\{,\}:\Lambda^2(\Sigma)\to\mathrm{gr}_\Sigma^0({\mathfrak{L}}_1)$) The map 
$\{,\}:\Lambda^2(\Sigma)\to\mathrm{gr}_\Sigma^0({\mathfrak{L}}_1)\simeq{\mathfrak{L}}_1/\Sigma\cdot{\mathfrak{L}}_1=M/\Sigma\cdot M$ is given by 
$\{\sigma_i,\sigma_j\}=($class of $-2\tau_{ij})$, for $i,j$ odd $\geq 3$. 
\end{lemma}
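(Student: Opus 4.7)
The plan is to evaluate the bracket $\{,\}$ on a pair of generators $\sigma_i,\sigma_j$ by tracing through its definition in Subsection \ref{constr:M:min}. According to item (2) there, $\{\sigma_i,\sigma_j\}$ is the class in $\mathrm{gr}_\Sigma^0({\mathfrak{L}}_1)$ of $\tilde c(\sigma_i,\sigma_j)$ for any cocycle $\tilde c\in Z^2({\mathfrak{L}}_0,{\mathfrak{L}}_1)$ representing the extension $0\to{\mathfrak{L}}_1\to{\mathfrak{L}}_{quot}\to{\mathfrak{L}}_0\to 0$. By Proposition \ref{prop:cocycle:2}, the map $c$ of (\ref{def:c}) is such a representative, so it suffices to compute $c(\sigma_i,\sigma_j)$ and reduce modulo $\Sigma\cdot M$.

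The evaluation of $c(\sigma_i,\sigma_j)$ rests on two polynomial identities valid for odd $k\geq 3$. The first, $\delta_{\sigma_k}=-\tilde\sigma_k$, is exactly (\ref{comp:delta}); it follows from expanding (\ref{application:delta}) at $v=\sigma_k=A^k+B^k+(-A-B)^k$ and applying $(-X)^k=-X^k$ to collapse the nine resulting monomials onto the list (\ref{def:sigmai}) with an overall sign. The second, $\lambda_k=2\lambda_{\sigma_k}$ (stated just before the lemma), is obtained analogously: inserting $v=\sigma_k$ into the defining formula of $\lambda_v$ from Subsection \ref{subsection:cocycle} produces eighteen monomials; the twelve that involve only two of the four indeterminates cancel in pairs, and the remaining six match (\ref{lambda:i}) after the same parity identity. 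Substituting these two identities into (\ref{def:c}) then yields $c(\sigma_i,\sigma_j)=-2\bigl(\tilde\sigma_i\cdot\lambda_j-\tilde\sigma_j\cdot\lambda_i\bigr)$, which equals $-2\tau_{ij}$ by the definition (\ref{tau:ij}).

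It remains to pass to the quotient $\mathrm{gr}_\Sigma^0({\mathfrak{L}}_1)\simeq M/\Sigma\cdot M$. Under the isomorphism (\ref{isos}) and the identification of the $\Sigma$-action on ${\mathfrak{L}}_1$ provided by Lemma \ref{lemma:5:2}, the submodule $\Sigma\cdot{\mathfrak{L}}_1$ corresponds precisely to $\Sigma\cdot M$, with $\sigma_k$ acting by multiplication by $\tilde\sigma_k$. The image of $-2\tau_{ij}$ in $M/\Sigma\cdot M$ is therefore $\{\sigma_i,\sigma_j\}$, which is the desired equality.

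The main technical obstacle is the verification of $\lambda_k=2\lambda_{\sigma_k}$: although it is only a sign-bookkeeping exercise in monomials, one must be careful to track which of the eighteen terms cancel and to confirm the correspondence between the surviving six and the summands of (\ref{lambda:i}). Once that identity and (\ref{comp:delta}) are in hand, the rest of the argument is purely formal assembly of Propositions \ref{prop:cocycle:2} and \ref{prop:cocycle:1}, Lemma \ref{lemma:5:2}, and the definition of $\{,\}$.
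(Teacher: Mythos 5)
Your proof follows the same route as the paper's own derivation: identify $\{,\}$ as the class of the cocycle $c$ via Subsection \ref{constr:M:min} item 2) together with Proposition \ref{prop:cocycle:2}, compute $c(\sigma_i,\sigma_j)$ from the two polynomial identities $\delta_{\sigma_k}=-\tilde\sigma_k$ (equation (\ref{comp:delta})) and $\lambda_{\sigma_k}=\tfrac12\lambda_k$, and finally reduce modulo $\Sigma\cdot M$ using Lemma \ref{lemma:5:2}. This is essentially the paper's argument.

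One arithmetic caution, however, which applies equally to the paper's own wording: substituting $\delta_{\sigma_k}=-\tilde\sigma_k$ and $\lambda_{\sigma_k}=\tfrac12\lambda_k$ into $c(v,v')=\delta_v\lambda_{v'}-\delta_{v'}\lambda_v$ gives
$$
c(\sigma_i,\sigma_j)=\big(-\tilde\sigma_i\big)\cdot\tfrac12\lambda_j-\big(-\tilde\sigma_j\big)\cdot\tfrac12\lambda_i=-\tfrac12\big(\tilde\sigma_i\lambda_j-\tilde\sigma_j\lambda_i\big)=-\tfrac12\tau_{ij},
$$
not $-2\tau_{ij}$; the two stated identities do not produce the coefficient $-2$ you (and the paper) write. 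The discrepancy is a global factor of $4$, so it affects only the normalization constant and not the structural content of the lemma (that $\{\sigma_i,\sigma_j\}$ is a fixed nonzero multiple of the class of $\tau_{ij}$), but as written the claimed coefficient does not follow from the substitution you describe, and you should either verify the two polynomial identities more carefully or flag the constant.
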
 

\subsection{Expression of ${\mathbb{M}}^{min}(\Sigma)$ in terms of commutative algebra}\label{subsection:comm:alg}

\subsubsection{The ring $\mathbb{A}$} \label{A2}

The vector space $\C^2$ can be equipped with a structure of irreducible module over the 
symmetric group $S_3$. The external square of this module in a $(S_3)^2$-module with underlying vector space $\mathbf{V}:=
(\C^2)^{\otimes 2}$. The group $(S_3)^2$ is a subgroup of the wreath product $G:=S_3\wr S_2=(S_3)^2\rtimes S_2$ (where $S_2$
acts by permutation of factors); the $(S_3)^2$-module structure of $\mathbf{V}$ can be extended to this group by the condition 
that the non-trivial element of $S_2$ acts by permutation of the factors of the tensor square. This defines a $G$-module structure on 
$\mathbf{V}$. One then defines the corresponding invariant ring 
$$\mathbb{A}:=S(\mathbf{V})^G. $$
The grading of the symmetric algebra $S(\mathbf{V})$ (for which the elements of $\mathbf{V}$ have degree 1) restricts to a grading 
of $\mathbb{A}$, which will be called the {\it weight degree.}

The ring $\mathbb{A}$ has the following interpretation in terms of polynomial algebras. Recall that $A,B,A',B'$ are free commutative 
variables. We identify the ring $S(\mathbf{V})$ with the polynomial ring $\C[A,B,A',B']$. A generating family of $G=S_3\wr S_2$
is: 

$(12)\times 1$, $(123)\times 1$, the nontrivial element of $S_2\subset S_3\wr S_2$. 

These generators act on a polynomial in $\C[A,B,A',B']$ by replacing $(A,B,A',B')$ by the following values: 

$(B,A,B',A')$, $(B,-A-B,B',-A'-B')$, $(A,A',B,B')$. 

We then identify $\mathbb{A}$ with the subring of $\C[A,B,A',B']$ of all polynomials invariant under these replacements. 

Define the {\it depth grading} in $S(\mathbf{V})=\C[A,B,A',B']$ by assigning degree 0 to $A,A'$ and degree 1 to $B,B'$. 
The corresponding decreasing filtration is defined by 
$F^i_{dpth}(S(\mathbf{V})):=\oplus_{j|j\geq i}\{$part of $S(\mathbf{V})$ of depth degree $j\}$. We also call {\it depth filtration}
the induced filtration on ${\mathbb{A}}$, defined by $F^i_{dpth}({\mathbb{A}}):=F^i_{dpth}(S(\mathbf{V}))\cap{\mathbb{A}}$
for any $i\geq 0$. This is an algebra filtration on ${\mathbb{A}}$, compatible with the weight degree grading of ${\mathbb{A}}$. 

\subsubsection{The ring $\mathrm{gr}_\Sigma({\mathbb{A}})$}

The ring ${\mathbb{A}}$ is a subring of $\C[A,B,A',B']^{sym}$. There is an injective linear map 
$\Sigma\hookrightarrow\C[A,B,A',B']^{sym}$, given by $\sigma_i\mapsto\tilde\sigma_i$ for $i$ odd $\geq 3$. 
We use this map to identify $\Sigma$ with a subspace of $\C[A,B,A',B']^{sym}$. Then there is a double inclusion 
\begin{equation}\label{inclusions}
\Sigma\subset{\mathbb{A}}\subset\C[A,B,A',B']^{sym}. 
\end{equation}
Define ${\mathbb{I}}$ to be the ideal of  ${\mathbb{A}}$ generated by $\Sigma$, so 
$$
{\mathbb{I}}=(\Sigma)\subset{\mathbb{A}}. 
$$
One defines a decreasing ring filtration ${\mathbb{A}}=F^0({\mathbb{A}})\supset F^1({\mathbb{A}})\supset\cdots$
by $F^k({\mathbb{A}}):={\mathbb{I}}^k$ for $k\geq 0$, and $\mathrm{gr}_\Sigma({\mathbb{A}})$ as the associated graded ring.
So 
$$
\mathrm{gr}_\Sigma({\mathbb{A}})=\oplus_{k\geq 0}\mathrm{gr}_\Sigma^k({\mathbb{A}}), \quad \mathrm{where}\quad
\mathrm{gr}_\Sigma^k({\mathbb{A}})={\mathbb{I}}^k/{\mathbb{I}}^{k+1}. 
$$ 
The grading of ${\mathbb{A}}$ corresponding to this decomposition will be called the {\it $\Sigma$-grading}. So 
$\mathrm{gr}_\Sigma({\mathbb{A}})$ is bigraded by (weight degree, $\Sigma$-degree).

Define the {\it depth filtration} on each $\mathrm{gr}_\Sigma^k({\mathbb{A}})$, $k\geq 0$ by 
$$
F^i_{dpth}(\mathrm{gr}_\Sigma^k({\mathbb{A}})):=(F^i_{dpth}({\mathbb{A}})\cap{\mathbb{I}}^k)/(
F^i_{dpth}({\mathbb{A}})\cap{\mathbb{I}}^{k+1})\hookrightarrow \mathrm{gr}_\Sigma^k({\mathbb{A}})
$$
for any $i\geq 0$ and set $F^i_{dpth}(\mathrm{gr}_\Sigma^k({\mathbb{A}})):=\oplus_{k\geq 0}F^i_{dpth}(\mathrm{gr}_\Sigma^k({\mathbb{A}}))$. 
This defines an algebra filtration on $\mathrm{gr}_\Sigma({\mathbb{A}})$, compatible with its bigrading by (weight degree, $\Sigma$-degree).

\subsubsection{The ring ${\mathbb{S}}$} \label{section:S}\label{543}

Composing the linear map $\Sigma\to{\mathbb{I}}$ with the projection ${\mathbb{I}}\to{\mathbb{I}}/{\mathbb{I}}^2
=\mathrm{gr}_\Sigma^1({\mathbb{A}})$, 
one obtains a linear map $\Sigma\to\mathrm{gr}_\Sigma^1({\mathbb{A}})\hookrightarrow\mathrm{gr}_\Sigma({\mathbb{A}})$. Equip the 
symmetric algebra $S(\Sigma)$ of $\Sigma$ with the $\Sigma$-grading, for which the degree of $\Sigma$ is 1. Then $S(\Sigma)$ is bigraded 
by ($\Sigma$-degree, weight degree). The linear map $\Sigma\to\mathrm{gr}_\Sigma({\mathbb{A}})$ then induces a bigraded morphism 
$S(\Sigma)\to\mathrm{gr}_\Sigma({\mathbb{A}})$. We then define
$$
{\mathbb{S}}:=\mathrm{im}(S(\Sigma)\to\mathrm{gr}_\Sigma({\mathbb{A}})) ;  
$$
this is a bigraded subring of $\mathrm{gr}_\Sigma({\mathbb{A}})$. Its decomposition for the $\Sigma$-degree is denoted 
${\mathbb{S}}=\oplus_{k\geq 0}{\mathbb{S}}[k]$, where ${\mathbb{S}}[k]:=\mathrm{im}(S^k(\Sigma)\to\mathrm{gr}_\Sigma({\mathbb{A}}))$
(a subspace of $\mathrm{gr}_\Sigma^k({\mathbb{A}})$). We define the depth filtration on ${\mathbb{S}}[k]$ by $F^k_{dpth}({\mathbb{S}}[k])=
{\mathbb{S}}[k]$, $F^{k+1}_{dpth}({\mathbb{S}}[k])=0$, and the depth filtration on ${\mathbb{S}}$ by $F^i_{dpth}({\mathbb{S}})
:=\oplus_{k\geq 0}F^i_{dpth}({\mathbb{S}}[k])$. This filtration of ${\mathbb{S}}$ is compatible with the bigrading and with the 
morphisms $S(\Sigma)\twoheadrightarrow{\mathbb{S}}\hookrightarrow\mathrm{gr}_\Sigma({\mathbb{A}})$. 

\subsubsection{The ${\mathbb{A}}$-module structure of $M$}\label{544}

The linear map $\Sigma\to{\mathbb{A}}$ (see (\ref{inclusions})) induces a ring morphism $S(\Sigma)\to{\mathbb{A}}$. 
Recall also that $M$ is a $S(\Sigma)$-module (Subsection \ref{lemma:5:2}). 

As $M=ABA'B'\C[A,B,A',B']^{as}$ is a module over $\C[A,B,A',B']^{sym}$, and as ${\mathbb{A}}$ is a subring of $\C[A,B,A',B']^{sym}$, 
{\it the $S(\Sigma)$-module structure of $M$ lifts to a module structure over the ring ${\mathbb{A}}$.} 

Define the {\it weight grading} of $M$ by assigning degree 1 to $A,B,A',B'$, the {\it depth grading} of $M$ by assigning degree 0 to $A,A'$
and degree 1 to $B,B'$, and the {\it depth filtration} of $M$ as the corresponding decreasing filtration. Then the ${\mathbb{A}}$-module 
structure of $M$ is compatible with weights gradings and depth filtrations.  

\subsubsection{The $\mathrm{gr}_\Sigma({\mathbb{A}})$-module structure of $\mathrm{gr}_\Sigma(M)$}\label{545}

For each $k\geq 0$, we have ${\mathbb{I}}^k=\mathrm{im}(S^k(\Sigma)\to{\mathbb{A}})\cdot{\mathbb{A}}$, therefore 
${\mathbb{I}}^k\cdot M=\mathrm{im}(S^k(\Sigma)\to{\mathbb{A}})\cdot{\mathbb{A}}\cdot M=\mathrm{im}(S^k(\Sigma)\to{\mathbb{A}})
\cdot M=S^k(\Sigma)\cdot M$. It follows that the decreasing filtration $M\supset\Sigma\cdot M\supset S^2(\Sigma)\cdot M
\supset\cdots$ can be identified with the decreasing filtration $M\supset{\mathbb{I}}\cdot M\supset{\mathbb{I}}^2\cdot M\supset\cdots$. 
The associated graded space $\mathrm{gr}_\Sigma(M)\simeq\mathrm{gr}_\Sigma({\mathfrak{L}}_1)$ (see Subsection \ref{constr:M:min}) is therefore
given by 
$$
\mathrm{gr}_\Sigma(M)=\oplus_{k\geq 0}\mathrm{gr}_\Sigma^k(M), \quad \mathrm{where}\quad 
\mathrm{gr}_\Sigma^k(M)={\mathbb{I}}^k\cdot M/{\mathbb{I}}^{k+1}\cdot M. 
$$
We define the $\Sigma$-degree on $\mathrm{gr}_\Sigma(M)$ by setting the degree of $\mathrm{gr}_\Sigma^k(M)$ to be $k+2$. 
The above expression of $\mathrm{gr}_\Sigma(M)$ shows that {\it this space is naturally equipped with a module structure
over $\mathrm{gr}_\Sigma({\mathbb{A}})$.}  One checks that it is compatible with the weight degrees, the $\Sigma$-degrees and the 
depth filtration on both sides. 

Recall that there is a sequence of algebra morphisms $S(\Sigma)\twoheadrightarrow{\mathbb{S}}\hookrightarrow
\mathrm{gr}_\Sigma({\mathbb{A}})$ (see Subsection \ref{section:S}). The $S(\Sigma)$-module structure of 
$\mathrm{gr}_\Sigma(M)$ (Subsection \ref{constr:M:min}, 1)) is induced by the above $\mathrm{gr}_\Sigma({\mathbb{A}})$-module 
structure though this morphism. This module structure also induces a {\it ${\mathbb{S}}$-module structure on $\mathrm{gr}_\Sigma(M)$.} 

\subsubsection{Expression of ${\mathbb{M}}^{min}(\Sigma)$} \label{546}
According to Subsection \ref{constr:M:min}, 3), one has 
$$
{\mathbb{M}}^{min}_0(\Sigma)=\mathrm{im}(\Lambda^2(\Sigma)\stackrel{\{,\}}{\to}\mathrm{gr}^0_\Sigma({\mathfrak{L}}_1))\subset 
\mathrm{gr}^0_\Sigma(M), \quad {\mathbb{M}}^{min}_i(\Sigma)={\mathbb{S}}[i]\cdot{\mathbb{M}}^{min}_0(\Sigma)\subset 
\mathrm{gr}^i_\Sigma(M)\ \mathrm{for}\ i\geq 1, 
$$
so 
$$
{\mathbb{M}}^{min}(\Sigma)={\mathbb{S}}\cdot{\mathbb{M}}^{min}_0(\Sigma)\subset\mathrm{gr}_\Sigma(M).
$$  

\section{Presentation of some commutative rings}\label{Section:comm:alg}

In this section, we compute the presentation of some of the commutative rings arising in the discussion of 
Subsection \ref{subsection:comm:alg}. 

\subsection{Structure of $\mathbb{A}$}
Recall that the ring $\mathbb{A}$ is graded by the weight degree. For $k\geq 0$, the elements $\tilde\sigma_k$ defined by (\ref{def:sigmai}) 
belong to $\C[A,B,A',B']^{sym}$; one checks that they also belong to the subalgebra $\mathbb{A}\subset\C[A,B,A',B']^{as}$. 

\begin{proposition} \label{prop:présentation:AA} The graded ring $\mathbb{A}$ has the following presentation: generators are 
$\tilde\sigma_i$, $i=2,3,4,5,6$; the only relation is 
$$
(\tilde\sigma_5)^2-{25\over18}\tilde\sigma_2\tilde\sigma_3\tilde\sigma_5+\Big({275\over108}
\tilde\sigma_4-{25\over162}(\tilde\sigma_2)^2\Big)\cdot(\tilde\sigma_3)^2+\Big(\tilde\sigma_4-{1\over4}(\tilde\sigma_2)^2\Big)\cdot\Big(-{125\over432}(\tilde\sigma_2)^3+{175\over72}\tilde\sigma_2\tilde\sigma_4-{25\over6}
\tilde\sigma_6\Big)=0; 
$$
the degree of each $\tilde\sigma_i$ is $i$. 
\end{proposition}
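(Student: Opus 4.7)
My plan is to prove the presentation by combining algebraic independence of a homogeneous system of parameters with a Hilbert series argument.

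First, I verify the stated degree $10$ identity $\mathcal{P}(\tilde\sigma_2,\ldots,\tilde\sigma_6)=0$ by direct substitution of (\ref{def:sigmai}) into the displayed polynomial; since both sides lie in the finite-dimensional weight $10$ component of $\C[A,B,A',B']$, this is a routine polynomial check. Second, I show that $\tilde\sigma_2,\tilde\sigma_3,\tilde\sigma_4,\tilde\sigma_6$ are algebraically independent over $\C$ by computing the $4\times 4$ Jacobian of $(\tilde\sigma_2,\tilde\sigma_3,\tilde\sigma_4,\tilde\sigma_6)$ with respect to $(A,B,A',B')$ and evaluating at a single point of $\C^4$ at which its determinant does not vanish. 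Since $\mathbb{A}$ has transcendence degree $4$ (equal to $\dim_\C\mathbf{V}$), these four elements constitute a homogeneous system of parameters, and by the Hochster--Eagon theorem $\mathbb{A}$ is a free module over the polynomial subring $B:=\C[\tilde\sigma_2,\tilde\sigma_3,\tilde\sigma_4,\tilde\sigma_6]$.

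The central Hilbert series computation is
\[
P_\mathbb{A}(t)=\frac{1}{|G|}\sum_{g\in G}\frac{1}{\det(1-tg|_{\mathbf{V}})}=\frac{1-t^{10}}{(1-t^2)(1-t^3)(1-t^4)(1-t^5)(1-t^6)}=\frac{1+t^5}{(1-t^2)(1-t^3)(1-t^4)(1-t^6)},
\]
obtained via Molien's formula ($|G|=72$) organized by the conjugacy classes of $G=(S_3)^2\rtimes S_2$. The eigenvalues of any element of $(S_3)^2$ on $\mathbf{V}=\C^2\otimes\C^2$ are tensor products of the standard $S_3$-eigenvalues $\{1,-1\}$ for transpositions and $\{\omega,\omega^2\}$ for $3$-cycles, while the nontrivial coset of $(S_3)^2$ is analyzed using the explicit involution $(A,B,A',B')\mapsto(A,A',B,B')$. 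Comparing the final form with $P_B(t)=\prod_{i\in\{2,3,4,6\}}(1-t^i)^{-1}$ shows that $\mathbb{A}$ is free of rank $2$ over $B$ with a basis concentrated in degrees $0$ and $5$. The element $1$ provides the degree $0$ basis element; since $\tilde\sigma_5\in\mathbb{A}$ is nonzero of degree $5$ and $B$ has no nonzero elements in degree $5$, $\{1,\tilde\sigma_5\}$ is such a $B$-basis.

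Consequently $\tilde\sigma_5^2\in\mathbb{A}$ is uniquely expressible as $f+g\tilde\sigma_5$ with $f,g\in B$ of degrees $10$ and $5$ respectively, and the first step identifies this combination as the displayed relation $\mathcal{P}$. The same analysis applies to $R:=\C[x_2,x_3,x_4,x_5,x_6]/(\mathcal{P})$: since $\mathcal{P}$ is monic quadratic in $x_5$, $R$ is free of rank $2$ over $\C[x_2,x_3,x_4,x_6]$ with basis $\{1,x_5\}$, and the assignment $x_i\mapsto\tilde\sigma_i$ is consequently an isomorphism of graded $\C$-algebras. The main obstacle is the Molien computation: though not conceptually deep, it demands patient bookkeeping of conjugacy classes and tensor-product eigenvalues in $(S_3)^2\rtimes S_2$, and the crucial factorization $1-t^{10}=(1-t^5)(1+t^5)$ accounts simultaneously for both the ``extra'' degree $5$ generator $\tilde\sigma_5$ and the unique quadratic relation in degree $10$.
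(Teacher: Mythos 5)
Your approach is a legitimate alternative to the paper's, which simply computes the Molien series and then offloads the structural claims (generation by $\tilde\sigma_2,\ldots,\tilde\sigma_6$, algebraic independence of $\tilde\sigma_2,\tilde\sigma_3,\tilde\sigma_4,\tilde\sigma_6$, quadraticity of $\tilde\sigma_5$) to Magma and the explicit relation to Maple. Replacing the computer check by a Cohen--Macaulay/Hilbert series argument is the natural by-hand strategy, and several of your steps (Molien computation, the Jacobian criterion for independence, the verification of the degree-$10$ identity, and the final Hilbert-series comparison to deduce the isomorphism) are sound.

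There are, however, two genuine gaps as written. First, algebraic independence of four homogeneous elements in a graded ring of Krull dimension $4$ does \emph{not} imply they form a homogeneous system of parameters; you also need $\mathbb{A}$ to be a finite module over $B=\C[\tilde\sigma_2,\tilde\sigma_3,\tilde\sigma_4,\tilde\sigma_6]$, equivalently that the common zero locus of these four polynomials in $\C^4$ is $\{0\}$. The classic counterexample is $x^2, xy$ in $\C[x,y]$: algebraically independent, of the right cardinality, yet not an hsop because $\C[x,y]/(x^2,xy)$ is infinite-dimensional. A nonvanishing Jacobian at one point gives independence but says nothing about the zero locus. Second, the claim that ``$B$ has no nonzero elements in degree $5$'' is false: $\tilde\sigma_2\tilde\sigma_3\in B$ has degree $5$, so $\dim B_5=1$ (while $\dim\mathbb{A}_5=2$ from the Hilbert series). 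What you actually need is that $\tilde\sigma_5$ is not a scalar multiple of $\tilde\sigma_2\tilde\sigma_3$, which is true but requires a check (e.g.\ comparing values at two points).

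Both gaps can be repaired simultaneously, and the repair lets you drop the hsop/Cohen--Macaulay machinery entirely. Once the relation is verified, $\tilde\sigma_5$ is integral over $B$ via a \emph{monic} quadratic. Since $B$ is a polynomial ring, hence integrally closed, if $\tilde\sigma_5\in\mathrm{Frac}(B)$ then $\tilde\sigma_5\in B$, forcing $\tilde\sigma_5\in B_5=\C\,\tilde\sigma_2\tilde\sigma_3$, which one rules out by a two-point evaluation. Therefore $\{1,\tilde\sigma_5\}$ is linearly independent over $\mathrm{Frac}(B)$, and combined with the relation $\tilde\sigma_5^2\in B+B\tilde\sigma_5$ one gets $C:=\C[\tilde\sigma_2,\ldots,\tilde\sigma_6]=B\oplus B\tilde\sigma_5$ as a graded $B$-module. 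Then $P_C(t)=(1+t^5)\,P_B(t)=P_{\mathbb{A}}(t)$, and since $C\subset\mathbb{A}$ with the same finite-dimensional graded pieces, $C=\mathbb{A}$. The ring $\C[x_2,\ldots,x_6]/(\mathcal{P})$ is likewise free of rank $2$ over $\C[x_2,x_3,x_4,x_6]$ with basis $\{1,x_5\}$, so the surjection onto $\mathbb{A}$ preserves bases and is an isomorphism. This route proves generation of $\mathbb{A}$ rather than assuming it, which is precisely what your hsop step was trying to buy.
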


\begin{proof} The Hilbert series of $\mathbb{A}$ can be computed by Molien's theorem (\cite{M,St});  
one obtains
\begin{equation} \label{eqn:P:AA}
P_{\mathbb{A}}(t)={{1+t^5}\over{(1-t^2)(1-t^3)(1-t^4)(1-t^6)}}.\end{equation} 
Moreover, it can be checked through Magma (\cite{Mgm}) that $\mathbb{A}$ is generated by the elements $\tilde\sigma_i$, $i=2,3,4,5,6$;
that the $\tilde\sigma_i$, $i=2,3,4,6$ generate a free polynomial ring, and that $\tilde\sigma_5$ is quadratic over this ring. 
The quadratic relation satisfied by $\tilde\sigma_5$ can be determined though a Maple computation (\cite{Mpl}); this the relation given in 
the statement of the proposition.\end{proof}

\subsection{Structure of $\mathrm{gr}_\Sigma^0(\mathbb{A})$} \label{titi}

\begin{lemma}\label{lemma:restriction:géns:idéal}
If $i$ is odd $\geq 3$, then the element $\tilde\sigma_i\in\mathbb{A}$ belongs to the ideal $(\tilde\sigma_3,\tilde\sigma_5)$
of $\mathbb{A}$ generated by $\tilde\sigma_3$ and $\tilde\sigma_5$. 
\end{lemma}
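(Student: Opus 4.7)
The plan is to read off the lemma from the presentation of $\mathbb{A}$ established in the previous Proposition~\ref{prop:présentation:AA} combined with a parity argument on the weight grading. By that proposition, $\mathbb{A}$ is generated as a $\C$-algebra by $\tilde\sigma_2,\tilde\sigma_3,\tilde\sigma_4,\tilde\sigma_5,\tilde\sigma_6$, whose respective weight degrees are $2,3,4,5,6$. In particular, every element of $\mathbb{A}$ is a $\C$-linear combination of monomials of the form $\tilde\sigma_2^{a}\tilde\sigma_3^{b}\tilde\sigma_4^{c}\tilde\sigma_5^{d}\tilde\sigma_6^{e}$, with weight degree $2a+3b+4c+5d+6e$.

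First, I would observe that for any $i\geq 2$, the polynomial $\tilde\sigma_i$ is homogeneous of weight degree $i$. If $i$ is odd $\geq 3$, then $\tilde\sigma_i$ lies in the odd-weight component of $\mathbb{A}$, and since the weight grading is compatible with the expansion in the generators, it suffices to show that every odd-weight monomial of the above form lies in the ideal $(\tilde\sigma_3,\tilde\sigma_5)$.

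This last statement is an immediate parity check: the exponents $a,c,e$ contribute even amounts $2a,4c,6e$ to the total weight, so the parity of $2a+3b+4c+5d+6e$ coincides with the parity of $b+d$. For the total weight to be odd, one must have $b+d$ odd, hence either $b\geq 1$ (in which case the monomial is divisible by $\tilde\sigma_3$) or $d\geq 1$ (in which case it is divisible by $\tilde\sigma_5$). Either way, the monomial belongs to the ideal $(\tilde\sigma_3,\tilde\sigma_5)\subset \mathbb{A}$, and the lemma follows by linearity.

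I do not anticipate any genuine obstacle here: the only nontrivial input is the generation statement of Proposition~\ref{prop:présentation:AA}, and the quadratic relation listed there is itself homogeneous of even weight degree $10$, so it does not interfere with the odd-degree component of $\mathbb{A}$. The argument is thus a direct two-line consequence of the presentation plus a parity observation.
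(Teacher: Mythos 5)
Your proof is correct and follows essentially the same route as the paper: both invoke the generation of $\mathbb{A}$ by $\tilde\sigma_2,\ldots,\tilde\sigma_6$ from Proposition~\ref{prop:présentation:AA} and then use the parity of the weight degree to conclude that every odd-degree monomial in these generators is divisible by $\tilde\sigma_3$ or $\tilde\sigma_5$. You merely make explicit the parity bookkeeping that the paper leaves implicit in the phrase ``It follows that the odd degree part of $\mathbb{A}$ is contained in its ideal $(\tilde\sigma_3,\tilde\sigma_5)$.''
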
 

\begin{proof} Recall that $\mathbb{A}$ is generated by $\tilde\sigma_i$, $i=2,\ldots,6$. If $i$ is an integer $\geq 0$, the degree $i$
part of $\mathbb{A}$ is linearly spanned by the monomials $(\tilde\sigma_2)^{i_2}\cdots(\tilde\sigma_6)^{i_6}$, where
$2i_2+3i_3+\cdots+6i_6=i$. It follows that the odd degree part of $\mathbb{A}$ is contained in its ideal $(\tilde\sigma_3,\tilde\sigma_5)$. 
If $i$ is odd, then $\tilde\sigma_i$ has degree $i$, so it belongs to this ideal. \end{proof}

Recall that the ring $\mathrm{gr}_\Sigma^0(\mathbb{A})$ is graded by the weight degree.

\begin{proposition} \label{prop43} For $i=2,4,6$, let $\dot\sigma_i$ be the image of $\tilde\sigma_i$ under $\mathbb{A}\to\mathbb{A}/\mathbb{I}=\mathrm{gr}_\Sigma^0(\mathbb{A})$. The ring $\mathrm{gr}_\Sigma^0(\mathbb{A})$ 
has the following presentation: generators $\dot\sigma_i$, with $i=2,4,6$; relation 
$$
\Big(\dot\sigma_4-{1\over4}(\dot\sigma_2)^2\Big)\cdot\Big(-{125\over432}(\dot\sigma_2)^3+{175\over72}\dot\sigma_2\dot\sigma_4-{25\over6}
\dot\sigma_6\Big)=0;
$$
the degree of each $\dot\sigma_i$ is $i$. 
\end{proposition}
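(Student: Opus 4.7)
The plan is to reduce the problem to a direct application of Proposition~\ref{prop:présentation:AA} (the presentation of $\mathbb{A}$) together with Lemma~\ref{lemma:restriction:géns:idéal}.

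First, I would identify the ideal $\mathbb{I}$ inside $\mathbb{A}$ in terms of the generators exhibited in Proposition~\ref{prop:présentation:AA}. By definition $\mathbb{I}$ is generated by $\Sigma$, i.e.\ by the elements $\tilde\sigma_k$ with $k$ odd and $k\geq 3$. Lemma~\ref{lemma:restriction:géns:idéal} says every such $\tilde\sigma_k$ already lies in $(\tilde\sigma_3,\tilde\sigma_5)$, while conversely $\tilde\sigma_3,\tilde\sigma_5\in\Sigma\subset\mathbb{I}$. Therefore
$$
\mathbb{I}=(\tilde\sigma_3,\tilde\sigma_5), \qquad
\mathrm{gr}_\Sigma^0(\mathbb{A})=\mathbb{A}/(\tilde\sigma_3,\tilde\sigma_5).
$$

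Next, I would apply the presentation of $\mathbb{A}$ from Proposition~\ref{prop:présentation:AA}. Writing $\mathbb{A}=\C[T_2,T_3,T_4,T_5,T_6]/(\rho)$, where $T_i$ maps to $\tilde\sigma_i$ and $\rho$ is the quadratic relation stated there, the elementary identity $(R/(\rho))/(T_3,T_5)=R/(T_3,T_5,\rho)$ yields
$$
\mathrm{gr}_\Sigma^0(\mathbb{A})=\C[T_2,T_4,T_6]/(\rho\bmod(T_3,T_5)).
$$
Substituting $T_3=T_5=0$ into $\rho$ kills the first three summands (each visibly contains $\tilde\sigma_3$ or $\tilde\sigma_5$ as a factor) and leaves exactly the factored expression of the proposition in the variables $\dot\sigma_2,\dot\sigma_4,\dot\sigma_6$, namely
$$
\Big(\dot\sigma_4-{1\over4}(\dot\sigma_2)^2\Big)\Big(-{125\over432}(\dot\sigma_2)^3+{175\over72}\dot\sigma_2\dot\sigma_4-{25\over6}\dot\sigma_6\Big)=0.
$$
The compatibility with the weight grading is automatic since $\mathbb{A}\to\mathrm{gr}_\Sigma^0(\mathbb{A})$ is graded and $\tilde\sigma_i$ has weight $i$.

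There is essentially no hard step here: the entire argument is bookkeeping once Proposition~\ref{prop:présentation:AA} and Lemma~\ref{lemma:restriction:géns:idéal} are in hand. The only possible subtlety is the claim that the \emph{only} relation among $\dot\sigma_2,\dot\sigma_4,\dot\sigma_6$ is the one exhibited; this follows because the passage $\mathbb{A}/(\tilde\sigma_3,\tilde\sigma_5)$ inherits its full presentation from that of $\mathbb{A}$ by the quotient identity above, so no new relation can appear beyond $\rho\bmod(T_3,T_5)$. If one wanted an independent verification, one could check that the Hilbert series of $\C[T_2,T_4,T_6]/(\rho\bmod(T_3,T_5))$ matches the specialization of the right-hand side of \eqref{eqn:P:AA} obtained by killing the odd generators, namely $1/((1-t^2)(1-t^6))$, which is indeed the Hilbert series of a complete intersection of a degree-$10$ relation in $\C[T_2,T_4,T_6]$.
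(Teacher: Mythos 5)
Your main argument matches the paper's proof exactly: use Lemma~\ref{lemma:restriction:géns:idéal} to identify $\mathbb{I}=(\tilde\sigma_3,\tilde\sigma_5)$, then push the presentation of $\mathbb{A}$ from Proposition~\ref{prop:présentation:AA} down to the quotient by setting $T_3=T_5=0$ in the relation $\rho$; the first three summands vanish and only the factored term survives. That is precisely what the paper does, and it is correct.

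One small correction to the closing ``independent verification'': the Hilbert series of $\C[T_2,T_4,T_6]/(\bar\rho)$ with $\bar\rho$ homogeneous of degree $10$ and a nonzerodivisor is
$$
\frac{1-t^{10}}{(1-t^2)(1-t^4)(1-t^6)},
$$
not $\frac{1}{(1-t^2)(1-t^6)}$; these already disagree in degree $4$ ($\dim=2$ versus $\dim=1$). The paper records the correct series in the proof of Theorem~\ref{StructureDAnneaux}, where $\mathrm{gr}_\Sigma^0(\mathbb{A})$ is identified with $\C[\xi_2,\xi_4,\xi_6]/(\xi_4\xi_6)$. Since you offered this only as an optional cross-check, it does not affect the validity of the proof, but the stated series should be fixed.
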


\begin{proof} Lemma \ref{lemma:restriction:géns:idéal} implies that the ideals $\mathbb{I}=(\tilde\sigma_3,\tilde\sigma_5,
\tilde\sigma_7,\ldots)$ and $(\tilde\sigma_3,\tilde\sigma_5)$ of $\mathbb{A}$ coincide. As $\mathrm{gr}_\Sigma^0(\mathbb{A})
=\mathbb{A}/\mathbb{I}$, it follows that $\mathrm{gr}_\Sigma^0(\mathbb{A})=\mathbb{A}/(\tilde\sigma_3,\tilde\sigma_5)$. Combining 
this result with the presentation of $\mathbb{A}$ obtained in Proposition \ref{prop:présentation:AA}, one obtains the announced presentation 
of $\mathrm{gr}_\Sigma^0(\mathbb{A})$. \end{proof}

\subsection{Structure of $\mathrm{gr}_\Sigma(\mathbb{A})$}

Recall that the ring $\mathrm{gr}_\Sigma^0(\mathbb{A})$ is bigraded by (weight degree, $\Sigma$-degree).

Let $\mathrm{gr}_\Sigma^0(\mathbb{A})[\mathbf{x}_3,\mathbf{x}_5]$ be the polynomial algebra in two variables $\mathbf{x}_3,\mathbf{x}_5$ 
with coefficients in $\mathrm{gr}_\Sigma^0(\mathbb{A})$. It is equipped with a bigrading, such that $\dot\sigma_i$ has bidegree $(i,0)$ 
($i=2,4,6$) and $\mathbf{x}_i$ has bidegree $(i,1)$ ($i=3,5$). 

\begin{theorem} (Structure of $\mathrm{gr}_\Sigma(\mathbb{A})$) \label{StructureDAnneaux}
There is a unique isomorphism $\mathrm{gr}_\Sigma(\mathbb{A})\stackrel{\sim}{\to}\mathrm{gr}^0_\Sigma(\mathbb{A})
[\mathbf{x}_3,\mathbf{x}_5]$ of bigraded algebras, whose inverse can be characterized as follows: its restriction to 
$\mathrm{gr}^0_\Sigma(\mathbb{A})$ coincides with the canonical inclusion 
$\mathrm{gr}^0_\Sigma(\mathbb{A})\hookrightarrow\mathrm{gr}_\Sigma(\mathbb{A})$, and $\mathbf{x}_3,\mathbf{x}_5$ map to the 
classes $\dot\sigma_3,\dot\sigma_5$ of $\tilde\sigma_3,\tilde\sigma_5$ in $\mathbb{I}/\mathbb{I}^2$.  
\end{theorem}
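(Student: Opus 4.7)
The plan is to construct a canonical bigraded surjection
$\phi: \mathrm{gr}^0_\Sigma(\mathbb{A})[\mathbf{x}_3,\mathbf{x}_5] \to \mathrm{gr}_\Sigma(\mathbb{A})$
and then show that source and target have the same weight Hilbert series, which forces $\phi$ to be an isomorphism. Uniqueness is immediate since the values on the generators $\mathrm{gr}^0_\Sigma(\mathbb{A})\cup\{\mathbf{x}_3,\mathbf{x}_5\}$ of the source are prescribed by the statement.

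To build $\phi$, combine the canonical inclusion $\mathrm{gr}^0_\Sigma(\mathbb{A})=\mathbb{A}/\mathbb{I}\hookrightarrow \mathrm{gr}_\Sigma(\mathbb{A})$ with the classes $\dot\sigma_3,\dot\sigma_5\in\mathbb{I}/\mathbb{I}^2=\mathrm{gr}^1_\Sigma(\mathbb{A})$; by the universal property of the polynomial algebra this extends uniquely to an algebra morphism from the source, and the bidegrees match since $\mathbf{x}_i$ and $\dot\sigma_i$ both have bidegree $(i,1)$. For surjectivity, Lemma \ref{lemma:restriction:géns:idéal} gives $\mathbb{I}=(\tilde\sigma_3,\tilde\sigma_5)$, hence $\mathbb{I}^k$ is spanned as an $\mathbb{A}$-submodule by the monomials $\tilde\sigma_3^a\tilde\sigma_5^b$ with $a+b=k$. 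Reducing the $\mathbb{A}$-coefficients modulo $\mathbb{I}$ shows that $\mathrm{gr}^k_\Sigma(\mathbb{A})=\mathbb{I}^k/\mathbb{I}^{k+1}$ is generated over $\mathrm{gr}^0_\Sigma(\mathbb{A})$ by the $k+1$ classes $\dot\sigma_3^a\dot\sigma_5^b$, which are precisely $\phi$ applied to the degree-$k$ monomials in $\mathbf{x}_3,\mathbf{x}_5$.

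For injectivity, one compares weight Hilbert series. Since forming $\mathrm{gr}_\Sigma$ preserves weight-graded dimensions, Proposition \ref{prop:présentation:AA} yields
$$
P_{\mathrm{gr}_\Sigma(\mathbb{A})}(t)=P_\mathbb{A}(t)=\frac{1+t^5}{(1-t^2)(1-t^3)(1-t^4)(1-t^6)}.
$$
On the source side, Proposition \ref{prop43} presents $\mathrm{gr}^0_\Sigma(\mathbb{A})$ as $\C[\dot\sigma_2,\dot\sigma_4,\dot\sigma_6]/(R_0)$ for a nonzero weight-$10$ element $R_0$; since the polynomial ring is a domain, $R_0$ is a non-zerodivisor, so $P_{\mathrm{gr}^0_\Sigma(\mathbb{A})}(t)=(1-t^{10})/\bigl((1-t^2)(1-t^4)(1-t^6)\bigr)$. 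Multiplying by $(1-t^3)^{-1}(1-t^5)^{-1}$ and using the identity $1-t^{10}=(1-t^5)(1+t^5)$ recovers exactly the series above. Thus $\phi$ is a surjection of finite-dimensional weight-graded vector spaces with identical Hilbert series, and hence an isomorphism.

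The substantive step is the Hilbert series match, but it reduces to a clean cancellation once the presentations of $\mathbb{A}$ and $\mathrm{gr}^0_\Sigma(\mathbb{A})$ are in hand; no further difficulty should arise beyond trusting Propositions \ref{prop:présentation:AA} and \ref{prop43}. If anything mildly subtle must be checked, it is that the surjection computed in the second paragraph is bidegree-preserving on each $\mathrm{gr}^k_\Sigma$, but this is built into the construction since multiplication in $\mathrm{gr}_\Sigma(\mathbb{A})$ adds $\Sigma$-degrees exactly.
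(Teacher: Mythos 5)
Your proof is correct and follows essentially the same route as the paper: build the bigraded morphism $\phi$, deduce surjectivity from $\mathbb{I}=(\tilde\sigma_3,\tilde\sigma_5)$, and conclude injectivity by matching weight Hilbert series. The only minor deviation is in computing $P_{\mathrm{gr}^0_\Sigma(\mathbb{A})}(t)$, where you observe directly that the single weight-$10$ relation is a non-zerodivisor in the polynomial ring, while the paper first changes variables to $\C[\xi_2,\xi_4,\xi_6]/(\xi_4\xi_6)$ and counts a basis; both yield the same series.
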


\begin{proof} There is a unique morphism $\phi:\mathrm{gr}^0_\Sigma(\mathbb{A})[\mathbf{x}_3,\mathbf{x}_5]\to
\mathrm{gr}_\Sigma(\mathbb{A})$ of bigraded algebras, whose restriction to $\mathrm{gr}^0_\Sigma(\mathbb{A})$ is 
$\mathrm{gr}^0_\Sigma(\mathbb{A})\hookrightarrow\mathrm{gr}_\Sigma(\mathbb{A})$ and taking $\mathbf{x}_3,\mathbf{x}_5$ to 
$\dot\sigma_3,\dot\sigma_5$. 

The algebra $\mathrm{gr}_\Sigma(\mathbb{A})$ is known to be generated by its parts of $\Sigma$-degree 0 and 1, namely 
$\mathrm{gr}^0_\Sigma(\mathbb{A})$ and $\mathrm{gr}^1_\Sigma(\mathbb{A})$. According to Lemma \ref{lemma:restriction:géns:idéal}, 
the ideals $\mathbb{I}$ and $(\tilde\sigma_3,\tilde\sigma_5)=\mathbb{A}\cdot\tilde\sigma_3+\mathbb{A}\cdot\tilde\sigma_5$ of 
$\mathbb{A}$ coincide. Therefore 
$\mathrm{gr}^1_\Sigma(\mathbb{A})=\mathbb{I}/\mathbb{I}^2=\mathrm{im}(\mathbb{A}\cdot\tilde\sigma_3+\mathbb{A}\cdot\tilde\sigma_5
\to\mathbb{I}/\mathbb{I}^2)=\mathrm{gr}^0_\Sigma(\mathbb{A})\cdot\dot\sigma_3+\mathrm{gr}^0_\Sigma(\mathbb{A})\cdot\dot\sigma_5$. 
It follows that $\mathrm{gr}_\Sigma(\mathbb{A})$ is generated by $\mathrm{gr}_\Sigma^0(\mathbb{A}),\dot\sigma_3$ and $\dot\sigma_5$. 
The restriction of $\phi$ to the part of $\Sigma$-degree 0 is the identity of $\mathrm{gr}_\Sigma^0(\mathbb{A})$, therefore $\mathrm{im}(\phi)\supset\mathrm{gr}_\Sigma^0(\mathbb{A})$. The image of $\phi$ also contains the images of $\mathbf{x}_3$ and $\mathbf{x}_5$, 
which are $\dot\sigma_3$ and $\dot\sigma_5$. The image of $\phi$ therefore contains a generating part of $\mathrm{gr}_\Sigma(\mathbb{A})$, 
and so coincides with it. It follows that {\it $\phi$ is onto.} 

The algebra $\mathrm{gr}_\Sigma(\mathbb{A})$ is constructed out of $\mathbb{A}$ using a filtration which is compatible with the weight degree. 
It follows that the Hilbert series $P_{\mathrm{gr}_\Sigma(\mathbb{A})}(t)$ of $\mathrm{gr}_\Sigma(\mathbb{A})$ with respect to the weight 
degree coincides with that of $\mathbb{A}$ given by (\ref{eqn:P:AA}), so 
$$
P_{\mathrm{gr}_\Sigma(\mathbb{A})}(t)={{1+t^5}\over{(1-t^2)(1-t^3)(1-t^4)(1-t^6)}}. 
$$

Let $\C[\xi_2,\xi_4,\xi_6]/(\xi_4\cdot\xi_6)$ be the graded commutative algebra with generators $\xi_2,\xi_4,\xi_6$ and relation 
$\xi_4\cdot\xi_6=0$, with degree given by deg$(\xi_i)=i$. As basis of $\C[\xi_2,\xi_4,\xi_6]/(\xi_4\cdot\xi_6)$ is then given by the 
union of families $\xi_2^a\xi_4^b$ ($a\geq 0,b\geq 0$) and $\xi_2^c\xi_6^d$ ($c\geq 0,d>0$). Therefore the Hilbert series of 
 $\C[\xi_2,\xi_4,\xi_6]/(\xi_4\cdot\xi_6)$ is 
$$
P_{\C[\xi_2,\xi_4,\xi_6]/(\xi_4\cdot\xi_6)}(t)={1\over{(1-t^2)(1-t^4)}}+{1\over{(1-t^2)(1-t^6)}}-{1\over{1-t^2}}=
{{1-t^{10}}\over{(1-t^2)(1-t^4)(1-t^6)}}. 
$$
It follows from Proposition \ref{prop43} that there is a unique isomorphism 
$\mathrm{gr}_\Sigma^0(\mathbb{A})\simeq\C[\xi_2,\xi_4,\xi_6]/(\xi_4\cdot\xi_6)$ of graded algebras, given by 
$\xi_2\mapsto\dot\sigma_2$, $\xi_4\mapsto\dot\sigma_4-{1\over4}(\dot\sigma_2)^2$, $\xi_6\mapsto
-{125\over432}(\dot\sigma_2)^3+{175\over72}\dot\sigma_2\tilde\sigma_4-{25\over6}\dot\sigma_6$. 
Therefore the Hilbert series of $\mathrm{gr}_\Sigma^0(\mathbb{A})$ is 
$$
P_{\mathrm{gr}_\Sigma^0(\mathbb{A})}(t)=P_{\C[\xi_2,\xi_4,\xi_6]/(\xi_4\cdot\xi_6)}(t)={{1-t^{10}}\over{(1-t^2)(1-t^4)(1-t^6)}}. 
$$
The Hilbert series of the polynomial algebra $\C[x_3,x_5]$ is $P_{\C[x_3,x_5]}(t)={1\over{(1-t^3)(1-t^5)}}$. 
As $\mathrm{gr}_\Sigma^0(\mathbb{A})[\mathbf{x}_3,\mathbf{x}_5]$ is isomorphic to the tensor product of the graded algebras 
$\mathrm{gr}_\Sigma^0(\mathbb{A})$ and $\C[\mathbf{x}_3,\mathbf{x}_5]$, its Hilbert series is given by 
$$
P_{\mathrm{gr}_\Sigma^0(\mathbb{A})[\mathbf{x}_3,\mathbf{x}_5]}(t)=P_{\mathrm{gr}_\Sigma^0(\mathbb{A})}(t)\cdot 
P_{\C[\mathbf{x}_3,\mathbf{x}_5]}(t)={{1+t^5}\over{(1-t^2)(1-t^3)(1-t^4)(1-t^6)}}, 
$$
therefore $P_{\mathrm{gr}_\Sigma^0(\mathbb{A})[\mathbf{x}_3,\mathbf{x}_5]}(t)=P_{\mathrm{gr}_\Sigma(\mathbb{A})}(t)$: {\it the 
Hilbert series of $\mathrm{gr}_\Sigma^0(\mathbb{A})[\mathbf{x}_3,\mathbf{x}_5]$ and $\mathrm{gr}_\Sigma(\mathbb{A})$ with respect to 
the weight degree coincide.}  This result, combined with the fact that $\phi$ is onto, implies that $\phi$ is an isomorphism of graded vector 
spaces (for the weight degree). As $\phi$ is also compatible with the bigrading (weight degree, $\Sigma$-degree), it is an isomorphism of 
bigraded vector spaces. \end{proof}

\subsection{Results on $\mathbb{S}$} \label{results:S}

Lemma \ref{lemma:restriction:géns:idéal} says that for any odd $i\geq 3$, there exist elements 
$P_{i3},P_{i5}$ in $\mathbb{A}$, of respective weight degrees $i-3$, $i-5$, such that 
\begin{equation}\label{id:sigma:i}
\tilde\sigma_i=P_{i3}\cdot\tilde\sigma_3+P_{i5}\cdot\tilde\sigma_5 
\end{equation}
(equation in $\mathbb{A}$). 
We choose such a pair $(P_{i3},P_{i5})$ for each $i$ odd $\geq 3$. 

For each odd $i\geq 3$, let $\dot\sigma_i$ be the image of $\tilde\sigma_i\in{\mathbb{I}}$
in ${\mathbb{I}}/{\mathbb{I}}^2=\mathrm{gr}_\Sigma^1({\mathbb{A}})$, and let $\dot P_{i3},\dot P_{i5}$
be the images of $P_{i3},P_{i5}$ in ${\mathbb{A}}/{\mathbb{I}}=\mathrm{gr}_\Sigma^0({\mathbb{A}})$. Then (\ref{id:sigma:i})
implies
$$
\dot\sigma_i=\dot P_{i3}\cdot\dot\sigma_3+\dot P_{i5}\cdot\dot\sigma_5 
$$
(equation in $\mathrm{gr}_\Sigma^1({\mathbb{A}})$). Then $\mathbb{S}$ is the subring of $\mathrm{gr}_\Sigma({\mathbb{A}})$
generated by the $\dot\sigma_i$ for all odd $i\geq 3$. Under the isomorphism $\mathrm{gr}_\Sigma({\mathbb{A}})\simeq
\mathrm{gr}_\Sigma^0({\mathbb{A}})[\mathbf{x}_3,\mathbf{x}_5]$ (Theorem \ref{StructureDAnneaux}), {\it $\mathbb{S}$ is 
isomorphic to the subring of $\mathrm{gr}_\Sigma^0({\mathbb{A}})[\mathbf{x}_3,\mathbf{x}_5]$ generated by 
$\mathbf{x}_3,\mathbf{x}_5$ and the $\dot P_{i3}\cdot\mathbf{x}_3+\dot P_{i5}\cdot\mathbf{x}_5$ ($i\geq 7$).} 

\section{Computation of the lowest degree part ${\mathbb M}^{min}_0(\Sigma)$ of the lower bound $\Sigma$-structure}
\label{Section:comp:Mminlowest}

Recall that ${\mathbb M}^{min}_0(\Sigma)$ is a subspace of $\mathrm{gr}_\Sigma^0(M)=M/{\mathbb I}\cdot M$, graded for the weight 
grading. The space $\mathrm{gr}_\Sigma^0(M)$ is a module over the ring $\mathrm{gr}_\Sigma^0({\mathbb A})={\mathbb A}/{\mathbb I}$, 
which contains a particular element $\dot\tau_{35}$ (see (\ref{tau:ij}); we denote by $x\mapsto\dot x$ the projection map 
$M\to\mathrm{gr}_\Sigma^0(M)$). In this section, we show the equality, inside $\mathrm{gr}_\Sigma^0(M)$, of 
${\mathbb M}^{min}_0(\Sigma)$ and  $\mathrm{gr}_\Sigma^0({\mathbb A})\cdot\dot\tau_{35}$, the cyclic 
$\mathrm{gr}_\Sigma^0({\mathbb A})$-submodule generated by $\dot\tau_{35}$. 

We first prove the inclusion ${\mathbb M}^{min}_0(\Sigma)\subset\mathrm{gr}_\Sigma^0({\mathbb A})\cdot\dot\tau_{35}$
(Subsection \ref{subsect:incl:M:cycl}). We then compute the Hilbert-Poincar\'e series of $\mathrm{gr}_\Sigma^0({\mathbb A})
\cdot\dot\tau_{35}$ relative to the weight grading (Subsection \ref{subsect:hilb:cyclic}) and a lower bound for the series of 
${\mathbb M}^{min}_0(\Sigma)$ (Subsection \ref{subsect:lower:bound:series}). This enables us to prove the desired equality
and to compute explicitly the map $\Lambda^2(\Sigma)\to{\mathbb M}^{min}_0(\Sigma)$ (Subsection \ref{subsect:conclusions}). 

\subsection{The inclusion ${\mathbb M}^{min}_0(\Sigma)\subset\mathrm{gr}_\Sigma^0({\mathbb A})\cdot\dot\tau_{35}$}\label{C2}
\label{subsect:incl:M:cycl} The structure of the proof of this inclusion is as follows. Subsections (a), (b), (c) contain preparatory material for the 
definition of a family of polynomials $\mathbf{P}_{ij}$ and the proof of its property (Aux$_{ij}$) in Subsection (d). Subsection (e) contains some 
auxiliary results which enable us to prove in Subsection (f) that $\mathbf{P}_{ij}$ satisfies a property (Cond$_{ij}$). This implies the desired
inclusion (Subsection (g)). 

\medskip {\it(a) Divisibility of polynomials.} For $i$ odd $\geq 3$, we defined a polynomial $\lambda_i\in\C[A,B,A',B']^{as}$
(see (\ref{lambda:i})). One has $(\lambda_3)_{|B'=0}=3A'B(A'+2A+B)$, and 
for any $i$ odd $\geq 3$, $(\lambda_i)_{|B'=0}=(A+A'+B)^i-(A+A')^i-(A+B)^i+A^i$. This polynomial vanishes
under the substitutions $A'=0$ and $B=0$, and also (as $i$ is odd) under $A'=-2A-B$. It follows that 
{\it for any odd $i\geq 3$, $(\lambda_3)_{|B'=0}$ divides $(\lambda_i)_{|B'=0}$ in $\C[A,B,A']$.}
Degree considerations also imply that {\it the polynomial $(\lambda_i)_{|B'=0}/(\lambda_3)_{|B'=0}$ of $\C[A,B,A']$ in even, 
i.e., invariant under $(A,B,A')\mapsto(-A,-B,-A')$.} 

\medskip {\it(b) Decomposition of algebras.} 
Set $\sigma:=A^2+AB+B^2$, $p:=AB$; these are elements of $\C[A,B]$. Then  
$$
\C[A,B]_{\mathrm{invariant\ under\ }(A,B)\mapsto(-A,-B)\mathrm{\ and\ }(A,B)\mapsto(B,A)}=\C[\sigma,p]. 
$$
Let $\pi:=p^2\sigma+p^3$. Then $p$ is integral over $\C[\sigma,\pi]$, with minimal polynomial $X^3+\sigma X^2-\pi$. 
It follows that $\C[\sigma,p]$ is a free module 
over its subring $\C[\sigma,\pi]$, with 
\begin{equation} \label{decomposition}
\C[\sigma,p]=\C[\sigma,\pi]\oplus\C[\sigma,\pi]\cdot p\oplus\C[\sigma,\pi]\cdot p^2. 
\end{equation}

Now, we have $(\tilde\sigma_2)_{|A'=B'=0}=2(A^2+(A+B)^2+B^2)$, 
$(\tilde\sigma_6)_{|A'=B'=0}=2(A^6+(A+B)^6+B^6)$, therefore
\begin{equation} \label{expression:sigma:2:6}
(\tilde\sigma_2)_{|A'=B'=0}=4\sigma, \quad 
(\tilde\sigma_6)_{|A'=B'=0}=6\pi+4\sigma^3, 
\end{equation}
therefore 
$$
\C[(\tilde\sigma_2)_{|A'=B'=0},(\tilde\sigma_6)_{|A'=B'=0}]=\C[\sigma,\pi]\subset\C[\sigma,p].  
$$

\medskip {\it(c) Decomposition of generating series of polynomials.} Let $t$ be a formal variable, then one computes
\begin{equation} \label{égalité1}
\sum_{i\mathrm{\ odd\ }\geq 3}(\lambda_i)_{|B'=0}t^i
={{Num(A,B,A',t)}\over{(1-((A+B+A')t)^2)(1-((A+A')t)^2)(1-((A+B)t)^2)(1-(At)^2)}}
\end{equation}
where $Num(A,B,A',t)\in\C[A,B,A',t]$. One checks that $Num(A,B,A',t)$ is divisible by $\lambda_3(A,B,A',0)$ and one sets 
$Num_1(A,B,A',t):=Num(A,B,A',t)/\lambda_3(A,B,A',0)$. One computes 
\begin{align*}
Num_1(A,B,A',t) & ={1\over 3}t^3(3-(2A^2+B^2+A^{\prime 2}+2AB+BA'+2AA')t^2 \\ & 
-A(A^3+2A^2B+AB^2+2A^2A'+AA^{\prime 2}+3ABA'+B^2A'+BA^{\prime 2})t^4), 
\end{align*}
so 
$$
\sum_{i\mathrm{\ odd\ }\geq 3}{(\lambda_i)_{|B'=0}\over (\lambda_3)_{|B'=0}}\cdot t^i
={{Num_1(A,B,A',t)}\over{(1-((A+B+A')t)^2)(1-((A+A')t)^2)(1-((A+B)t)^2)(1-(At)^2)}}
$$
which upon specialization $A'=0$ and symmetrization in $(A,B)$ gives 
$$
\sum_{i\mathrm{\ odd\ }\geq 3}\Big(\Big({(\lambda_i)_{|B'=0}\over (\lambda_3)_{|B'=0}}\Big)_{A'=0}+(A\leftrightarrow B)\Big)\cdot t^i
={{Num_2(A,B,t)}\over{(1-((A+B)t)^2)^2(1-(At)^2)^2(1-(Bt)^2)^2}},  
$$
where $Num_2(A,B,t):=Num_1(A,B,0,t)(1-(Bt)^2)^2+(A\leftrightarrow B)$. 

Set 
\begin{equation} \label{déf:D}
D(\sigma,\pi,t):=(1-2\sigma t^2+\sigma^2 t^4-\pi t^6)^2, 
\end{equation}
then  
$$
(1-((A+B)t)^2)^2(1-(At)^2)^2(1-(Bt)^2)^2=D(\sigma,\pi,t), 
$$ 
so the l.h.s. belongs to $\C[\sigma,\pi][t]\subset\C[A,B][t]$ (and even to $1+t\C[\sigma,\pi][t]$). 
It follows from (a) that for $i$ odd $\geq 3$, $\mathrm{sym}_{A,B}((\lambda_i)_{|B'=0}/(\lambda_3)_{|B'=0})_{|A'=0}$
belongs to $$\C[A,B]_{\mathrm{invariant\ under\ }(A,B)\mapsto(-A,-B)\mathrm{\ and\ }(A,B)\mapsto(B,A)}=\C[\sigma,p].$$ It follows that 
$Num_2(A,B,t)$ belongs to $\C[\sigma,p]$. The decomposition of $Num_2(A,B,t)$ along (\ref{decomposition}) can be 
computed as follows: 
$$
Num_2(A,B,t)=X(\sigma,\pi,t)+Y(\sigma,\pi,t)\cdot p+0\cdot p^2, 
$$
where 
\begin{equation} \label{déf:X}
X(\sigma,\pi,t):=2t^3-3\sigma t^5+(4/3)\sigma^2t^7+(2\pi-(1/3)\sigma^3)t^9-(1/3)\pi\sigma t^{11}, \end{equation}
\begin{equation} \label{déf:Y}
Y(\sigma,\pi,t):=(5/3)t^5-2\sigma t^7+(1/3)\sigma^2t^9+(1/3)\pi t^{11}.
\end{equation}
It follows that {\it the decomposition according to (\ref{decomposition}) of the polynomials 
$\mathrm{sym}_{A,B}({(\lambda_i)_{|B'=0}\over (\lambda_3)_{|B'=0}})_{|A'=0}$ of $\C[\sigma,p]$ can be derived from the following identity 
\begin{equation} \label{décomposition:rapports:lambda}
\sum_{i\mathrm{\ odd\ }\geq 3}\Big(\Big({(\lambda_i)_{|B'=0}\over (\lambda_3)_{|B'=0}}\Big)_{A'=0}+(A\leftrightarrow B)\Big)\cdot t^i
={X(\sigma,\pi,t)\over D(\sigma,\pi,t)}+{Y(\sigma,\pi,t)\over D(\sigma,\pi,t)}\cdot p+0\cdot p^2, 
\end{equation}
where $X(\sigma,\pi,t),Y(\sigma,\pi,t),D(\sigma,\pi,t)$ are given by (\ref{déf:X}), (\ref{déf:Y}), (\ref{déf:D}).}

If $i$ is odd $\geq 3$, then $\tilde\sigma_i(A,B,0,0)=0$. Define $\tilde\sigma_i^{lin}(A,B,A',B')$ to be the part of 
$\tilde\sigma_i(A,B,A',B')$ which is linear in $A',B'$, so 
\begin{equation}\label{def:sigma:lin}
\tilde\sigma_i^{lin}(A,B,A',B')
={\partial \tilde\sigma_i\over\partial A'}(A,B,0,0)\cdot A'+{\partial \tilde\sigma_i\over\partial B'}(A,B,0,0)\cdot B'.
\end{equation} 

In Subsection \ref{results:S}, we introduced elements $P_{i3},P_{i5}$ of $\mathbb{A}\subset\C[A,B,A',B']$ satisfying identity 
(\ref{id:sigma:i}). 
One has $\tilde\sigma_i(A,B,A',B')=P_{i3}(A,B,A',B')\tilde\sigma_3(A,B,A',B')+P_{i5}(A,B,A',B')\tilde\sigma_5(A,B,A',B')$, so 
$$
\tilde\sigma_i^{lin}(A,B,A',B')=P_{i3}(A,B,0,0)\tilde\sigma_3^{lin}(A,B,A',B')+P_{i5}(A,B,0,0)\tilde\sigma_5^{lin}(A,B,A',B'), 
\quad (\mathrm{Eq}_i)
$$
One computes
$$
\tilde\sigma_i^{lin}(A,B,A',B')=i\big((A+B)^{i-1}-A^{i-1}\big)\cdot A'+i\big((A+B)^{i-1}-B^{i-1}\big)\cdot B'. 
$$
The combination $\sum_{i\mathrm{\ odd\ }\geq 3}t^i\cdot({\mathrm{Eq}_i})$ yields
\begin{align*}
& tA'\big({1+(t(A+B))^2\over(1-(t(A+B))^2)^2}-{1+(tA)^2\over(1-(tA)^2)^2}\big)+((A,A')\leftrightarrow(B,B')) 
\\ & =A'\Big(3\big((A+B)^2 - A^2\big)P_3(A,B,t) 
+ 5\big( (A+B)^4 - A^4 \big)P_5(A,B,t)\Big) +((A,A')\leftrightarrow(B,B')), 
\end{align*}
where $P_3(A,B,t):=\sum_{i\mathrm{\ odd\ }\geq 3}P_{i3}(A,B,0,0)t^i$, $P_5(A,B,t):=\sum_{i\mathrm{\ odd\ }\geq 3}P_{i5}(A,B,0,0)t^i$. 

Taking coefficients of $A'$ and $B'$ in this equation, we obtain the following system
$$3\big((A+B)^2 - A^2\big)P_3(A,B,t) 
+ 5\big( (A+B)^4 - A^4 \big)P_5(A,B,t) =t\big({1+(t(A+B))^2\over(1-(t(A+B))^2)^2}-{1+(tA)^2\over(1-(tA)^2)^2}\big), $$
$$
3\big((A+B)^2 - B^2\big)P_3(A,B,t) 
+ 5\big( (A+B)^4 - B^4 \big)P_5(A,B,t) =t\big({1+(t(A+B))^2\over(1-(t(A+B))^2)^2}-{1+(tB)^2\over(1-(tB)^2)^2}\big),
$$
leading to 
\begin{align*}
& 15 AB(A^2-B^2)(2A^2+5AB+2B^2)P_5(A,B,t)
\\ &
=  3(A^2-B^2)t{1+(t(A+B))^2\over(1-(t(A+B))^2)^2}+3(B^2-(A+B)^2)t{1+(tA)^2\over(1-(tA)^2)^2}+3((A+B)^2-A^2)t
{1+(tB)^2\over(1-(tB)^2)^2}, 
\end{align*}
which gives, after computation, 
$$
P_5(A,B,t)={t^5\over 5}{5-6\sigma t^2+\sigma^2 t^4+\pi t^6\over (1-2\sigma t^2+\sigma^2 t^4-\pi t^6)^2}={3\over 5}
{Y(\sigma,\pi,t)\over D(\sigma,\pi,t)}, 
$$
therefore 
\begin{equation} \label{expression:P5i}
\sum_{i\mathrm{\ odd\ }\geq 3}P_{i5}(A,B,0,0)t^i={3\over 5}
{Y(\sigma,\pi,t)\over D(\sigma,\pi,t)}. 
\end{equation}

\medskip {\it(d) A family $(\mathbf{P}_{ij})_{i,j}$ of polynomials and their properties $(\mathrm{Aux}_{ij})$.} 
Let $\mathbf{x}_2,\mathbf{x}_6$ be free commutative variables of weight degrees 2,6. For $i,j$ odd integers $\geq 3$, 
define the polynomial $\mathbf{P}_{ij}(\mathbf{x}_2,\mathbf{x}_6)$ of $\C[\mathbf{x}_2,\mathbf{x}_6]$ by 
\begin{align} \label{def:Pij}
\mathbf{P}_{ij}(\mathbf{x}_2,\mathbf{x}_6):={3\over10}\Big( & {X\over D}\Big(\sigma(\mathbf{x}_2,\mathbf{x}_6),\pi(\mathbf{x}_2,\mathbf{x}_6),t\Big)
{Y\over D}\Big(\sigma(\mathbf{x}_2,\mathbf{x}_6),\pi(\mathbf{x}_2,\mathbf{x}_6),u\Big) \\ & \nonumber 
-{Y\over D}\Big(\sigma(\mathbf{x}_2,\mathbf{x}_6),\pi(\mathbf{x}_2,\mathbf{x}_6),t\Big)
{X\over D}\Big(\sigma(\mathbf{x}_2,\mathbf{x}_6),\pi(\mathbf{x}_2,\mathbf{x}_6),u\Big)
\Big|t^iu^j\Big), 
\end{align}
where 
\begin{equation}\label{def:sigma:pi}
\sigma(\mathbf{x}_2,\mathbf{x}_6):={1\over 4}\mathbf{x}_2, \quad 
\pi(\mathbf{x}_2,\mathbf{x}_6):={1\over 6}\mathbf{x}_6-{1\over 96}\mathbf{x}_2^3,
\end{equation} 
and where $(\cdots|t^iu^j)$ means ``coefficient of $t^iu^j$ in $\cdots$''. 

We have 
$$
\mathbf{P}_{ij}((\tilde\sigma_2)_{|A'=B'=0},(\tilde\sigma_6)_{|A'=B'=0})=\mathbf{P}_{ij}(4\sigma,6\pi+4\sigma^3)
$$
by (\ref{expression:sigma:2:6}). Using $(\sigma(4\sigma,6\pi+4\sigma^3),\pi(4\sigma,6\pi+4\sigma^3))=(\sigma,\pi)$, 
and separating variables, one gets  
$$
\mathbf{P}_{ij}(4\sigma,3\pi+\sigma^3)=
{3\over10}\Big( \Big( {X\over D}(\sigma,\pi,t)\Big|t^i\Big)
\Big({Y\over D}(\sigma,\pi,t)\Big|t^j\Big)  -\Big({Y\over D}(\sigma,\pi,t)\Big|t^i\Big)
\Big({X\over D}(\sigma,\pi,t)\Big|t^j\Big)\Big),
$$
which is equal to 
$$
{3\over10}\Big( {Y\over D}(\sigma,\pi,t)\Big|t^j\Big)
\Big({X\over D}(\sigma,\pi,t)+p{Y\over D}(\sigma,\pi,t)\Big|t^i\Big)
-{3\over10}\Big( {Y\over D}(\sigma,\pi,t)\Big|t^i\Big)
\Big({X\over D}(\sigma,\pi,t)+p{Y\over D}(\sigma,\pi,t)\Big|t^j\Big),
$$
which by (\ref{décomposition:rapports:lambda}) and (\ref{expression:P5i}) is equal to 
$$
-{1\over2}\Big(
(P_{i5})_{|A'=B'=0}\Big(\Big({(\lambda_j)_{|B'=0}\over (\lambda_3)_{|B'=0}}\Big)_{A'=0}+(A\leftrightarrow B)\Big)-(i\leftrightarrow j)
\Big). 
$$
All this implies: 

\begin{lemma} \label{PropB} The polynomial $\mathbf{P}_{ij}(\mathbf{x}_2,\mathbf{x}_6)$ defined by (\ref{def:Pij}) satisfies
\begin{equation*}
\mathbf{P}_{ij}((\tilde\sigma_2)_{|A'=B'=0},(\tilde\sigma_6)_{|A'=B'=0})=-{1\over2}\Big(
(P_{i5})_{|A'=B'=0}\Big(\Big({(\lambda_j)_{|B'=0}\over (\lambda_3)_{|B'=0}}\Big)_{A'=0}+(A\leftrightarrow B)\Big)-(i\leftrightarrow j)\Big). 
\quad (\mathrm{Aux}_{ij})
\end{equation*}
\end{lemma}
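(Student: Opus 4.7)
The plan is to reduce $(\mathrm{Aux}_{ij})$ to a direct coefficient extraction from the two generating-series identities (\ref{décomposition:rapports:lambda}) and (\ref{expression:P5i}) established earlier in the subsection. First I would substitute $\mathbf{x}_2=(\tilde\sigma_2)_{|A'=B'=0}=4\sigma$ and $\mathbf{x}_6=(\tilde\sigma_6)_{|A'=B'=0}=6\pi+4\sigma^3$, given by (\ref{expression:sigma:2:6}), into the definition (\ref{def:Pij}). A short verification from (\ref{def:sigma:pi}) shows that $\sigma(4\sigma,6\pi+4\sigma^3)=\sigma$ and $\pi(4\sigma,6\pi+4\sigma^3)=\pi$, so the formal series $X/D$ and $Y/D$ appearing in (\ref{def:Pij}) are preserved by the substitution. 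After reading $u$ as a second copy of $t$, $\mathbf{P}_{ij}(4\sigma,6\pi+4\sigma^3)$ equals $(3/10)$ times the antisymmetrization in $(i,j)$ of $(X/D\,|\,t^i)(Y/D\,|\,t^j)$.

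The conceptual bridge is the elementary identity
\[
\Big(\tfrac{X}{D}\Big|t^i\Big)\Big(\tfrac{Y}{D}\Big|t^j\Big)-\Big(\tfrac{Y}{D}\Big|t^i\Big)\Big(\tfrac{X}{D}\Big|t^j\Big)
=\Big(\tfrac{Y}{D}\Big|t^j\Big)\Big(\tfrac{X}{D}+p\cdot\tfrac{Y}{D}\Big|t^i\Big)-\Big(\tfrac{Y}{D}\Big|t^i\Big)\Big(\tfrac{X}{D}+p\cdot\tfrac{Y}{D}\Big|t^j\Big),
\]
valid because the $p\cdot(Y/D\,|\,t^i)(Y/D\,|\,t^j)$ contributions cancel under antisymmetrization. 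This step is the whole point: it converts the $X/D$-coefficients into coefficients of the series $X/D+p\cdot Y/D$, whose closed form is given by (\ref{décomposition:rapports:lambda}) as $((\lambda_k)_{|B'=0}/(\lambda_3)_{|B'=0})_{|A'=0}+(A\leftrightarrow B)$, while (\ref{expression:P5i}) identifies $(Y/D\,|\,t^k)$ with $(5/3)\,(P_{k5})_{|A'=B'=0}$.

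Substituting these two closed-form identifications into the right-hand side of the bridge identity yields the right-hand side of $(\mathrm{Aux}_{ij})$ up to constants: the prefactor $3/10$ from (\ref{def:Pij}) multiplied by the factor $5/3$ coming from (\ref{expression:P5i}) gives $1/2$, and reordering the $(i\leftrightarrow j)$ swap to place $(P_{i5})_{|A'=B'=0}$ first supplies the desired $-1/2$. The main obstacle is purely arithmetic: verifying the two fixed-point identities $\sigma(4\sigma,6\pi+4\sigma^3)=\sigma$ and $\pi(4\sigma,6\pi+4\sigma^3)=\pi$ under (\ref{def:sigma:pi}), and tracking the rational normalizations cleanly through each substitution. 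No conceptual input beyond the generating-series machinery developed in Subsection (c) is required.
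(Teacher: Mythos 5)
Your proof is correct and takes essentially the same route as the paper: both substitute $(\tilde\sigma_2)_{|A'=B'=0}=4\sigma$, $(\tilde\sigma_6)_{|A'=B'=0}=6\pi+4\sigma^3$, use the fixed-point property of $\sigma(\cdot,\cdot),\pi(\cdot,\cdot)$ from (\ref{def:sigma:pi}) to reduce $\mathbf{P}_{ij}$ to the antisymmetrized product of $(X/D|t^i)(Y/D|t^j)$, and then apply the same trick of inserting $p\,Y/D$ (which cancels under antisymmetrization) so that (\ref{décomposition:rapports:lambda}) and (\ref{expression:P5i}) can be read off directly. The constant bookkeeping $3/10\cdot 5/3=1/2$ and the sign from reordering $(i\leftrightarrow j)$ match the paper's computation.
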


\medskip {\it(e) Characterization of images of linear maps between spaces of polynomials.}

Set 
\begin{align*}
\C[A,B,A',B']^{as\times as}:=\{P\in\C[A,B,A',B']|&  P\mathrm{\ is\ antisymmetric\ under\ both\ exchanges\ }\\ & 
(A,B)\leftrightarrow(A',B')\mathrm{\ and\ }(A,A')\leftrightarrow(B,B').\}
\end{align*}

Let $r:\C[A,B,A',B']^{as\times as}\to\C[A,B,A']$ be the linear map given by $P\mapsto P_{|B'=0}$. 

\begin{lemma} \label{CaractérisationIm(r)}
Let $\Pi$ be an element of $\C[A,B,A']$. Then $\Pi$ belongs to $\mathrm{im}(r)$ iff: 
\begin{enumerate}
\item the polynomial $\Pi(0,B,A')$ is symmetric in $(B,A')$; 
\item the polynomial $\Pi(A,0,A')$ is antisymmetric in $(A,A')$; 
\item the polynomial $\Pi(A,B,0)$ is antisymmetric in $(A,B)$.  
\end{enumerate}
\end{lemma}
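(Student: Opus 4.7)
The lemma has two directions. The necessity—that any $\Pi \in \mathrm{im}(r)$ satisfies (1), (2), (3)—is a direct verification. If $P \in \C[A,B,A',B']^{as\times as}$ and $\Pi := P_{|B'=0}$, then condition (3) follows from the $\tau_2$-antisymmetry $P(A,B,A',B') = -P(B,A,B',A')$ by setting $A'=B'=0$; condition (2) follows from the $\tau_1$-antisymmetry $P(A,B,A',B') = -P(A',B',A,B)$ by setting $B=B'=0$; and condition (1) follows from the composite identity $P(A,B,A',B') = P(B',A',B,A)$ (a consequence of the product of the two antisymmetries) by setting $A=B'=0$.

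For the sufficiency direction, the plan is to construct an explicit lift $P$ of any given $\Pi$ satisfying (1)--(3), by writing $P(A,B,A',B') = \Pi(A,B,A') + B' \cdot R(A,B,A',B')$ and solving for $R$. The two antisymmetry requirements $\tau_1 P = -P$ and $\tau_2 P = -P$ translate to the linear equations
$$
B' R(A,B,A',B') + B\, R(A',B',A,B) = -\Pi(A,B,A') - \Pi(A',B',A),
$$
$$
B' R(A,B,A',B') + A'\, R(B,A,B',A') = -\Pi(A,B,A') - \Pi(B,A,B').
$$
Evaluating the first equation at $B=B'=0$ gives $0 = -\Pi(A,0,A') - \Pi(A',0,A)$, which holds by (2); evaluating the second at $A'=B'=0$ gives $0 = -\Pi(A,B,0) - \Pi(B,A,0)$, which holds by (3). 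A cross-consistency at $A=B'=0$ uses condition~(1). These are precisely the divisibility conditions that allow one to peel off one power of $B'$ at a time and construct $R$ inductively on the $B'$-degree, at each stage invoking exactly one of (1)--(3).

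A more uniform alternative is a Hilbert-series comparison. Let $W \subset \C[A,B,A']$ denote the subspace defined by conditions (1)--(3); necessity gives $\mathrm{im}(r) \subset W$. The kernel of $r$ on $\C[A,B,A',B']^{as\times as}$ equals $A B A' B' \cdot \C[A,B,A',B']^{as\times as}$, because $B'\mid P$ combined with the two antisymmetries and their composite $\tau_1\tau_2$ forces each of $A, B, A'$ to divide $P$ as well. This yields an exact formula for the weight-graded Hilbert series of $\mathrm{im}(r)$, and an inclusion-exclusion computation over the three specializations defining $W$ gives the matching series for $W$, forcing $\mathrm{im}(r) = W$. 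The main obstacle in either route is bookkeeping: one must verify that exactly the conditions (1), (2), (3)—no stronger hidden constraint—suffice at each step of the inductive construction (or equivalently, that the dimensions match exactly rather than merely giving an inequality).
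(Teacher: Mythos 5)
Your necessity argument is correct. The gap is in sufficiency: you sketch two strategies but complete neither. In the first route, after writing $P=\Pi+B'R$, the two antisymmetry constraints couple $R$ at the three distinct argument tuples $(A,B,A',B')$, $(A',B',A,B)$, $(B,A,B',A')$; ``peeling off powers of $B'$'' is not a triangular induction, and the consistency checks you run at $B=B'=0$, $A'=B'=0$, $A=B'=0$ are necessary conditions on $\Pi$ but do not construct a polynomial solution $R$. The second route (Hilbert series comparison) is also incomplete: your observation that $\mathrm{ker}(r)=ABA'B'\cdot\C[A,B,A',B']^{as\times as}$ is correct, but the inclusion-exclusion computation of the series of the subspace $W$ cut out by (1)--(3) --- which, as you acknowledge, is where the real content lies, since the three specialization maps have overlapping constraints when pairs of variables are set to zero --- is not carried out.

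The paper's proof of sufficiency avoids both routes by exhibiting an explicit preimage. Given $\Pi$ satisfying (1)--(3), it writes down a fourteen-term linear combination $F(A,B,A',B')$ whose leading four terms form a signed average of $\Pi$ over the Klein four-group generated by $(A,B)\leftrightarrow(A',B')$ and $(A,A')\leftrightarrow(B,B')$, followed by lower-order correction terms obtained by setting various arguments to zero. The fourteen terms are then grouped so that each group visibly lies in $\C[A,B,A',B']^{as\times as}$, with exactly one of conditions (1), (2), (3) invoked per correction group, and the equality $r(F)=\Pi$ is verified directly using (1) and (2). This explicit averaging-with-corrections construction is the idea your proposal does not reach.
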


\begin{proof} Let $\Pi$ belong to $\mathrm{im}(r)$. Then $\Pi=r(F)$ for some $F\in\C[A,B,A',B']^{as\times as}$. 
Then $\Pi(0,B,A')=F(0,B,A',0)=-F(A',0,0,B)=F(0,A',B,0)=\Pi(0,A',B)$, so $\Pi$ satisfies (1). One proves in the same way that 
its satisfies (2) and (3). 

Let $\Pi$ be an element of $\C[A,B,A']$ satisfying (1), (2) and (3). Set 
\begin{align*}
& F(A,B,A',B'):= \Pi(A,B,A')-\Pi(A',B',A)-\Pi(B,A,B')+\Pi(B',A',B)\\ & +\Pi(A',0,A)-\Pi(B',0,B)+\Pi(B,A,0)-\Pi(B',A',0)-\Pi(0,A',B)+\Pi(0,B',A)
\\ & +\Pi(0,0,B)-\Pi(0,0,A)-\Pi(0,0,B')+\Pi(0,0,A'). 
\end{align*}
$F(A,B,A',B')$ is the sum of fourteen terms, which we denote $T_1,\ldots,T_{14}$. Then $T_1+\cdots+T_4$ and 
$T_{11}+\cdots+T_{14}$ are signed averages over 
the group $S_2\times S_2$, therefore they belong to $\C[A,B,A',B']^{as\times as}$. The fact that $\Pi$ satisfies (2) (resp., (3), (1)) implies that 
$T_5+T_6$ (resp., $T_7+T_8$, $T_9+T_{10}$) belongs to $\C[A,B,A',B']^{as\times as}$. It follows that $F(A,B,A',B')$ belongs to 
$\C[A,B,A',B']^{as\times as}$.

The image of $F$ by $r$ is
$$
r(F)(A,B,A')=F(A,B,A',0)=\Pi(A,B,A') -\Pi(0,A',0)-\Pi(0,0,0)+\Pi(0,0,A').  
$$
Since $\Pi$ satisfies (1), $\Pi(0,A',0)=\Pi(0,0,A')$ and since it satisfies (2), $\Pi(0,0,0)=0$. Therefore $r(F)=\Pi$. 
\end{proof}

\begin{lemma} \label{PropositionPi}
Let $\Pi$ be an element of $\C[A,B,A']$ and $\mathbf{P}$ be an element of $\C[\mathbf{x}_2,\mathbf{x}_6]$. The following conditions 
on $(\Pi,\mathbf{P})$ are equivalent: 
\begin{description}
\item[{\rm (a)}] $(\Pi,\mathbf{P})$ satisfies the following three conditions: 
\begin{description}
\item[{\rm (a1)}] $\Pi(0,B,A')=\Pi(0,A',B)$; 
\item[{\rm (a2)}] $\Pi(A,B,0)+\Pi(B,A,0)=\Pi(A,0,B)+\Pi(B,0,A)$; 
\item[{\rm (a3)}] $\mathbf{P}((\tilde\sigma_2)_{|A'=B'=0},(\tilde\sigma_6)_{|A'=B'=0})={1\over 2}(\Pi(A,B,0)+\Pi(B,A,0))$ (equality in $\C[A,B]$).   
\end{description}
\item[{\rm (b)}] there exists an element $X$ of $\C[A,B,A',B']^{as\times as}$ such that $(\mathbf{P}(\tilde\sigma_2,\tilde\sigma_6)+X)_{|B'=0}=\Pi$. 
\end{description}
\end{lemma}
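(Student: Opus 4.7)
The plan is to reduce Lemma \ref{PropositionPi} to Lemma \ref{CaractérisationIm(r)} by introducing the polynomial
$$ F(A,B,A') := \mathbf{P}(\tilde\sigma_2,\tilde\sigma_6)_{|B'=0}=\mathbf{P}\bigl(\tilde\sigma_2(A,B,A',0),\tilde\sigma_6(A,B,A',0)\bigr). $$
Condition (b) then reads $\Pi - F \in \mathrm{im}(r)$, which by Lemma \ref{CaractérisationIm(r)} is equivalent to requiring that $(\Pi-F)(0,B,A')$ be symmetric in $(B,A')$, that $(\Pi-F)(A,0,A')$ be antisymmetric in $(A,A')$, and that $(\Pi-F)(A,B,0)$ be antisymmetric in $(A,B)$. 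The task thus becomes that of translating these three conditions into (a1), (a2), (a3), using the behaviour of $F$ at the three relevant coordinate slices.

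First I would record, by direct substitution in \eqref{def:sigmai}, that for even $k$:
$$ \tilde\sigma_k(0,B,A',0)=3B^k+3(A')^k+(B+A')^k,\qquad \tilde\sigma_k(A,0,A',0)=2A^k+2(A')^k+2(A+A')^k, $$
$$ \tilde\sigma_k(A,B,0,0)=2A^k+2B^k+2(A+B)^k. $$
These formulas show that $F(0,B,A')$ is symmetric in $(B,A')$, $F(A,0,A')$ in $(A,A')$, and $F(A,B,0)$ in $(A,B)$; moreover they yield the key identity $F(A,0,A')=F(A,A',0)$, coming from the equality $\tilde\sigma_k(A,0,A',0)=\tilde\sigma_k(A,A',0,0)$.

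Using the symmetries of $F$, the three conditions characterising $\Pi-F\in\mathrm{im}(r)$ rewrite respectively as (1$'$)~$\Pi(0,B,A')$ is symmetric in $(B,A')$; (2$'$)~$\Pi(A,0,A')+\Pi(A',0,A)=2F(A,0,A')$; (3$'$)~$\Pi(A,B,0)+\Pi(B,A,0)=2F(A,B,0)$. Condition (1$'$) is exactly (a1), and, since $F(A,B,0)=\mathbf{P}((\tilde\sigma_2)_{|A'=B'=0},(\tilde\sigma_6)_{|A'=B'=0})$, condition (3$'$) is exactly (a3). Finally, using (a3) and $F(A,0,A')=F(A,A',0)$, one computes $2F(A,0,A')=2F(A,A',0)=\Pi(A,A',0)+\Pi(A',A,0)$, so that (2$'$) becomes $\Pi(A,0,A')+\Pi(A',0,A)=\Pi(A,A',0)+\Pi(A',A,0)$; after relabelling $A'\mapsto B$ this is precisely (a2). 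The converse implications are traced through the same chain of identifications.

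The only step requiring any thought is the identification $F(A,0,A')=F(A,A',0)$, a manifestation of the $S_3\wr S_2$-invariance of $\tilde\sigma_k$, which is what lets one pass between condition (2$'$) and identity (a2) through (a3). Everything else follows directly from Lemma \ref{CaractérisationIm(r)} together with the coordinate-slice formulas for $\tilde\sigma_k$.
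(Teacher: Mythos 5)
Your proof is correct and follows essentially the same route as the paper: reduce (b) to membership in $\mathrm{im}(r)$ via Lemma \ref{CaractérisationIm(r)}, observe that the slices of $\mathbf{P}(\tilde\sigma_2,\tilde\sigma_6)_{|B'=0}$ have the relevant symmetries, and convert the three resulting conditions into (a1), (a2), (a3). Your explicit identity $F(A,0,A')=F(A,A',0)$ is a nice way to make transparent the step the paper states without comment, namely that the conjunction of (\ref{2'}) and (\ref{3'}) is equivalent to (a2) together with (a3).
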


\begin{proof} Condition (b) is equivalent to the statement``$\Pi-\mathbf{P}(\tilde\sigma_2,\tilde\sigma_6)_{|B'=0}$ belongs to 
$\mathrm{im}(r)$''. According to Lemma \ref{CaractérisationIm(r)}, this is equivalent to the condition that 
$\Pi-\mathbf{P}(\tilde\sigma_2,\tilde\sigma_6)_{|B'=0}$ satisfies conditions (1), (2) and (3) from that lemma. As 
$(\tilde\sigma_2)_{|A=B'=0}$ and $(\tilde\sigma_6)_{|A=B'=0}$ are symmetric in $(A',B)$,   
so is $\mathbf{P}(\tilde\sigma_2,\tilde\sigma_6)_{|A=B'=0}$; the condition that $\Pi-\mathbf{P}(\tilde\sigma_2,\tilde\sigma_6)_{|B'=0}$ 
satisfies (1) is therefore equivalent to (a1). Besides, the condition that $\Pi-\mathbf{P}(\tilde\sigma_2,\tilde\sigma_6)_{|B'=0}$ satisfies 
(2) is equivalent to the antisymmetry in $(A,A')$ of the polynomial $\Pi(A,0,A')-\mathbf{P}(2(A^2+A^{\prime2}+(A+A')^2),
2(A^6+A^{\prime6}+(A+A')^6))$, in other words 
\begin{equation} \label{2'}
\mathbf{P}(2(A^2+A^{\prime2}+(A+A')^2),2(A^6+A^{\prime6}+(A+A')^6))={1\over 2}(\Pi(A,0,A')+\Pi(A',0,A)). 
\end{equation}
In the same way, the condition that $\Pi-\mathbf{P}(\tilde\sigma_2,\tilde\sigma_6)_{|B'=0}$ satisfies (3) is equivalent to 
the antisymmetry in $(A,B)$ of the polynomial $\Pi(A,B,0)-\mathbf{P}(2(A^2+B^2+(A+B)^2),2(A^6+B^6+(A+B)^6))$, in other words 
\begin{equation} \label{3'}
\mathbf{P}(2(A^2+B^2+(A+B)^2),2(A^6+B^6+(A+B)^6))={1\over 2}(\Pi(A,B,0)+\Pi(B,A,0)). 
\end{equation}
The conjunction of (\ref{2'}) and (\ref{3'}) is equivalent to the conjunction of (a2) and (a3). So we have proved the chain of equivalences 
$(\mathrm{b})\Leftrightarrow((1), (2) \mathrm{\ and\ } (3) )\Leftrightarrow((\mathrm{a1}), (\ref{2'})\mathrm{\ and\ } (\ref{3'}))\Leftrightarrow((\mathrm{a1}), (\mathrm{a2})\mathrm{\ and\ }  (\mathrm{a3}))$. 
\end{proof}

\medskip {\it(f) The polynomials $\mathbf{P}_{ij}$ satisfy property $(\mathrm{Cond}_{ij})$.}

\begin{lemma} \label{LemmeUV}
Let $U,V$ belong to $\C[A,B,A',B']^{as\times as}$. Then there exists $X$ in $\C[A,B,A',B']^{as\times as}$ such that 
$(U-\tilde\sigma_5\cdot X)_{|B'=0}=(V+\tilde\sigma_3\cdot X)_{|B'=0}=0$
iff 
$$
\tilde\sigma_3\cdot U+\tilde\sigma_5\cdot V\equiv 0\quad mod\quad  
{\mathbb{I}}\cdot ABA'B'\cdot\C[A,B,A',B']^{as\times as}.
$$
\end{lemma}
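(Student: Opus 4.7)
The plan is to prove the equivalence by handling both implications separately: the forward direction ($\Leftarrow$) is essentially bookkeeping with the antisymmetries, while the backward direction ($\Rightarrow$) reduces via the Koszul complex to the coprimality of $\tilde\sigma_3$ and $\tilde\sigma_5$ in the polynomial ring $\C[A,B,A',B']$.

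For the forward direction, suppose $X\in\C[A,B,A',B']^{as\times as}$ exists and set $\alpha:=U-\tilde\sigma_5\cdot X$, $\beta:=V+\tilde\sigma_3\cdot X$. Since $\tilde\sigma_3,\tilde\sigma_5\in\mathbb{A}$ and both involutions $s_1:(A,B)\leftrightarrow(A',B')$ and $s_2:(A,A')\leftrightarrow(B,B')$ lie in $G$ (one has $s_2=g_1$ directly and $s_1=g_3g_1g_3$), multiplication by $\tilde\sigma_3$ and $\tilde\sigma_5$ preserves $\C[A,B,A',B']^{as\times as}$; hence $\alpha,\beta$ lie there. The key observation is that any $P\in\C[A,B,A',B']^{as\times as}$ vanishing at $B'=0$ automatically lies in $ABA'B'\cdot\C[A,B,A',B']^{as\times as}$: the antisymmetry under $s_1$ propagates the vanishing to $B=0$, that under $s_2$ to $A'=0$, and their composite to $A=0$; and since $ABA'B'$ is fixed by both involutions, the quotient inherits the $(as\times as)$-property. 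Applied to $\alpha,\beta$, this gives
$$
\tilde\sigma_3\cdot U+\tilde\sigma_5\cdot V=\tilde\sigma_3\cdot\alpha+\tilde\sigma_5\cdot\beta\in\mathbb{I}\cdot ABA'B'\cdot\C[A,B,A',B']^{as\times as}.
$$

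For the backward direction, unpack the congruence: using Lemma \ref{lemma:restriction:géns:idéal} (so that $\mathbb{I}=(\tilde\sigma_3,\tilde\sigma_5)_{\mathbb{A}}$) and the fact that $\mathbb{A}$-multiplication preserves $ABA'B'\cdot\C[A,B,A',B']^{as\times as}$, write $\tilde\sigma_3 U+\tilde\sigma_5 V=\tilde\sigma_3\alpha+\tilde\sigma_5\beta$ with $\alpha,\beta\in ABA'B'\cdot\C[A,B,A',B']^{as\times as}$. Rearranging yields $\tilde\sigma_3\cdot(U-\alpha)=\tilde\sigma_5\cdot(\beta-V)$ in the UFD $\C[A,B,A',B']$. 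Granted coprimality of $\tilde\sigma_3$ and $\tilde\sigma_5$, Koszul exactness produces a unique $X\in\C[A,B,A',B']$ such that $U-\alpha=\tilde\sigma_5\cdot X$ and $\beta-V=\tilde\sigma_3\cdot X$. Since $U-\alpha\in\C[A,B,A',B']^{as\times as}$ and $\tilde\sigma_5$ is fixed by $s_1,s_2$, the quotient $X$ inherits the $(as\times as)$-property. Setting things back, $U-\tilde\sigma_5 X=\alpha$ and $V+\tilde\sigma_3 X=\beta$, both of which vanish at $B'=0$ since they lie in $ABA'B'\cdot\C[A,B,A',B']^{as\times as}$.

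The main obstacle is the coprimality of $\tilde\sigma_3$ and $\tilde\sigma_5$ in $\C[A,B,A',B']$ (equivalently, that they form a regular sequence). My plan to establish this combines two inputs. First, $G$-invariance constrains any hypothetical common irreducible factor $P$: its full $G$-orbit consists of pairwise non-associate irreducible factors of both $\tilde\sigma_3$ and $\tilde\sigma_5$, so the orbit size times $\deg P$ is at most $3$. This leaves only very few possibilities for the orbit type. Second, restricting to coordinate subspaces gives concrete finite lists of candidate restrictions: for instance, one computes $\tilde\sigma_3|_{A'=B'=0}=-6AB(A+B)$ and $\tilde\sigma_5|_{A'=B'=0}=-10AB(A+B)(A^2+AB+B^2)$, so any common factor $P$ must have $P|_{A'=B'=0}$ dividing $AB(A+B)$, and analogous constraints come from the $G$-translates of this specialization. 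One then rules out each remaining candidate by exhibiting a $G$-invariant polynomial that is a multiple of the candidate but does not divide $\tilde\sigma_3$, or equivalently by producing a point of $V(\tilde\sigma_3)$ at which $\tilde\sigma_5$ is nonzero. Once coprimality is in hand, the rest of the argument is formal.
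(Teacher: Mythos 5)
Your argument follows the same route as the paper's. The forward direction is handled by the same observation that vanishing at $B'=0$, combined with the two antisymmetries, forces divisibility by $ABA'B'$ (the paper uses this without spelling it out); the backward direction is the same UFD/Koszul argument: write the congruence as $\tilde\sigma_3(U-U')+\tilde\sigma_5(V-V')=0$, invoke coprimality to produce $X$, and observe that $X$ inherits the $(as\times as)$ property because $\tilde\sigma_3,\tilde\sigma_5$ are $G$-invariant. The paper simply \emph{asserts} the coprimality of $\tilde\sigma_3$ and $\tilde\sigma_5$ in $\C[A,B,A',B']$, whereas you sketch a verification — that is a reasonable addition, but it is precisely there that your proposal has a concrete error.

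You claim $\tilde\sigma_3|_{A'=B'=0}=-6AB(A+B)$ and $\tilde\sigma_5|_{A'=B'=0}=-10AB(A+B)(A^2+AB+B^2)$. Both are wrong: for any odd $k$ one has $\tilde\sigma_k(A,B,0,0)=0$. Indeed, specializing (\ref{def:sigmai}) at $A'=B'=0$ gives $A^k+B^k+(-A-B)^k+(-A)^k+(-B)^k+(A+B)^k$, which cancels pairwise when $k$ is odd. (You appear to have computed $2\sigma_k(A,B)$ — correct for even $k$, but not for odd $k$; compare with (\ref{expression:sigma:2:6}) and with the explicit statement elsewhere in the paper that $(\tilde\sigma_i)_{|B=B'=0}=0$ for odd $i$, which by $G$-invariance equals $(\tilde\sigma_i)_{|A'=B'=0}$.) Consequently the restriction $A'=B'=0$ yields the equation $0=0$ and gives no constraint on a putative common factor. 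If you want to carry out the coprimality check by restriction, you should instead restrict to a single hyperplane such as $B'=0$, where $(\tilde\sigma_3)_{|B'=0}=3A'B(2A+B+A')$ and $(\tilde\sigma_5)_{|B'=0}=(A+A'+B)^5-(A+A')^5-(A+B)^5+A^5$ are nonzero — but note these two restrictions still share the common factor $A'B(2A+B+A')$, so a single such restriction is not by itself decisive and more than one specialization (or an additional argument) is needed. The rest of your proof is sound and coincides with the paper's.
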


\begin{proof} If $(U,V)$ satisfies the first condition, then $U\equiv\tilde\sigma_5\cdot X$ 
and $V\equiv-\tilde\sigma_3\cdot X$ mod 
$ABA'B'\cdot\C[A,B,A',B']^{as\times as}$. Let $U',V'$ in $ABA'B'\cdot\C[A,B,A',B']^{as\times as}$ be such that 
$U-\tilde\sigma_5\cdot X=U'$ and $V+\tilde\sigma_3\cdot X=V'$, then 
$
\tilde\sigma_3 U+\tilde\sigma_5 V=
\tilde\sigma_3 (\tilde\sigma_5 X+U')+\tilde\sigma_5 (-\tilde\sigma_3 X+V')=\tilde\sigma_3 U'+\tilde\sigma_5 V'
\equiv 0$ mod ${\mathbb{I}}\cdot ABA'B'\cdot\C[A,B,A',B']^{as\times as}$, so $(U,V)$ satisfies the second condition. 

Assume that $(U,V)$ satisfies the second condition. Since 
$${\mathbb{I}}\cdot\C[A,B,A',B']^{as\times as}=\tilde\sigma_3\cdot\C[A,B,A',B']^{as\times as}+\tilde\sigma_5\cdot\C[A,B,A',B']^{as
\times as},$$ there exist  
$U',V'$ in $ABA'B'\cdot\C[A,B,A',B']^{as\times as}$, such that $\tilde\sigma_3\cdot U+\tilde\sigma_5\cdot V
=\tilde\sigma_3\cdot U'+\tilde\sigma_5\cdot V'$, which one rewrites as follows $\tilde\sigma_3\cdot (U-U')
+\tilde\sigma_5\cdot (V-V')=0$ (equality in $\C[A,B,A',B']$). 

The ring $\C[A,B,A',B']$ is an UFD, in which $\tilde\sigma_3$ and $\tilde\sigma_5$ have no common factor. This last equation then 
implies the existence of $X$ in $\C[A,B,A',B']$, such that $U-U'=\tilde\sigma_5\cdot X$, $V-V'=-\tilde\sigma_3\cdot X$. 
Taking into account symmetries, one finds that $X$ in fact belongs to $\C[A,B,A',B']^{as\times as}$. Then 
$(U-\tilde\sigma_5\cdot X)_{|B'=0}=U'_{|B'=0}=0$ and $(V+\tilde\sigma_3\cdot X)_{|B'=0}=-V'_{|B'=0}=0$. 
This implies that $(U,V)$ satisfies the first condition. 
\end{proof}
 
For each odd $i\geq 3$, $\lambda_i$ belongs to $\C[A,B,A',B']^{as\times as}$ (see (\ref{lambda:i})), i.e., is 
antisymmetric under $(A,B)\leftrightarrow(A',B')$ and $(A,A')\leftrightarrow(B,B')$, while $\tilde\sigma_i$
is symmetric under the same transformations. Equation (\ref{tau:ij}) then implies that for any odd $i,j\geq 3$, 
$\tau_{ij}$ belongs to $\C[A,B,A',B']^{as\times as}$. Since this element also belongs to $M=ABA'B'\cdot\C[A,B,A',B']^{as}$, 
one obtains
$$
\tau_{ij}\in ABA'B'\cdot\C[A,B,A',B']^{as\times as}. 
$$
Since $\mathbb{A}$ is contained in the subalgebra of $\C[A,B,A',B']$ of invariants under $(A,B)\leftrightarrow(A',B')$ and 
$(A,A')\leftrightarrow(B,B')$, the action of  $\mathbb{A}$ on $M=ABA'B'\cdot\C[A,B,A',B']^{as}$ restricts to an action on 
$M=ABA'B'\cdot\C[A,B,A',B']^{as\times as}$. 

\begin{lemma} \label{PropC} Let $i,j$ be integers $\geq 3$ and let $\mathbf{P}(\mathbf{x}_2,\mathbf{x}_6)$ be an element of 
$\C[\mathbf{x}_2,\mathbf{x}_6]$. Then 
$\mathbf{P}(\mathbf{x}_2,\mathbf{x}_6)$ satisfies $(\mathrm{Aux}_{ij})$ iff it satisfies 
\begin{equation*}
\tau_{ij}=\mathbf{P}(\tilde\sigma_2,\tilde\sigma_6)\cdot\tau_{35} \mathrm{\ mod\ } 
\mathbb{I}\cdot ABA'B'\cdot\C[A,B,A',B']^{as\times as} \quad\quad\quad\quad
(\mathrm{Cond}_{ij})
\end{equation*} 
in $ABA'B'\cdot\C[A,B,A',B']^{as\times as}$. 
\end{lemma}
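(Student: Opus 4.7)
The plan is to recast both $(\mathrm{Aux}_{ij})$ and $(\mathrm{Cond}_{ij})$ as existence statements for the same auxiliary polynomial $X\in\C[A,B,A',B']^{as\times as}$, by invoking Lemma \ref{PropositionPi} on the one hand and Lemma \ref{LemmeUV} on the other. The key computation making this work is the coincidence $\tilde\sigma_k|_{B'=0}=\lambda_k|_{B'=0}$ for all odd $k\geq 3$, which forces several polynomial identities to become trivial.

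First, I would substitute (\ref{id:sigma:i}) into $\tau_{ij}=\tilde\sigma_i\lambda_j-\tilde\sigma_j\lambda_i$ and into $\tau_{35}=\tilde\sigma_3\lambda_5-\tilde\sigma_5\lambda_3$, obtaining $\tau_{ij}-\mathbf{P}(\tilde\sigma_2,\tilde\sigma_6)\tau_{35}=\tilde\sigma_3 U+\tilde\sigma_5 V$, where $U:=P_{i3}\lambda_j-P_{j3}\lambda_i-\mathbf{P}(\tilde\sigma_2,\tilde\sigma_6)\lambda_5$ and $V:=P_{i5}\lambda_j-P_{j5}\lambda_i+\mathbf{P}(\tilde\sigma_2,\tilde\sigma_6)\lambda_3$ both lie in $\C[A,B,A',B']^{as\times as}$. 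Lemma \ref{LemmeUV} then translates $(\mathrm{Cond}_{ij})$ into the existence of $X\in\C[A,B,A',B']^{as\times as}$ satisfying $(U-\tilde\sigma_5 X)_{|B'=0}=0$ (call it $(\mathrm{E}1)$) and $(V+\tilde\sigma_3 X)_{|B'=0}=0$ (call it $(\mathrm{E}2)$). A direct expansion yields the coincidence $\tilde\sigma_k|_{B'=0}=A^k-(A+B)^k-(A+A')^k+(A+B+A')^k=\lambda_k|_{B'=0}$ for odd $k\geq 3$; combined with the divisibility result from (a), the quotients $Q_k:=\lambda_k|_{B'=0}/\lambda_3|_{B'=0}$ belong to $\C[A,B,A']$, and the $B'=0$ restriction of (\ref{id:sigma:i}) reads $Q_k=P_{k3}|_{B'=0}+P_{k5}|_{B'=0}\cdot Q_5$.

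Dividing $(\mathrm{E}2)$ by $\tilde\sigma_3|_{B'=0}=\lambda_3|_{B'=0}$ determines $X|_{B'=0}=\Pi-\mathbf{P}(\tilde\sigma_2,\tilde\sigma_6)|_{B'=0}$ with $\Pi:=P_{j5}|_{B'=0}Q_i-P_{i5}|_{B'=0}Q_j\in\C[A,B,A']$; substituting this into $(\mathrm{E}1)$, dividing by $\lambda_3|_{B'=0}$, and using the formula for $Q_k$ above collapses $(\mathrm{E}1)$ to the tautology $Q_iQ_j=Q_jQ_i$. Thus $(\mathrm{Cond}_{ij})$ is equivalent to the existence of $X\in\C[A,B,A',B']^{as\times as}$ with $(\mathbf{P}(\tilde\sigma_2,\tilde\sigma_6)+X)_{|B'=0}=\Pi$, which by Lemma \ref{PropositionPi} is equivalent to the conjunction (a1), (a2), (a3) for the pair $(\Pi,\mathbf{P})$.

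Finally I would check that (a1) and (a2) hold automatically: for (a1), the $(A,A')\leftrightarrow(B,B')$ invariance in $\mathbb{A}$ gives $P_{k5}(0,B,A',0)=P_{k5}(0,A',B,0)$, while $\lambda_k(0,B,A',0)=(A'+B)^k-(A')^k-B^k$ and $\lambda_3(0,B,A',0)=3A'B(A'+B)$ are manifestly symmetric in $(B,A')$; for (a2), writing $\lambda_k(A,B,A',0)=A'B(A'+2A+B)R_k(A,B,A')$ and extracting $R_k(A,B,0)$ and $R_k(A,0,B)$ from $\partial_{A'}\lambda_k|_{A'=0}$ and $\partial_B\lambda_k|_{B=0}$ respectively yields in both cases the polynomial $k\big((A+B)^{k-1}-A^{k-1}\big)/\big(B(2A+B)\big)$, so $Q_k(A,B,0)=Q_k(A,0,B)$, and combined with the $(A,A')\leftrightarrow(B,B')$ symmetry of $P_{k5}$ this gives $\Pi(A,B,0)=\Pi(A,0,B)$. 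Expanding $\tfrac{1}{2}\big(\Pi(A,B,0)+\Pi(B,A,0)\big)$ using the $(A,B)$-symmetry of $P_{k5}|_{A'=B'=0}$ then reproduces the RHS of $(\mathrm{Aux}_{ij})$, so (a3) is precisely $(\mathrm{Aux}_{ij})$. The main obstacle is recognizing and exploiting the coincidence $\tilde\sigma_k|_{B'=0}=\lambda_k|_{B'=0}$: it is the mechanism that simultaneously trivializes $(\mathrm{E}1)$ in the Lemma \ref{LemmeUV} side and the symmetry (a2) on the Lemma \ref{PropositionPi} side, and without it $(\mathrm{Cond}_{ij})$ would be strictly stronger than $(\mathrm{Aux}_{ij})$.
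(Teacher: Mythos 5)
Your proof is correct and follows essentially the same strategy as the paper's: both reduce $(\mathrm{Cond}_{ij})$ via Lemma \ref{LemmeUV} to the existence of an $X\in\C[A,B,A',B']^{as\times as}$ satisfying two $B'=0$ equations, use the decomposition $\tau_{ij}-\mathbf{P}\tau_{35}=\tilde\sigma_3 U+\tilde\sigma_5 V$ and the identity $\tilde\sigma_3|_{B'=0}=\lambda_3|_{B'=0}$ to show that one of the two equations is redundant, and then invoke Lemma \ref{PropositionPi} with the same polynomial $\Pi_{ij}$ to land on $(\mathrm{Aux}_{ij})$. The only differences are cosmetic: you run the chain of equivalences in the opposite direction, establish the redundancy of (E1) by explicitly solving for $X|_{B'=0}$ and substituting (rather than the paper's argument from $(\tau_{ij}-\mathbf{P}\tau_{35})_{|B'=0}=0$), and derive the automatic conditions (a1), (a2) from explicit formulas for $\lambda_k|_{B'=0}$ rather than from the single invariance $\Pi_{ij}(A,B,A')=\Pi_{ij}(A,A',B)$.
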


\begin{proof} Let $\Pi_{ij}:=(P_{j5})_{|B'=0}\cdot {(\lambda_i)_{B'=0}\over(\lambda_3)_{B'=0}}
-(P_{i5})_{|B'=0}\cdot {(\lambda_j)_{B'=0}\over(\lambda_3)_{B'=0}}$; this is an element of $\C[A,B,A']$. 
The polynomials $(P_{i5})_{|B'=0}$ and ${(\lambda_i)_{B'=0}\over(\lambda_3)_{B'=0}}$ of $\C[A,B,A']$ 
are invariant under the symmetry $(A,B,A')\leftrightarrow(A,A',B)$. It follows that $\Pi_{ij}$ is invariant under 
the same symmetry, so $\Pi_{ij}(A,B,A')=\Pi_{ij}(A,A',B)$. Substituting $A=0$, one derives $\Pi_{ij}(0,B,A')=\Pi_{ij}(0,A',B)$, 
so {\it $\Pi_{ij}$ satisfies condition (a1) in Lemma \ref{PropositionPi}}. Substituting $(A,B,0)$ and then $(A,0,B)$ to $(A,B,A')$, 
and summing up the resulting identities, one obtains $\Pi_{ij}(A,B,0)+\Pi_{ij}(B,A,0)=\Pi_{ij}(A,0,B)+\Pi_{ij}(B,0,A)$, so 
{\it $\Pi_{ij}$ satisfies condition (a2) in Lemma \ref{PropositionPi}.} 

One checks that {\it $\mathbf{P}(\mathbf{x}_2,\mathbf{x}_6)$ satisfies $(\mathrm{Aux}_{ij})$} iff $(\Pi_{ij},\mathbf{P})$ satisfies 
condition (a3) in Lemma \ref{PropositionPi}; as conditions (a1) and (a2) are automatically satisfied, this {\it is 
equivalent to $(\Pi_{ij},\mathbf{P})$ satisfying condition (a) in Lemma \ref{PropositionPi}.} Taking into account
 Lemma \ref{PropositionPi}, this is equivalent to $(\Pi_{ij},\mathbf{P})$ satisfying condition (b) in Lemma \ref{PropositionPi}, 
i.e., taking into account the definition of $\Pi_{ij}$, to the {\it existence of $X$ in $\C[A,B,A',B']^{as\times as}$ such that} 
\begin{equation} \label{eq3}
(\mathbf{P}(\tilde\sigma_2,\tilde\sigma_6)+X)_{|B'=0}=(P_{j5})_{|B'=0}\cdot {(\lambda_i)_{B'=0}\over(\lambda_3)_{B'=0}}
-(P_{i5})_{|B'=0}\cdot {(\lambda_j)_{B'=0}\over(\lambda_3)_{B'=0}}
\end{equation}
(equality in $\C[A,B,A']$). Since $(\lambda_3)_{B'=0}=(\tilde\sigma_3)_{|B'=0}$, since this element of $\C[A,B,A']$ is nonzero and 
since $\C[A,B,A']$ is a domain, {\it (\ref{eq3}) is equivalent to} 
\begin{equation} \label{eq2'}
(\mathbf{P}(\tilde\sigma_2,\tilde\sigma_6)\cdot\lambda_3+X\cdot\tilde\sigma_3-P_{j5}\cdot\lambda_i+P_{i5}\cdot\lambda_j)_{|B'=0}=0
\end{equation}
Since $(\tau_{ij})_{|B'=0}=(\tau_{35})_{|B'=0}=0$, $(\tau_{ij}-\mathbf{P}(\tilde\sigma_2,\tilde\sigma_6)\cdot\tau_{35})_{|B'=0}=0$. 
The equality 
$$
\tau_{ij}-\mathbf{P}(\tilde\sigma_2,\tilde\sigma_6)\cdot\tau_{35}
=\tilde\sigma_3\cdot(P_{i3}\lambda_j-P_{j3}\lambda_i-\mathbf{P}(\tilde\sigma_2,\tilde\sigma_6)\lambda_5
-X\tilde\sigma_5)
+\tilde\sigma_5\cdot(P_{i5}\lambda_j-P_{j5}\lambda_i+\mathbf{P}(\tilde\sigma_2,\tilde\sigma_6)\lambda_3
+X\tilde\sigma_3) 
$$
then implies  
$(\tilde\sigma_3)_{|B'=0}\cdot(P_{i3}\lambda_j-P_{j3}\lambda_i-\mathbf{P}(\tilde\sigma_2,\tilde\sigma_6)\lambda_5
-X\tilde\sigma_5)_{|B'=0}
+(\tilde\sigma_5)_{|B'=0}\cdot(P_{i5}\lambda_j-P_{j5}\lambda_i+\mathbf{P}(\tilde\sigma_2,\tilde\sigma_6)\lambda_3
+X\tilde\sigma_3)_{|B'=0}=0$. 
Condition (\ref{eq2'}) implies the vanishing of the second part of the left-hand side of this identity, which, taking into account the 
non-vanishing of $(\tilde\sigma_3)_{|B'=0}$ and the fact that $\C[A,B,A']$ is a domain, implies the following identity
\begin{equation} \label{eq1'}
(P_{i3}\lambda_j-P_{j3}\lambda_i-\mathbf{P}(\tilde\sigma_2,\tilde\sigma_6)\lambda_5
-X\tilde\sigma_5)_{|B'=0}=0
\end{equation}
It follows that {\it there exists an $X$ in $\C[A,B,A',B']^{as\times as}$ satisfying (\ref{eq2'}) iff there exists 
$X$ in $\C[A,B,A',B']^{as\times as}$ satisfying the conjunction of (\ref{eq2'}) and (\ref{eq1'}).} 
This existence condition is exactly the first side of the equivalence of Lemma \ref{LemmeUV}, with 
$U=P_{i3}\lambda_j-P_{j3}\lambda_i-\mathbf{P}(\tilde\sigma_2,\tilde\sigma_6)\lambda_5$, 
$V=P_{i5}\lambda_j-P_{j5}\lambda_i+\mathbf{P}(\tilde\sigma_2,\tilde\sigma_6)\lambda_3$. 
By virtue of the equivalence of Lemma \ref{LemmeUV}, {\it the existence of $X$ in $\C[A,B,A',B']^{as\times as}$ satisfying 
the conjunction of (\ref{eq2'}) and (\ref{eq1'}) is equivalent to}  
$$
\tilde\sigma_3\cdot (P_{i3}\lambda_j-P_{j3}\lambda_i-\mathbf{P}(\tilde\sigma_2,\tilde\sigma_6)\lambda_5)
+\tilde\sigma_5\cdot (P_{i5}\lambda_j-P_{j5}\lambda_i+\mathbf{P}(\tilde\sigma_2,\tilde\sigma_6)\lambda_3)
\equiv 0\quad \mathrm{mod}\quad  
{\mathbb{I}}\cdot ABA'B'\cdot\C[A,B,A',B']^{as\times as}, 
$$
i.e., 
$$
\tau_{ij}-\mathbf{P}(\tilde\sigma_2,\tilde\sigma_6)\tau_{35}\equiv 0\quad \mathrm{mod}\quad  
{\mathbb{I}}\cdot ABA'B'\cdot\C[A,B,A',B']^{as\times as}, 
$$
which is $(\mathrm{Cond}_{ij})$. \end{proof}

Combining Lemma \ref{PropB} and Lemma \ref{PropC}, we obtain: 
\begin{proposition}\label{prop:Pij:Condij}
The poynomial $\mathbf{P}_{ij}$ given by (\ref{def:Pij}) satisfies condition $(\mathrm{Cond}_{ij})$ from Lemma \ref{PropC}.  
\end{proposition}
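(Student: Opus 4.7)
The plan is straightforward: this proposition is essentially a conjunction of the two preceding lemmas, so the proof proposal amounts to chaining them. First I would invoke Lemma \ref{PropB}, which asserts precisely that the polynomial $\mathbf{P}_{ij}(\mathbf{x}_2,\mathbf{x}_6)$ defined by (\ref{def:Pij}) satisfies property $(\mathrm{Aux}_{ij})$. This lemma has already been established via an explicit computation combining the generating-series identity (\ref{décomposition:rapports:lambda}), the closed form (\ref{expression:P5i}) of $\sum_i P_{i5}(A,B,0,0) t^i$, and the change-of-variables formula (\ref{def:sigma:pi}) expressing $(\sigma,\pi)$ in terms of $(\mathbf{x}_2,\mathbf{x}_6)$ via the specialization (\ref{expression:sigma:2:6}).

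Second, I would apply Lemma \ref{PropC}, which states the equivalence $(\mathrm{Aux}_{ij}) \Longleftrightarrow (\mathrm{Cond}_{ij})$ for any $\mathbf{P}(\mathbf{x}_2,\mathbf{x}_6) \in \C[\mathbf{x}_2,\mathbf{x}_6]$. Applying the forward implication to $\mathbf{P} = \mathbf{P}_{ij}$ yields immediately that $\mathbf{P}_{ij}$ satisfies $(\mathrm{Cond}_{ij})$, which is the assertion of the proposition. So the proof is a one-line combination, and there is no nontrivial obstacle beyond what has already been handled in the two lemmas.

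Concretely, the plan writes as: \emph{By Lemma \ref{PropB}, the polynomial $\mathbf{P}_{ij}(\mathbf{x}_2,\mathbf{x}_6)$ defined by (\ref{def:Pij}) satisfies $(\mathrm{Aux}_{ij})$. By Lemma \ref{PropC}, this implies that $\mathbf{P}_{ij}$ satisfies $(\mathrm{Cond}_{ij})$.} The intellectual weight of the argument sits entirely in Lemma \ref{PropC}, whose proof chains the characterization of $\mathrm{im}(r)$ from Lemma \ref{CaractérisationIm(r)}, the equivalence of conditions (a) and (b) in Lemma \ref{PropositionPi}, and the UFD argument of Lemma \ref{LemmeUV} to translate a scalar identity in $\C[A,B,A']$ into a congruence in $ABA'B'\cdot\C[A,B,A',B']^{as\times as}$ modulo $\mathbb{I}$. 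Since all these ingredients are already in place, no further obstacle remains for Proposition \ref{prop:Pij:Condij} itself.
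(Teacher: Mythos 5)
Your proposal matches the paper's own proof exactly: the paper also obtains Proposition \ref{prop:Pij:Condij} directly by combining Lemma \ref{PropB} (which gives $(\mathrm{Aux}_{ij})$ for $\mathbf{P}_{ij}$) with the equivalence in Lemma \ref{PropC}. Nothing is missing and there is no deviation.
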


\medskip {\it(g) The inclusion ${\mathbb M}^{min}_0(\Sigma)\subset\mathrm{gr}_\Sigma^0({\mathbb A})\cdot\dot\tau_{35}$.}
For any odd $i,j\geq 3$, Proposition \ref{prop:Pij:Condij} implies the relation
$$
\tau_{ij}\equiv \mathbf{P}_{ij}(\tilde\sigma_2,\tilde\sigma_6)\cdot \tau_{35}\quad \mathrm{mod}\quad  {\mathbb{I}}\cdot M
$$
in $M$, where $\cdot$ is the action of $\mathbb{A}$ on $M$. The image of this relation in $\mathrm{gr}_\Sigma^0(M)
=M/{\mathbb{I}}\cdot M$ is 
\begin{equation}\label{id:tau:ij}
\dot\tau_{ij}=\mathbf{P}_{ij}(\dot\sigma_2,\dot\sigma_6)\cdot \dot\tau_{35}, 
\end{equation}
where $m\mapsto\dot m$ is the projection map $M\to M/{\mathbb{I}}\cdot M=\mathrm{gr}_\Sigma^0(M)$ and $\cdot$ is the action of 
$\mathrm{gr}_\Sigma^0(\mathbb{A})$ on $\mathrm{gr}_\Sigma^0(M)$. As ${\mathbb M}^{min}_0(\Sigma)$ is the subspace of 
$\mathrm{gr}_\Sigma^0(M)$ spanned by all the $\dot\tau_{ij}$ for odd $i,j\geq 3$, we derive:

\begin{proposition} \label{prop:incl}
${\mathbb M}^{min}_0(\Sigma)$ is contained in $\mathrm{gr}_\Sigma^0({\mathbb A})\cdot\dot\tau_{35}$. 
\end{proposition}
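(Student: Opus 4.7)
The plan is to assemble the ingredients already established in the preceding subsections; the heavy lifting has been done in Proposition \ref{prop:Pij:Condij}, so at this point the argument becomes a straightforward unwinding of definitions.

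First I would recall the description of ${\mathbb M}^{min}_0(\Sigma)$ from Subsection \ref{constr:M:min}, namely that it is the image of the map $\{,\}\colon \Lambda^2(\Sigma)\to\mathrm{gr}^0_\Sigma({\mathfrak{L}}_1)$. Since $\Sigma$ has basis $\{\sigma_k\mid k\text{ odd}\geq 3\}$, the subspace ${\mathbb M}^{min}_0(\Sigma)$ is linearly spanned by the elements $\{\sigma_i,\sigma_j\}$ for $i,j$ odd $\geq 3$. By the lemma at the end of Subsection \ref{subsection:ingredients}, each $\{\sigma_i,\sigma_j\}$ coincides, up to the scalar $-2$, with the image $\dot\tau_{ij}$ of $\tau_{ij}$ in $\mathrm{gr}^0_\Sigma(M)=M/{\mathbb{I}}\cdot M$. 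Thus ${\mathbb M}^{min}_0(\Sigma)$ is the linear span of $\{\dot\tau_{ij}\mid i,j\text{ odd}\geq 3\}$.

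Next I would invoke Proposition \ref{prop:Pij:Condij}, which tells us that for each pair $(i,j)$ of odd integers $\geq 3$, the polynomial $\mathbf{P}_{ij}(\mathbf{x}_2,\mathbf{x}_6)$ defined in (\ref{def:Pij}) satisfies the relation $(\mathrm{Cond}_{ij})$, i.e.\
$$
\tau_{ij}\equiv \mathbf{P}_{ij}(\tilde\sigma_2,\tilde\sigma_6)\cdot\tau_{35}\quad \mathrm{mod}\quad {\mathbb{I}}\cdot ABA'B'\cdot\C[A,B,A',B']^{as\times as}.
$$
Since ${\mathbb{I}}\cdot ABA'B'\cdot\C[A,B,A',B']^{as\times as}\subset {\mathbb{I}}\cdot M$, passing to the quotient $M/{\mathbb{I}}\cdot M=\mathrm{gr}^0_\Sigma(M)$ yields the identity
$$
\dot\tau_{ij}=\mathbf{P}_{ij}(\dot\sigma_2,\dot\sigma_6)\cdot\dot\tau_{35}
$$
in $\mathrm{gr}^0_\Sigma(M)$, where $\dot\sigma_2,\dot\sigma_6$ denote the images of $\tilde\sigma_2,\tilde\sigma_6$ in $\mathrm{gr}^0_\Sigma(\mathbb{A})=\mathbb{A}/\mathbb{I}$ and $\cdot$ refers to the $\mathrm{gr}^0_\Sigma(\mathbb{A})$-module structure on $\mathrm{gr}^0_\Sigma(M)$.

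Finally, since $\mathbf{P}_{ij}(\dot\sigma_2,\dot\sigma_6)\in\mathrm{gr}^0_\Sigma(\mathbb{A})$, each generator $\dot\tau_{ij}$ of ${\mathbb M}^{min}_0(\Sigma)$ lies in the cyclic submodule $\mathrm{gr}_\Sigma^0({\mathbb A})\cdot\dot\tau_{35}$, from which the desired inclusion follows. The only step requiring any genuine work is the passage from $(\mathrm{Cond}_{ij})$, which sits in $\C[A,B,A',B']^{as\times as}$, to a congruence modulo the larger subspace ${\mathbb{I}}\cdot M$; but this is immediate from the inclusion of submodules noted above, so there is no real obstacle once Proposition \ref{prop:Pij:Condij} is in hand.
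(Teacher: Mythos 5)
Your proof is correct and follows essentially the same route as the paper: invoke Proposition \ref{prop:Pij:Condij} to get $(\mathrm{Cond}_{ij})$, pass to $\mathrm{gr}_\Sigma^0(M)$ to obtain $\dot\tau_{ij}=\mathbf{P}_{ij}(\dot\sigma_2,\dot\sigma_6)\cdot\dot\tau_{35}$, and note that the $\dot\tau_{ij}$ span $\mathbb{M}_0^{min}(\Sigma)$. You even make explicit the small inclusion $\mathbb{I}\cdot ABA'B'\cdot\C[A,B,A',B']^{as\times as}\subset\mathbb{I}\cdot M$ that the paper leaves tacit.
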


\subsection{Computation of the Hilbert-Poincar\'e series of $\mathrm{gr}_\Sigma^0({\mathbb{A}})\cdot\dot\tau_{35}$}\label{C1}
\label{subsect:hilb:cyclic}

Recall that $\mathrm{gr}_\Sigma^0(M)$ is a module over the ring $\mathrm{gr}_\Sigma^0({\mathbb{A}})$, containing a vector 
$\dot\tau_{35}$. Recall also that $\mathrm{gr}_\Sigma^0({\mathbb{A}})$ contains the images $\dot\sigma_2,\dot\sigma_4$
of $\tilde\sigma_2,\tilde\sigma_4$ under ${\mathbb{A}}\to{\mathbb{A}}/{\mathbb{I}}=\mathrm{gr}_\Sigma^0({\mathbb{A}})$. 

\begin{lemma} \label{Prop:act:sigma4}
The element $\dot\sigma_4-{1\over 4}(\dot\sigma_2)^2$ of $\mathrm{gr}_\Sigma^0({\mathbb{A}})$ annihilates the vector
$\dot\tau_{35}$: $(\dot\sigma_4-{1\over 4}(\dot\sigma_2)^2)\cdot\dot\tau_{35}=0$ (equality in $\mathrm{gr}_\Sigma^0(M)$). 
\end{lemma}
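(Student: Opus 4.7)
The plan is to establish $\mu\tau_{35}\in\mathbb{I}\cdot M$, where $\mu:=\tilde\sigma_4-\tfrac14\tilde\sigma_2^2\in\mathbb{A}$; projecting to $\mathrm{gr}_\Sigma^0(M)=M/\mathbb{I}M$ then yields the lemma. Using $\tau_{35}=\tilde\sigma_3\lambda_5-\tilde\sigma_5\lambda_3$, I will write $\mu\tau_{35}=\tilde\sigma_3 U+\tilde\sigma_5 V$ with $U:=\mu\lambda_5$ and $V:=-\mu\lambda_3$. Both $U$ and $V$ lie in $\C[A,B,A',B']^{as\times as}$, since $\mu\in\mathbb{A}$ is doubly symmetric and each $\lambda_i$ is doubly antisymmetric. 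By the iff of Lemma~\ref{LemmeUV}, it then suffices to exhibit some $X\in\C[A,B,A',B']^{as\times as}$ with $(U-\tilde\sigma_5 X)|_{B'=0}=(V+\tilde\sigma_3 X)|_{B'=0}=0$: the lemma will then give $\mu\tau_{35}\equiv 0\bmod\mathbb{I}\cdot ABA'B'\C[A,B,A',B']^{as\times as}\subset\mathbb{I}\cdot M$.

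The construction of $X$ rests on the identity, for each odd $i\geq 3$,
$$\tilde\sigma_i|_{B'=0}=\lambda_i|_{B'=0}=A^i-(A+B)^i-(A+A')^i+(A+B+A')^i.$$
The right-hand expression for $\lambda_i|_{B'=0}$ is already recorded in Subsection 7.1(a); the matching expansion of $\tilde\sigma_i|_{B'=0}$ follows from \eqref{def:sigmai}, with the pairs $B^i+(-B)^i$ and $A'^i+(-A')^i$ cancelling by oddness of $i$. Taking $X|_{B'=0}:=\mu|_{B'=0}$ then yields $(U-\tilde\sigma_5 X)|_{B'=0}=\mu|_{B'=0}(\lambda_5|_{B'=0}-\tilde\sigma_5|_{B'=0})=0$, and symmetrically $(V+\tilde\sigma_3 X)|_{B'=0}=0$.

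To lift $\mu|_{B'=0}$ to a doubly antisymmetric $X$, I will invoke Lemma~\ref{CaractérisationIm(r)}, whose three conditions become: symmetry of $\mu|_{A=B'=0}(B,A')$ in $(B,A')$ (immediate from the double symmetry of $\mu$), and antisymmetry of $\mu|_{B=B'=0}(A,A')$ in $(A,A')$ and of $\mu|_{A'=B'=0}(A,B)$ in $(A,B)$. I will deduce the last two from the stronger vanishing $\mu|_{A'=B'=0}=\mu|_{B=B'=0}=0$. At $A'=B'=0$ the nine summands of $\tilde\sigma_k$ collapse pairwise to $2(A^k+B^k+(A+B)^k)$; applying Newton's identity to $\{A,B,-A-B\}$ (whose first power sum vanishes) gives $A^2+B^2+(A+B)^2=2\sigma$ and $A^4+B^4+(A+B)^4=2\sigma^2$ with $\sigma:=A^2+AB+B^2$, so $\tilde\sigma_2|_{A'=B'=0}=4\sigma$, $\tilde\sigma_4|_{A'=B'=0}=4\sigma^2$, and $\mu|_{A'=B'=0}=4\sigma^2-\tfrac14(4\sigma)^2=0$. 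The vanishing at $B=B'=0$ is identical after applying the $(A,A')\leftrightarrow(B,B')$ symmetry of $\mu$.

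With such $X$ obtained, Lemma~\ref{LemmeUV} immediately delivers $\mu\tau_{35}\in\mathbb{I}\cdot ABA'B'\C[A,B,A',B']^{as\times as}\subset\mathbb{I}\cdot M$, and projection completes the proof. The essential obstacle is the Newton-type vanishing $\mu|_{A'=B'=0}=0$; once this is noted, the rest slots straight into the machinery already prepared in Subsection 7.1.
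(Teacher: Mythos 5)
Your proof is correct but takes a genuinely different route from the paper's. The paper argues directly and explicitly: it observes the factorization $P:=\tilde\sigma_4-\tfrac14(\tilde\sigma_2)^2=-3(AB'-A'B)^2$, introduces the ad hoc element $Q:=3(A^2B'^2-A'^2B^2)$, checks $P|_{B'=0}=Q|_{B'=0}$, and then uses $\tilde\sigma_i|_{B'=0}=\lambda_i|_{B'=0}$ together with the double anti-invariance of $P\lambda_i$ and $Q\tilde\sigma_i$ to get $P\lambda_i\equiv Q\tilde\sigma_i\ \mathrm{mod}\ ABA'B'\C[A,B,A',B']^{as}$, whence $P\tau_{35}=\tilde\sigma_3\cdot P\lambda_5-\tilde\sigma_5\cdot P\lambda_3\equiv\tilde\sigma_3 Q\tilde\sigma_5-\tilde\sigma_5 Q\tilde\sigma_3=0\ \mathrm{mod}\ \mathbb{I}\cdot M$. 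You instead reuse the abstract lifting machinery of Subsection~\ref{C2}, namely Lemma~\ref{CaractérisationIm(r)} and Lemma~\ref{LemmeUV}, which the paper deploys only for the $\mathbf{P}_{ij}$ argument and not here; you replace the explicit $Q$ with an existence argument for an $X\in\C[A,B,A',B']^{as\times as}$ with $X|_{B'=0}=\mu|_{B'=0}$, and the only computational input you need is the Newton-type vanishing $\mu|_{A'=B'=0}=0$ (which is essentially the same fact the paper packages as $P=-3(AB'-A'B)^2$). Your route is more systematic and would transport more easily; the paper's is shorter and self-contained. One small imprecision: to get $\mu|_{B=B'=0}=0$ from $\mu|_{A'=B'=0}=0$, the relevant invariance is not $(A,A')\leftrightarrow(B,B')$ (which only yields $\mu(A,0,A',0)=\mu(0,A,0,A')$), but rather the $S_2$ generator of $G=S_3\wr S_2$, i.e.\ the swap $(A,B,A',B')\mapsto(A,A',B,B')$, which gives $\mu(A,0,A',0)=\mu(A,A',0,0)$ directly; alternatively, one can just recompute $\tilde\sigma_k(A,0,A',0)=2(A^k+A'^k+(A+A')^k)$ for even $k$. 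This does not affect the validity of your argument.
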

 
\begin{proof} Recall that the $\tilde\sigma_i$, as well as $\lambda_3,\lambda_5$ all belong to $\C[A,B,A',B']$. One checks that 
\begin{equation} \label{id:sigma:lambda}
(\tilde\sigma_3)_{|B'=0}=(\lambda_3)_{|B'=0}, \quad (\tilde\sigma_5)_{|B'=0}=(\lambda_5)_{|B'=0}.
\end{equation}
Set $P:=\tilde\sigma_4-{1\over 4}(\tilde\sigma_2)^2$ and $Q:=3(A^2B^{\prime 2}-A^{\prime 2}B^2)\in\C[A,B,A',B']^{as}$. 
One has $P=-3(AB'-A'B)^2$, so 
\begin{equation} \label{constat}
P_{|B'=0}=Q_{|B'=0}.
\end{equation}
Then
$(P\lambda_5)_{|B'=0}=(P\tilde\sigma_5)_{|B'=0}=(Q\tilde\sigma_5)_{|B'=0}$, using the second part of (\ref{id:sigma:lambda}) 
and then (\ref{constat}). Since both $P\lambda_5$ and $Q\tilde\sigma_5$ are anti-invariant under the exchanges
$(A,A')\leftrightarrow(B,B')$ and $(A,B)\leftrightarrow(A',B')$, it follows that $P\lambda_5=Q\tilde\sigma_5$
modulo $ABA'B'\cdot\C[A,B,A',B']^{as}$. In the same way, one proves (using the first part of (\ref{id:sigma:lambda}) that 
$P\lambda_3=Q\tilde\sigma_3$ modulo $ABA'B'\cdot\C[A,B,A',B']^{as}$.

As $\tau_{35}=\tilde\sigma_3\lambda_5-\tilde\sigma_5\lambda_3$, one has $P\cdot \tau_{35}=P\cdot(\tilde\sigma_3\lambda_5-\tilde\sigma_5\lambda_3)=\tilde\sigma_3\cdot P\lambda_5
-\tilde\sigma_5\cdot P\lambda_3\equiv\tilde\sigma_3\cdot Q\sigma_5-\tilde\sigma_5\cdot Q\sigma_3=0$ 
modulo $\sum_{i=3,5}\tilde\sigma_i\cdot ABA'B'\cdot\C[A,B,A',B']^{as}$. Therefore 
$P\cdot\tau_{35}\equiv0$ in $M=ABA'B'\cdot\C[A,B,A',B']^{as}$ modulo 
${\mathbb{I}}\cdot M=\sum_{i\mathrm{\ odd\ }\geq 3}\tilde\sigma_i\cdot ABA'B'\cdot\C[A,B,A',B']^{as}$. 
So $P\cdot\dot\tau_{35}=0$ in $\mathrm{gr}_\Sigma^0(M)$, as claimed. \end{proof}

Define $\overline{\mathrm{gr}_\Sigma^0({\mathbb{A}})}$ as the quotient of $\mathrm{gr}_\Sigma^0({\mathbb{A}})$
by the ideal generated by $\dot\sigma_4-{1\over 4}(\dot\sigma_2)^2$. Taking into account Proposition \ref{prop43}, 
this ring is the commutative ring freely generated by the images $\overline\sigma_2$ and  $\overline\sigma_6$
of $\dot\sigma_2$ and  $\dot\sigma_6$ under the projection map $\mathrm{gr}_\Sigma^0({\mathbb{A}})\to
\overline{\mathrm{gr}_\Sigma^0({\mathbb{A}})}$. Therefore: 

\begin{lemma}\label{str:gr:bar} There is a unique isomorphism of graded rings $\C[{\mathbf{x}}_2,{\mathbf{x}}_6]\simeq
\overline{\mathrm{gr}_\Sigma^0({\mathbb{A}})}$, given by ${\mathbf{x}}_2\mapsto\dot\sigma_2$, ${\mathbf{x}}_6\mapsto\dot\sigma_6$.  
 \end{lemma}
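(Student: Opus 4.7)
The plan is a direct application of the presentation of $\mathrm{gr}_\Sigma^0(\mathbb{A})$ obtained in Proposition \ref{prop43}. That proposition asserts that $\mathrm{gr}_\Sigma^0(\mathbb{A})$ is the graded ring generated by $\dot\sigma_2,\dot\sigma_4,\dot\sigma_6$ (of degrees $2,4,6$) subject to the unique relation
$$
\Big(\dot\sigma_4-\tfrac{1}{4}(\dot\sigma_2)^2\Big)\cdot\Big(-\tfrac{125}{432}(\dot\sigma_2)^3+\tfrac{175}{72}\dot\sigma_2\dot\sigma_4-\tfrac{25}{6}\dot\sigma_6\Big)=0.
$$
I would first pass to the quotient $\overline{\mathrm{gr}_\Sigma^0(\mathbb{A})}$ of $\mathrm{gr}_\Sigma^0(\mathbb{A})$ by the ideal generated by $\dot\sigma_4-\tfrac{1}{4}(\dot\sigma_2)^2$. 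In this quotient, the image of $\dot\sigma_4$ equals $\tfrac{1}{4}(\overline\sigma_2)^2$, so $\overline\sigma_4$ is redundant as a generator, and $\overline{\mathrm{gr}_\Sigma^0(\mathbb{A})}$ is generated as a graded ring by $\overline\sigma_2$ and $\overline\sigma_6$ alone.

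Next, I would examine what remains of the defining relation. The crucial observation is that this relation carries the factor $\dot\sigma_4-\tfrac{1}{4}(\dot\sigma_2)^2$, which is precisely the element being killed. Thus the relation becomes automatically $0=0$ in $\overline{\mathrm{gr}_\Sigma^0(\mathbb{A})}$, and no new relation is introduced by the passage to the quotient. Consequently, the generators $\overline\sigma_2$ and $\overline\sigma_6$ satisfy no algebraic relation at all, and the natural surjection $\C[\mathbf{x}_2,\mathbf{x}_6]\twoheadrightarrow\overline{\mathrm{gr}_\Sigma^0(\mathbb{A})}$ sending $\mathbf{x}_i\mapsto\overline\sigma_i$ has trivial kernel, hence is an isomorphism of graded rings.

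Finally, uniqueness of the isomorphism is automatic: a homomorphism out of a polynomial ring is determined by its values on the generators $\mathbf{x}_2,\mathbf{x}_6$, and these are prescribed by the statement. There is no real obstacle in this lemma; it is a formal consequence of the presentation given in Proposition \ref{prop43}, the whole point being that the second factor of that relation is nontrivial in $\mathrm{gr}_\Sigma^0(\mathbb{A})$ (so it must be the first factor that one kills to obtain a free polynomial ring), while the first factor is exactly the element we are quotienting by.
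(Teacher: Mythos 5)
Your proof is correct and follows the same route as the paper: invoke the presentation of $\mathrm{gr}_\Sigma^0(\mathbb{A})$ from Proposition \ref{prop43}, observe that the unique relation lies in the ideal $(\dot\sigma_4-\tfrac14(\dot\sigma_2)^2)$ and so becomes vacuous in the quotient, and then eliminate the generator $\dot\sigma_4$ via that same ideal to obtain a free polynomial ring on $\overline\sigma_2,\overline\sigma_6$. The paper states this more tersely (just citing Proposition \ref{prop43}), but the content is identical.
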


Lemma \ref{Prop:act:sigma4} implies that the ideal generated by $\dot\sigma_4-{1\over 4}(\dot\sigma_2)^2$
annihilates the cyclic submodule $\mathrm{gr}_\Sigma^0({\mathbb{A}})\cdot\dot\tau_{35}$ of $\mathrm{gr}_\Sigma^0(M)$ 
generated by $\dot\tau_{35}$, therefore that $\mathrm{gr}_\Sigma^0({\mathbb{A}})\cdot\dot\tau_{35}$ has a 
structure of module over $\overline{\mathrm{gr}_\Sigma^0({\mathbb{A}})}$, of which the $\mathrm{gr}_\Sigma^0({\mathbb{A}})$-module 
$\mathrm{gr}_\Sigma^0({\mathbb{A}})\cdot\dot\tau_{35}$ is the pullback under the projection $\mathrm{gr}_\Sigma^0({\mathbb{A}})\to
\overline{\mathrm{gr}_\Sigma^0({\mathbb{A}})}$. 
 
\begin{lemma} \label{LemmeIndAlg}
The morphism $\C[{\mathbf{x}}_2,{\mathbf{x}}_6]\to\C[A,A']$, ${\mathbf{x}}_2\mapsto(\tilde\sigma_2)_{|B=B'=0}$, ${\mathbf{x}}_6\mapsto(\tilde\sigma_6)_{|B=B'=0}$, is injective. 
\end{lemma}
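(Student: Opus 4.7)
The plan is to translate the injectivity into an algebraic independence statement for two explicit polynomials in $\C[A,A']$, and to establish the latter via $S_3$-invariant theory on $\C[A,A']$. First I would record the specializations: from definition (\ref{def:sigmai}), for any even $k$,
$$(\tilde\sigma_k)_{|B=B'=0}=2\bigl(A^k+(A')^k+(-A-A')^k\bigr)=2p_k,$$
where $p_k$ denotes the $k$-th power sum in the three quantities $A,A',\omega:=-A-A'$. So the morphism sends $\mathbf{x}_2\mapsto 2p_2$ and $\mathbf{x}_6\mapsto 2p_6$, and it suffices to show these two images are algebraically independent in $\C[A,A']$.

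Next I would exploit $e_1:=A+A'+\omega=0$: the three-variable Newton identities, specialized at $e_1=0$, give $p_2=-2e_2$ and $p_6=-2e_2^3+3e_3^2$, where $e_2,e_3$ are the second and third elementary symmetric polynomials in $A,A',\omega$. So the morphism sends $\mathbf{x}_2\mapsto -4e_2$ and $\mathbf{x}_6\mapsto -4e_2^3+6e_3^2$. For the independence I would invoke the $S_3$-action on $\C[A,A']$ permuting $\{A,A',\omega\}$: its invariant subring equals $\C[e_2,e_3]$, and the latter is a polynomial ring in $e_2,e_3$. One sees this by applying $S_3$-invariants (an exact functor in characteristic zero) to the short exact sequence $0\to(x_1+x_2+x_3)\to\C[x_1,x_2,x_3]\to\C[A,A']\to 0$; the invariants of the principal ideal $(x_1+x_2+x_3)$ are generated by the invariant $\sigma_1$, so the invariant quotient is $\C[\sigma_1,\sigma_2,\sigma_3]/(\sigma_1)\simeq\C[\sigma_2,\sigma_3]$, mapping isomorphically onto $\C[e_2,e_3]$.

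In particular $e_2,e_3$ are algebraically independent, hence so are $e_2$ and $e_3^2$; and since one can invert to recover $e_2=-\mathbf{x}_2/4$ and $e_3^2=(\mathbf{x}_6-\mathbf{x}_2^3/16)/6$, the images of $\mathbf{x}_2,\mathbf{x}_6$ generate the polynomial subring $\C[e_2,e_3^2]$ of $\C[A,A']$. This gives the desired injectivity, identifying the morphism with $\C[\mathbf{x}_2,\mathbf{x}_6]\stackrel{\sim}{\to}\C[e_2,e_3^2]\hookrightarrow\C[A,A']$. The only step requiring any care is the description of $\C[A,A']^{S_3}$ as the polynomial ring $\C[e_2,e_3]$; the rest is a short computation with Newton's identities and a change of variables. (Alternatively, one could bypass invariant theory entirely by a direct Jacobian computation for $(2p_2,2p_6)$ with respect to $(A,A')$, which is readily seen to be nonzero.)
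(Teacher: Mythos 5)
Your proof is correct and takes a conceptually different route to the same conclusion. The paper simply exhibits an explicit intermediate ring: it sets $u:=A^2+AA'+A'^2$, $v:=AA'$, verifies by direct computation that $(\tilde\sigma_2)_{|B=B'=0}=4u$ and $(\tilde\sigma_6)_{|B=B'=0}=2(2u^3+3uv^2+3v^3)$, asserts that $(u,v)$ is algebraically independent in $\C[A,A']$ and that $(4u,2(2u^3+3uv^2+3v^3))$ is algebraically independent in $\C[u,v]$, and composes the two injections. You instead recognize the specialized $\tilde\sigma_k$ as power sums $2p_k$ of $A,A',-A-A'$, reduce via Newton's identities at $e_1=0$, and invoke the invariant-theoretic fact $\C[A,A']^{S_3}=\C[e_2,e_3]$ (with $e_2,e_3$ a polynomial generating set), whence $e_2$ and $e_3^2$ are algebraically independent and the morphism is an isomorphism onto $\C[e_2,e_3^2]$. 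Note that your $e_2$ is $-u$ while your $e_3^2=v^2(u+v)$, so the two factorizations are closely related: the paper's $\C[u,v]$ strictly contains your $\C[e_2,e_3^2]$, since $v$ itself is not $S_3$-invariant. What your version buys is a transparent structural reason for the algebraic independences the paper only asserts ("one checks"), and it identifies the image of the morphism precisely; what the paper's version buys is brevity and no appeal to reflection-group invariant theory. The Jacobian alternative you mention at the end would also work, and is likely the unwritten content of the paper's "one checks."
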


\begin{proof} One checks that the pair $(A^2+AA'+A^{\prime 2},AA')$ is algebraically independent in $\C[A,A']$, and that
if $u,v$ are free commutative variables, then the pair $(4u,2(2u^3+3uv^2+3v^3))$ is algebraically independent in $\C[u,v]$. 
So the morphism $\C[{\mathbf{x}}_2,{\mathbf{x}}_6]\to\C[u,v]$, ${\mathbf{x}}_2\mapsto 4u$, ${\mathbf{x}}_6\mapsto 
2(2u^3+3uv^2+3v^3)$ is injective, as well as the morphism $\C[u,v]\to\C[A,A']$, $u\mapsto A^2+AA'+A^{\prime 2}$, $v\mapsto AA'$. 
As $(\tilde\sigma_2)_{|B=B'=0}=2(A^2+A^{\prime 2}+(A+A')^2)$ and $(\tilde\sigma_6)_{|B=B'=0}=2(A^6+A^{\prime 6}+(A+A')^6)$, 
the morphism $\C[{\mathbf{x}}_2,{\mathbf{x}}_6]\to\C[A,A']$ is the composition $\C[{\mathbf{x}}_2,{\mathbf{x}}_6]\to\C[u,v]\to\C[A,A']$, 
and is therefore injective. \end{proof}

Composing the isomorphism $\C[{\mathbf{x}}_2,{\mathbf{x}}_6]\simeq\overline{\mathrm{gr}_\Sigma^0({\mathbb{A}})}$ 
with the action map $\overline{\mathrm{gr}_\Sigma^0({\mathbb{A}})}\to{\mathrm{gr}}_\Sigma^0({\mathbb{A}})\cdot\dot\tau_{35}$ of $\overline{\mathrm{gr}_\Sigma^0({\mathbb{A}})}$ on $\mathrm{gr}_\Sigma^0({\mathbb{A}})\cdot\dot\tau_{35}$, 
one obtains a linear map $\C[{\mathbf{x}}_2,{\mathbf{x}}_6]\to\mathrm{gr}_\Sigma^0({\mathbb{A}})\cdot\dot\tau_{35}$. 

\begin{proposition} \label{PropInjectivité}
This linear map $\C[{\mathbf{x}}_2,{\mathbf{x}}_6]\to\mathrm{gr}_\Sigma^0({\mathbb{A}})\cdot\dot\tau_{35}$ induces an isomorphism 
of graded vector spaces $\C[{\mathbf{x}}_2,{\mathbf{x}}_6]\{8\}\simeq\mathrm{gr}_\Sigma^0({\mathbb{A}})\cdot\dot\tau_{35}$, where 
the right-hand side is graded by the weight degree and in the left-hand side $\{8\}$ means shifting the degree by 8. 
\end{proposition}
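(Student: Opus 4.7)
The plan is to establish surjectivity directly from prior results, and to prove injectivity by constructing a specialization map $\mathrm{gr}_\Sigma^0(M)\to\C[A,A']$ that detects $\dot\tau_{35}$ and then invoking Lemma~\ref{LemmeIndAlg}. The map sends the monomial $\mathbf{x}_2^a\mathbf{x}_6^b$, of weight $2a+6b$ in $\C[\mathbf{x}_2,\mathbf{x}_6]\{8\}$, to the element $\dot\sigma_2^a\dot\sigma_6^b\cdot\dot\tau_{35}$ of weight $2a+6b+8$ in $\mathrm{gr}_\Sigma^0(M)$, so it is a morphism of graded vector spaces. For \emph{surjectivity}, Lemma~\ref{Prop:act:sigma4} implies that the annihilator of $\dot\tau_{35}$ in $\mathrm{gr}_\Sigma^0({\mathbb{A}})$ contains the ideal generated by $\dot\sigma_4-\frac{1}{4}(\dot\sigma_2)^2$, so the action map $\mathrm{gr}_\Sigma^0({\mathbb{A}})\to\mathrm{gr}_\Sigma^0({\mathbb{A}})\cdot\dot\tau_{35}$ factors through $\overline{\mathrm{gr}_\Sigma^0({\mathbb{A}})}$; composing with the inverse of the isomorphism of Lemma~\ref{str:gr:bar}, one obtains the desired surjection $\C[\mathbf{x}_2,\mathbf{x}_6]\twoheadrightarrow\mathrm{gr}_\Sigma^0({\mathbb{A}})\cdot\dot\tau_{35}$.

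\medskip

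For \emph{injectivity}, I would define a $\C$-linear map $\phi\colon M\to\C[A,A']$ as follows. Each $m\in M=ABA'B'\cdot\C[A,B,A',B']^{as}$ has a unique expression $m=ABA'B'\cdot q_m$ with $q_m\in\C[A,B,A',B']^{as}$; set $\phi(m):=q_m(A,0,A',0)$. The key observation is that for every odd $i\geq 3$, the nine terms in (\ref{def:sigmai}) defining $\tilde\sigma_i(A,0,A',0)$ cancel in pairs, yielding $\tilde\sigma_i(A,0,A',0)=0$. Since $\phi(f\cdot m)=f|_{B=B'=0}\cdot\phi(m)$ for $f\in\mathbb{A}$, this means $\phi$ kills ${\mathbb{I}}\cdot M$ and descends to $\bar\phi\colon\mathrm{gr}_\Sigma^0(M)\to\C[A,A']$ intertwining the $\mathrm{gr}_\Sigma^0({\mathbb{A}})$-action with the action of $\mathrm{gr}_\Sigma^0({\mathbb{A}})$ on $\C[A,A']$ induced by $\dot\sigma_i\mapsto(\tilde\sigma_i)|_{B=B'=0}$ ($i=2,4,6$).

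\medskip

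The final step is to compute $\phi(\tau_{35})$. Using the identity $\phi(m)=(AA')^{-1}\partial_B\partial_{B'}m|_{B=B'=0}$ together with the facts that $\tilde\sigma_3,\tilde\sigma_5,\lambda_3,\lambda_5$ all vanish at $B=B'=0$, the Leibniz rule reduces the computation of $\phi(\tilde\sigma_3\lambda_5-\tilde\sigma_5\lambda_3)$ to evaluating the eight first partial derivatives $\partial_B\tilde\sigma_3,\partial_{B'}\tilde\sigma_3,\partial_B\lambda_5,\partial_{B'}\lambda_5,\partial_B\tilde\sigma_5,\partial_{B'}\tilde\sigma_5,\partial_B\lambda_3,\partial_{B'}\lambda_3$ at $B=B'=0$ and assembling them; a direct calculation gives
$$
\phi(\tau_{35})=30(A-A')(A+A')(2A+A')(A+2A')\in\C[A,A'],
$$
which is in particular nonzero. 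Given a relation $P(\dot\sigma_2,\dot\sigma_6)\cdot\dot\tau_{35}=0$ in $\mathrm{gr}_\Sigma^0(M)$, applying $\bar\phi$ yields $P(\tilde\sigma_2|_{B=B'=0},\tilde\sigma_6|_{B=B'=0})\cdot\phi(\tau_{35})=0$ in the integral domain $\C[A,A']$, forcing $P(\tilde\sigma_2|_{B=B'=0},\tilde\sigma_6|_{B=B'=0})=0$, whence $P=0$ by Lemma~\ref{LemmeIndAlg}. The main obstacle is verifying the non-vanishing of $\phi(\tau_{35})$; the identity above rests on the fortuitous fact that $\partial_B\partial_{B'}(\tilde\sigma_3\lambda_5)|_{B=B'=0}$ and $\partial_B\partial_{B'}(\tilde\sigma_5\lambda_3)|_{B=B'=0}$ turn out to be negatives of one another, so that their difference (which is $\partial_B\partial_{B'}\tau_{35}|_{B=B'=0}$) doubles rather than cancels.
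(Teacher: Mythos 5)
Your proposal is correct and follows essentially the same route as the paper's own proof: surjectivity via the factorization through $\overline{\mathrm{gr}_\Sigma^0(\mathbb{A})}\simeq\C[\mathbf{x}_2,\mathbf{x}_6]$ (Lemmas~\ref{Prop:act:sigma4} and~\ref{str:gr:bar}), and injectivity via the specialization $B=B'=0$ (your $\phi$ is the paper's map (\ref{spéc}) up to the harmless $AA'BB'$ factor), the key points being $(\tilde\sigma_i)_{|B=B'=0}=0$, the nonvanishing of $\phi(\tau_{35})=30(A-A')(A+A')(2A+A')(A+2A')$, and Lemma~\ref{LemmeIndAlg}. The only difference is that you spell out the Leibniz-rule computation of $\phi(\tau_{35})$, which the paper only states.
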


\begin{proof} As the element $\dot\tau_{35}\in\mathrm{gr}_\Sigma^0({\mathbb{A}})\cdot\dot\tau_{35}$ has degree 8, the map $\C[{\mathbf{x}}_2,{\mathbf{x}}_6]\to\mathrm{gr}_\Sigma^0({\mathbb{A}})\cdot\dot\tau_{35}$ induced by the action on $\dot\tau_{35}$ 
shifts the degrees by $8$. Since the action of $\mathrm{gr}_\Sigma^0({\mathbb{A}})$ on $\dot\tau_{35}$ factors through $\overline{\mathrm{gr}_\Sigma^0({\mathbb{A}})}$, we have $\mathrm{gr}_\Sigma^0({\mathbb{A}})\cdot\dot\tau_{35}
=\overline{\mathrm{gr}_\Sigma^0({\mathbb{A}})}\cdot\dot\tau_{35}$. Therefore the map $\C[{\mathbf{x}}_2,{\mathbf{x}}_6]\to
\mathrm{gr}_\Sigma^0({\mathbb{A}})\cdot\dot\tau_{35}$ is injective. 

Multiplying by $AA'BB'$ the composite map $\C[A,A',B,B']^{as}\hookrightarrow\C[A,A',B,B']\to\C[A,A']$, where the second map is 
the specialization for $B=B'=0$, we obtain a map 
\begin{equation} \label{spéc}
M=AA'BB'\cdot\C[A,A',B,B']^{as}\to AA'BB'\cdot\C[A,A'].  
\end{equation}
For each odd integer $i\geq 3$, there holds $(\tilde\sigma_i)_{|B=B'=0}=0$. It follows that the image of 
$\sum_{i\mathrm{\ odd\ }\geq 3}\tilde\sigma_i\cdot M={\mathbb{I}}\cdot M$ under (\ref{spéc}) is zero. 
It follows that there is a unique map 
$$
\mathrm{gr}_\Sigma^0(M)\to AA'BB'\cdot\C[A,A'],
$$ through which (\ref{spéc}) factors. 

If $k$ is an integer $\geq 2$, then this map intertwines the action of $\tilde\sigma_k$ on $\mathrm{gr}_\Sigma^0(M)$ with the 
product by $(\tilde\sigma_k)_{|B=B'=0}$ on $ABA'B'\cdot\C[A,A']$. It follows that the map 
%
$$\C[{\mathbf{x}}_2,{\mathbf{x}}_6]\to AA'BB'\C[A,A']$$
obtained as $\C[{\mathbf{x}}_2,{\mathbf{x}}_6]\to\mathrm{gr}_\Sigma^0({\mathbb{A}})\cdot\dot\tau_{35}\hookrightarrow
\mathrm{gr}_\Sigma^0(M)\to AA'BB'\C[A,A']$ is given by 
\begin{equation} \label{image:P}
P({\mathbf{x}}_2,{\mathbf{x}}_6)\mapsto P((\tilde\sigma_2)_{|B=B'=0},(\tilde\sigma_6)_{|B=B'=0})\cdot\mathrm{im}(\dot\tau_{35}\in\mathrm{gr}_\Sigma^0(M)\to AA'BB'\cdot \C[A,A']). 
\end{equation}
One computes 
$$
\mathrm{im}(\dot\tau_{35}\in\mathrm{gr}_\Sigma^0(M)\to AA'BB'\cdot \C[A,A'])=30AA'BB'(A^2-A^{\prime 2})(2A^2+5AA'+2A^{\prime 2}). 
$$
If $P\in\C[{\mathbf{x}}_2,{\mathbf{x}}_6]$ is in $\mathrm{ker}(\C[{\mathbf{x}}_2,{\mathbf{x}}_6]\to\mathrm{gr}_\Sigma^0(M))$, then the
 element (\ref{image:P}) is zero. As $\mathrm{im}(\dot\tau_{35}\in\mathrm{gr}_\Sigma^0(M)\to AA'BB'\cdot \C[A,A'])$ is nonzero and as 
$\C[A,A']$ has no zero divisors, $$P((\tilde\sigma_2)_{|B=B'=0},(\tilde\sigma_6)_{|B=B'=0})=0.$$ Then Lemma \ref{LemmeIndAlg} implies that 
$P=0$. This shows that the linear map $\C[{\mathbf{x}}_2,{\mathbf{x}}_6]\to AA'BB'\C[A,A']$ is injective. As this linear map decomposes as 
$\C[{\mathbf{x}}_2,{\mathbf{x}}_6]\to\mathrm{gr}_\Sigma^0({\mathbb{A}})\cdot\dot\tau_{35}\to AA'BB'\C[A,A']$, its injectivity implies that of 
$\C[{\mathbf{x}}_2,{\mathbf{x}}_6]\to\overline{\mathbb{A}}$.  
\end{proof}

Proposition \ref{PropInjectivité} enables one to compute the Hilbert series of $\mathrm{gr}_\Sigma^0({\mathbb{A}})\cdot\dot\tau_{35}
\simeq\C[{\mathbf{x}}_2,{\mathbf{x}}_6]\{8\}$ by $P_{\mathrm{gr}_\Sigma^0({\mathbb{A}})\cdot\dot\tau_{35}}(t)
=t^8\cdot P_{\C[{\mathbf{x}}_2,{\mathbf{x}}_6]}(t)$, therefore
\begin{equation} \label{SérieHilbMajorante}
P_{\mathrm{gr}_\Sigma^0({\mathbb{A}})\cdot\dot\tau_{35}}(t)={t^8\over(1-t^2)(1-t^6)}. 
\end{equation}

\subsection{A lower bound for the Hilbert-Poincar\'e series of $\mathbb{M}_0^{min}(\Sigma)$}\label{C3}\label{subsect:lower:bound:series}

In the proof of Proposition \ref{PropInjectivité}, we constructed a linear map 
$$\mathrm{gr}_\Sigma^0(M)\to AA'BB'\cdot\C[A,A'],
$$ such that 
the composed linear map $ABA'B'\cdot\C[A,B,A',B']^{as}\simeq
M\to M/{\mathbb{I}}\cdot M\simeq\mathrm{gr}_\Sigma^0(M)\to AA'BB'\cdot\C[A,A']$ is 
$ABA'B'\cdot P\mapsto ABA'B'\cdot P_{|B=B'=0}$. Recall that $\mathrm{gr}_\Sigma^0(M)$ contains 
elements $\dot\tau_{ij}$ defined for any odd $i,j\geq3$ (see introduction of Section \ref{Section:comp:Mminlowest}), and define 
$\overline\tau_{ij}$ as their images under $\mathrm{gr}_\Sigma^0(M)\to AA'BB'\cdot\C[A,A']$. 

As $\dot\tau_{ij}$ is itself the image of $\tau_{ij}\in M$ under $M\to\mathrm{gr}_\Sigma^0(M)$, $\overline\tau_{ij}$ is the image of 
$\tau_{ij}$ (defined in (\ref{tau:ij})) under the composed map $ABA'B'\cdot\C[A,B,A',B']^{as}\to AA'BB'\cdot\C[A,A']$. 

If $i$ is odd, then $\tilde\sigma_i(A,0,A',0)=\lambda_i(A,0,A',0)=0$, therefore 
$\tilde\sigma_i(A,B,A',B')\equiv{\partial\tilde\sigma_i\over\partial B}(A,0,A',0)\cdot B+
{\partial\tilde\sigma_i\over\partial B'}(A,0,A',0)\cdot B'$ and $\lambda_i(A,B,A',B')\equiv{\partial\lambda_i\over\partial B}(A,0,A',0)
\cdot B+{\partial\lambda_i\over\partial B'}(A,0,A',0)\cdot B'$ mod $(B,B')^2$ (relations in $\C[A,B,A',B']$), where $(B,B')$ is the ideal 
of $\C[A,B,A',B']$ generated by $B,B'$.  Plugging this in (\ref{tau:ij}), one obtains an expression of the class of $\tau_{ij}$ in 
$\C[A,B,A',B']/(B,B')^3$ in terms of ${\partial\tilde\sigma_i\over\partial B}(A,0,A',0),\ldots,{\partial\lambda_i\over\partial B'}(A,0,A',0)$. 
One checks that ${\partial\lambda_i\over\partial B}(A,0,A',0)={\partial\tilde\sigma_i\over\partial B}(A,0,A',0)$,  
${\partial\lambda_i\over\partial B'}(A,0,A',0)=-{\partial\tilde\sigma_i\over\partial B'}(A,0,A',0)$, therefore 
$\tau_{ij}\equiv-2BB'({\partial\tilde\sigma_i\over\partial B}{\partial\tilde\sigma_j\over\partial B'}
-{\partial\tilde\sigma_j\over\partial B}{\partial\tilde\sigma_i\over\partial B'})(A,0,A',0)$ mod $(B,B')^3$. 
One also computes ${\partial\tilde\sigma_i\over\partial B}(A,0,A',0)=i((A+A')^{i-1}-A^{i-1})$, 
${\partial\tilde\sigma_i\over\partial B'}(A,0,A',0)=i((A+A')^{i-1}-A^{\prime i-1})$. Together with the previous formula, this implies
$$
{\overline\tau}_{ij}=2BB'\cdot P_{ij}(A,A')\in ABA'B'\cdot\C[A,A'], 
$$
where 
\begin{align*}
P_{ij}(A,A'):= & ij(A^{\prime j-1}-A^{j-1})(A+A')^{i-1}+ij(A^{i-1}-A^{\prime i-1})(A+A')^{j-1}-ijA^{i-1}A^{\prime j-1}
+ijA^{j-1}A^{\prime i-1}\\ & \in  AA'\cdot\C[A,A'], 
\end{align*}
Recall that the subspace $\mathbb{M}_0^{min}(\Sigma)$ of $\mathrm{gr}_\Sigma^0(M)$ is defined as the linear span of all $\dot\tau_{ij}$, 
for $i,j$ odd $\geq 3$. Define  $\overline{\mathbb{M}_0^{min}(\Sigma)}\subset AA'BB'\cdot\C[A,A']$ to be the image of this 
subspace under $\mathrm{gr}_\Sigma^0(M)\to AA'BB'\cdot\C[A,A']$, so 
$$
\overline{\mathbb{M}_0^{min}(\Sigma)}=\mathrm{im}(\mathbb{M}_0^{min}(\Sigma)
\subset\mathrm{gr}_\Sigma^0(M)\to AA'BB'\cdot\C[A,A']).
$$
Then $\overline{\mathbb{M}_0^{min}(\Sigma)}$ is the linear span of all $\overline\tau_{ij}$, for $i,j$ odd $\geq 3$. 
Set $\mathcal{A}:=\{(a_{ij})_{i,j\mathrm{\ odd\ }\geq 3}, a_{ij}\in\C, a_{ij}+a_{ji}=0\mathrm{\ for\ any\ }i,j\}$. Taking into 
account the identity $\overline\tau_{ij}=-\overline\tau_{ji}$, there is a surjective linear map $\mathcal{A}\to
\overline{\mathbb{M}_0^{min}(\Sigma)}$, given by 
$$
(a_{ij})_{i,j\mathrm{\ odd\ }\geq 3}\mapsto \sum_{i,j\mathrm{\ odd\ }\geq 3}a_{ij}{\overline\tau}_{ij}. 
$$
Denote by $R$ the kernel of this map; it identifies with the vector space of relations between the polynomials ${\overline\tau}_{ij}$.
The vector spaces $\mathcal{A}$ and $\overline{\mathbb{M}_0^{min}(\Sigma)}$ are graded: one has 
$\mathcal{A}=\oplus_{n\geq 0}\mathcal{A}_n$ and $\overline{\mathbb{M}_0^{min}(\Sigma)}=\oplus_{n\geq 0}
\overline{\mathbb{M}_0^{min}(\Sigma)}_n$, where $\mathcal{A}_n:=\{(a_{ij})_{i,j\mathrm{\ odd\ }\geq 3}|a_{ij}=0$ if $i+j\neq n\}$ and 
$\overline{\mathbb{M}_0^{min}(\Sigma)}_n:=$Span$\{\overline\tau_{ij}|i,j$ are odd $\geq 3$ and $i+j=n\}$. The linear map $\mathcal{A}\to\overline{\mathbb{M}_0^{min}(\Sigma)}$ is then graded, which implies that 
$R=\mathrm{ker}(\mathcal{A}\to\overline{\mathbb{M}_0^{min}(\Sigma)})$ is a graded vector subspace of $\mathcal{A}$. Therefore 
$R=\oplus_{n\geq 0}R_n$, where $R_n:=R\cap\mathcal{A}_n$. 

Let $n$ be an integer $\geq 0$. If $n$ is odd, then $\mathcal{A}_n=0$, therefore $R_n=0$. Assume that $n$ is even. Let 
$(a_{ij})_{i,j\mathrm{\ odd\ }\geq 3,i+j=n}$ belong to $\mathcal{A}_n$. Its image under 
the map $\mathcal{A}\to\overline{\mathbb{M}_0^{min}(\Sigma)}$ is equal to 
$$
4BB'(A+A')^{n-2}\big(P(x)-P(1-x)-(1-x)^{n-2}P({x\over 1-x})\big), 
$$
where $x:=A/(A+A')$ and $P(x):=\sum_{i,j|i,j\mathrm{\ odd\ }\geq 3,i+j=n}ija_{ij}x^{j-1}$. The map 
$(a_{ij})_{i,j\mathrm{\ odd\ }\geq 3,i+j=n}\mapsto P$ sets up an isomorphism between $\mathcal{A}_n$
and $\{P\in\C[X]|P$ is even, has degree $\leq n-2$, $P(0)=0$, and $X^{n-2}P(1/X)=-P(X)\}$. This isomorphism
induces an isomorphism between $R_n$ and 
\begin{align*}
\Sigma_n:=\{P\in\C[X]| & P\mathrm{\ is\ even,\ has\ degree\ }\leq n-2,\ P(0)=0,\ X^{n-2}P(1/X)=-P(X),\mathrm{\ and\ } \\ & 
P(X)-P(1-X)-(1-X)^{n-2}P({X\over 1-X})=0\}. 
\end{align*}
The two last equations in the definition of $\Sigma_n$ can be rewritten as follows $P+P|S=P+P|U+P|U^2=0$, where 
$P|\gamma:=(cX+d)^{n-2}P({aX+b\over cX+d})$ for $\gamma=\begin{pmatrix} a & b \\ c & d\end{pmatrix}\in\mathrm{SL}_2(\Z)$,
and $S:=\begin{pmatrix} 0 & -1 \\ 1 & 0\end{pmatrix}$, $U:=\begin{pmatrix} 1 & -1 \\ 1 & 0\end{pmatrix}$.  

Let $W_n^+$ be the space of polynomials $P\in\C[X]$ which are even, of degree $\leq n-2$, and satisfy $P+P|S=P+P|U+P|U^2=0$. 
According to \cite{Za}, there is an injective map $S_n\to W_n^+$, injective and with image of codimension 1, where $S_n$
is the space of cusp modular forms for the group $\mathrm{SL}_2(\Z)$. It follows that $\mathrm{dim}(W_n^+)=\mathrm{dim}(S_n)+1$. 

On the other hand, $\Sigma_n$ is the kernel of the linear map $W_n^+\to\C$, $P\mapsto P(0)$. As $W_n^+$ contains the polynomial
$X^{n-2}-1$, the linear map is nonzero. Therefore $\mathrm{dim}(\Sigma_n)=\mathrm{dim}(W_n^+)-1$, which implies that 
$\mathrm{dim}(\Sigma_n)=\mathrm{dim}(S_n)$ and therefore $\mathrm{dim}(R_n)=\mathrm{dim}(S_n)$. 

The commutative ring of modular forms under $\mathrm{SL}_2(\Z)$ is known to be freely generated in degrees 4 and 6; 
its Hilbert series is therefore $1/((1-t^4)(1-t^6))$.  On the other hand, the graded vector space of cusp forms under $\mathrm{SL}_2(\Z)$
is a free module of rank one over this group, with generator in degree 12. The Hilbert series of $S:=\oplus_{n\geq 0}S_n$
is therefore $P_S(t)=t^{12}/((1-t^4)(1-t^6))$, which implies that the Hilbert series of $R=\oplus_{n\geq 0}R_n$ is 
$$
P_R(t)={t^{12}\over{(1-t^4)(1-t^6)}}. 
$$

One also computes $\mathrm{dim}(\mathcal{A}_n)=[n/4]-1$ ($[x]$ meaning the integral part of $x$); from where one derives the Hilbert series 
of $\mathcal{A}=\oplus_{n\geq 0}\mathcal{A}_n$:
$$
P_{\mathcal{A}}(t)={t^8\over{(1-t^2)(1-t^4)}}. 
$$
There is an exact sequence $0\to R\to \mathcal{A}\to\overline{\mathbb{M}_0^{min}(\Sigma)}\to 0$, therefore
\begin{equation} \label{CalculSérieBarbarLambda}
P_{\overline{\mathbb{M}_0^{min}(\Sigma)}}(t)=P_{\mathcal{A}}(t)-P_R(t)={t^8\over{(1-t^2)(1-t^6)}}. 
\end{equation}
The fact that $\overline{\mathbb{M}_0^{min}(\Sigma)}$ is a quotient of $\mathbb{M}_0^{min}(\Sigma)$ implies the inequality  
$P_{\mathbb{M}_0^{min}(\Sigma)}(t)\geq P_{\overline{\mathbb{M}_0^{min}(\Sigma)}}(t)$ (meaning that the difference of these series has 
$\geq 0$ terms), 
which given (\ref{CalculSérieBarbarLambda}) yields 
\begin{equation}\label{lower:bound:hilb:M2}
P_{\mathbb{M}_0^{min}(\Sigma)}(t)\geq {t^8\over (1-t^2)(1-t^6)}.
\end{equation}

\subsection{Computation of ${\mathbb M}^{min}_0(\Sigma)$} \label{subsect:conclusions}

Comparing (\ref{SérieHilbMajorante}) and (\ref{lower:bound:hilb:M2}), one obtains
$P_{\mathbb{M}_0^{min}(\Sigma)}(t)\geq P_{\mathrm{gr}_\Sigma^0(\mathbb{A})\cdot\dot\tau_{35}}(t)$. 
Combining this inequality with the opposite inequality following from the inclusion 
$\mathbb{M}_0^{min}(\Sigma)\subset\mathrm{gr}_\Sigma^0(\mathbb{A})\cdot\dot\tau_{35}$ (see Proposition \ref{prop:incl}), one
obtains the equality 
$$
P_{\mathbb{M}_0^{min}(\Sigma)}(t)=P_{\mathrm{gr}_\Sigma^0(\mathbb{A})\cdot\dot\tau_{35}}(t)(= {t^8\over (1-t^2)(1-t^6)})
$$
and therefore: 
\begin{proposition} \label{prop:7:11}
The subspaces $\mathbb{M}_0^{min}(\Sigma)$ and $\mathrm{gr}_\Sigma^0(\mathbb{A})\cdot\dot\tau_{35}$ of 
$\mathrm{gr}_\Sigma^0(M)$ are equal. 
\end{proposition}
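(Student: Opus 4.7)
The plan is to combine three ingredients already established in the preceding three subsections, namely an inclusion together with a matching pair of Hilbert--Poincar\'e series estimates that squeeze one space onto the other. Specifically, Proposition \ref{prop:incl} (Subsection \ref{subsect:incl:M:cycl}) furnishes the inclusion $\mathbb{M}_0^{min}(\Sigma)\subset\mathrm{gr}_\Sigma^0(\mathbb{A})\cdot\dot\tau_{35}$ of graded subspaces of $\mathrm{gr}_\Sigma^0(M)$.

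Since this inclusion is compatible with the weight grading, it yields the coefficient-wise inequality
$$
P_{\mathbb{M}_0^{min}(\Sigma)}(t)\le P_{\mathrm{gr}_\Sigma^0(\mathbb{A})\cdot\dot\tau_{35}}(t).
$$
By (\ref{SérieHilbMajorante}) (established via Proposition \ref{PropInjectivité}, which shows that $\mathrm{gr}_\Sigma^0(\mathbb{A})\cdot\dot\tau_{35}\simeq \C[\mathbf{x}_2,\mathbf{x}_6]\{8\}$ as graded vector spaces), the right-hand side equals $t^8/((1-t^2)(1-t^6))$; while by (\ref{lower:bound:hilb:M2}) in Subsection \ref{subsect:lower:bound:series}, the left-hand side is bounded below by the same series $t^8/((1-t^2)(1-t^6))$. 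Chaining the three inequalities gives an equality of Hilbert--Poincar\'e series
$$
P_{\mathbb{M}_0^{min}(\Sigma)}(t)= P_{\mathrm{gr}_\Sigma^0(\mathbb{A})\cdot\dot\tau_{35}}(t).
$$

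Finally, I would invoke the standard fact that an inclusion of graded subspaces of a graded vector space whose Hilbert series coincide (and are coefficient-wise finite in each degree) must be an equality: in each weight degree the inclusion becomes an injection between finite-dimensional spaces of equal dimension, hence a bijection. This yields the desired equality $\mathbb{M}_0^{min}(\Sigma)=\mathrm{gr}_\Sigma^0(\mathbb{A})\cdot\dot\tau_{35}$.

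There is no genuine obstacle remaining at this stage; the real difficulty lay in the preparatory work of Subsections \ref{subsect:incl:M:cycl}--\ref{subsect:lower:bound:series} (the explicit family $\mathbf{P}_{ij}$ witnessing the inclusion, the algebraic-independence argument for the upper bound via the specialization $B=B'=0$, and the modular-forms input of Zagier used in the lower bound). The present statement is the formal conclusion extracted from these three ingredients.
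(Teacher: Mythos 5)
Your proposal is correct and matches the paper's argument exactly: the paper likewise combines the inclusion from Proposition~\ref{prop:incl} with the Hilbert series identity (\ref{SérieHilbMajorante}) and the lower bound (\ref{lower:bound:hilb:M2}) to force equality of Hilbert series, and then concludes the inclusion is an equality. Your explicit remark about finite-dimensionality of the weight-graded pieces is the (tacit) justification of the final step in the paper as well, so there is nothing to add.
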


Combining this result with the isomorphism $\mathrm{gr}_\Sigma^0(\mathbb{A})\cdot\dot\tau_{35}\simeq
\C[{\mathbf{x}}_2,{\mathbf{x}}_6]\{8\}$ (Proposition \ref{PropInjectivité}), we obtain a isomorphism of graded vector spaces
\begin{equation}\label{iso:M:min}
\mathbb{M}_0^{min}(\Sigma)\simeq\C[{\mathbf{x}}_2,{\mathbf{x}}_6]\{8\}. 
\end{equation}
The map $\{,\}:\Lambda^2(\Sigma)\to\mathbb{M}_0^{min}(\Sigma)$ is given by $\{\sigma_i,\sigma_j\}=($class of $c(\sigma_i,\sigma_j))$
for any odd $i,j\geq 3$. Taking into account the relation $c(\sigma_i,\sigma_j)=-2\tau_{ij}$ (Subsection \ref{subsection:ingredients}),
we obtain $\{\sigma_i,\sigma_j\}=-2\dot\tau_{ij}$. Then (\ref{id:tau:ij}) yields $\{\sigma_i,\sigma_j\}
=-2{\mathbf{P}}_{ij}(\dot\sigma_2,\dot\sigma_6)\dot\tau_{35}$. It follows that the map 
$$
\{,\}:\Lambda^2(\Sigma)\to\C[{\mathbf{x}}_2,{\mathbf{x}}_6]\{8\}
$$
obtained by composing $\Lambda^2(\Sigma)\stackrel{\{,\}}{\to}\mathbb{M}_0^{min}(\Sigma)$ with the isomorphism (\ref{iso:M:min})
is given by 
\begin{equation}\label{form:of:bracket}
\{\sigma_i,\sigma_j\}=-2{\mathbf{P}}_{ij}({\mathbf{x}}_2,{\mathbf{x}}_6) \quad \mathrm{for\ any\ odd\ }i,j\geq 3, 
\end{equation}
where ${\mathbf{P}}_{ij}$ is given by (\ref{def:Pij}). Introducing the generating series 
$$
\sigma(t):=\sum_{i\mathrm{\ odd\ }\geq 3}\sigma_i\cdot t^i\in\Sigma[[t]], 
$$
We may rewrite (\ref{form:of:bracket}) as follows
$$
\{\sigma(t),\sigma(u)\}=-{3\over5}\Big({X\over D}\big(\sigma({\mathbf{x}}_2,{\mathbf{x}}_6),\pi({\mathbf{x}}_2,{\mathbf{x}}_6),t\big){Y\over D}\big(\sigma({\mathbf{x}}_2,{\mathbf{x}}_6),\pi({\mathbf{x}}_2,{\mathbf{x}}_6),u\big)-(t\leftrightarrow u)\Big), 
$$  
where $X,Y,D,\sigma({\mathbf{x}}_2,{\mathbf{x}}_6),\pi({\mathbf{x}}_2,{\mathbf{x}}_6)$ are as in (\ref{déf:X}), (\ref{déf:Y}), (\ref{déf:D}), 
and (\ref{def:sigma:pi}). 
Define 
\begin{equation}\label{def:d}
d(\mathbf{x}_2,\mathbf{x}_6,t):=(1-{\mathbf{x}_2\over 4}t^2)^2+(-{1\over 6}\mathbf{x}_6
+{1\over96}(\mathbf{x}_2)^3)t^6,  
\end{equation}
then one checks that 
$$
t{d\over dt}({-{1\over 3}t^3\over d(\mathbf{x}_2,\mathbf{x}_6,t)})=-{1\over2}\Big(
{X\over D}\big(\sigma({\mathbf{x}}_2,{\mathbf{x}}_6),\pi({\mathbf{x}}_2,{\mathbf{x}}_6),t\big)
+\sigma({\mathbf{x}}_2,{\mathbf{x}}_6)
{Y\over D}\big(\sigma({\mathbf{x}}_2,{\mathbf{x}}_6),\pi({\mathbf{x}}_2,{\mathbf{x}}_6),t\big)\Big), 
$$
$$
t{d\over dt}({-{1\over 5}t^5\over d(\mathbf{x}_2,\mathbf{x}_6,t)})=-{3\over5}
{Y\over D}\big(\sigma({\mathbf{x}}_2,{\mathbf{x}}_6),\pi({\mathbf{x}}_2,{\mathbf{x}}_6),t\big). 
$$
It follows that if  
\begin{equation}\label{xi3:xi5}
\xi_3({\mathbf{x}}_2,{\mathbf{x}}_6,t):=t{d\over dt}({{1\over 3}t^3\over d(\mathbf{x}_2,\mathbf{x}_6,t)}), \quad 
\xi_5({\mathbf{x}}_2,{\mathbf{x}}_6,t):=t{d\over dt}({{1\over 5}t^5\over d(\mathbf{x}_2,\mathbf{x}_6,t)}),
\end{equation}
then 
\begin{equation}\label{bracket:sigma}
\{\sigma(t),\sigma(u)\}=-2(\xi_3({\mathbf{x}}_2,{\mathbf{x}}_6,t)\xi_5({\mathbf{x}}_2,{\mathbf{x}}_6,u)
-\xi_3({\mathbf{x}}_2,{\mathbf{x}}_6,u)\xi_5({\mathbf{x}}_2,{\mathbf{x}}_6,t)). 
\end{equation}

\section{Computation of the lower bound ${\mathbb M}^{min}(\Sigma)$}\label{Section:comp:Mminfull}

To compute the lower bound  ${\mathbb M}^{min}(\Sigma)$ as a (weight degree, $\Sigma$-degree)-bigraded vector space, we first construct a 
surjective morphism $\C[\mathbf{x}_2,\mathbf{x}_6,\mathbf{x}_3,\mathbf{x}_5]\{8,2\}\to\mathbb{M}^{min}(\Sigma)$ between this space and 
a polynomial ring with shifted degrees (Subsection \ref{subsect:surj}). We then show that this morphism is injective (Subsection \ref{subsect:inj}), 
and is therefore an isomorphism. We then compute the action (Subsection \ref{subsect:SSigma:str}) and the filtration (Subsection 
\ref{subsect:depth:M:min}) obtained from the action of $S(\Sigma)$ on ${\mathbb M}^{min}(\Sigma)$ and from the depth filtration of 
${\mathbb M}^{min}(\Sigma)$ by transport of structure through this isomorphism. We summarize these results in Subsection 
\ref{subsect:summary}. 

\subsection{A surjective morphism $\phi:\C[\mathbf{x}_2,\mathbf{x}_6,\mathbf{x}_3,\mathbf{x}_5]\{8,2\}\to\mathbb{M}^{min}(\Sigma)$} 
\label{subsect:surj}

By Subsection \ref{545}, the $\Sigma$-graded space $\mathrm{gr}_\Sigma(M)=\oplus_{k\geq 0}\mathrm{gr}_\Sigma^k(M)$ is equipped with 
an action of the $\Sigma$-graded ring $\mathrm{gr}_\Sigma(\mathbb{A})=\oplus_{k\geq 0}\mathrm{gr}_\Sigma^k(\mathbb{A})$. Then 
$\mathrm{gr}_\Sigma(\mathbb{A})$ contains $\mathrm{gr}_\Sigma^0(\mathbb{A})$ as a subring, and by Subsection \ref{543}, it also 
contains a $\Sigma$-graded subring $\mathbb{S}$. By Subsection \ref{546}, the space $\mathrm{gr}_\Sigma^0(M)$ contains a vector subspace 
$\mathbb{M}_0^{min}(\Sigma)$, and the $\Sigma$-graded subspace $\mathbb{M}^{min}(\Sigma)$ of $\mathrm{gr}_\Sigma(M)$ is equal to 
$\mathbb{S}\cdot\mathbb{M}_0^{min}(\Sigma)$. In Proposition \ref{prop:7:11}, we showed that $\mathbb{M}_0^{min}(\Sigma)$ coincides with $\mathrm{gr}_\Sigma^0(\mathbb{A})\cdot\dot\tau_{35}$, where the element $\dot\tau_{35}\in\mathrm{gr}_\Sigma^0(M)$ 
has been defined in the introduction of Section \ref{Section:comp:Mminlowest}. It follows that 
\begin{equation}\label{formula:M:min}
\mathbb{M}^{min}(\Sigma)=(\mathbb{S}\cdot\mathrm{gr}_\Sigma^0(\mathbb{A}))\cdot\dot\tau_{35},
\end{equation} 
where $\mathbb{S}\cdot\mathrm{gr}_\Sigma^0(\mathbb{A})$ is the subalgebra of $\mathrm{gr}_\Sigma(\mathbb{A})$, 
image of the product morphism $\mathbb{S}\otimes\mathrm{gr}_\Sigma^0(\mathbb{A})\to\mathrm{gr}_\Sigma(\mathbb{A})$. 

Under the isomorphism $\mathrm{gr}_\Sigma(\mathbb{A})\simeq\mathrm{gr}_\Sigma^0(\mathbb{A})[\mathbf{x}_3,\mathbf{x}_5]$
(Theorem \ref{StructureDAnneaux}), the subalgebra $\mathrm{gr}_\Sigma^0(\mathbb{A})$ of $\mathrm{gr}_\Sigma(\mathbb{A})$ is taken 
to the subalgebra $\mathrm{gr}_\Sigma^0(\mathbb{A})$ of $\mathrm{gr}_\Sigma^0(\mathbb{A})[\mathbf{x}_3,\mathbf{x}_5]$, while the 
subalgebra $\mathbb{S}$ of $\mathrm{gr}_\Sigma(\mathbb{A})$ is taken to a subalgebra of 
$\mathrm{gr}_\Sigma^0(\mathbb{A})[\mathbf{x}_3,\mathbf{x}_5]$ containing $\C[\mathbf{x}_3,\mathbf{x}_5]$ (see Subsection \ref{results:S}). 
As the product of these two subalgebras of $\mathrm{gr}_\Sigma^0(\mathbb{A})[\mathbf{x}_3,\mathbf{x}_5]$ is equal to 
$\mathrm{gr}_\Sigma^0(\mathbb{A})[\mathbf{x}_3,\mathbf{x}_5]$ itself, the product $\mathbb{S}\cdot\mathrm{gr}_\Sigma^0(\mathbb{A})$ 
coincides with $\mathrm{gr}_\Sigma(\mathbb{A})$. Equation (\ref{formula:M:min}) then implies that 
\begin{equation} \label{id:M:min}
\mathbb{M}^{min}(\Sigma)=\mathrm{gr}_\Sigma(\mathbb{A})\cdot\dot\tau_{35}. 
\end{equation}
Let $\overline{\mathrm{gr}_\Sigma(\mathbb{A})}$ be the quotient of the ring $\mathrm{gr}_\Sigma(\mathbb{A})$ by the principal ideal 
generated by the element $\dot\sigma_4-{1\over4}(\dot\sigma_2)^2\in\mathrm{gr}_\Sigma^0(\mathbb{A})\subset 
\mathrm{gr}_\Sigma(\mathbb{A})$. By Lemma \ref{Prop:act:sigma4}, this element annihilates $\dot\tau_{35}$. Together with (\ref{id:M:min}), 
this implies that the action of $\mathrm{gr}_\Sigma(\mathbb{A})$ on $\mathbb{M}^{min}(\Sigma)$ factors through 
$\overline{\mathrm{gr}_\Sigma(\mathbb{A})}$ and that 
\begin{equation}\label{eq:M:min}
\mathbb{M}^{min}(\Sigma)=\overline{\mathrm{gr}_\Sigma(\mathbb{A})}\cdot\dot\tau_{35}. 
\end{equation} 
As $\dot\sigma_4-{1\over4}(\dot\sigma_2)^2$ belongs to $\mathrm{gr}_\Sigma^0(\mathbb{A})$, the isomorphism 
$\mathrm{gr}_\Sigma(\mathbb{A})\simeq\mathrm{gr}_\Sigma^0(\mathbb{A})[\mathbf{x}_3,\mathbf{x}_5]$ from Theorem
\ref{StructureDAnneaux} induces an isomorphism 
\begin{equation}\label{iso:gr:A}
\overline{\mathrm{gr}_\Sigma(\mathbb{A})}\simeq\overline{\mathrm{gr}_\Sigma^0(\mathbb{A})}[\mathbf{x}_3,\mathbf{x}_5], 
\end{equation}
where we recall that $\overline{\mathrm{gr}_\Sigma^0(\mathbb{A})}$ is the quotient of $\mathrm{gr}_\Sigma^0(\mathbb{A})$
by its ideal generated by $\dot\sigma_4-{1\over4}(\dot\sigma_2)^2$ (see Subsection \ref{subsect:hilb:cyclic}). Combining the 
isomorphism (\ref{iso:gr:A}) with the isomorphism $\overline{\mathrm{gr}_\Sigma^0(\mathbb{A})}\simeq\C[\mathbf{x}_2,\mathbf{x}_6]$
(Lemma \ref{str:gr:bar}), we obtain an isomorphism 
\begin{equation}\label{iso:A}
\overline{\mathrm{gr}_\Sigma(\mathbb{A})}\simeq\C[\mathbf{x}_2,\mathbf{x}_6,\mathbf{x}_3,\mathbf{x}_5]
\end{equation}
of (weight degree, $\Sigma$-degree)-bigraded rings, where the bidegrees of $\mathbf{x}_2,\mathbf{x}_6,\mathbf{x}_3,\mathbf{x}_5$
are respectively (2,0), (6,0), (3,1), (5,1). 

The element $\dot\tau_{35}$ of $\mathbb{M}^{min}(\Sigma)$ has weight degree 8 and $\Sigma$-degree 2 (as it belongs to 
$\mathbb{M}^{min}_0(\Sigma)$). Combining the isomorphism (\ref{iso:gr:A}) with the action map of 
$\overline{\mathrm{gr}_\Sigma(\mathbb{A})}$ on $\dot\tau_{35}$ and taking (\ref{eq:M:min}) into account, {\it we obtain a surjective 
bigraded linear map 
$$
\phi:\C[\mathbf{x}_2,\mathbf{x}_6,\mathbf{x}_3,\mathbf{x}_5]\{8,2\}\to\mathbb{M}^{min}(\Sigma), 
$$ 
where $\cdots\{8,2\}$ means shifting the bidegree by (8,2).} This linear map is compatible with the structures of modules over both sides of 
the inverse of isomorphism (\ref{iso:A}).

\subsection{Injectivity of $\phi:\C[\mathbf{x}_2,\mathbf{x}_6,\mathbf{x}_3,\mathbf{x}_5]\{8,2\}\to\mathbb{M}^{min}(\Sigma)$}
\label{subsect:inj}

For each odd $i\geq 3$, the element $\tilde\sigma_i$ of $\mathbb{A}\subset\C[A,A',B,B']$ is contained in $(B,B')$, the ideal of $\C[A,A',B,B']$
generated by $B,B'$. As the ideal $\mathbb{I}$ of $\mathbb{A}$ is generated by the $\tilde\sigma_i$, $i$ odd $\geq 3$, it follows that 
\begin{equation}\label{I:BB'}
\mathbb{I}\mathrm{\ is\ contained\ in\ }(B,B').
\end{equation}
Recall that $M$ is a vector subspace of $ABA'B'\cdot\C[A,B,A',B']$. It then follows from (\ref{I:BB'}) that for any $k\geq 0$, $\mathbb{I}^k\cdot M$
is contained in $ABA'B'\cdot(B,B')^k$. The collection of maps $\mathbb{I}^k\cdot M\hookrightarrow ABA'B'\cdot(B,B')^k$ induces quotient maps
\begin{equation}\label{quot:map}
\mathrm{gr}_\Sigma^k(M)={\mathbb{I}^k\cdot M\over\mathbb{I}^{k+1}\cdot M}\to
ABA'B'\cdot{(B,B')^k\over(B,B')^{k+1}}\simeq ABA'B'\cdot\C[A,B,A',B']_{B\mathrm{-deg}+B'\mathrm{-deg}=k},
\end{equation}
where the index means the linear space of monomials whose sum of partial degrees in $B$ and $B'$ equals $k$. The direct sum of 
all maps (\ref{quot:map}) is a bigraded linear map 
\begin{equation}\label{mod:map}
\mathrm{gr}_\Sigma(M)\to ABA'B'\cdot\C[A,B,A',B'], 
\end{equation}
where on the right side the weight degree is the total degree and the $\Sigma$-degree is the sum of partial degrees in $B$ and $B'$. 
 
Similarly, the morphism $(\mathbb{A},\mathbb{I})\to(\C[A,B,A',B'],(B,B'))$ of pairs (a graded algebra, a graded ideal of this algebra) induces a 
morphism of bigraded rings $\oplus_{k\geq 0}\mathbb{I}^k/\mathbb{I}^{k+1}\to\oplus_{k\geq 0}(B,B')^k/(B,B')^{k+1}$, which identifies
with a morphism 
\begin{equation}\label{alg:map}
\mathrm{gr}_\Sigma(\mathbb{A})\to\C[A,A',B,B']
\end{equation}
of bigraded rings, where on the r.h.s. the pair (weight degree, $\Sigma$-degree) is again (total degree, sum of partial degrees in $B$ and $B'$). 

The morphism (\ref{mod:map}) is then compatible with the morphism (\ref{alg:map}) and with the action of both sides of (\ref{alg:map}) on both 
sides of (\ref{mod:map}).  

The presentation results Lemma \ref{str:gr:bar} and Proposition \ref{prop43} imply that there is a morphism of graded algebras 
$sec:\overline{\mathrm{gr}_\Sigma^0(\mathbb{A})}\to\mathrm{gr}_\Sigma^0(\mathbb{A})$, uniquely defined by 
$(\mathrm{class\ of\ }\dot\sigma_2)\mapsto\dot\sigma_2$, $(\mathrm{class\ of\ }\dot\sigma_6)\mapsto\dot\sigma_6$. Tensoring 
this morphism with the identity morphism of the polynomial ring $\C[\mathbf{x}_3,\mathbf{x}_5]$, and composing the resulting morphism 
with the isomorphisms $\overline{\mathrm{gr}_\Sigma(\mathbb{A})}\simeq\overline{\mathrm{gr}_\Sigma^0(\mathbb{A})}[\mathbf{x}_3,
\mathbf{x}_5]$ (see (\ref{iso:gr:A})) and $\mathrm{gr}_\Sigma(\mathbb{A})\simeq\mathrm{gr}_\Sigma^0(\mathbb{A})[\mathbf{x}_3,\mathbf{x}_5]$
(Theorem \ref{StructureDAnneaux}), one obtains a morphism of bigraded algebras 
$\overline{\mathrm{gr}_\Sigma(\mathbb{A})}\to\mathrm{gr}_\Sigma(\mathbb{A})$, extending 
$sec:\overline{\mathrm{gr}_\Sigma^0(\mathbb{A})}\to\mathrm{gr}_\Sigma^0(\mathbb{A})$ by $(\mathrm{class\ of\  }\dot\sigma_3)
\mapsto\dot\sigma_3$, $(\mathrm{class\ of\ }\dot\sigma_5)\mapsto\dot\sigma_5$.

Since this morphism is a section of the quotient map $\mathrm{gr}_\Sigma(\mathbb{A})\to\overline{\mathrm{gr}_\Sigma(\mathbb{A})}$, 
the identity morphism of $\mathbb{M}^{min}(\Sigma)$ is compatible with the algebra morphism 
$\overline{\mathrm{gr}_\Sigma(\mathbb{A})}\to\mathrm{gr}_\Sigma(\mathbb{A})$ and with the module structure of 
$\mathbb{M}^{min}(\Sigma)$ with respect to both sides of this morphism. 

Then there is a sequence of morphisms of bigraded vector spaces
\begin{equation}\label{map:d}
\C[\mathbf{x}_2,\mathbf{x}_6,\mathbf{x}_3,\mathbf{x}_5]\{8,2\}\stackrel{\phi}{\twoheadrightarrow}\mathbb{M}^{min}(\Sigma)
\stackrel{\mathrm{id}}{\to}\mathbb{M}^{min}(\Sigma)\hookrightarrow\mathrm{gr}_\Sigma(M)\to ABA'B'\C[A,B,A',B'], 
\end{equation}
compatible with the following sequence of morphisms of algebras, and their module structures over these algebras
\begin{equation}\label{map:a}
\C[\mathbf{x}_2,\mathbf{x}_6,\mathbf{x}_3,\mathbf{x}_5]\stackrel{\simeq}{\to}\overline{\mathrm{gr}_\Sigma(\mathbb{A})}
\stackrel{sec}{\to}\mathrm{gr}_\Sigma(\mathbb{A})\stackrel{\mathrm{id}}{\to}\mathrm{gr}_\Sigma(\mathbb{A})\to\C[A,B,A',B']. 
\end{equation}
The resulting composite morphism $\C[\mathbf{x}_2,\mathbf{x}_6,\mathbf{x}_3,\mathbf{x}_5]\to\C[A,B,A',B']$ is given by 
$$
\mathbf{x}_2\mapsto(\mathrm{class\ of\ }\dot\sigma_2)\mapsto\dot\sigma_2\mapsto\sigma_2(A,0,A',0), \quad 
\mathbf{x}_6\mapsto(\mathrm{class\ of\ }\dot\sigma_6)\mapsto\dot\sigma_6\mapsto\sigma_6(A,0,A',0),
$$
$$
\mathbf{x}_3\mapsto(\mathrm{class\ of\ }\dot\sigma_3)\mapsto\dot\sigma_3\mapsto 
{\partial\tilde\sigma_3\over\partial B}_{|B=B'=0}B+{\partial\tilde\sigma_3\over\partial B'}_{|B=B'=0}B'=\tilde\sigma_3^{lin}(A,A',B,B'), 
$$
$$
\mathbf{x}_5\mapsto(\mathrm{class\ of\ }\dot\sigma_5)\mapsto\dot\sigma_5\mapsto 
{\partial\tilde\sigma_5\over\partial B}_{|B=B'=0}B+{\partial\tilde\sigma_5\over\partial B'}_{|B=B'=0}B'=\tilde\sigma_5^{lin}(A,A',B,B'),  
$$
where $\tilde\sigma_i^{lin}$ are as in (\ref{def:sigma:lin}) and the equalities follow from the invariance of $\sigma_i(A,B,A',B')$ under the 
exchange $B\leftrightarrow A'$. 
Let $\mathbf{1}$ be the element of $\C[\mathbf{x}_2,\mathbf{x}_6,\mathbf{x}_3,\mathbf{x}_5]\{8,2\}$ corresponding to 
$1\in\C[\mathbf{x}_2,\mathbf{x}_6,\mathbf{x}_3,\mathbf{x}_5]$. There is a commutative diagram 
\begin{equation}\label{diag:pour:inject}
\xymatrix{\C[\mathbf{x}_2,\mathbf{x}_6,\mathbf{x}_3,\mathbf{x}_5]\ar[r]^{(a)} \ar[d]_{(b)}& 
\C[A,A',B,B'] \ar[d]^{(c)} \\ 
\C[\mathbf{x}_2,\mathbf{x}_6,\mathbf{x}_3,\mathbf{x}_5]\{8,2\}\ar[r]^{(d)} & AA'BB'\cdot \C[A,A',B,B'] }
\end{equation}
where (a), (d) are the composite maps (\ref{map:a}), (\ref{map:d}), (b) is the action map on $\mathbf{1}$ and (c) is the action map 
on the image of $\mathbf{1}$ by (d).  

We have $\mathrm{im}(\mathbf{1}\in\C[\mathbf{x}_2,\mathbf{x}_6,\mathbf{x}_3,\mathbf{x}_5]\{8,2\}\stackrel{(d)}{\to}
AA'BB'\C[A,A',B,B'])=\mathrm{im}(\dot\tau_{35}\in\mathrm{gr}_\Sigma^0(M)\to AA'BB'\C[A,A']\hookrightarrow AA'BB'\C[A,A',B,B'])$. One 
computes $\mathrm{im}(\dot\tau_{35}\in\mathrm{gr}_\Sigma^0(M)\to AA'BB'\C[A,A'])=30AA'BB'(A^2-A^{\prime2})(2A+A')(A+2A')$, 
therefore 
$$
\mathrm{im}(\mathbf{1}\in\C[\mathbf{x}_2,\mathbf{x}_6,\mathbf{x}_3,\mathbf{x}_5]\{8,2\}\stackrel{(d)}{\to}
AA'BB'\C[A,A',B,B'])=30AA'BB'(A^2-A^{\prime2})(2A+A')(A+2A'). 
$$
Since this element of $AA'BB'\C[A,A',B,B']$ is nonzero and since $\C[A,A',B,B']$ is a domain, 
\begin{equation}\label{inject:c}
\mathrm{the\ map\ (c)\ is\ injective.}  
\end{equation}
We also record: 
\begin{equation}\label{iso:b}
\mathrm{the\ map\ (b)\ is\ an\ isomorphism\ of\ bigraded\ vector\ spaces\ (of\ bidegree\ (8,2)).} 
\end{equation}

One computes 
$$
\tilde\sigma_2(A,0,A',0)=2(A^2+A^{\prime2}+(A+A')^2), \quad\tilde\sigma_6(A,0,A',0)=2(A^6+A^{\prime6}+(A+A')^6),  
$$
$$
\tilde\sigma_3^{lin}(A,A',B,B')=3((A+A')^2-A^2)B+3((A+A')^2-A^{\prime 2})B', 
$$
$$
\tilde\sigma_5^{lin}(A,A',B,B')=5((A+A')^4-A^4)B+5((A+A')^4-A^{\prime 4})B'. 
$$
The Jacobian matrix of the map 
$$
(A,A',B,B')\mapsto(\tilde\sigma_2(A,0,A',0),\tilde\sigma_6(A,0,A',0),\tilde\sigma_3^{lin}(A,A',B,B'),
\tilde\sigma_5^{lin}(A,A',B,B'))
$$ 
is then $\begin{pmatrix} {\partial(\tilde\sigma_2(A,0,A',0),\tilde\sigma_6(A,0,A',0))\over \partial(A,A')} 
& 0 \\ * & {\partial(\tilde\sigma_3^{lin}(A,A',B,B'),\tilde\sigma_5^{lin}(A,A',B,B'))\over\partial(B,B')} \end{pmatrix}$. 

One computes the determinants of the diagonal submatrices: 

$|{\partial(\tilde\sigma_2(A,0,A',0),\tilde\sigma_6(A,0,A',0))\over \partial(A,A')}|
=48AA'(A+A')(A-A')(2A+A')(A+2A')$, 

$|{\partial(\tilde\sigma_3^{lin}(A,A',B,B'),\tilde\sigma_5^{lin}(A,A',B,B'))\over\partial(B,B')}|=
-15AA'(A+A')(A-A')(2A+A')(A+2A')$, 

\noindent which implies that the Jacobian determinant of the above map is nonzero. This implies that the family 
$(\tilde\sigma_2(A,0,A',0),\tilde\sigma_6(A,0,A',0),\tilde\sigma_3^{lin}(A,B,A',B'),\tilde\sigma_5^{lin}(A,B,A',B'))$ 
of $\C[A,A',B,B']$ is algebraically independent. 

It follows that the map (a) is injective. Combining this with the isomorphism result (\ref{iso:b}) and the injectivity result (\ref{inject:c}), 
we obtain the injectivity of map (d) in (\ref{diag:pour:inject}).     

There is a factorisation of (d) starting with the morphism $\C[\mathbf{x}_2,\mathbf{x}_6,\mathbf{x}_3,\mathbf{x}_5]\{8,2\}
\stackrel{\phi}{\to}\mathbb{M}^{min}(\Sigma)$ (see (\ref{map:d})), therefore {\it $\phi$ is injective.}

Combining this result with the surjectivity of $\phi$ proved in Subsection \ref{subsect:inj}, we obtain: 

\begin{proposition}\label{prop:unique:iso}
There is a unique isomorphism of bigraded vector spaces 
$$
\phi:\C[\mathbf{x}_2,\mathbf{x}_6,\mathbf{x}_3,\mathbf{x}_5]\{8,2\}\to\mathbb{M}^{min}(\Sigma), 
$$
such that $\phi(\mathbf{1})=\dot\tau_{35}\in\mathbb{M}^{min}_0(\Sigma)$, which intertwines the actions of 
$\mathbf{x}_2,\mathbf{x}_6,\mathbf{x}_3,\mathbf{x}_5$ with the actions of $\dot\sigma_2,\dot\sigma_6,\dot\sigma_3,\dot\sigma_5$.

The bidegree (weight degree, $\Sigma$-degree) is defined on the left side as follows: 
$\mathbf{x}_2,\mathbf{x}_6,\mathbf{x}_3,\mathbf{x}_5$ have (weight degree, $\Sigma$-degree)
respectively $(2,0)$, $(6,0)$, $(3,1)$, $(5,1)$ and $\{8,2\}$ means shifting the bidegree by $(8,2)$. 
\end{proposition}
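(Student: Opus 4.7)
The statement essentially packages the two preceding subsections into a clean isomorphism, so my plan is to assemble the surjectivity and injectivity results and then verify uniqueness.

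First, for uniqueness, I will observe that the conditions $\phi(\mathbf{1}) = \dot\tau_{35}$ together with equivariance under the action of the four variables force the value of $\phi$ on every monomial: one must have $\phi(\mathbf{x}_2^a \mathbf{x}_6^b \mathbf{x}_3^c \mathbf{x}_5^d) = \dot\sigma_2^a \dot\sigma_6^b \dot\sigma_3^c \dot\sigma_5^d \cdot \dot\tau_{35}$ (where the right-hand side is computed in $\mathrm{gr}_\Sigma(M)$), and by linearity this pins down $\phi$ on all of $\C[\mathbf{x}_2,\mathbf{x}_6,\mathbf{x}_3,\mathbf{x}_5]\{8,2\}$.

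Next, for surjectivity, I will invoke the construction of Subsection \ref{subsect:surj}. The identity (\ref{formula:M:min}) expresses $\mathbb{M}^{min}(\Sigma)$ as $(\mathbb{S} \cdot \mathrm{gr}_\Sigma^0(\mathbb{A})) \cdot \dot\tau_{35}$, and the discussion following Theorem \ref{StructureDAnneaux} shows that the product subalgebra $\mathbb{S}\cdot\mathrm{gr}_\Sigma^0(\mathbb{A})$ fills out all of $\mathrm{gr}_\Sigma(\mathbb{A})$. Lemma \ref{Prop:act:sigma4} says that $\dot\sigma_4 - \tfrac14\dot\sigma_2^2$ annihilates $\dot\tau_{35}$, so the action factors through $\overline{\mathrm{gr}_\Sigma(\mathbb{A})}$, giving $\mathbb{M}^{min}(\Sigma) = \overline{\mathrm{gr}_\Sigma(\mathbb{A})}\cdot\dot\tau_{35}$. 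Composing the isomorphism (\ref{iso:gr:A}) with the isomorphism $\overline{\mathrm{gr}_\Sigma^0(\mathbb{A})} \simeq \C[\mathbf{x}_2,\mathbf{x}_6]$ from Lemma \ref{str:gr:bar} yields a bigraded ring isomorphism $\overline{\mathrm{gr}_\Sigma(\mathbb{A})} \simeq \C[\mathbf{x}_2,\mathbf{x}_6,\mathbf{x}_3,\mathbf{x}_5]$; composing this with the action on $\dot\tau_{35}$ and shifting by $(8,2)$ to account for the bidegree of $\dot\tau_{35}$ produces the surjection $\phi$.

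Finally, for injectivity, I will factor $\phi$ through the ``leading term in $(B,B')$'' map of Subsection \ref{subsect:inj}. Using the inclusion $\mathbb{I} \subset (B,B')$ in $\C[A,B,A',B']$, one constructs a bigraded linear map $\mathrm{gr}_\Sigma(M) \to ABA'B'\cdot\C[A,B,A',B']$, compatible with the corresponding algebra map $\mathrm{gr}_\Sigma(\mathbb{A}) \to \C[A,B,A',B']$. The commutative diagram (\ref{diag:pour:inject}) then reduces injectivity of $\phi$ to two ingredients: (i) that the algebra map $\C[\mathbf{x}_2,\mathbf{x}_6,\mathbf{x}_3,\mathbf{x}_5] \to \C[A,A',B,B']$ sending the variables to $\tilde\sigma_2(A,0,A',0)$, $\tilde\sigma_6(A,0,A',0)$, $\tilde\sigma_3^{lin}$, $\tilde\sigma_5^{lin}$ is injective, and (ii) that the image of $\dot\tau_{35}$ in $AA'BB'\cdot\C[A,A']$ is nonzero. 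The hard part is (i); I plan to dispatch it by the Jacobian argument of the text, noting that the Jacobian matrix is block triangular with diagonal blocks whose determinants are each proportional to $AA'(A+A')(A-A')(2A+A')(A+2A')$, hence nonzero, which yields algebraic independence of the four target polynomials. Item (ii) is the explicit computation $\mathrm{im}(\dot\tau_{35}) = 30\,AA'BB'(A^2-A^{\prime 2})(2A+A')(A+2A') \neq 0$, whose non-vanishing together with the domain property of $\C[A,A',B,B']$ implies injectivity of the composite map in (\ref{diag:pour:inject}). Combining (i) and (ii) through the diagram yields the injectivity of $\phi$, and combined with surjectivity and uniqueness this completes the proof.
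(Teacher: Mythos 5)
Your proposal is correct and takes essentially the same route as the paper: surjectivity via the factorization $\mathbb{M}^{min}(\Sigma)=\overline{\mathrm{gr}_\Sigma(\mathbb{A})}\cdot\dot\tau_{35}$ coupled with the isomorphism $\overline{\mathrm{gr}_\Sigma(\mathbb{A})}\simeq\C[\mathbf{x}_2,\mathbf{x}_6,\mathbf{x}_3,\mathbf{x}_5]$, and injectivity via the leading-term-in-$(B,B')$ reduction to the Jacobian computation plus the non-vanishing of the image of $\dot\tau_{35}$. The only thing worth stating a little more explicitly is the final step: the commutative square (\ref{diag:pour:inject}) gives injectivity of the composite map (d), and because (d) factors through $\phi$ as the first arrow in (\ref{map:d}), $\phi$ inherits the injectivity — but this is exactly what the paper does.
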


\subsection{The $S(\Sigma)$-module structure of ${\mathbb M}^{min}(\Sigma)$}\label{subsect:SSigma:str}

Lemma \ref{lemma:5:2} describes the action of $\Sigma(\hookrightarrow{\mathfrak{L}}_0)$ on $M\simeq{\mathfrak{L}}_1$. 
According to Subsection \ref{543}, there is a sequence of linear maps $\Sigma\to\mathbb{I}\hookrightarrow\mathbb{A}$, 
inducing an algebra morphism $S(\Sigma)\to\mathbb{A}$, and such that the $S(\Sigma)$-module structure of 
$M$ is the pullback of its $\mathbb{A}$-module structure. 

The linear map $\Sigma\to\mathbb{I}$ induces a linear map $\Sigma\to\mathbb{I}/\mathbb{I}^2=\mathrm{gr}_\Sigma^1(\mathbb{A})$
and therefore an algebra morphism $S(\Sigma)\to\mathrm{gr}_\Sigma(\mathbb{A})$. The $S(\Sigma)$-module structure on 
$\mathrm{gr}_\Sigma(M)$ is then the pullback of its $\mathrm{gr}_\Sigma(\mathbb{A})$-module structure. 
The $\mathrm{gr}_\Sigma(\mathbb{A})$-module structure on the subspace $\mathbb{M}^{min}(\Sigma)\subset\mathrm{gr}_\Sigma(M)$ 
is the pullback of a $\overline{\mathrm{gr}_\Sigma(\mathbb{A})}$-module structure. The $S(\Sigma)$-module structure of this space is 
therefore a pullback of this module structure under the algebra morphism $S(\Sigma)\to\overline{\mathrm{gr}_\Sigma(\mathbb{A})}$. 
The latter morphism is induced by the linear map $\Sigma\to\overline{\mathrm{gr}_\Sigma^1(\mathbb{A})}=\mathbb{I}/
(\mathbb{I}^2+\mathbb{I}\cap\mathbb{J})$, where $\mathbb{J}$ is the principal ideal of $\mathbb{A}$ generated by 
$\tilde\sigma_4-{1\over 4}(\tilde\sigma_2)^2$. 

Recall that $\sigma(t)=\sum_{k\mathrm{\ odd\ }\geq 3}\sigma_k t^k\in\Sigma[[t]]$ is a generating series for a basis of $\Sigma$. 
The map $\Sigma\to\mathbb{I}$ is given by $\sigma(t)\mapsto\sum_{k\mathrm{\ odd\ }\geq 3}\tilde\sigma_k t^k$. One computes 
\begin{align}\label{début:sigma}
& \nonumber \sum_{k\mathrm{\ odd\ }\geq 3}\tilde\sigma_k t^k=\sum_{k\mathrm{\ odd\ }\geq 3}(\sum_{\alpha,\beta=1}^3
X_{\alpha\beta}^k)t^k \quad (\mathrm{where\ }(X_{\alpha\beta})_{1\leq\alpha,\beta\leq 3} \mathrm{\ is\ defined\ by} 
\\& \nonumber(X_{11},X_{12},X_{21},X_{22}):=(A,B,A',B'), \quad  \sum_{\beta'=1}^3X_{\alpha\beta'}=\sum_{\alpha'=1}^3X_{\alpha'\beta}=0 
\mathrm{\ for\ } 1\leq\alpha,\beta\leq 3)
\\ & \nonumber=\sum_{\alpha,\beta=1}^3{1\over2}({tX_{\alpha\beta}\over1+tX_{\alpha\beta}}+{tX_{\alpha\beta}\over1-tX_{\alpha\beta}})
={1\over 2}t{d\over dt}\mathrm{log}{\prod_{\alpha,\beta=1}^3(1+tX_{\alpha\beta})\over\prod_{\alpha,\beta=1}^3(1-tX_{\alpha\beta})}
\\ &={1\over 2}t{d\over dt}\mathrm{log}{D(t)\over D(-t)}
\quad(\mathrm{where\ }D(t):=\prod_{\alpha,\beta=1}^3(1+tX_{\alpha\beta})). 
\end{align}
Since $\mathrm{log}D(t)=t\tilde\sigma_1-{t^2\over 2}\tilde\sigma_2+{t^3\over3}\tilde\sigma_3+\cdots$ and $D(t)$ has degree $9$, 
there holds $D(t)=\mathrm{exp}(t\tilde\sigma_1-{t^2\over 2}\tilde\sigma_2+\cdots-{t^9\over 9}\tilde\sigma_9)_{\leq 9}$, where the 
index $\leq 9$ denotes the projection $t^i\mapsto t^i$ if $1\leq i\leq 9$, $t^i\mapsto 0$ if $i>9$. Expressing $\tilde\sigma_7$,
$\tilde\sigma_8$ and $\tilde\sigma_9$ as polynomials in the generators $\tilde\sigma_2,\tilde\sigma_3,\ldots,\tilde\sigma_6$ 
of $\mathbb{A}$ using Maple, one obtains an expression of $D(t)$ terms of the generators of $\mathbb{A}$. 

Set $D_{ev}(t):={1\over 2}(D(t)+D(-t))$, $D_{odd}(t):={1\over 2}(D(t)-D(-t))$. Let $\mathbb{J}$ be the ideal of $\mathbb{A}$ generated
by $\tilde\sigma_4-{1\over 4}(\tilde\sigma_2)^2$. Set 
$$
D_{ev}^0(t):=(1-{\tilde\sigma_2\over 4}t^2)^2+(-{1\over 6}\tilde\sigma_6+{1\over96}(\tilde\sigma_2)^3)t^6, \quad
D_{odd}^0(t):={1\over3}\tilde\sigma_3 t^3+({1\over5}\tilde\sigma_5-{1\over 6}\tilde\sigma_2\tilde\sigma_3)t^5, 
$$
Then $D_{odd}^0(t)\in \mathbb{I}[t]$ and one checks that  
$$
D_{ev}(t)\equiv D_{ev}^0(t) \quad \mathrm{mod}\quad (\mathbb{J}+\mathbb{I}^2)[t], 
\quad
D_{odd}(t)\equiv D_{odd}^0(t) \quad \mathrm{mod}\quad (\mathbb{IJ}+\mathbb{I}^2)[t]. 
$$
The first statement, together with the invertibility of $D_{ev}^0(t)$ in $\mathbb{A}[[t]]$, implies that 
${D_{ev}(t)\over D_{ev}^0(t)}\in 1+(\mathbb{J}+\mathbb{I}^2)[[t]]$. The second statement, together with 
$D_{odd}^0(t)\in \mathbb{I}[t]$, then implies that 
$$
{D_{odd}(t)\over D_{ev}(t)}\equiv{D_{odd}^0(t)\over D_{ev}^0(t)} \quad \mathrm{mod}\quad(\mathbb{IJ}+\mathbb{I}^2)[[t]]. 
$$
This implies 
$$
{D(t)\over D(-t)}={D_{ev}(t)+D_{odd}(t)\over D_{ev}(t)-D_{odd}(t)}\equiv
1+2{D_{odd}^0(t)\over D_{ev}^0(t)}
 \quad \mathrm{mod}\quad(\mathbb{IJ}+\mathbb{I}^2)[[t]]. 
$$
By (\ref{début:sigma}), $\sum_{k\mathrm{\ odd\ }\geq 3}\tilde\sigma_k t^k={1\over 2}t{d\over dt}\mathrm{log}
{D(t)\over D(-t)}$, therefore 
$$
\sum_{k\mathrm{\ odd\ }\geq 3}\tilde\sigma_k t^k\equiv
t{d\over dt}({D_{odd}^0(t)\over D_{ev}^0(t)})
 \quad \mathrm{mod}\quad(\mathbb{IJ}+\mathbb{I}^2)[[t]]. 
$$

The map $\Sigma\to\mathbb{I}/(\mathbb{I}^2+\mathbb{I}\cap\mathbb{J})=\overline{\mathrm{gr}_\Sigma^1(\mathbb{A})}$ 
is then given by 
$$
\sum_{k\mathrm{\ odd\ }\geq 3}\sigma_k t^k\mapsto (\mathrm{class\ of\ }t{d\over dt}({D_{odd}^0(t)\over D_{ev}^0(t)}))
=(\mathrm{class\ of\ }\tilde\sigma_3\cdot t{d\over dt}({{1\over 3}t^3-{1\over 6}\tilde\sigma_2t^5\over D_{ev}^0(t)})
+\tilde\sigma_5\cdot t{d\over dt}({{1\over 5}t^5\over D_{ev}^0(t)})). 
$$
Composing it with the isomorphism $\overline{\mathrm{gr}_\Sigma^1(\mathbb{A})}\simeq\overline{\mathrm{gr}_\Sigma^0(\mathbb{A})}\otimes
(\C\mathbf{x}_3\oplus\C\mathbf{x}_5)$, we obtain the map 
$$
\Sigma\to\overline{\mathrm{gr}_\Sigma^0(\mathbb{A})}\otimes(\C\mathbf{x}_3\oplus\C\mathbf{x}_5)
$$
$$
\sum_{k\mathrm{\ odd\ }\geq 3}\sigma_k t^k\mapsto  (\mathrm{class\ of\ }t{d\over dt}({{1\over 3}t^3-{1\over 6}\tilde\sigma_2t^5\over D_{ev}^0(t)}))\otimes\mathbf{x}_3+(\mathrm{class\ of\ }t{d\over dt}({{1\over 5}t^5\over D_{ev}^0(t)}))\otimes\mathbf{x}_5
$$
Composing the latter map with the isomorphism induced by $\overline{\mathrm{gr}_\Sigma^0(\mathbb{A})}\simeq
\C[\mathbf{x}_2,\mathbf{x}_6]$, and using (class of $D_{ev}^0(t))=d(\mathbf{x}_2,\mathbf{x}_6,t)$ (see (\ref{def:d})), 
we obtain the map 
$$
\Sigma\to\C[\mathbf{x}_2,\mathbf{x}_6]\otimes(\C\mathbf{x}_3\oplus\C\mathbf{x}_5), 
$$
$$
\sigma(t)\mapsto t{d\over dt}({{1\over 3}t^3-{1\over 6}\mathbf{x}_2t^5\over d(\mathbf{x}_2,\mathbf{x}_6,t)})\otimes\mathbf{x}_3
+t{d\over dt}({{1\over 5}t^5\over d(\mathbf{x}_2,\mathbf{x}_6,t)})\otimes\mathbf{x}_5. 
$$
We derive from there: 
\begin{proposition}\label{prop:comp:module}
Under the inverse of the isomorphism $\phi:\C[\mathbf{x}_2,\mathbf{x}_6,\mathbf{x}_3,\mathbf{x}_5]\{8,2\}\to\mathbb{M}^{min}(\Sigma)$, 
the action of $\sigma(t)\in\Sigma[[t]]$ on $\mathbb{M}^{min}(\Sigma)$ corresponds to multiplication by 
$$
\xi_3(\mathbf{x}_2,\mathbf{x}_6,t)\cdot\mathbf{x}_3+\xi_5(\mathbf{x}_2,\mathbf{x}_6,t)
\cdot(\mathbf{x}_5-{5\over 6}\mathbf{x}_2\mathbf{x}_3)
$$
on $\C[\mathbf{x}_2,\mathbf{x}_6,\mathbf{x}_3,\mathbf{x}_5]\{8,2\}$, where $\xi_3(\mathbf{x}_2,\mathbf{x}_6,t),
\xi_5(\mathbf{x}_2,\mathbf{x}_6,t)$ are given by (\ref{xi3:xi5}). 
\end{proposition}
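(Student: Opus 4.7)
The plan is to transport the $S(\Sigma)$-action through the chain of identifications established in Section~\ref{Section5} and Section~\ref{Section:comp:Mminfull}. Since the $S(\Sigma)$-module structure on $\mathbb{M}^{min}(\Sigma)$ is the pullback, along the algebra morphism $S(\Sigma) \to \overline{\mathrm{gr}_\Sigma(\mathbb{A})}$ induced by $\sigma_k \mapsto$ (class of $\tilde\sigma_k$), of the natural module structure of $\overline{\mathrm{gr}_\Sigma(\mathbb{A})}$ on $\dot\tau_{35}$, and since under $\phi^{-1}$ the latter corresponds to the regular module structure of $\C[\mathbf{x}_2,\mathbf{x}_6,\mathbf{x}_3,\mathbf{x}_5]$ on itself, the problem reduces to computing the image of each $\tilde\sigma_k$ in $\overline{\mathrm{gr}_\Sigma^1(\mathbb{A})} = \mathbb{I}/(\mathbb{I}^2 + \mathbb{I}\cap\mathbb{J})$ and identifying it with an element of $\C[\mathbf{x}_2,\mathbf{x}_6]\mathbf{x}_3 \oplus \C[\mathbf{x}_2,\mathbf{x}_6]\mathbf{x}_5$. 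Assembled into a generating series, the action of $\sigma(t) = \sum_k \sigma_k t^k$ then becomes multiplication by the image of $\sum_{k\text{ odd}\geq 3} \tilde\sigma_k t^k$ under this map.

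To compute all $\tilde\sigma_k$ simultaneously, introduce the $3\times 3$ matrix $(X_{\alpha\beta})$ with vanishing row and column sums and $(X_{11},X_{12},X_{21},X_{22}) = (A,B,A',B')$, so that $\tilde\sigma_k = \sum_{\alpha,\beta}X_{\alpha\beta}^k$. Set $D(t):=\prod_{\alpha,\beta}(1+tX_{\alpha\beta}) \in \mathbb{A}[t]$, a polynomial of degree $9$. A direct $\log$ computation gives
\[
\sum_{k\text{ odd}\geq 3}\tilde\sigma_k t^k = \tfrac{1}{2}t\tfrac{d}{dt}\log\bigl(D(t)/D(-t)\bigr) = t\tfrac{d}{dt}\bigl(D_{odd}(t)/D_{ev}(t)\bigr),
\]
where $D_{ev},D_{odd}$ are the even and odd parts of $D$. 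The key technical step, and the main obstacle, is to exhibit explicit low-degree representatives
\[
D_{ev}^0(t) = \bigl(1-\tfrac{\tilde\sigma_2}{4}t^2\bigr)^2 + \bigl(-\tfrac{\tilde\sigma_6}{6}+\tfrac{(\tilde\sigma_2)^3}{96}\bigr)t^6, \qquad D_{odd}^0(t) = \tfrac{\tilde\sigma_3}{3}t^3 + \bigl(\tfrac{\tilde\sigma_5}{5} - \tfrac{\tilde\sigma_2\tilde\sigma_3}{6}\bigr)t^5
\]
satisfying $D_{ev} \equiv D_{ev}^0 \pmod{(\mathbb{J}+\mathbb{I}^2)[t]}$ and $D_{odd}\equiv D_{odd}^0 \pmod{(\mathbb{I}\mathbb{J}+\mathbb{I}^2)[t]}$. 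Since $D$ has degree $9$, Newton's identities yield $D(t) = \exp\bigl(\sum_{k=1}^9(-1)^{k-1}\tilde\sigma_k t^k/k\bigr)_{\leq 9}$; then substituting for $\tilde\sigma_7,\tilde\sigma_8,\tilde\sigma_9$ their expressions as polynomials in $\tilde\sigma_2,\ldots,\tilde\sigma_6$ (which exist by Proposition~\ref{prop:présentation:AA} and are found by computer algebra) and identifying which cross-terms lie in $\mathbb{J}, \mathbb{I}^2$ or $\mathbb{I}\mathbb{J}$ establishes both congruences.

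Since $D_{ev}^0$ has constant term $1$ it is invertible in $\mathbb{A}[[t]]$, and since $D_{odd}^0 \in \mathbb{I}[t]$, the congruences propagate to $D_{odd}/D_{ev} \equiv D_{odd}^0/D_{ev}^0 \pmod{(\mathbb{I}\mathbb{J}+\mathbb{I}^2)[[t]]}$. Applying $t\tfrac{d}{dt}$, mapping to $\overline{\mathrm{gr}_\Sigma^1(\mathbb{A})}[[t]]$, and composing with $\phi^{-1}$ — using that the class of $D_{ev}^0(t)$ corresponds to $d(\mathbf{x}_2,\mathbf{x}_6,t)$ of (\ref{def:d}) — the action of $\sigma(t)$ becomes multiplication by
\[
t\tfrac{d}{dt}\!\Bigl(\tfrac{\tfrac{1}{3}t^3 - \tfrac{1}{6}\mathbf{x}_2 t^5}{d(\mathbf{x}_2,\mathbf{x}_6,t)}\Bigr)\mathbf{x}_3 + t\tfrac{d}{dt}\!\Bigl(\tfrac{\tfrac{1}{5}t^5}{d(\mathbf{x}_2,\mathbf{x}_6,t)}\Bigr)\mathbf{x}_5.
\]
Using the definition (\ref{xi3:xi5}) of $\xi_3,\xi_5$ and the identity $t\tfrac{d}{dt}(\tfrac{t^5/6}{d}) = \tfrac{5}{6}\xi_5$, this expression rearranges to $\xi_3(\mathbf{x}_2,\mathbf{x}_6,t)\mathbf{x}_3 + \xi_5(\mathbf{x}_2,\mathbf{x}_6,t)\bigl(\mathbf{x}_5 - \tfrac{5}{6}\mathbf{x}_2\mathbf{x}_3\bigr)$, which is the claimed formula.
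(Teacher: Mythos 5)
Your proposal is correct and follows essentially the same path as the paper's proof: transport the $S(\Sigma)$-action through the pullback chain $S(\Sigma)\to\overline{\mathrm{gr}_\Sigma(\mathbb{A})}$, compute $\sum_{k\text{ odd}\geq 3}\tilde\sigma_k t^k$ via the $3\times3$ matrix $(X_{\alpha\beta})$ and $D(t)=\prod(1+tX_{\alpha\beta})$, reduce $D_{ev},D_{odd}$ to the explicit low-degree representatives $D^0_{ev},D^0_{odd}$ modulo $(\mathbb{J}+\mathbb{I}^2)$ and $(\mathbb{IJ}+\mathbb{I}^2)$ respectively, then identify the classes of $\tilde\sigma_3,\tilde\sigma_5$ with $\mathbf{x}_3,\mathbf{x}_5$. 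One small imprecision: the identity $\tfrac12 t\tfrac{d}{dt}\log(D(t)/D(-t))=t\tfrac{d}{dt}(D_{odd}/D_{ev})$ is not an exact equality but a congruence modulo $\mathbb{I}^3[[t]]$ (since $\log\tfrac{1+x}{1-x}=2x+O(x^3)$ with $x=D_{odd}/D_{ev}\in\mathbb{I}[[t]]$), which is harmless here because you then reduce modulo $(\mathbb{IJ}+\mathbb{I}^2)[[t]]\supset\mathbb{I}^3[[t]]$.
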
 

\subsection{The depth filtration of ${\mathbb M}^{min}(\Sigma)$}\label{subsect:depth:M:min}

It follows from Subsection \ref{subsect:depth} that the depth filtration on $M$ is given by 
$M=F^2_{dpth}(M)\supset F^3_{dpth}(M)\supset\cdots$, where $F^{k+2}_{dpth}(M)=ABA'B'\cdot\{(B,B')^k\cap\C[A,B,A',B']^{as}\}$
and $(B,B')\subset\C[A,B,A',B']$ is the ideal generated by $B,B'$. This filtration induces filtrations on the subspaces 
$F^k_\Sigma(M):=S^k(\Sigma)\cdot M$, on the quotients $\mathrm{gr}_\Sigma^k(M)=F^k_\Sigma(M)/F^{k+1}_\Sigma(M)$, and
on the subspaces $\mathbb{M}_k^{min}(\Sigma)\subset\mathrm{gr}_\Sigma^k(M)$. 

Since $\Sigma$ is contained in the intersection $(B,B')\cap \C[A,B,A',B']^{sym}$, we have an inclusion $F^k_\Sigma(M)\subset 
F^{k+2}_{dpth}(M)$ for each $k\geq 0$. The collection of these inclusions gives rise to a linear map 
\begin{equation}\label{lin:map:k}
\mathrm{gr}_\Sigma^k(M)\to \mathrm{gr}_{dpth}^{k+2}(M)
\end{equation}
for any $k\geq 0$. 

The depth filtration of $F^k_\Sigma(M)$ is induced by the depth filtration of $M$. The inclusion $F^{k+2}_{dpth}(M)\supset F^k_\Sigma(M)$
implies that $F^k_\Sigma(M)$ coincides with its part of depth degree $\geq k+2$, so $F^k_\Sigma(M)=F^{k+2}_{dpth}(F^k_\Sigma(M))$.
It follows that the same is true of $\mathrm{gr}^k_\Sigma(M)$, so 
\begin{equation}\label{grkM:depth:k+2}
\mathrm{gr}^k_\Sigma(M)=F^{k+2}_{dpth}(\mathrm{gr}^k_\Sigma(M)).
\end{equation}
The part of $\mathrm{gr}^k_\Sigma(M)$ of depth degree $\geq k+3$ is defined as the image of 
$F^{k+3}_{dpth}(F^k_\Sigma(M))\hookrightarrow F^k_\Sigma(M)\to\mathrm{gr}^k_\Sigma(M)$. 
The commutative diagram 
$$
\xymatrix{
F^{k+3}_{dpth}(\mathrm{gr}_\Sigma^k(M)) \ar@{^{(}->}[r] & \mathrm{gr}_\Sigma^k(M) \ar[r] & \mathrm{gr}_{dpth}^{k+2}(M)
 \\ 
F^{k+3}_{dpth}(F_\Sigma^k(M))
\ar@{>>}[u]\ar@{^{(}->}[rr] &  & F^{k+3}_{dpth}(M)\ar[u]^{0}}
$$
implies that the composition $F^{k+3}_{dpth}(\mathrm{gr}_\Sigma^k(M)) \hookrightarrow\mathrm{gr}_\Sigma^k(M) \to
\mathrm{gr}_{dpth}^{k+2}(M)$ is zero, so that 
\begin{equation} \label{inclusion:kernel}
F^{k+3}_{dpth}(\mathrm{gr}_\Sigma^k(M)) \mathrm{\ is\ contained\ in\ the\ kernel\ of\ } 
\mathrm{gr}_\Sigma^k(M) \to\mathrm{gr}_{dpth}^{k+2}(M).
\end{equation}  

The direct sum over $k\geq 0$ of the linear maps (\ref{lin:map:k}) is a linear map 
$$
\mathrm{gr}_\Sigma(M)\simeq\oplus_{k\geq 0}\mathrm{gr}^k_\Sigma(M)\to\oplus_{k\geq 0}\mathrm{gr}^{k+2}_{depth}(M). 
$$
Composing it with the inclusion $\mathbb{M}^{min}(\Sigma)\hookrightarrow\mathrm{gr}_\Sigma(M)$ and with the isomorphism 
$\mathbb{M}^{min}(\Sigma)\simeq\C[\mathbf{x}_2,\mathbf{x}_6,\mathbf{x}_3,\mathbf{x}_5]\{8,2\}$, with the isomorphism 
$\oplus_{k\geq 0}\mathrm{gr}^{k+2}_{depth}(M)\simeq M$ arising from the fact that the depth filtration of $M$ comes from a grading, 
and with the inclusion $M\hookrightarrow ABA'B'\cdot \C[A,B,A',B']$, we obtain a linear map 
$$
\C[\mathbf{x}_2,\mathbf{x}_6,\mathbf{x}_3,\mathbf{x}_5]\{8,2\}\to ABA'B'\cdot \C[A,B,A',B']. 
$$
One can check that this map coincides with map (\ref{map:a}), which has been shown (proof of Proposition \ref{prop:unique:iso})  
to be injective. It follows that for any $k\geq 0$, the map $\mathbb{M}^{min}_k(\Sigma)\hookrightarrow\mathrm{gr}^k_\Sigma(M)
\to\mathrm{gr}^{k+2}_{depth}(M)$ is injective. Comparing this with (\ref{inclusion:kernel}), we derive that {\it the intersection 
$\mathbb{M}^{min}_k(\Sigma)\cap F^{k+3}_{dpth}(\mathrm{gr}_\Sigma^k(M))$ is zero.}  On the other hand, (\ref{grkM:depth:k+2}) 
implies that $\mathbb{M}^{min}_k(\Sigma)$ is contained in $F^{k+2}_{dpth}(\mathrm{gr}_\Sigma^k(M))$. All this implies: 

\begin{proposition}
For each $k\geq 0$, $\mathbb{M}^{min}_k(\Sigma)$ is pure for the depth filtration, of depth $k+2$. 
\end{proposition}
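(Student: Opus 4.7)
The statement follows by assembling the two facts already isolated in the paragraph immediately preceding it, so the plan is essentially to package them into a clean conclusion rather than to introduce new geometric content.

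The plan is to first record that the depth filtration on $\mathbb{M}^{min}_k(\Sigma)$ is induced by that on $\mathrm{gr}^k_\Sigma(M)$, so that proving purity amounts to showing both $\mathbb{M}^{min}_k(\Sigma)\subset F^{k+2}_{dpth}(\mathrm{gr}^k_\Sigma(M))$ and $\mathbb{M}^{min}_k(\Sigma)\cap F^{k+3}_{dpth}(\mathrm{gr}^k_\Sigma(M))=0$. The first inclusion is immediate from equality (\ref{grkM:depth:k+2}), which asserts that the whole of $\mathrm{gr}^k_\Sigma(M)$ sits in depth $\geq k+2$; this comes from the fact that $\Sigma\subset(B,B')$ inside $\C[A,B,A',B']$, so that the Sigma-filtration $F_\Sigma^k(M)=S^k(\Sigma)\cdot M$ already satisfies $F_\Sigma^k(M)\subset F^{k+2}_{dpth}(M)$.

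Second, I would invoke the injectivity result derived at the end of Subsection \ref{subsect:depth:M:min}: the composite linear map
\[
\mathbb{M}^{min}_k(\Sigma)\hookrightarrow\mathrm{gr}^k_\Sigma(M)\to\mathrm{gr}^{k+2}_{dpth}(M)
\]
is injective, since summing over $k$ and composing with the isomorphism $\mathbb{M}^{min}(\Sigma)\simeq\C[\mathbf{x}_2,\mathbf{x}_6,\mathbf{x}_3,\mathbf{x}_5]\{8,2\}$ and with $M\hookrightarrow ABA'B'\C[A,B,A',B']$ recovers the map (\ref{map:a}), whose injectivity was established in the proof of Proposition \ref{prop:unique:iso} via a nonvanishing-Jacobian argument. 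By (\ref{inclusion:kernel}), the kernel of $\mathrm{gr}^k_\Sigma(M)\to\mathrm{gr}^{k+2}_{dpth}(M)$ contains $F^{k+3}_{dpth}(\mathrm{gr}^k_\Sigma(M))$, so the injectivity above forces $\mathbb{M}^{min}_k(\Sigma)\cap F^{k+3}_{dpth}(\mathrm{gr}^k_\Sigma(M))=0$, which is exactly the complementary inclusion needed.

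Combining the two inclusions gives $F^{k+2}_{dpth}(\mathbb{M}^{min}_k(\Sigma))=\mathbb{M}^{min}_k(\Sigma)$ and $F^{k+3}_{dpth}(\mathbb{M}^{min}_k(\Sigma))=0$, so the induced depth filtration collapses to a single step concentrated in depth $k+2$, which is the asserted purity. There is no substantial obstacle here: the serious work was already done earlier, first in establishing (\ref{grkM:depth:k+2}) via the containment $\Sigma\subset(B,B')$, and then in the Jacobian computation of Subsection \ref{subsect:inj} that certifies injectivity of (\ref{map:a}). The only mild care needed is to verify that the map $\mathbb{M}^{min}_k(\Sigma)\to\mathrm{gr}^{k+2}_{dpth}(M)$ coming from the inclusion into $\mathrm{gr}^k_\Sigma(M)$ is indeed the $k$-th graded piece of the previously used injection, but this is a direct unraveling of definitions given the compatibility of the Sigma- and depth-gradings on $M$.
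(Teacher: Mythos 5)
Your proposal reproduces the paper's argument almost step for step: you reduce purity to the two containments $\mathbb{M}^{min}_k(\Sigma)\subset F^{k+2}_{dpth}(\mathrm{gr}^k_\Sigma(M))$ and $\mathbb{M}^{min}_k(\Sigma)\cap F^{k+3}_{dpth}(\mathrm{gr}^k_\Sigma(M))=0$, obtaining the first from (\ref{grkM:depth:k+2}) and the second by combining (\ref{inclusion:kernel}) with the injectivity of the graded pieces of the composite $\C[\mathbf{x}_2,\mathbf{x}_6,\mathbf{x}_3,\mathbf{x}_5]\{8,2\}\to ABA'B'\C[A,B,A',B']$ established in the proof of Proposition \ref{prop:unique:iso}. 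This is correct and is precisely the route the paper takes, including your acknowledgement that one must check the compatibility of the two descriptions of the map, which matches the paper's ``one can check'' step.
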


\subsection{Summary of the results on lower bounds for $\Sigma$-structures}\label{subsect:summary}

\subsubsection{} Recall that $\mathfrak{grt}_1$ is equipped with a weight grading and a compatible depth filtration (see Subsection \ref{subsect:depth}). The spaces derived from $\mathfrak{grt}_1$ are likewise equipped with compatible degrees and filtrations. 

\subsubsection{} There is an isomorphism of graded spaces $\mathrm{gr}_{lcs}^0(\mathfrak{grt}_1)\simeq
\oplus_{k\mathrm{\ odd\ }\geq 3}\C\sigma_k(=\Sigma)$, and $\Sigma$ is pure of depth degree 1. The Lie bracket of $\mathfrak{grt}_1$ gives 
rise to a $S(\Sigma)$-module structure over $\mathrm{gr}_{lcs}^1(\mathfrak{grt}_1)$ and to a linear map 
$\Lambda^2(\Sigma)\to\mathrm{gr}_{lcs}^1(\mathfrak{grt}_1)/\Sigma\cdot\mathrm{gr}_{lcs}^1(\mathfrak{grt}_1)$. 

\subsubsection{} Set $\mathrm{gr}_\Sigma^i(\mathrm{gr}_{lcs}^1(\mathfrak{grt}_1)):=S^i(\Sigma)\cdot\mathrm{gr}_{lcs}^1(\mathfrak{grt}_1)/
S^{i+1}(\Sigma)\cdot\mathrm{gr}_{lcs}^1(\mathfrak{grt}_1)$, then 
$$
\mathrm{gr}_\Sigma(\mathrm{gr}_{lcs}^1(\mathfrak{grt}_1)):=
\oplus_{i\geq 0}\mathrm{gr}_\Sigma^i(\mathrm{gr}_{lcs}^1(\mathfrak{grt}_1))
$$ 
is a $\Sigma$-module in the sense of the Introduction. In addition to its grading corresponding to this decomposition (the $\Sigma$-grading), 
this module is equipped with a weight grading and a depth filtration, which are all compatible. 

\subsubsection{} The variables $\mathbf{x}_2,\mathbf{x}_3,\mathbf{x}_5,\mathbf{x}_6$ are free commutative variables of (weight degree, 
$\Sigma$-degree) equal to (2,0), (3,1), (5,1), (6,0), respectively. Polynomials and formal series are given by 
$$
d(\mathbf{x}_2,\mathbf{x}_6,t):=(1-{\mathbf{x}_2\over 4}t^2)^2+(-{1\over 6}\mathbf{x}_6
+{1\over96}(\mathbf{x}_2)^3)t^6, 
$$ 
$$
\xi_3({\mathbf{x}}_2,{\mathbf{x}}_6,t):=t{d\over dt}({-{1\over 3}t^3\over d(\mathbf{x}_2,\mathbf{x}_6,t)}), \quad 
\xi_5({\mathbf{x}}_2,{\mathbf{x}}_6,t):=t{d\over dt}({-{1\over 5}t^5\over d(\mathbf{x}_2,\mathbf{x}_6,t)}). 
$$
The (weight degree, $\Sigma$-degree)-bigraded vector space $\mathbb{M}^{min}(\Sigma)$ is equal to 
$\C[\mathbf{x}_2,\mathbf{x}_3,\mathbf{x}_5,\mathbf{x}_6]\{8,2\}$, where $\{8,2\}$ denotes the shift of bidegrees 
by $(8,2)$; so $\mathbb{M}_0^{min}(\Sigma)=\C[\mathbf{x}_2,\mathbf{x}_6]\{8\}$, 
$\mathbb{M}_1^{min}(\Sigma)=\C[\mathbf{x}_2,\mathbf{x}_6]\{8\}\otimes(\C\mathbf{x}_3\oplus\C\mathbf{x}_5)$, etc. 
A $S(\Sigma)$-module structure is defined on $\mathbb{M}^{min}(\Sigma)$ by the condition that $\sigma(t)=\sum_{k
\mathrm{\ odd\ }\geq 3}\sigma_kt^k$ acts by multiplication by 
$$
\xi_3(\mathbf{x}_2,\mathbf{x}_6,t)\cdot\mathbf{x}_3+\xi_5(\mathbf{x}_2,\mathbf{x}_6,t)
\cdot(\mathbf{x}_5-{5\over 6}\mathbf{x}_2\mathbf{x}_3). 
$$
A linear map $\Lambda^2(\Sigma)\to\mathbb{M}^{min}_0(\Sigma)$
is defined by  
$$
\{\sigma(t),\sigma(u)\}=-2(\xi_3({\mathbf{x}}_2,{\mathbf{x}}_6,t)\xi_5({\mathbf{x}}_2,{\mathbf{x}}_6,u)
-\xi_3({\mathbf{x}}_2,{\mathbf{x}}_6,u)\xi_5({\mathbf{x}}_2,{\mathbf{x}}_6,t)). 
$$
Equipped with these structures, $\mathbb{M}^{min}(\Sigma)$ is a $\Sigma$-module. Its depth filtration 
is such that 
$$
F^i_{dpth}(\mathbb{M}^{min}(\Sigma))=\mathrm{(part\ of\ }\mathbb{M}^{min}(\Sigma)\mathrm{\ of\ }\Sigma\text{-degree\ }
\geq i)=\mathbb{M}^{min}_{i-2}(\Sigma)\oplus\mathbb{M}^{min}_{i-1}(\Sigma)\oplus\cdots.
$$ 
\begin{theorem}\label{thm:Mmin} 
The depth-filtered $\Sigma$-module $\mathbb{M}^{min}(\Sigma)$ is a subquotient of the depth-filtered 
$\Sigma$-module $\mathrm{gr}_\Sigma(\mathrm{gr}_{lcs}^1(\mathfrak{grt}_1))$. 
\end{theorem}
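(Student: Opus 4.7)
The plan is to deduce Theorem~\ref{thm:Mmin} as a packaging of the results of Sections~\ref{Section4}--\ref{Section:comp:Mminfull}, rather than as a new calculation. First I would observe that the underlying graded $S(\Sigma)$-module of the $\Sigma$-structure $[\mathrm{gr}_{lcs}^1(\mathfrak{grt}_1)]$ is, by its very definition in the Introduction, the module $\mathrm{gr}_\Sigma(\mathrm{gr}_{lcs}^1(\mathfrak{grt}_1))=\oplus_i \mathrm{gr}^i_\Sigma(\mathrm{gr}_{lcs}^1(\mathfrak{grt}_1))$; and that this $\Sigma$-structure is precisely $\mathbb{M}(\mathfrak{H})$ in the notation of Subsection~\ref{subsect:lower:bound}, via the diagram (\ref{morph:ex:seq}) together with the identification ${\mathfrak H}\cap{\mathfrak L}_1=\mathrm{gr}_{lcs}^1({\mathfrak{grt}}_1)$. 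Both the weight grading and the depth filtration carried by $\mathrm{gr}_\Sigma(\mathrm{gr}_{lcs}^1(\mathfrak{grt}_1))$ are the ones induced from $\overline{\mathfrak L}$ via the sequence of filtrations (weight, $lcs$, $\Sigma$, depth), and in particular they are compatible with each other and with the $S(\Sigma)$-action and the bracket $\Lambda^2(\Sigma)\to{\mathbb M}_0$.

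Next I would invoke the Proposition of Subsection~\ref{subsect:lower:bound}, which applies the $GF$ version of Proposition~\ref{proposition:lower:bound} to the extension $0\to{\mathfrak L}_1\to{\mathfrak L}_{quot}\to{\mathfrak L}_0\to 0$ and the Lie subalgebra ${\mathfrak H}\subset{\mathfrak L}_{quot}$. This directly gives that ${\mathbb M}^{min}(\Sigma)$ is a subquotient of $[\mathrm{gr}_{lcs}^1(\mathfrak{grt}_1)]$ as $\Sigma$-structures in $GF$, i.e. as depth-filtered $\Sigma$-modules. Concretely, one must check that the chain of inclusions and quotients in Subsection~\ref{subsection:lower:bound} preserves weight degree, $\Sigma$-degree and depth filtration; this is built into the construction, since each map used (the inclusion ${\mathfrak{grt}}_1\hookrightarrow\overline{\mathfrak L}$, the projection $\overline{\mathfrak L}\to{\mathfrak L}_{quot}$, and the passage to $S(\Sigma)$-graded pieces) is a morphism in $GF$.

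At this point the theorem is already true as an abstract statement, but to match the explicit description of ${\mathbb M}^{min}(\Sigma)$ used in Subsection~\ref{subsect:summary} I would identify the abstract ${\mathbb M}^{min}(\Sigma)$ of (\ref{def:M:min}) with the concrete polynomial model. This identification is precisely Proposition~\ref{prop:unique:iso}, which provides the isomorphism of bigraded vector spaces $\C[\mathbf{x}_2,\mathbf{x}_6,\mathbf{x}_3,\mathbf{x}_5]\{8,2\}\xrightarrow{\sim}{\mathbb M}^{min}(\Sigma)$. The $S(\Sigma)$-action is transported to the multiplication formula $\sigma(t)\mapsto \xi_3(\mathbf{x}_2,\mathbf{x}_6,t)\mathbf{x}_3+\xi_5(\mathbf{x}_2,\mathbf{x}_6,t)(\mathbf{x}_5-\tfrac{5}{6}\mathbf{x}_2\mathbf{x}_3)$ by Proposition~\ref{prop:comp:module}, the bracket $\{,\}:\Lambda^2(\Sigma)\to{\mathbb M}_0^{min}(\Sigma)$ to (\ref{bracket:sigma}), and the depth filtration to the one in which $F^i_{dpth}=\oplus_{j\geq i-2}{\mathbb M}_j^{min}(\Sigma)$ by the Proposition of Subsection~\ref{subsect:depth:M:min}. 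Combining the abstract subquotient statement with these four explicit identifications yields the theorem as stated in Subsection~\ref{subsect:summary}.

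The main obstacle in this plan is not a single hard step but rather the coherence verification: one has to make sure that all the structures attributed to ${\mathbb M}^{min}(\Sigma)$ in Subsection~\ref{subsect:summary} (polynomial bigrading, $S(\Sigma)$-action given by $\xi_3,\xi_5$, antisymmetric bracket, and the statement that ${\mathbb M}_i^{min}$ is \emph{pure} of depth $i+2$) are transported consistently from the abstract definition (\ref{def:M:min}), and then that the subquotient relation in the $GF$ version of Proposition~\ref{proposition:lower:bound} respects each of these structures. The delicate ingredients are (i) the compatibility of the isomorphism $\phi$ of Proposition~\ref{prop:unique:iso} with the $S(\Sigma)$-action, which requires the explicit generating-series computation in Subsection~\ref{subsect:SSigma:str}, and (ii) the purity of ${\mathbb M}_i^{min}$ for the depth filtration, which uses the injectivity of the map (\ref{map:a}) via the nonvanishing of the Jacobian in Subsection~\ref{subsect:depth:M:min}. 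Both have already been established in the preceding sections, so the proof reduces to assembling these statements and citing Proposition~\ref{proposition:lower:bound} (in its $GF$ form).
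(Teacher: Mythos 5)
Your proposal is correct and takes essentially the same approach the paper implicitly uses: Theorem \ref{thm:Mmin} in Subsection \ref{subsect:summary} is a restatement of the unnumbered Proposition of Subsection \ref{subsect:lower:bound} (itself the $GF$ specialization of Proposition \ref{proposition:lower:bound} to the diagram (\ref{morph:ex:seq})), combined with the explicit description of ${\mathbb M}^{min}(\Sigma)$ obtained in Propositions \ref{prop:unique:iso}, \ref{prop:comp:module} and Subsection \ref{subsect:depth:M:min}. You have correctly identified the assembly of already-proved pieces; the paper does not supply a separate proof of this theorem beyond that assembly.
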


\section{Lower bounds for Hilbert-Poincar\'e series of subquotients of $\mathfrak{grt}_1$}\label{Section:comp:hilbert}

As remarked in Subsection \ref{subsection:GF}, Theorem \ref{thm:Mmin} implies lower bound results for Hilbert-Poincar\'e series. 
For $X$ a vector space with weight grading $X=\oplus_{n\in\N}X[n]$ and compatible depth filtration, we have 
\begin{equation}\label{formula:for:P}
P_X(t,u)=\sum_{n,i\geq 0}\mathrm{dim}\ \mathrm{gr}_{dpth}^iX[n]t^nu^i, \quad
P_X(t)=\sum_{n\geq 0}\mathrm{dim}\ X[n]t^n(=P_X(t,1)). 
\end{equation}
Then 
$$
P_{\mathbb{M}^{min}(\Sigma)}(t,u)={t^8u^2\over{(1-t^2)(1-t^6)(1-ut^3)(1-ut^5)}}, \quad
P_{\mathbb{M}^{min}(\Sigma)}(t)={t^8\over{(1-t^2)(1-t^6)(1-t^3)(1-t^5)}}. 
$$
On the other hand, the 2-variable series of $\mathrm{gr}_{lcs}^1(\mathfrak{grt}_1)$ and of  $\mathrm{gr}_\Sigma(
\mathrm{gr}_{lcs}^1(\mathfrak{grt}_1))$ coincide, as well as the 1-variable series of the same spaces. Theorem \ref{thm:Mmin}
then implies: 

\begin{theorem}\label{thm:lower:bound}
The following inequalities hold
$$
P_{\mathrm{gr}_{lcs}^1(\mathfrak{grt}_1)}(t,u)\geq {t^8u^2\over{(1-t^2)(1-t^6)(1-ut^3)(1-ut^5)}}, \quad
P_{\mathrm{gr}_{lcs}^1(\mathfrak{grt}_1)}(t)\geq {t^8\over{(1-t^2)(1-t^6)(1-t^3)(1-t^5)}},
$$
where an inequality means that the difference of both sides is a series with nonnegative coefficients, and the left-hand sides 
are defined by (\ref{formula:for:P}). 
\end{theorem}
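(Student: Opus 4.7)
The plan is to deduce the theorem from Theorem \ref{thm:Mmin}, by combining the monotonicity of Hilbert-Poincar\'e series under subquotients in $GF$ (Subsection \ref{subsection:GF}) with two auxiliary facts: an explicit computation of the double Hilbert-Poincar\'e series of $\mathbb{M}^{min}(\Sigma)$, and the equality $P_{\mathrm{gr}_\Sigma(X)}(t,u) = P_X(t,u)$, where $X := \mathrm{gr}_{lcs}^1(\mathfrak{grt}_1)$ and $\mathrm{gr}_\Sigma$ is taken with respect to the filtration $F^k_\Sigma := S^k(\Sigma) \cdot (-)$.

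For the computation of $P_{\mathbb{M}^{min}(\Sigma)}(t,u)$, I would invoke the summary in Subsection \ref{subsect:summary}, which identifies $\mathbb{M}^{min}(\Sigma)$ with $\mathbb{C}[\mathbf{x}_2,\mathbf{x}_3,\mathbf{x}_5,\mathbf{x}_6]\{8,2\}$, the variables $\mathbf{x}_2,\mathbf{x}_6,\mathbf{x}_3,\mathbf{x}_5$ having (weight, $\Sigma$)-bidegrees $(2,0),(6,0),(3,1),(5,1)$ respectively, together with the Proposition at the end of Subsection \ref{subsect:depth:M:min} asserting that $\mathbb{M}^{min}_k(\Sigma)$ is pure of depth $k+2$. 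The latter means that on $\mathbb{M}^{min}(\Sigma)$ the depth grading equals the $\Sigma$-grading, so the exponent of $u$ in $P_{\mathbb{M}^{min}(\Sigma)}(t,u)$ tracks the $\Sigma$-degree. The Hilbert-Poincar\'e series of the shifted polynomial ring then gives
\begin{equation*}
P_{\mathbb{M}^{min}(\Sigma)}(t,u) = \frac{t^8 u^2}{(1-t^2)(1-t^6)(1-ut^3)(1-ut^5)}.
\end{equation*}

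For the second fact, $\Sigma$ is supported in weights $\geq 3$ (spanned by the $\sigma_k$ with $k$ odd $\geq 3$), so the filtration $F^k_\Sigma(X) = S^k(\Sigma)\cdot X$ satisfies $F^k_\Sigma(X)[n] = 0$ once $3k > n$; in particular it terminates in each fixed weight. Summing the additivity of Hilbert-Poincar\'e series in $GF$ over the short exact sequences $0 \to F^{k+1}_\Sigma(X) \to F^k_\Sigma(X) \to \mathrm{gr}^k_\Sigma(X) \to 0$ then yields $P_X(t,u) = P_{\mathrm{gr}_\Sigma(X)}(t,u)$, and likewise for the 1-variable series.

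The conclusion follows by applying the monotonicity principle recalled in Subsection \ref{subsection:GF} to the subquotient relation between $\mathbb{M}^{min}(\Sigma)$ and $\mathrm{gr}_\Sigma(X)$ provided by Theorem \ref{thm:Mmin}: this gives $P_{\mathbb{M}^{min}(\Sigma)}(t,u) \leq P_{\mathrm{gr}_\Sigma(X)}(t,u)$, and combining with the previous step yields the double-series inequality in the theorem. Specialization to $u = 1$ then gives the 1-variable bound. The argument is pure bookkeeping on top of Theorem \ref{thm:Mmin}; the only point requiring verification is the termination of the $F^k_\Sigma$-filtration in each fixed weight, which is immediate from the positivity of the weights of the generators $\sigma_k$.
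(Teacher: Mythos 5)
Your proof is correct and follows essentially the same route as the paper's: compute $P_{\mathbb{M}^{min}(\Sigma)}(t,u)$ from the identification of Subsection \ref{subsect:summary} together with the depth-purity result, observe that the two-variable (and hence one-variable) series of $\mathrm{gr}_{lcs}^1(\mathfrak{grt}_1)$ and $\mathrm{gr}_\Sigma(\mathrm{gr}_{lcs}^1(\mathfrak{grt}_1))$ agree, and apply the monotonicity from Subsection \ref{subsection:GF} to the subquotient relation of Theorem \ref{thm:Mmin}. You merely spell out the termination argument for the $\Sigma$-adic filtration in fixed weight, which the paper leaves implicit.
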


\section{On the explicit form of $\mathrm{gr}_{dpth}(\mathfrak{grt}_1)$ in depths 2 and 3} \label{Section:depth3}

The depth filtration of $\overline{\mathfrak{L}}$ (Section \ref{subsect:depth}) induces a decreasing filtration on $\mathfrak{grt}_1$, also called the depth 
filtration. The associated graded Lie algebra $\mathrm{gr}_{dpth}(\mathfrak{grt}_1)$ is a (weight, depth)-bigraded Lie algebra. 
The Broadhurst-Kreimer conjecture (\cite{BK}) predicts the dimensions of the components of this bigraded vector space. 
It has been proved for depths 1,2,3 (\cite{G}). We recall this material in Subsection \ref{subsect:material}.  

The explicit form of $\mathrm{gr}_{dpth}^k(\mathfrak{grt}_1)$ is known for depths $k=1,2$ (\cite{Ec}). Using the known results on 
$\mathbb{M}^{min}(\Sigma)$, we recover this explicit form (Subsection \ref{subsect:depth2}) and prove some results on the explicit form 
of $\mathrm{gr}_{dpth}^k(\mathfrak{grt}_1)$ when $k=3$ (Subsection \ref{subsect:depth3}). 

\subsection{\ } \label{subsect:material}

In Subsection \ref{sec:structure}, we introduced the free Lie algebra $\tilde{\mathfrak{L}}={\mathbb{L}}({\mathbb{C}}x\oplus{\mathbb{C}}y)$, 
equipped with the bracket $\langle,\rangle$. As remarked in Subsection \ref{subsect:depth}, this bracket (as well as the free Lie algebra 
bracket $[,]$) is homogeneous for the depth degree (i.e., the degree in $y$), therefore $\underline{\mathfrak{L}}:=
\oplus_{i>0}(\text{part\ of\ }\tilde{\mathfrak{L}}\text{\ of\ }y\text{-degree\ }i)$ is a graded Lie subalgebra of $\tilde{\mathfrak{L}}$ for 
the brackets $\langle,\rangle$ and $[,]$. 

For $a\geq 0$, set $\xi[a]:=(\mathrm{ad}x)^a(y)$ ($\in\underline{\mathfrak{L}}$), and 
$$
\underline{V}:=\oplus_{a\geq 0}\mathbb{C}\xi[a]\quad(\subset\underline{\mathfrak{L}}). 
$$
Then $\underline{V}\simeq\underline{\mathfrak{L}}[1]$ (where for $k\geq 1$, $\underline{\mathfrak{L}}[k]$ denotes the part of $\underline{\mathfrak{L}}$
of $y$-degree $k$), and it follows from Lazard's elimination theorem that the Lie bracket $[,]$ induces an isomorphism 
$$
{\mathbb{L}}(\underline{V})\simeq\underline{\mathfrak{L}}
$$ 
of depth-graded Lie algebras between $(\underline{\mathfrak{L}},[,])$ and the free Lie algebra generated by $\underline{V}$, where the  
degree on ${\mathbb{L}}(\underline{V})$ is defined by the condition that $\underline{V}$ has depth degree 1.   

The depth filtration on $\underline{\mathfrak{L}}$ is defined by $F^i_{dpth}(\underline{\mathfrak{L}}):=\oplus_{j|j\geq i}
\underline{\mathfrak{L}}[j]$; since this filtration is induced by a grading, $\underline{\mathfrak{L}}$ coincides with its associated 
graded Lie algebra for this filtration. This filtration is also compatible with the Lie algebra inclusion 
$\underline{\mathfrak{L}}\subset\tilde{\mathfrak{L}}$. 

The Lie algebra inclusion ${\mathfrak{grt}}_1\subset\underline{{\mathfrak{L}}}$ and the depth filtration on $\underline{{\mathfrak{L}}}$ induces 
a filtration on ${\mathfrak{grt}}_1$, which is again called the depth filtration. The associated graded Lie algebra is $\mathrm{gr}_{dpth}
({\mathfrak{grt}}_1)=\oplus_{k\geq 1}\mathrm{gr}_{dpth}^k({\mathfrak{grt}}_1)$, where 
$$
\mathrm{gr}_{dpth}^k({\mathfrak{grt}}_1):={{\mathfrak{grt}}_1\cap F_{dpth}^k(\underline{{\mathfrak{L}}})
\over {\mathfrak{grt}}_1\cap F_{dpth}^{k+1}(\underline{{\mathfrak{L}}})}\hookrightarrow \underline{\mathfrak{L}}[k]
\simeq\mathbb{L}_k(\underline{V}). 
$$
 
The map $\mathrm{gr}_{dpth}^k({\mathfrak{grt}}_1)\hookrightarrow\mathbb{L}_k(\underline{V})$ is an inclusion of graded (for the $x$-degree) vector spaces, and 
as mentioned above, the Broadhurst-Kreimer conjecture predicts the graded dimension of these spaces for each $k\geq1$.  They have been established for $k=1,2,3$. 
More precisely, the following results have been shown (\cite{G}). Set 
$$
\underline{W}:=\oplus_{a\mathrm{\ even\ }>0}{\mathbb{C}}\xi[a]\subset\underline{V}.
$$
Let $\mathrm{Lie}(\underline{W})\subset{\mathbb{L}}(\underline{V})$ be the Lie subalgebra (for the bracket $\langle,\rangle$) 
generated by $\underline W$, and let 
$\mathrm{Lie}(\underline W)[k]\subset{\mathbb{L}}_k(\underline{V})$ be its part of depth degree $k$. Then: 
\begin{enumerate}
\item for $k=1,2,3$, $\mathrm{Lie}(\underline W)[k]=\mathrm{gr}_{dpth}^k({\mathfrak{grt}_1})$, 
\item the Hilbert-Poincar\'e series of $\mathrm{gr}_{dpth}^k({\mathfrak{grt}_1})$ is equal to 
${t^3\over 1-t^2}$ for $k=1$, to ${t^8\over(1-t^2)(1-t^6)}$ for $k=2$, and to ${t^{11}(1+t^2-t^4)\over(1-t^2)(1-t^4)(1-t^6)}$
for $k=3$. 
\end{enumerate}

\subsection{Computation of $\mathrm{gr}_{dpth}^2(\mathfrak{grt}_1)$}\label{subsect:depth2}
There is a commutative diagram 
$$
\xymatrix{
 & \Sigma^{\otimes2}\ar@{^{(}->}[r]\ar[d]\ar[dl]& {\mathfrak{L}}_0^{\otimes2}\ar@{->>}[r]\ar[d]&\mathrm{gr}^1_{dpth}({\mathfrak{L}}_0)^{\otimes2}\ar[d]& 
\mathrm{gr}^1_{dpth}(\overline{\mathfrak{L}})^{\otimes2}\ar[d]\ar[l]& 
\ar[l]\ar[d] \mathrm{gr}_{dpth}^1(\mathfrak{grt}_1)^{\otimes2}& 
\ar@{->>}[l]\ar[d]\mathfrak{grt}_1^{\otimes2}
\\
\mathbb{M}_0\ar@{^{(}->}[r]& {{\mathfrak{L}}_1\over{\Sigma\cdot{\mathfrak{L}}_1}}\ar@{->>}[r]&  {{\mathfrak{L}}_1\over{{\mathfrak{L}}_0\cdot{\mathfrak{L}}_1}}\ar[r]& \mathrm{gr}^2_{dpth}(\mathfrak{L}_1)& 
\mathrm{gr}^2_{dpth}(\overline{\mathfrak{L}}^1)\ar[l]
&
\mathrm{gr}^2_{dpth}(\mathfrak{grt}_1)\ar[l]& 
F^2_{dpth}(\mathfrak{grt}_1)\ar@{->>}[l]}
$$
where the first square arises from the inclusion $\Sigma\subset{\mathfrak{L}}_0$, the second square from the compatiblity of the operations of ${\mathfrak{L}}$ with 
depth, the fifth square from compatiblity of the Lie bracket of $\mathfrak{grt}_1$ with depth, the fourth square from the Lie algebra inclusion 
$\mathfrak{grt}_1\subset\overline{\mathfrak{L}}$, and the third square from taking associated graded for the l.c.s. filtration. 

The isomorphism ${\mathfrak{L}}_0\simeq AB\C[A,B]$ is given by $(\mathrm{class\ of\ }(\mathrm{ad}x)^k(\mathrm{ad}y)^l([x,y]))\leftrightarrow
A^{k+1}B^{l+1}$. It gives rise to an isomorphism $\mathrm{gr}_{dpth}({\mathfrak{L}}_0)\simeq A\C[A]B$. This space is isomorphic to the subspace
$\underline{V}_+:=\oplus_{a>0}\C\xi[a]$ of $\underline{V}$, where the bijections are $(\mathrm{class\ of\ }(\mathrm{ad}x)^{k+1}(y))
\leftrightarrow A^{k+1}B\leftrightarrow\xi[k+1]$ for $k\geq 0$. We have therefore 
\begin{equation} \label{iso:gr:L0}
\mathrm{gr}_{dpth}({\mathfrak{L}}_0)\simeq A\C[A]B\simeq\underline{V}_+. 
\end{equation}
The isomorphisms ${\mathfrak{L}}_1\simeq\mathbb{L}_2({\mathfrak{L}}_0)\simeq\mathbb{L}_2(AB\C[A,B])$ gives rise to an isomorphism 
\begin{equation} \label{iso:gr:L1}
\mathrm{gr}_{dpth}^2({\mathfrak{L}}_1)\simeq\mathbb{L}_2(A\C[A]B)\simeq ABA'B'\C[A,A']^{as}\simeq\mathbb{L}_2(\underline{V}_+), 
\end{equation}
where the exponent $as$ means antisymmetry in $A,A'$, and where the bijections are 
$$
(\text{class of }[(\mathrm{ad}x)^{k+1}(y),(\mathrm{ad}x)^{l+1}(y)])\leftrightarrow[A^{l+1}B,A^{l+1}B]\leftrightarrow
ABA'B'(A^k(A')^l-A^l(A')^k)\leftrightarrow[\xi[k+1],\xi[l+1]]
$$
for $k,l\geq 0$. 
One checks that the natural maps $\Sigma\to\mathrm{gr}_{dpth}^1({\mathfrak{L}}_0)$ and $\mathbb{M}_0^{min}(\Sigma)\to\mathrm{gr}_{dpth}^1({\mathfrak{L}}_1)$
are injective. Moreover, there is a decomposition $\mathbb{L}_2(\underline{V})=[\xi[0],\underline{V}_+]\oplus\mathbb{L}_2(\underline{V}_+)$. 
As $F^2_{dpth}(\mathfrak{grt}_1)$ is contained in $\overline{\mathfrak{L}}^1$, its projection on $[\xi[0],\underline{V}_+]$ is zero, which 
implies that $\mathrm{gr}_{dpth}^2(\mathfrak{grt}_1)$ is contained in $\mathbb{L}_2(\underline{V}_+)\simeq\mathrm{gr}_{dpth}^2({\mathfrak{L}}_1)$. 
The above diagram then gives rise to a diagram 
$$
\xymatrix{
\Sigma^{\otimes 2}\ar@/^1.5pc/[rr]^{\simeq}
\ar@{^{(}->}[r]\ar_{\{,\}}@{->>}[d]&\mathrm{gr}_{dpth}^1({\mathfrak{L}}_0)^{\otimes2}\ar[d] 
& \mathrm{gr}_{dpth}^1(\mathfrak{grt}_1)^{\otimes2}  \ar@{_{(}->}[l]\ar^{bracket}[d]\\
\mathbb{M}_0^{min}(\Sigma)\ar@{^{(}->}[r]& \mathrm{gr}_{dpth}^2({\mathfrak{L}}_1)& \mathrm{gr}_{dpth}^2(\mathfrak{grt}_1)\ar@{_{(}->}[l]
}
$$
where the surjectivity of $\Sigma^{\otimes 2}\to\mathbb{M}_0^{min}(\Sigma)$ follows from the construction of $\mathbb{M}^{min}(\Sigma)$
and the surjectivity of $\mathrm{gr}_{dpth}^1(\mathfrak{grt}_1)^{\otimes2}\to\mathrm{gr}_{dpth}^2(\mathfrak{grt}_1)$ follows from the 
results recalled in Subsection \ref{subsect:material}. This diagram implies the equality 
\begin{equation}\label{iso:depth:2}
\mathbb{M}_0^{min}(\Sigma)=\mathrm{gr}_{dpth}^2(\mathfrak{grt}_1). 
\end{equation}
It follows that {\it the subspace $\mathrm{gr}_{dpth}^2(\mathfrak{grt}_1)$ of $\mathbb{L}_2(\underline{V}_+)$ coincides, under the identification 
$\mathbb{L}_2(\underline{V}_+)\simeq ABA'B'\C[A,A']^{as}$ induced by $[\xi[k+1],\xi[l+1]]\leftrightarrow 
ABA'B'(A^kA^{\prime l}-A^lA^{\prime k})$, with the subspace $ABA'B'(A-A')(A+A')(A+2A')(2A+A')\C[A^2+A^{\prime 2}+(A+A')^2,
A^6+A^{\prime 6}+(A+A')^6]$} (see \cite{Ec}; the kernel of the bracket $\mathrm{gr}^1_{dpth}(\mathfrak{grt}_1)^{\otimes 2}\to\mathrm{gr}^2_{dpth}(\mathfrak{grt}_1)$ was studied first in \cite{Sc}). 

\subsection{Computation of $\mathrm{gr}_{dpth}^3(\mathfrak{grt}_1)$}\label{subsect:depth3}

\subsubsection{} The isomorphism ${\mathfrak{L}}_1\simeq\mathbb{L}(AB\C[A,B])$ gives rise to an isomorphism
$\mathrm{gr}_{dpth}^3({\mathfrak{L}}_1)\simeq(\text{the part}$ $\text{of }\mathbb{L}(AB\C[A,B])\text{ of degree 3 in }B)$, therefore 
\begin{equation} \label{iso:gr:L13}
\mathrm{gr}_{dpth}^3({\mathfrak{L}}_1)\simeq[AB\C[A],AB^2\C[A]]\simeq ABA'(B')^2\C[A,A']\simeq [\underline{V}_+,[\xi[0],\underline{V}_+]], 
\end{equation}
where the third space is viewed as a subspace of $\mathbb{L}_2(\oplus_{i\geq0}(\mathrm{ad}\xi[0])^i(\underline{V}_+))$; the bijections 
are given by $(\text{class of }[(\mathrm{ad}x)^{k+1}(y),(\mathrm{ad}y)(\mathrm{ad}x)^{l+1}(y)])
\leftrightarrow[A^{k+1}B,A^{l+1}B^2]\leftrightarrow A^{k+1}B(A')^{l+1}(B')^2\leftrightarrow[\xi[k+1],[\xi[0],\xi[l+1]]]$ for $k,l\geq 0$.

The adjoint action of ${\mathfrak{L}}_0$ on ${\mathfrak{L}}_1$ is compatible with depth, therefore it gives rise to a linear map 
$\mathrm{gr}_{dpth}^1({\mathfrak{L}}_0)\otimes\mathrm{gr}_{dpth}^2({\mathfrak{L}}_1)\to\mathrm{gr}_{dpth}^3({\mathfrak{L}}_1)$. By transport of structure via the 
isomorphisms (\ref{iso:gr:L0}), (\ref{iso:gr:L1}) and (\ref{iso:gr:L13}), we obtain a map 
$$
\mu:A\C[A]B\otimes ABA'B'\C[A,A']^{as}\to ABA'(B')^2\C[A,A']. 
$$
Lemma \ref{lemme:actions} then implies that this map is given by 
$$
f(A)B\otimes ABA'B'\cdot g(A,A')\mapsto ABA'(B')^2 (f(A')-f(A+A'))g(A,A'). 
$$

\subsubsection{} For any integer $k\geq 0$, there is a natural sequence of linear maps 
$$
\mathbb{M}_k^{min}(\Sigma)\hookrightarrow {S^k(\Sigma)\cdot{\mathfrak{L}}_1\over S^{k+1}(\Sigma)\cdot{\mathfrak{L}}_1}\to
{S^k({\mathfrak{L}}_0)\cdot{\mathfrak{L}}_1\over S^{k+1}({\mathfrak{L}}_0)\cdot{\mathfrak{L}}_1}\to\mathrm{gr}_{dpth}^{k+2}({\mathfrak{L}}_1), 
$$
where the last map is induced by the inclusions $S^k({\mathfrak{L}}_0)\cdot{\mathfrak{L}}_1\subset F^{k+2}_{dpth}({\mathfrak{L}}_1)$, 
$S^{k+1}({\mathfrak{L}}_0)\cdot{\mathfrak{L}}_1\subset F^{k+3}_{dpth}({\mathfrak{L}}_1)$. The direct sum over $k\geq 0$ of this sequence of maps is a 
sequence of graded module morphisms over the sequence of morphisms of symmetric algebras (graded by the symmetric
algebra degree) $S(\Sigma)\simeq S(\Sigma)\to S({\mathfrak{L}}_0)\to S(\mathrm{gr}_{dpth}^1({\mathfrak{L}}_0))$. One derives from there a 
commutative diagram 
\begin{equation}\label{CommDiag}
\xymatrix{
\Sigma\otimes\mathbb{M}_0^{min}(\Sigma) \ar[r]\ar[d]_{act}& \mathrm{gr}_{dpth}^1({\mathfrak{L}}_0)\otimes\mathrm{gr}_{dpth}^2({\mathfrak{L}}_1)\ar[d]\\
\mathbb{M}_1^{min}(\Sigma)\ar^{bot}[r]& \mathrm{gr}_{dpth}^3({\mathfrak{L}}_1)}
\end{equation}
It follows from the computation of the $S(\Sigma)$-module $\mathbb{M}^{min}(\Sigma)$ in Proposition \ref{prop:comp:module}
that {\it the map $act$ is surjective}.   

Let $\Sigma_0:=\C\sigma_3\oplus\C\sigma_5\subset\Sigma$. Proposition \ref{prop:comp:module} also implies that the restriction 
of $act$ to a map $\Sigma_0\otimes\mathbb{M}_0^{min}(\Sigma)\to\mathbb{M}_1^{min}(\Sigma)$ is a linear isomorphism. 
We have therefore a diagram 
\begin{equation}\label{diag:M:interm}
\xymatrix{
\Sigma_0\otimes\mathbb{M}_0^{min}(\Sigma)\ar_{\simeq}[d]\ar@{^{(}->}[r]& \mathrm{gr}_{dpth}^1({\mathfrak{L}}_0)\otimes\mathrm{gr}_{dpth}^2({\mathfrak{L}}_1)
\ar[d]\ar^{\!\!\!\!\!\!\!\!\!\!\!\!\simeq}[r]& 
A\C[A]B\otimes ABA'B'\C[A,A']^{as}\ar^{\mu}[d]\\
\mathbb{M}_1^{min}(\Sigma)\ar^{bot}[r]& \mathrm{gr}_{dpth}^3({\mathfrak{L}}_1)\ar^{\simeq}[r]& ABA'(B')^2\C[A,A']
}\end{equation}
Composing the resulting map $\Sigma_0\otimes\mathbb{M}_0^{min}(\Sigma)\to ABA'(B')^2\C[A,A']$ with the isomorphism of 
$\Sigma_0\otimes\mathbb{M}_0^{min}(\Sigma)$ with a subspace of $\Sigma_0\otimes ABA'B'\C[A,A']^{as}$ (see end of Subsection 
\ref{subsect:depth2}), we obtain the map 
\begin{align}\label{test:map}
& \Sigma_0\otimes ABA'B'(A-A')(A+A')(2A+A')(A+2A')\C[A^2+(A')^2+(A+A')^2,A^6+(A')^6+(A+A')^6]\nonumber
\\ & \to ABA'(B')^2\C[A,A'], 
\end{align}
$$
\sigma_3\otimes ABA'B'g(A,A')\mapsto 3ABA'(B')^2((A')^2-(A+A')^2)g(A,A'),$$
$$
\sigma_5\otimes ABA'B'g(A,A')\mapsto 5ABA'(B')^2((A')^4-(A+A')^4)g(A,A')
$$
As the ratio $((A')^4-(A+A')^4)/((A')^2-(A+A')^2)=(A')^2+(A+A')^2$ is not invariant under the permutation $A\leftrightarrow A'$, 
the map (\ref{test:map}) is injective. It follows that {\it the map $bot$ from diagram (\ref{CommDiag}) is injective.} Diagram 
(\ref{diag:M:interm}) also implies that {\it the image of $bot$ coincides with the images of (\ref{test:map}).}

\subsubsection{} There is a commutative diagram 
$$
\xymatrix{
F^1_{dpth}(\mathfrak{grt}_1)\otimes F^2_{dpth}(\mathfrak{grt}_1)\ar@{^{(}->}[r]\ar[d] & \overline{\mathfrak{L}}\otimes\overline{\mathfrak{L}}^1\ar[d]\ar[r]& {\mathfrak{L}}_0\otimes{\mathfrak{L}}_1\ar[d]\\
F^3_{dpth}(\mathfrak{grt}_1\ar@{^{(}->}[r])& \overline{\mathfrak{L}}^1\ar[r]&{\mathfrak{L}}^1 
}
$$
where the first square is induced by the Lie algebra inclusion $\mathfrak{grt}_1\subset\overline{\mathfrak{L}}$, and the second square is induced by 
taking associated graded for the l.c.s. filtration. The resulting diagram is compatible with the depth gradings and therefore gives rise to 
a diagram 
\begin{equation}\label{diag:grt:L}
\xymatrix{
\mathrm{gr}^1_{dpth}(\mathfrak{grt}_1)\otimes \mathrm{gr}^2_{dpth}(\mathfrak{grt}_1)\ar[r]\ar[d]& \mathrm{gr}^1_{dpth}({\mathfrak{L}}_0)\otimes \mathrm{gr}^2_{dpth}({\mathfrak{L}}_1)\ar[d]\\
\mathrm{gr}^3_{dpth}(\mathfrak{grt}_1)\ar[r]& \mathrm{gr}^3_{dpth}({\mathfrak{L}}_1)}
\end{equation}
The space $\mathbb{L}_3(\underline{V})$ decomposes as follows 
$$
\mathbb{L}_3(\underline{V})=[\xi[0],[\xi[0],\underline{V}_+]]\oplus[\underline{V}_+,[\xi[0],\underline{V}_+]]
\oplus\mathbb{L}_3(\underline{V}_+). 
$$
As $F_{dpth}^3(\mathfrak{grt}_1)\subset\overline{\mathfrak{L}}^1$, the subspace $\mathrm{gr}_{dpth}^3(\mathfrak{grt}_1)\subset 
\mathbb{L}_3(\underline{V})$ is such that 
\begin{equation}\label{incl:grt:V}
\mathrm{gr}_{dpth}^3(\mathfrak{grt}_1)\subset [\underline{V}_+,[\xi[0],\underline{V}_+]]\oplus\mathbb{L}_3(\underline{V}_+). 
\end{equation}
Recalling the isomorphism (\ref{iso:gr:L13}) of $\mathrm{gr}^3_{dpth}({\mathfrak{L}}_1)$ with the first summand, the bottom map of (\ref{diag:grt:L}) 
then can be identified with the composition of (\ref{incl:grt:V}) with the projection on the first summand of 
$[\underline{V}_+,[\xi[0],\underline{V}_+]]\oplus\mathbb{L}_3(\underline{V}_+)$. 

\subsubsection{}
Combining diagrams (\ref{diag:grt:L}) and (\ref{CommDiag}) and using isomorphism (\ref{iso:depth:2}), we obtain a commutative diagram 
$$
\xymatrix{
\Sigma\otimes\mathbb{M}_0^{min}(\Sigma)\ar@{^{(}->}[r]\ar@{->>}[d]
\ar@/^1.5pc/[rr]^{\simeq}&\mathrm{gr}_{dpth}^1({\mathfrak{L}}_0)\otimes\mathrm{gr}_{dpth}^2({\mathfrak{L}}_1)\ar[d] & 
\mathrm{gr}_{dpth}^1(\mathfrak{grt}_1)\otimes\mathrm{gr}_{dpth}^2(\mathfrak{grt}_1)\ar@{_{(}->}[l]\ar[d]^{\langle,\rangle}\\
\mathbb{M}_1^{min}(\Sigma)\ar@{^{(}->}[r]&\mathrm{gr}_{dpth}^3(\L_1) & \mathrm{gr}_{dpth}^3(\mathfrak{grt}_1)\ar[l]
}
$$
which, using isomorphism (\ref{iso:gr:L13}), gives rise to the diagram 
$$
\xymatrix{
\Sigma\otimes\mathbb{M}_0^{min}(\Sigma)\ar@{->>}[d]\ar^{\langle,\rangle\circ\simeq}[r]&\mathrm{gr}_{dpth}^3(\mathfrak{grt}_1)
\ar[d]\ar@{^{(}->}[r] &
[\underline{V}_+,[\xi[0],\underline{V}_+]]
\oplus\mathbb{L}_3(\underline{V}_+)\ar@{->>}[ld] \\
\mathbb{M}_1^{min}(\Sigma)\ar@{^{(}->}[r]& [\underline{V}_+,[\xi[0],\underline{V}_+]]& }
$$
From this diagram, one derives the equality 
$$
\mathrm{im}(\mathrm{gr}_{dpth}^3(\mathfrak{grt}_1\to[\underline{V}_+,[\xi[0],\underline{V}_+]]))
=\mathrm{im}(\mathbb{M}_1^{min}(\Sigma)\hookrightarrow[\underline{V}_+,[\xi[0],\underline{V}_+]])
=\text{image\ of\ map\ }(\ref{test:map}). 
$$
The vertical kernel in the square of this diagram is a linear map from the kernel of the left vertical map to the kernel of the right vertical
map. As the latter kernel coincides with $\mathrm{gr}_{dpth}^3(\mathfrak{grt}_1)\cap\mathbb{L}_3(\underline{V}_+)$, we obtain a linear map 
$$
\mathrm{ker}(\Sigma\otimes\mathbb{M}_0^{min}(\Sigma)\to\mathbb{M}_1^{min}(\Sigma))\to
\mathrm{gr}_{dpth}^3(\mathfrak{grt}_1)\cap\mathbb{L}_3(\underline{V}_+), 
$$
such that the following diagram commutes 
\begin{equation}\label{first:diag:concl}
\xymatrix{
\mathrm{ker}(\Sigma\otimes\mathbb{M}_0^{min}(\Sigma)\to\mathbb{M}_1^{min}(\Sigma))\ar[r]\ar@{^{(}->}[d]& 
\mathrm{gr}_{dpth}^3(\mathfrak{grt}_1)\cap\mathbb{L}_3(\underline{V}_+)\ar@{^{(}->}[d]\\
\Sigma\otimes\mathbb{M}_0^{min}(\Sigma)\ar[r]& \mathrm{gr}_{dpth}^3(\mathfrak{grt}_1)
}
\end{equation}
In Subsection \ref{subsect:depth2}, we constructed a commutative diagram 
$$
\xymatrix{
\Lambda^2(\Sigma)\ar^{\sim}[r]\ar_{\mathrm{id}\otimes\{,\}}@{->>}[d]& \Lambda^2(\mathrm{gr}_{dpth}^1(\mathfrak{grt}_1))\ar^{\mathrm{id}\otimes
\langle,\rangle}@{->>}[d] \\
\mathbb{M}_0^{min}(\Sigma)\ar^{\sim}[r]&\mathrm{gr}_{dpth}^2(\mathfrak{grt}_1)
}
$$
Taking the tensor product of this diagram with the isomorphism $\Sigma\stackrel{\sim}{\to}\mathrm{gr}_{dpth}^1(\mathfrak{grt}_1)$
and combining it with the Jacobi identity 
$$
(\Lambda^3(\mathrm{gr}_{dpth}^1(\mathfrak{grt}_1))\stackrel{\mathrm{id}\otimes \langle,\rangle}{\longrightarrow}\mathrm{gr}_{dpth}^1(\mathfrak{grt}_1)\otimes\mathrm{gr}_{dpth}^2(\mathfrak{grt}_1)\stackrel{\langle,\rangle}{\longrightarrow}
\mathrm{gr}_{dpth}^3(\mathfrak{grt}_1))=0,
$$
we obtain the diagram 
$$
\xymatrix{
\Lambda^3(\Sigma)\ar^{\!\!\!\!\!\!\!\!\!\sim}[r]\ar_{\mathrm{id}\otimes\{,\}}[d]&\Lambda^3(\mathrm{gr}_{dpth}^1(\mathfrak{grt}_1))\ar_{\mathrm{id}
\otimes \langle,\rangle}[d]\ar^{0}[dr] & \\
\Sigma\otimes\mathbb{M}_0^{min}(\Sigma)\ar^{\!\!\!\!\!\!\!\!\!\!\!\!\!\!\!\!\!\!\!\!\sim}[r]& 
\mathrm{gr}_{dpth}^1(\mathfrak{grt}_1)\otimes
\mathrm{gr}_{dpth}^2(\mathfrak{grt}_1)\ar^{\ \ \ \ \ \ \ \ \ \langle,\rangle}[r]& \mathrm{gr}_{dpth}^3(\mathfrak{grt}_1)
}
$$
which implies the identity 
$$
(\Lambda^3(\Sigma)\stackrel{\mathrm{id}\otimes\{,\}}{\longrightarrow}\Sigma\otimes\mathbb{M}_0^{min}(\Sigma)
\to\mathrm{gr}_{dpth}^3(\mathfrak{grt}_1))=0. 
$$
The $\Sigma$-structure property of $\mathbb{M}^{min}(\Sigma)$ implies that the sequence of maps 
\begin{equation}\label{compl:M}
\Lambda^3(\Sigma)\stackrel{\mathrm{id}\otimes\{,\}}{\to}\Sigma\otimes\mathbb{M}_0^{min}(\Sigma)\to\mathbb{M}_1^{min}(\Sigma)
\end{equation}
is a complex, therefore that the image of $\Lambda^3(\Sigma)\stackrel{\mathrm{id}\otimes\{,\}}{\to}
\Sigma\otimes\mathbb{M}_0^{min}(\Sigma)$ is contained in $\mathrm{ker}(\Sigma\otimes\mathbb{M}_0^{min}(\Sigma)
\to\mathbb{M}_1^{min}(\Sigma))$. It follows that one can augment (\ref{first:diag:concl}) as follows 
$$
\xymatrix{
\Lambda^3(\Sigma)\ar^0[dr]\ar[d] & \\
\mathrm{ker}(\Sigma\otimes\mathbb{M}_0^{min}(\Sigma)\to\mathbb{M}_1^{min}(\Sigma))\ar[r]\ar@{^{(}->}[d]& 
\mathrm{gr}_{dpth}^3(\mathfrak{grt}_1)\cap\mathbb{L}_3(\underline{V}_+)\ar@{^{(}->}[d]\\
\Sigma\otimes\mathbb{M}_0^{min}(\Sigma)\ar[r]& \mathrm{gr}_{dpth}^3(\mathfrak{grt}_1)}
$$
Denoting the middle homology of the complex (\ref{compl:M}) by $H(\mathbb{M}^{min}(\Sigma))$, one then obtains a map  
\begin{equation}\label{map:coh}
H(\mathbb{M}^{min}(\Sigma))\to\mathrm{gr}_{dpth}^3(\mathfrak{grt}_1)\cap\mathbb{L}_3(\underline{V}_+). 
\end{equation}

\subsubsection{} \label{subsect:1012:surj} The following diagram commutes
$$
\xymatrix{
\Sigma^{\otimes3}\ar^{\!\!\!\!\!\!\!\!\!\!\!\!\!\!\!\!\!\!\!\!\sim}[r]
\ar_{\mathrm{id}\otimes\{,\}}[d]&\mathrm{gr}_{dpth}^1(\mathfrak{grt}_1)^{\otimes3}
\ar@{->>}^{\langle,\rangle\circ(\mathrm{id}\otimes \langle,\rangle)}[d]\\
\Sigma\otimes\mathbb{M}_0^{min}(\Sigma)\ar[r]&\mathrm{gr}_{dpth}^3(\mathfrak{grt}_1)}
$$
In this diagram, the surjectivity of the left vertical map follows from the construction of $\mathbb{M}^{min}(\Sigma)$, and the surjectivity 
of the right vertical map follows from Subsection \ref{subsect:material}. It follows that the map $\Sigma\otimes\mathbb{M}_0^{min}(\Sigma)
\to\mathrm{gr}_{dpth}^3(\mathfrak{grt}_1)$ is surjective, as indicated in the following commutative diagram 
$$
\xymatrix{
\mathrm{ker}(\Sigma\otimes\mathbb{M}_0^{min}(\Sigma)\to\mathbb{M}_1^{min}(\Sigma))\ar[r]\ar@{^{(}->}[d]& \mathrm{gr}_{dpth}^3(\mathfrak{grt}_1)\cap\mathbb{L}_3(\underline{V}_+)\ar@{^{(}->}[d]\\
\Sigma\otimes\mathbb{M}_0^{min}(\Sigma)\ar@{->>}[r]\ar@{->>}[d]& \mathrm{gr}_{dpth}^3(\mathfrak{grt}_1)\ar[d]\\
\mathbb{M}_1^{min}(\Sigma)\ar@{^{(}->}[r]&[\underline{V}_+,[\xi[0],\underline{V}_+]]}
$$
with exact columns. Inspection of this diagram then implies that {\it the top horizontal map of this diagram is surjective,} therefore 
that {\it the map (\ref{map:coh}) is surjective.}

\subsubsection{} There is an exact sequence 
$$
0\to\mathrm{gr}_{dpth}^3(\mathfrak{grt}_1)\cap\mathbb{L}_3(\underline{V}_+)\to\mathrm{gr}_{dpth}^3(\mathfrak{grt}_1)\to
\mathrm{im}(\mathrm{gr}_{dpth}^3(\mathfrak{grt}_1)\to[\underline{V}_+,[\xi[0],\underline{V}_+]])\to0.
$$
The last space is isomorphic to the image of the injection (\ref{test:map}), therefore its Hilbert series is 
${t^{11}(1+t^2)\over(1-t^2)(1-t^6)}$; according to Subsection \ref{subsect:material}, the Hilbert series of $\mathrm{gr}_{dpth}^3(\mathfrak{grt}_1)$ is ${t^{11}(1+t^2-t^4)\over(1-t^2)(1-t^4)(1-t^6)}$. It follows that the Hilbert series 
of $\mathrm{gr}_{dpth}^3(\mathfrak{grt}_1)\cap\mathbb{L}_3(\underline{V}_+)$ is ${t^{17}\over(1-t^2)(1-t^4)(1-t^6)}$. 

On the other hand, the maps of the complex (\ref{compl:M}) have the following properties: the last map 
$\Sigma\otimes\mathbb{M}_0^{min}(\Sigma)\to\mathbb{M}_1^{min}(\Sigma)$ is surjective, and it follows from \cite{G}, Theorem 1.5 
that the first map $\Lambda^3(\Sigma)\stackrel{\mathrm{id}\otimes \langle,\rangle}{\longrightarrow}\Sigma\otimes\mathbb{M}_0^{min}(\Sigma)$
is injective. Then the Hilbert series of $H(\mathbb{M}^{min}(\Sigma))$ is computed as follows 
\begin{align*}
& P_{H(\mathbb{M}^{min}(\Sigma))}(t)=P_{\Sigma\otimes\mathbb{M}_0^{min}(\Sigma)}(t)-P_{\Lambda^3(\Sigma)}(t)
-P_{\mathbb{M}_1^{min}(\Sigma)}(t)
\\ & ={t^3\over 1-t^2}\cdot{t^8\over(1-t^2)(1-t^6)}-{t^{15}\over(1-t^2)(1-t^4)(1-t^6)}-(t^3+t^5)\cdot{t^8\over(1-t^2)(1-t^6)}
\\ & ={t^{17}\over(1-t^2)(1-t^4)(1-t^6)}, 
\end{align*}
so $P_{H(\mathbb{M}^{min}(\Sigma))}(t)=P_{\mathrm{gr}_{dpth}^3(\mathfrak{grt}_1)\cap\mathbb{L}_3(\underline{V}_+)}(t)$. 
Combined with the fact that the map (\ref{map:coh}) is a graded epimorphism (Subsubsection \ref{subsect:1012:surj}), this equality implies 
that {\it the map (\ref{test:map}) is a graded isomorphism.} 

\subsubsection{} The results of Subsection \ref{subsect:depth3} can be summarized as follows. 

\begin{theorem}
The space $\mathbb{L}_3(\underline{V})$ admits the decomposition 
$$
\mathbb{L}_3(\underline{V})=[\xi[0],[\xi[0],\underline{V}_+]]\oplus[\underline{V}_+,[\xi[0],\underline{V}_+]]\oplus
\mathbb{L}_3(\underline{V}_+). 
$$ 
Its subspace $\mathrm{gr}_{dpth}^3(\mathfrak{grt}_1)$ satisfies
$$
\mathrm{gr}_{dpth}^3(\mathfrak{grt}_1)\subset[\underline{V}_+,[\xi[0],\underline{V}_+]]\oplus\mathbb{L}_3(\underline{V}_+).
$$
The exact sequence $0\to\mathbb{L}_3(\underline{V}_+)\to[\underline{V}_+,[\xi[0],
\underline{V}_+]]\oplus\mathbb{L}_3(\underline{V}_+)\to[\underline{V}_+,[\xi[0],\underline{V}_+]]\to0$ gives rise to an exact sequence
$$
0\to\mathrm{gr}_{dpth}^3(\mathfrak{grt}_1)\cap\mathbb{L}_3(\underline{V}_+)\to\mathrm{gr}_{dpth}^3(\mathfrak{grt}_1)\to
\mathrm{im}(\mathrm{gr}_{dpth}^3(\mathfrak{grt}_1)\to[\underline{V}_+,[\xi[0],\underline{V}_+]])\to0. 
$$
The isomorphism $[\underline{V}_+,[\xi[0],\underline{V}_+]]\simeq ABA'(B')^2\C[A,A']$ given by (\ref{iso:gr:L13}) gives rise to an isomorphism 
$$
\mathrm{im}(\mathrm{gr}_{dpth}^3(\mathfrak{grt}_1)\to[\underline{V}_+,[\xi[0],\underline{V}_+]])\simeq\mathrm{im}(\text{map\ }
(\ref{test:map})).
$$ The map (\ref{map:coh}) is an isomorphism 
$$
H(\mathbb{M}^{min}
(\Sigma))\stackrel{\sim}{\to}\mathrm{gr}_{dpth}^3(\mathfrak{grt}_1)\cap\mathbb{L}_3(\underline{V}_+).
$$ 
The Hilbert series of these spaces are given by 
$$P_{\mathrm{gr}_{dpth}^3(\mathfrak{grt}_1)\cap\mathbb{L}_3(\underline{V}_+)}(t)
={t^{17}\over(1-t^2)(1-t^4)(1-t^6)}, \  
P_{\mathrm{im}(\mathrm{gr}_{dpth}^3(\mathfrak{grt}_1)\to[\underline{V}_+,[\xi[0],\underline{V}_+]])}(t)={t^8(t^3+t^5)\over(1-t^2)(1-t^6)},
$$
$P_{\mathrm{gr}_{dpth}^3(\mathfrak{grt}_1)}={t^{11}(1+t^2-t^4)\over(1-t^2)(1-t^4)(1-t^6)}$.
\end{theorem}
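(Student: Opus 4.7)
The plan is to unwind the statement into five ingredients and verify each using the machinery developed in Sections \ref{Section:comp:Mminlowest}--\ref{Section:comp:Mminfull} together with the diagrammatic arguments sketched in Subsection \ref{subsect:depth3}. The decomposition of $\mathbb{L}_3(\underline{V})$ is a straight application of Lazard elimination: writing $\underline{V}=\C\xi[0]\oplus\underline{V}_+$, the subalgebra of $\mathbb{L}(\underline{V})$ generated by $\underline{V}_+$ is free on $\bigoplus_{i\geq 0}(\mathrm{ad}\xi[0])^i(\underline{V}_+)$, and taking the degree-$3$ part yields the three summands displayed. The inclusion $\mathrm{gr}_{dpth}^3(\mathfrak{grt}_1)\subset[\underline{V}_+,[\xi[0],\underline{V}_+]]\oplus\mathbb{L}_3(\underline{V}_+)$ will come from the containment $F^3_{dpth}(\mathfrak{grt}_1)\subset\overline{\mathfrak{L}}^1$, which kills the component in $[\xi[0],[\xi[0],\underline{V}_+]]$ since that summand corresponds to elements of l.c.s.-degree $0$ with $y$-degree $3$, i.e., not in $\overline{\mathfrak{L}}^1$.

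The exact sequence then comes for free from this inclusion. To identify the quotient $\mathrm{im}(\mathrm{gr}_{dpth}^3(\mathfrak{grt}_1)\to[\underline{V}_+,[\xi[0],\underline{V}_+]])$ with the image of the map (\ref{test:map}), I would use diagram (\ref{CommDiag}) relating the $S(\Sigma)$-action with the depth bracket, combined with the isomorphism $\mathrm{gr}_{dpth}^2(\mathfrak{grt}_1)\simeq\mathbb{M}_0^{min}(\Sigma)$ proved in Subsection \ref{subsect:depth2}, Lemma \ref{lemme:actions} which gives the action map $\mu$ explicitly, and the already-established surjectivity of $\Sigma\otimes\mathbb{M}_0^{min}(\Sigma)\twoheadrightarrow\mathbb{M}_1^{min}(\Sigma)$ from Proposition \ref{prop:comp:module}. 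Injectivity of (\ref{test:map}) is verified by the concrete observation that $((A')^4-(A+A')^4)/((A')^2-(A+A')^2)=(A')^2+(A+A')^2$ is not symmetric in $(A,A')$.

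Next I would construct the map (\ref{map:coh}) by the diagram chase in Subsection \ref{subsect:depth3}: the kernel of $\Sigma\otimes\mathbb{M}_0^{min}(\Sigma)\to\mathbb{M}_1^{min}(\Sigma)$ maps into $\mathrm{gr}_{dpth}^3(\mathfrak{grt}_1)\cap\mathbb{L}_3(\underline{V}_+)$, and the Jacobi identity applied to $\Lambda^3(\mathrm{gr}_{dpth}^1(\mathfrak{grt}_1))$ shows that $\Lambda^3(\Sigma)\xrightarrow{\mathrm{id}\otimes\{,\}}\Sigma\otimes\mathbb{M}_0^{min}(\Sigma)$ lands in this kernel, hence the map factors through the middle homology $H(\mathbb{M}^{min}(\Sigma))$. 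Surjectivity of (\ref{map:coh}) is then extracted from the diagram at the end of Subsubsection \ref{subsect:1012:surj}: iterated brackets surject $\mathrm{gr}_{dpth}^1(\mathfrak{grt}_1)^{\otimes 3}\twoheadrightarrow\mathrm{gr}_{dpth}^3(\mathfrak{grt}_1)$ by the depth $3$ part of the Broadhurst-Kreimer conjecture proved in \cite{G}, so the composite $\Sigma\otimes\mathbb{M}_0^{min}(\Sigma)\to\mathrm{gr}_{dpth}^3(\mathfrak{grt}_1)$ is onto, and a snake-style inspection forces the top horizontal arrow onto $\mathrm{gr}_{dpth}^3(\mathfrak{grt}_1)\cap\mathbb{L}_3(\underline{V}_+)$.

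The final step, and the only one requiring nontrivial external input, is to upgrade surjectivity to an isomorphism via a Hilbert-series count. I would compute $P_{\mathrm{gr}_{dpth}^3(\mathfrak{grt}_1)\cap\mathbb{L}_3(\underline{V}_+)}(t)$ by subtracting from the $\mathrm{BK}$-predicted series ${t^{11}(1+t^2-t^4)/((1-t^2)(1-t^4)(1-t^6))}$ the series $t^8(t^3+t^5)/((1-t^2)(1-t^6))$ of $\mathrm{im}($map (\ref{test:map})$)$, obtaining $t^{17}/((1-t^2)(1-t^4)(1-t^6))$; and compute $P_{H(\mathbb{M}^{min}(\Sigma))}(t)$ as an Euler characteristic, using injectivity of $\Lambda^3(\Sigma)\to\Sigma\otimes\mathbb{M}_0^{min}(\Sigma)$ (which is \cite{G}, Theorem~1.5) and surjectivity of $\Sigma\otimes\mathbb{M}_0^{min}(\Sigma)\to\mathbb{M}_1^{min}(\Sigma)$. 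The hard part of the whole argument is precisely this Euler-characteristic step: it relies on the delicate injectivity result from \cite{G} for the Broadhurst-Kreimer depth-$3$ bound, without which the surjection (\ref{map:coh}) could not be promoted to an isomorphism; the rest is diagram-chasing and bookkeeping of depth and l.c.s. filtrations.
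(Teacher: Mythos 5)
Your proposal reproduces the paper's own argument step by step: Lazard elimination for the decomposition, the containment $F^3_{dpth}(\mathfrak{grt}_1)\subset\overline{\mathfrak{L}}^1$ for the inclusion, the diagram chase through (\ref{CommDiag}) and (\ref{diag:grt:L}) together with the Jacobi identity to build and surject (\ref{map:coh}), and finally the Euler-characteristic/Hilbert-series count resting on the injectivity of $\Lambda^3(\Sigma)\to\Sigma\otimes\mathbb{M}_0^{min}(\Sigma)$ from \cite{G}. You have also correctly singled out the depth-$3$ results of \cite{G} as the only genuinely external input; this is a faithful reconstruction of the paper's proof.
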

\begin{remark} Explicit computation of $H(\mathbb{M}^{min}(\Sigma))$ and of the map (\ref{map:coh}) would therefore yield an 
explicit description of $\mathrm{gr}_{dpth}^3(\mathfrak{grt}_1)\cap\mathbb{L}_3(\underline{V}_+)$.  
\end{remark}

\subsection*{Acknowledgements} We thank L. Schneps for discussions related to this work, in particular on the subject of relating the lower
bounds found there with the depth filtration computations for \cite{G}.

\end{document}